\NeedsTeXFormat{LaTeX2e}
\documentclass[10pt,reqno]{amsart}
\usepackage{hyperref}
\usepackage{latexsym,amsmath}
\usepackage{enumerate}
\usepackage{amsfonts}
\usepackage{amssymb}
\usepackage{latexsym}
\usepackage{fixmath}
\usepackage{microtype}

\usepackage{mathtools} 

 \usepackage[usenames,dvipsnames]{color}

\usepackage{graphicx}

\usepackage[T1]{fontenc}
\usepackage{fourier}
\usepackage{bbm}
\usepackage{color}
\usepackage{fullpage}

\numberwithin{equation}{section}

\newcommand{\jtd}[1]{{#1}}
\newcommand{\fixxx}[1]{{\color{black}#1}}
\newcommand{\fixx}[1]{{\color{black}#1}}

\newcommand\nup{N}

\newcommand\vB{\vec B}
\newcommand\vQ{\vec Q}
\renewcommand{\vec}[1]{\boldsymbol{#1}}

\renewcommand{\subset}{\subseteq}

\newcommand\disteq{\,\sim\,}

\newcommand\vC{\vec C}
\newcommand\vD{\vec D}

\newcommand\vX{\vec X}
\newcommand\vw{\vec w}

\newcommand\row{\vec \zeta}
\newcommand\col{\vec \xi}

\newcommand\vk{\vec k}
\newcommand\vd{\vec d}
\newcommand\vy{\vec y}

\newcommand\vx{\vec x}

\newcommand\vi{\vec i}

\newcommand\THETA{\vec\theta}

\newcommand\CPC{Combinatorics, Probability and Computing}

\newcommand{\GG}{\mathbb G}

\newcommand\ALPHA{\vec\alpha}
\newcommand\MU{\vec\mu}

\newcommand\vm{{\vec m}}

\newcommand\CHI{{\vec\chi}}
\newcommand\DELTA{{\vec\Delta}}
\newcommand\GAMMA{{\vec\gamma}}

\newcommand\nix{\,\cdot\,}

\newcommand\vA{\vec A}

\newcommand\G{\vec G}

\newcommand\bemph[1]{{\bf\em #1}}

\newcommand\SIGMA{\vec\sigma}

\newcommand\aco[1]{\textcolor{black}{#1}}

\newcommand\fA{\mathfrak{A}}
\newcommand\fE{\mathfrak{E}}
\newcommand\fF{\mathfrak{F}}
\newcommand\cA{\mathcal{A}}
\newcommand\cB{\mathcal{B}}
\newcommand\cC{\mathcal{C}}
\newcommand\cD{\mathcal{D}}
\newcommand\cF{\mathcal{F}}

\newcommand\cE{\mathcal{E}}
\newcommand\cU{\mathcal{U}}
\newcommand\cN{\mathcal{N}}
\newcommand\cQ{\mathcal{Q}}
\newcommand\cH{\mathcal{H}}
\newcommand\cS{\mathcal{S}}
\newcommand\cT{\mathcal{T}}

\newcommand\cK{\mathcal{K}}

\newcommand\cL{\mathcal{L}}
\newcommand\cM{\mathcal{M}}

\newcommand\cP{\mathcal{P}}
\newcommand\cX{\mathcal{X}}
\newcommand\cY{\mathcal{Y}}
\newcommand\cV{\mathcal{V}}
\newcommand\cW{\mathcal{W}}

\def\cR{{\mathcal R}}
\def\cC{{\mathcal C}}
\def\cE{{\mathcal E}}

\newcommand\eps{\varepsilon}

\newcommand\ZZ{\mathbb{Z}}
\newcommand\FF{\mathbb{F}}
\newcommand\NN{\mathbb{N}}

\newcommand\Var{\mathrm{Var}}
\newcommand\Erw{\mathbb{E}}
\newcommand{\vecone}{\vec{1}}

\newcommand{\Po}{{\rm Po}}
\newcommand{\Bin}{{\rm Bin}}

\newcommand\dTV{d_{\mathrm{TV}}}
\newcommand{\bink}[2] {{\binom{#1}{#2}}}
\newcommand\bc[1]{\left({#1}\right)}
\newcommand\cbc[1]{\left\{{#1}\right\}}

\newcommand\brk[1]{\left\lbrack{#1}\right\rbrack}

\newcommand\abs[1]{\left|{#1}\right|}

\newcommand\RR{\mathbb{R}}

\newcommand{\Whp}{A.a.s.}
\newcommand{\whp}{a.a.s.}

\newcommand{\Erdos}{Erd\H os}

\newcommand{\Komlos}{Koml\'os}
\newcommand{\Luczak}{\L uczak}
\newcommand{\Rucinski}{Ruci\'nski}
\newcommand{\Mezard}{M\'ezard}

\newcommand\pr{\mathbb{P}} 
 
\newcommand\Lem{Lemma}
\newcommand\Prop{Proposition}
\newcommand\Thm{Theorem}

\newcommand\Cor{Corollary}
\newcommand\Sec{Section}
\newcommand\Chap{Chapter}

\newtheorem{definition}{Definition}[section]
\newtheorem{claim}[definition]{Claim}
\newtheorem{example}[definition]{Example}
\newtheorem{remark}[definition]{Remark}
\newtheorem{theorem}[definition]{Theorem}
\newtheorem{lemma}[definition]{Lemma}
\newtheorem{proposition}[definition]{Proposition}
\newtheorem{corollary}[definition]{Corollary}

\newtheorem{fact}[definition]{Fact}

\DeclareMathOperator{\nul}{nul}
\DeclareMathOperator{\rank}{rk}

\newcommand{\rk}{\rank}
\newcommand{\supp}{{\text{supp}}}


\newcommand\A{\vA}

\def\B{{\mathcal B}}

\def\E{{\mathcal E}}




\def\po{{\bf Po}}
\def\ex{{\mathbb E}}
\def\pr{{\mathbb P}}


\def\bfd{{\vec d}}

\def\bfk{\vk}

\def\bfm{{\vec m}}
\def\bfn{{\vec n}}

\def\bfG{{\G}}

\def\bftheta{{\pmb{\theta}}}


\def\bbN{{\mathbb N}}

\def\cH{{\mathcal H}}


\newcommand\mr[1]{\aco{#1}}

\newcommand{\remove}[1]{}
\newcommand\eqn[1]{(\ref{#1})}

\newcommand{\be}{\begin{equation}}
\newcommand{\bel}[1]{\begin{equation}\lab{#1}\ }
\newcommand{\ee}{\end{equation}}
\newcommand{\bea}{\begin{eqnarray}}
\newcommand{\eea}{\end{eqnarray}}
\newcommand{\bean}{\begin{eqnarray*}}
\newcommand{\eean}{\end{eqnarray*}}

\newcommand{\vn}{{\vec n}}
\newcommand{\vM}{\vec M}
\newcommand{\vN}{\vec N}

\begin{document}


\title{The rank of sparse random matrices}

\author{Amin Coja-Oghlan, Alperen A.~Erg\"ur, Pu Gao, Samuel Hetterich, Maurice Rolvien}
\thanks{Coja-Oghlan supported by DFG CO 646/3 and 646/4.
Erg\"ur's research was partially supported by Einstein Foundation, Berlin and NSF CCF 2110075.
Gao's research is supported by ARC DE170100716 and ARC DP160100835.
This submission combines the preprints {\tt arXiv:1810.07390} and {\tt arXiv:1906.05757}.
An extended abstract of this work appeared in the proceedings of the 31st ACM-SIAM Symposium on Discrete Algorithms (2020) 579--591.}

\address{Amin Coja-Oghlan, {\tt amin.coja-oghlan@tu-dortmund.de}, TU Dortmund, Faculty of Computer Science, 12 Otto Hahn St, Dortmund 44227, Germany.}

\address{Alperen A.~Erg\"ur, 
{\tt alperen.ergur@utsa.edu}, 
The University of Texas at San Antonio, TX, USA.}

\address{Pu Gao, {\tt p3gao@uwaterloo.ca}, 
  Department of Combinatorics and Optimization
    University of Waterloo, Canada.}

\address{Samuel Hetterich, {\tt hetterich@math.uni-frankfurt.de}, Goethe University, Mathematics Institute, 10 Robert Mayer St, Frankfurt 60325, Germany.}

\address{Maurice Rolvien, {\tt maurice.rolvien@tu-dortmund.de}, TU Dortmund, Faculty of Computer Science, 12 Otto Hahn St, Dortmund 44227, Germany.}

\begin{abstract}
	We determine the \aco{asymptotic normalized rank} of a random matrix $\vA$ over an arbitrary field with prescribed numbers of non-zero entries in each row and column.
As an application we obtain a formula for the rate of low-density parity check codes.
This formula vindicates a conjecture of Lelarge (2013).
The proofs are based on coupling arguments and a novel random perturbation, applicable to any matrix, that diminishes the number of short linear relations.
\hfill
{\em MSC:} 05C80, 	60B20, 	94B05 
\end{abstract}

\maketitle

\section{Introduction}\label{Sec_intro}

\subsection{Background and motivation}
The theory of random matrices, which commenced with the nuclear physics-inspired work of Wigner in the 1950s~\cite{Wigner}, has been one of the great success stories at the junction of probability, mathematical physics and combinatorics.
Nevertheless, quite a few basic questions remain open to this day.
For instance, while dense random matrices such as the Gaussian Orthogonal Ensemble are reasonably well understood (e.g., \cite{Mehta}), far less is known about sparse random matrices where the expected number of non-zero entries per row or column is bounded.
Yet over the last two or three decades such sparse random matrices, with entries from finite or infinite fields, have emerged to play a pivotal role in several exciting applications.
Modern error-correcting codes are a case in point.
For instance, the codebook of a low-density parity check code (`ldpc code'), a class of codes that has been at the centre of tremendous recent developments in coding theory~\cite{GMU,KYMP,RichardsonUrbanke}, comprises the kernel of a sparse random matrix over a finite field drawn from a carefully tailored distribution.
In addition, sparse random matrices 
occur in randomised constructions of Ramanujan graphs~\cite{AlonSpencer,Bordenave,Friedman},
 statistical inference~\cite{redemption}, the analysis of algorithms~\cite{Dietzfelbinger} and the theory of random constraint satisfaction problems \cite{AchlioptasMolloy,Ibrahimi}.

Among the fundamental questions about such random matrices that have remained open, perhaps the most conspicuous one concerns the rank.
Although this parameter was already studied in early contributions~\cite{Balakin1,Balakin2,Kovalenko}, there has been no comprehensive rank formula for sparse random matrices.
The present paper furnishes one.
To be precise, we will determine the asymptotic rank of a sparse random matrix with prescribed numbers of non-zero entries in the rows and columns.
Among other applications, important classes of ldpc codes are based on precisely such random matrices as a diligent choice of the degrees greatly boosts the code's performance~\cite{RichardsonUrbanke}.
Moreover, the rank is linearly related to the rate of the code, arguably the code's most basic parameter.

Lelarge~\cite{Lelarge} noticed that an upper bound on the rank of a sparse random matrix  can be derived from the matching number of random bipartite graphs, which was determined by Bordenave, Lelarge and Salez~\cite{BLS2}.
Lelarge went on to conjecture that this bound be tight for sparse random matrices over the binary field $\FF_2$.
We prove this conjecture.
In fact, we prove a much stronger result.
Namely, we show that Lelarge's conjectured formula holds for sparse random matrices over any field, finite or infinite, regardless the distribution of the non-zero matrix entries.
Thus, the rank is governed  by the {\em location} of the non-zero entries rather than the distribution of the matrix entries.

The proof of the rank formula evinces an interesting connection to statistical physics.
Indeed, Lelarge already observed that a sophisticated but mathematically non-rigorous physics approach  called the `cavity method' renders a wrong prediction as to the rank for certain degree distributions.%
	\footnote{The derivation of this erroneous prediction was posed as an exercise in~\cite[\Chap~19]{MM}.}
This discrepancy merits attention because the cavity method has been brought to bear on a panoply of theoretical as well as real-world problems, ranging from spin glasses to machine learning~\cite{LF}.
We manage to shed light on the issue.
Specifically, the `replica symmetric' version of the cavity method predicts that the rank of a random matrix over a finite field can be expressed analytically as the maximum of a variational problem.
A priori, this variational problem asks to optimise a functional called the Bethe free entropy over an infinite-dimensional space of probability measures.
Such optimisation problems have been tackled in the physics literature numerically by means of a heuristic called population dynamics.
For the rank problem this was carried out by Alamino and Saad~\cite{AlaminoSaad}.
\aco{But thanks to the algebraic nature of the problem we can show that the rank actually comes out as the solution to a variational problem on a restricted domain.
	We are thus left with a dramatically simplified variational problem, which ultimately boils down to a humble one-dimensional optimisation task.
We will see that the optimal solution to this one-dimensional problem does indeed yield the rank (over any field).
Furthermore, the solution can be lifted to a solution to the original infinite-dimensional problem.
As an aside, we do not know if the original infinite-dimensional variational problem may possess spurious maximisers that boost its value beyond the optimal value of the restricted version, thereby spoiling the accuracy of the original physics formula.
We will return to this question, and to the physics slant on the problem, in \Sec~\ref{Sec_cavity}.
In any case, for certain degree distributions the maximum values that we obtain by way of the restricted variational problem actually exceed those that surfaced in the experiments from~\cite{AlaminoSaad} or the heuristic derivations from~\cite{MM} for the unrestricted formula;
hence the discrepancy between the physics predictions and mathematical reality.
}

Apart from remedying the discrepancy, we prove the rank formula by  effectively turning the physicists' cavity calculations into a rigorous mathematical argument.
The crucial tool that makes this possible is a novel perturbation, applicable to any matrix, that diminishes the number of short linear relations (see \Prop~\ref{Prop_Alp} below).
We expect that this perturbation will find future applications.
Let us proceed to introduce the random matrix model and state the main results.
A discussion of related work and a detailed comparison with the physics work follow in \Sec~\ref{Sec_overview}, once we have the necessary notation in place.

\subsection{The rank formula}\label{Sec_rankFormula}
Let $\FF$ be a field equipped with a $\sigma$-algebra that turns $\FF $ into a standard Borel space and let  $\chi:[0,1]^2\to\FF^*=\FF\setminus\cbc 0$ be a measurable map.
Let $(\row_i,\col_i)_{i\geq1}$ be mutually independent uniformly distributed $[0,1]$-valued random variables.
Moreover, let $\vd,\vk\geq0$ be integer-valued random variables such that $0<\Erw[\vd^r]+\Erw[\vk^r]<\infty$ for a real $r>2$ and set $d=\Erw[\vd]$, $k=\Erw[\vk]$.
Let $n>0$ be an integer divisible by the greatest common divisor of the support of $\vk$ and let $\vm\disteq\Po(dn/k)$ be independent of the $\row_i,\col_i$.
Further, let $(\vd_i,\vk_i)_{i\geq1}$ be copies of $\vd,\vk$, mutually independent and independent of $\vm,\row_i,\col_i$.
Given
\begin{equation}\label{eqWellDef1}
\sum_{i=1}^n\vd_i=\sum_{i=1}^{\vm}\vk_i,
\end{equation}
draw a simple bipartite graph $\G$ comprising a set $\{a_1,\ldots,a_{\vm}\}$ of {\em check nodes} and a set $\{x_1,\ldots,x_n\}$ of 
{\em variable nodes} such that the degree of $a_i$ equals $\vk_i$ and the degree of $x_j$ equals $\vd_j$ for all $i,j$ uniformly at random.
Then let $\A$ be the $\vm\times n$-matrix with entries
\begin{align*}
\A_{ij}&=\vecone\{a_ix_j\in E(\G)\}\cdot\chi_{\row_i,\col_j}.
\end{align*}	
Thus, the $i$-th row of $\A$ features precisely $\vk_i$ non-zero entries and the $j$-th column contains precisely $\vd_j$ non-zero entries.
Moreover, the non-zero entries of $\vA$ are drawn in the vein of an exchangeable array by evaluating the function $\chi$ at a random poisition $(\row_i,\col_j)$.
Routine arguments show that $\A$ is well-defined for large enough $n$, i.e., \eqref{eqWellDef1} is satisfied and there exists a simple $\G$ with the desired degrees with positive probability; see \Prop~\ref{Lemma_welldef} below.
We call $\G$ the {\em Tanner graph} of $\A$.
\mr{Also recall that the rank $\rank A$ of the matrix $A$ is defined as the maximal number of linear independent rows (or columns).
In addition, $\nul A$ is the dimension of the kernel of $A$ and the sum $\rank A+\nul A$ equals the number of columns of $A$. }

The following theorem, the main result of the paper, provides an asymptotic formula for the  rank of $\A$.
Let $D(x)$ and $K(x)$ denote the probability generating functions of $\vd$ and $\vk$, respectively.
Since $\Erw[\vd^2]+\Erw[\vk^2]<\infty$, the functions $D(x),K(x)$ are continuously differentiable on the unit interval.
Therefore, the function
\begin{align}\label{eqBFE}
\Phi:[0,1]&\to\RR,&\alpha&\mapsto D\left(1-K'(\alpha)/k\right)-\frac{d}{k}\bc{1-K(\alpha)-(1-\alpha)K'(\alpha)}.
\end{align}
is continuous.

\begin{theorem}\label{thm:rank}
For any $\vd,\vk$ we have, uniformly for all $\chi$,
	\begin{align}\label{eqthm:rank}
	\lim_{n\to\infty}\frac{\rank(\A)}n&=1-\max_{\alpha\in[0,1]}\Phi(\alpha)&&\mbox{in probability.}
	\end{align}
\end{theorem}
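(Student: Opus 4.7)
My plan is to prove \Thm~\ref{thm:rank} by establishing matching upper and lower bounds on $\rank(\A)/n$; equivalently, on the nullity $\nul(\A)=n-\rank(\A)$. The upper bound $\rank(\A)/n\leq 1-\max_{\alpha}\Phi(\alpha)+o(1)$ is essentially already available: as observed by Lelarge~\cite{Lelarge}, over any field the rank of $\A$ is bounded above by the bipartite matching number of its Tanner graph $\G$, and the Bordenave--Lelarge--Salez formula of~\cite{BLS2} evaluates the latter to $(1-\max_{\alpha}\Phi(\alpha))n+o(n)$ in probability. Since this bound is manifestly independent of the field $\FF$ and of the function $\chi$, it already explains those features of the theorem, and all the remaining work lies in the matching lower bound.

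The first obstacle for that lower bound is that a naive cavity / local-weak-limit argument on $\A$ fails: typically there are $\Theta(n)$ kernel vectors of bounded support, and such short relations induce correlations between far-apart coordinates that destroy any recursion on local neighbourhoods of $\G$. My remedy is to apply the random perturbation of \Prop~\ref{Prop_Alp}: by adjoining a vanishing density of extra random rows one obtains a matrix $\A^\star$ whose kernel contains only $o(n)$ vectors of bounded support, while $\rank(\A^\star)=\rank(\A)+o(n)$. Once short relations are eliminated, each variable node $x_j$ can be unambiguously classified as \emph{frozen}, meaning its coordinate is determined in almost every kernel vector of $\A^\star$, or \emph{free}; I write $\alpha_n$ for the resulting empirical frozen fraction.

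The heart of the argument is then an Aizenman--Sims--Starr telescoping identity
\begin{align*}
\Erw\brk{\rank(\A^\star_n)}=\sum_{\ell=1}^n\bc{\Erw\brk{\rank(\A^\star_\ell)}-\Erw\brk{\rank(\A^\star_{\ell-1})}},
\end{align*}
where $\A^\star_\ell$ denotes the perturbed matrix on $\ell$ variable nodes. The one-step differences are evaluated by coupling: on inserting a fresh variable node of random degree $\vd$ attached to $\vd$ random check nodes, the new column contributes $1$ to the rank iff it lies outside the existing column span, an event determined by the local neighbourhood of the insertion site, which converges to a two-type Galton--Watson tree. Absence of short relations ensures that the frozen/free statistics in the one-step neighbourhoods of the $\vd$ incident checks decorrelate in the limit, so the relevant probability can be expressed in closed form as a polynomial in $\alpha_n$. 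A symmetric computation for a fresh check node supplies the remaining term, and after regrouping one obtains $\rank(\A^\star)/n=1-\Phi(\alpha_n)+o(1)$.

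It remains to pin down the limit of $\alpha_n$, and this is where the paper's reduction of the infinite-dimensional Bethe variational problem to the one-dimensional function $\Phi$ is decisive: algebraic closure properties of the kernel force the replica-symmetric order parameter to collapse onto a single scalar $\alpha$. Combining this collapse with the already-established upper bound pins every subsequential limit $\alpha^\star$ of $\alpha_n$ to a maximiser of $\Phi$, since otherwise the telescoping identity would exhibit a nullity strictly smaller than \cite{BLS2,Lelarge} permit. The main obstacle throughout is the perturbation step: designing a random modification that destroys all but $o(n)$ short relations while altering the rank by at most $o(n)$, and verifying that the perturbed matrix truly supports a local cavity recursion, is what makes \Prop~\ref{Prop_Alp} the central technical innovation of the paper and what I expect to be the most delicate part to get right.
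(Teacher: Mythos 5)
Your overall architecture is the paper's: take the upper bound on the rank from Lelarge/\cite{BLS2} (the paper also supplies an independent interpolation proof in the appendix), and prove the matching lower bound by combining the pinning perturbation of \Prop~\ref{Prop_Alp} with an Aizenman--Sims--Starr coupling in which the one-step nullity change is expressed through the fraction of frozen attachment points. However, there is a genuine overreach at the centre of your sketch. The telescoping identity does \emph{not} deliver the asymptotic identity $\rk(\A^\star)/n=1-\Phi(\alpha_n)+o(1)$: it only expresses $\frac1n\Erw[\nul \vA_n]$ as a Ces\`aro average of one-step differences over system sizes $\ell\le n$, and each such difference involves the frozen fraction of the size-$\ell$ system. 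Without proving that these fractions converge (which you do not, and the paper never does), you cannot replace that average by $\Phi(\alpha_n)$, so the concluding step in which you ``pin'' every subsequential limit of $\alpha_n$ to a maximiser of $\Phi$ by playing the identity off against the upper bound is not available as stated. It is also unnecessary: since each one-step difference equals $\Erw[\Phi(\ALPHA)]+o(1)\le\max_{\alpha\in[0,1]}\Phi(\alpha)+o(1)$ pointwise, whatever the frozen fraction $\ALPHA$ happens to be, the bound $\Erw[\nul\vA_{\eps,n}]/n\le\max_\alpha\Phi(\alpha)+o_{\eps,n}(1)$ follows directly; this is precisely \Prop~\ref{Prop_coupling} and \Cor~\ref{Prop_lower}, and it is the reason the paper never needs to identify the limiting frozen fraction or to invoke the Bethe variational picture in the proof.

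A second concrete gap is the coupling itself. ``Insert a fresh variable node of degree $\vd$ attached to $\vd$ random check nodes'' is not well defined on the model as given: for fixed $\vd,\vk$ the matrix $\vA_{n+1}$ need not exist for all $n$ because of the divisibility constraint \eqref{eqWellDef1}, and such an insertion adds $d$ rows in expectation instead of the correct $d/k$, so the naive telescoping compares systems with the wrong row densities. The paper repairs both problems by passing to the $\eps$-diluted model $\vA_{\eps,n}$ with a Poisson number of rows, which creates $\Theta(\eps n)$ cavities where the new checks attach, and by coupling $\vA_{\eps,n}$ and $\vA_{\eps,n+1}$ through a common base matrix $\vA'$ as in \eqref{eqfinecoupling}; transferring the resulting estimate back to the original conditioned, simple model $\vA_n$ then requires the concentration and local-limit arguments of \Prop~\ref{Cor_lower}, which your sketch omits entirely. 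Finally, a smaller inaccuracy: \Prop~\ref{Prop_Alp} controls proper \emph{relations}, i.e.\ small-support vectors in the row space of the matrix, not kernel vectors of bounded support, and the quantity entering the one-step computation is the fraction of frozen \emph{cavities} rather than of frozen variables; the distinction matters when you argue that the attachment points behave like independent Bernoulli$(\ALPHA)$ marks.
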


\aco{Perhaps surprisingly, the r.h.s.\ of \eqref{eqthm:rank} depends only on the degree distributions $\vd,\vk$ but not in any way on the field $\FF$ or the choice of non-zero entries (within the aforementioned model).
Furthermore, let us emphasise that the function $\Phi$, being continuous on the unit interval, is guaranteed to attain a maximum.
However, this maximum need not be unique, and non-uniqueness of the maximiser may have interesting combinatorial repercussions~\cite{CCKLRR}.
}

\aco{A second point that may seem surprising at first glance is that the rank converges to any non-random value at all, as provided by \eqref{eqthm:rank}.
A heuristic explanation can be given on grounds of physics reasoning.
Indeed, the nullity of $\A$ (dimension of the kernel) corresponds to the logarithm of the partition function of a natural Boltzmann distribution, namely the uniform distribution on the kernel of $\A$.
Commonly the normalised logarithm of such a partition functions (known as the ``free entropy'' in physics jargon) converges to a constant for random systems that are ``self-averaging''.
Here ``self-averaging'' means that a small perturbation to the system, i.e., the matrix in our case, cannot cause disproportionate tremors in logarithm of the partition function.
In the random matrix model that we consider here the self-averaging condition is clearly satisfied because changing a single matrix entry can at most alter the nullity by one.
Therefore, the Azuma--Hoeffding inequality easily implies that $\nul\A$ concentrates about its mean.
That said, there is no general theorem that guarantees convergence to a deterministic value in self-averaging systems, so even this aspect of \Thm~\ref{thm:rank} is not in any way a triviality.}

\aco{\Thm~\ref{thm:rank} establishes a generalised version of Lelarge's rank conjecture~\cite{Lelarge} with a tighter conditions on the moments of $\vd,\vk$.
Specifically, Lelarge only considered matrices over the field $\FF_2$, while here we consider general fields and allow for a very general choice of non-zero entries. 
That said, while here we assume that $\Erw[\vd^r],\Erw[\vk^r]<\infty$ for a real $r>2$, Lelarge considered degree distributions with $\Erw[\vd^2],\Erw[\vk^2]<\infty$.
We did not undertake a serious attempt to weaken the moment condition to $r=2$, but this may conceivably introduce significant new techical difficulties.}

The theorem covers a very general class of sparse random matrices.
Indeed, since $\vd,\vk$ have finite means the matrix $\A$ is sparse, i.e., the expected number of non-zero entries is $O(n)$ as $n\to\infty$.
Yet because the degree distributions are subject  only to the condition $\Erw[\vd^r]+\Erw[\vk^r]<\infty$, the typical maximum number of non-zero entries per row or column may approach $\sqrt n$.
Furthermore, the choice of the non-zero entries of the matrix by way of the measurable map $\chi$, reminiscent of an exchangeable array, allows for rather general choices of non-zero matrix entries.
\aco{To elaborate, recall that an exchangeable array is an infinite matrix $(\vec\chi_{ij})_{i,j\geq1}$ of $\FF^*$-valued random variables such that the distribution of any finite top-left sub-matrix is invariant under row and column permutations~\cite{Kallenberg}.
The Aldous--Hoover representation theorem shows that any such array can be described by a function $\cX:[0,1]^4\to\FF^*$~\cite{Aldous,Hoover}.
Specifically, any finite sub-matrix of $\vec\chi_{ij}$ can be obtained by substituting suitable independent random variables that are uniformly distributed on the unit interval $[0,1]$ into $\cX$.
\Thm~\ref{thm:rank} therefore implies the rank formula for a Hadamard product of the biadjacency matrix of the random bipartite graph $\G$ and the commensurately dimensioned top-left bit of the exchangeable array $(\vec\chi_{ij})_{i,j}$.}
Of course, an immediate special case is the random matrix whose non-zero entries are drawn mutually independently from an arbitrary distribution on $\FF^*$.%
	\footnote{To see this, assume that $\CHI$ is an $\FF^*$-valued random variable.
			Then given $n$ pick a large integer $N\gg n^2$.
			Let $\chi:[0,1]^2\to\FF^*$ be a step function obtained by chopping $[0,1]$ into $N$ sub-intervals of size $1/N$ and assigning a value drawn from $\CHI$ independently to each of the $N^2$ resulting rectangles.
			Because \Thm~\ref{thm:rank} provides uniform convergence in $\chi$, we obtain the rank of a matrix with non-zero entries drawn from $\CHI$.}

The lower bound on the rank constitutes the principal contribution of \Thm~\ref{thm:rank}.
Indeed, the upper bound $\rank(\A)/n\leq1-\max_{\alpha\in[0,1]}\Phi(\alpha)+o(1)$ as $n\to\infty$ \whp\ was already derived in~\cite{Lelarge} from the Leibniz determinant formula and the formula for the matching number of a random bipartite graph from~\cite{BLS2}.%
\footnote{While~\cite{Lelarge} only dealt with matrices over $\FF_2$, the argument extends to other fields without further ado.}
Nonetheless, in the appendix we give an independent proof of the upper bound, which is shorter than the combination~\cite{BLS2,Lelarge}.

\aco{\Thm~\ref{thm:rank} implies a formula for the rate of a common class of ldpc codes.
Such codes are based on random matrices $\A$ over finite fields $\FF_q$ with suitable degree distributions $\vd,\vk$.
Specifically, a common construction of ldpc codes involves an optimisation over the degree distributions $\vd,\vk$ of the variables/checks so as to maximise the probability that the Belief Propagation message passing algorithm (or a variant thereof) recovers the original codeword from the received, noisy data~\cite{RichardsonUrbanke}.
The codebook consists of the kernel of the random matrix $\A$.
Hence, the rate of the code equals $\nul\A/n$.
Since \Thm~\ref{thm:rank} implies that
\begin{align*}
	\frac1n\nul\A\to\max_{\alpha\in[0,1]}\Phi(\alpha)&&\mbox{in probability},
\end{align*}
we thus obtain the rate.}

\subsection{The 2-core bound}\label{Sec_intro_2core}
There is a simple graph-theoretic upper bound on the rank, and \Thm~\ref{thm:rank} puts us in a position to investigate if and when this bound is tight.
To state this bound, we recall that the {\em 2-core} of $\G$ is the subgraph $\G_*$ obtained by repeating the following operation.%
\begin{quote}
	While there is a variable node $x_i$ of degree one or less, remove that variable node along with the adjacent check node (if any).	\footnote{Strictly speaking, what we describe here is the 2-core of the hypergraph whose vertices are the variable nodes and whose edges are the neighbourhoods of the check nodes.}
\end{quote}
\fixxx{Of course, the 2-core may be empty, i.e.\ with no variable or check nodes. In the case that $\pr(\bfk=0)>0$ it is possible to have a 2-core without any variable node but with a non-empty set of check nodes whose degrees are all zero.}
Extending prior results that dealt with the degrees of all check nodes coinciding~\cite{Cooper04,molloy2005cores},
we compute the likely number of variable and check nodes in the 2-core.
\fixx{Let
	\begin{equation}\label{def:ff}
		\phi(\alpha)=1-\alpha-D'\left(1-K'(\alpha)/{k}\right)/d.
	\end{equation}
	Note that $\Phi'(\alpha)=dK''(\alpha)\phi(\alpha)/k$.
	Since $\vd,\vk$ have finite second moments and $\phi(0) \ge 0$ while $\phi(1)  \le 0$, we can define
	\begin{equation}\label{def:rho}
		\rho=\max\{x\in[0,1]: \phi(x)=0\}.
	\end{equation}
}

\begin{theorem} \label{thm:2core}
	Assume that $\phi'(\rho)<0$ and let  $\bfn^*$ and $\bfm^*$ be the number of  variable and check nodes in the 2-core, respectively.
	Then
	\begin{align}\label{eqthm:2core}
		\lim_{n\to\infty}\frac{\bfn^*}{n}&=1-D\left(1-\frac{K'(\rho)}{k}\right)-\frac{K'(\rho)}{k} D'\left(1-\frac{K'(\rho)}{k}\right),&
		\lim_{n\to\infty}\frac{\bfm^*}{n}&=\frac dk K( \rho)\quad\mbox{in probability.}
	\end{align} 
\end{theorem}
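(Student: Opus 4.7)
The plan is to compute $\bfn^*$ and $\bfm^*$ via a warning-propagation (WP) analysis coupled with the peeling process in the bipartite configuration model, extending the Cooper/Molloy analyses from constant check-degree to general $\vk$.

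Heuristically, on the Tanner graph define messages $\mu_{a\to x},\mu_{x\to a}\in\{0,1\}$ by
\[
\mu_{a\to x}=\prod_{y\in\partial a\setminus\{x\}}\mu_{y\to a},\qquad \mu_{x\to a}=\vecone\bigl\{\textstyle\sum_{b\in\partial x\setminus\{a\}}\mu_{b\to x}\geq 1\bigr\}.
\]
At the largest WP fixed point, $x$ lies in the 2-core iff at least two incident check nodes $a$ satisfy $\mu_{a\to x}=1$, and $a$ lies in the 2-core iff $\mu_{y\to a}=1$ for every $y\in\partial a$. Setting $p=\Pr[\mu_{x\to a}=1]$ and $r=\Pr[\mu_{a\to x}=1]$ on a uniformly random directed edge, the tree approximation in the configuration model gives $r=K'(p)/k$ and $p=1-D'(1-r)/d$. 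Eliminating $r$ yields $\phi(p)=0$, whose largest solution is $\rho$ by definition of \eqref{def:rho}. Averaging the 2-core indicators produces the per-variable probability $\Pr[\Bin(\vd,K'(\rho)/k)\geq 2]=1-D(1-K'(\rho)/k)-(K'(\rho)/k)D'(1-K'(\rho)/k)$ and the per-check probability $K(\rho)$; multiplying the latter by $\vm/n\to d/k$ recovers \eqref{eqthm:2core}.

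To make this rigorous, first replace $\G$ by the bipartite configuration model with the same degree sequence. A standard contiguity argument, valid under the moment condition $\Erw[\vd^r]+\Erw[\vk^r]<\infty$ for some $r>2$, transfers convergence in probability between the two models. Then run the peeling in continuous time by attaching i.i.d.\ rate-one exponential clocks to the variable nodes: whenever a peelable variable node's clock rings, remove it along with any incident check node (thereby decreasing the degree of that check's other neighbors). Apply Wormald's differential equation method to the vector of degree-class fractions; the resulting ODE system can be integrated explicitly via a single scalar parameter $\alpha$ tracking the WP probability $p$, and the peeling halts at the first zero of $\phi(\alpha)$. The transversality hypothesis $\phi'(\rho)<0$ guarantees that this zero equals $\alpha=\rho$ and that the trajectory crosses it non-degenerately, so the stopping time is concentrated and substituting $\alpha=\rho$ yields the claimed formulas.

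The principal technical obstacle is applying Wormald's method despite the unbounded row and column degrees permitted by the moment hypothesis. I would address this by truncating both $\vd_i$ and $\vk_j$ at a slowly growing threshold (for instance $n^{1/r-\eps}$), coupling the peelings in the truncated and untruncated models to show that their 2-cores differ by $o(n)$ vertices with high probability, and then invoking Wormald's theorem inside the truncated model, where the Lipschitz and boundedness hypotheses hold. A secondary point is ruling out a premature stop at a spurious zero of $\phi$ strictly above $\rho$; here the monotonicity of the WP iteration from all-ones downward to $\rho$, combined with $\phi'(\rho)<0$, ensures that the ODE trajectory reaches $\rho$ transversally from above rather than stalling earlier.
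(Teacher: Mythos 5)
Your proposal reaches the same fixed-point formulas but by a genuinely different route. The paper analyses the parallel stripping process directly: it first shows (Claims~\ref{c:neighbourhood} and~\ref{c:cycle}) that bounded-depth neighbourhoods are \whp\ small and tree-like, which justifies the round-$t$ recursion $\rho_{t+1}=1-D'(1-K'(\rho_t)/k)/d$ and the survival probability $\lambda_t$; it concentrates the round-$t$ remainder by Azuma's inequality, using a \whp\ maximum-degree bound as the Lipschitz constant (so no truncation of the degree sequence is needed); it lets $\rho_t\downarrow\rho$ because $\phi'(\rho)<0$ makes $\rho$ an attractive fixed point of $x\mapsto 1-D'(1-K'(x)/k)/d$; and it finishes with a SLOWSTRIP argument, showing that the one-step drift of the total degree $X_t$ of degree-one variables is $-1+D''(1-K'(\rho)/k)K''(\rho)/(kd)+O_n(\hat\eps)<0$, so the residual $O_n(\hat\eps n)$ peelable vertices are cleared after only $O_n(\hat\eps n)$ further removals. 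You instead propose Wormald's differential equation method for a continuous-time peeling, with the ODE collapsing to one scalar parameter; this is a legitimate alternative (in the spirit of Cain and Wormald's work on cores) and trades the tree recursion plus round-by-round concentration for a single trajectory analysis.

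Two steps in your sketch are where the substantive work lies and are stated too lightly. First, the endgame: the DE method only controls the process while the scaled number of peelable clones stays bounded away from zero, i.e.\ until the trajectory is within $\hat\eps$ of the largest root of $\phi$. The trajectory does not ``cross'' $\rho$ transversally --- it terminates there --- and without a further argument the last $O_n(\hat\eps n)$ peelable vertices could in principle trigger a cascade removing a positive fraction of the putative core. This is precisely where $\phi'(\rho)<0$ has to be converted into a negative-drift (supermartingale) statement for the peelable mass, which is the paper's SLOWSTRIP step; your appeal to ``transversality'' does not by itself supply it, so make this drift argument explicit. Second, the truncation coupling: the claim that truncating $\vd_i,\vk_j$ at $n^{1/r-\eps}$ changes the 2-core by only $o(n)$ vertices is not a routine coupling fact, again because deleting $o(n)$ clones can cascade; justifying it needs the same stability input, or you can avoid truncation altogether as the paper does by feeding a \whp\ maximum-degree bound into Azuma's inequality. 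With these two points filled in, your approach goes through and yields \eqref{eqthm:2core}.
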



\fixx{
	
	\begin{remark} \label{remark:specialCases}
		\begin{enumerate}
			\item[(a)] If \ $\pr(\bfk=1)=0$ then $1-D\left(1-\frac{K'(0)}{k}\right)-\frac{K'(0)}{k} D'\left(1-\frac{K'(0)}{k}\right)$  evaluates to zero, and \whp\ $dK(0)/k$ is the number of check nodes with degree zero in $\G$ divided by $n$, up to an $o(1)$ error.
			\item[(b)] If $\bfd\le 1$ then we observe that $\phi(\alpha)=-\alpha$ and thus $\rho=0$. In this case, $1-D\left(1-\frac{K'(0)}{k}\right)-\frac{K'(0)}{k} D'\left(1-\frac{K'(0)}{k}\right)$ evaluates to zero. This agrees with the trivial fact that $\bfn^*=0$, and $\frac{\bfm^*}{n}\to dK(0)/k$ \whp\ in this case.
			\item[(c)] If \ $\pr(\bfk=1)>0$ and $\pr(\bfd\ge 2)>0$ then $\phi(0)>0$, which implies that $\rho>0$. Thus the right hand sides of~\eqref{eqthm:2core} are both positive.
		\end{enumerate}
	\end{remark}
	
	\Thm~\ref{thm:2core} yields an elementary upper bound on the rank of $\A$, as follows, which we refer to as the {\em 2-core bound}:
	\begin{align}\label{eq2corebound}
		\rank(\A)/n\leq 1-\max\{\Phi(0),\Phi(\rho)\}+o(1)\qquad\mbox\whp
	\end{align}
	To see that $\rank(\A)/n\leq 1-\Phi(0)+o(1)$ \whp,
	let $\A'$ be the matrix comprising the rows of $\A$ that contain at most one non-zero entry and let $\vm'$ be the number of such rows.
	Then $\rank(\A)\le \vm-\vm'+\rank(\A')$.
	Moreover, routine arguments reveal that\ $(\vm-\vm')/n \sim 
	d(1-K(0)-K'(0))/k$  and $\rank(\A')/n\sim 1-D(1-K'(0)/k)$ \whp\ (see Appendix~\ref{apx_maurice}  for a proof), deducing the desired upper bound for $\rank(\A)$.
	
	The other upper bound in~\eqn{eq2corebound} can be deduced by considering the 2-core and lower bounding the nullity.
	Counting only solutions to $\A x=0$ where $x_i=0$ for all variables that belong to the 2-core $\G_*$, we obtain
	$\nul(\A)\geq n-\vn^*-(\vm-\vm^*)$. 
	Invoking \Thm~\ref{thm:2core}, we thus find that as $n\to\infty$,
	\[
	\frac{\rank(\A)}{n}\le 1- D\left(1-\frac{K'(\rho)}{k}\right) +\frac{d}{k}(1-K(\rho)) - \frac{K'(\rho)}{k} D'\left(1-\frac{K'(\rho)}{k}\right).
	\]
	Now $\phi(\rho)=0$ implies $D'\left(1-\frac{K'(\rho)}{k}\right)=d(1-\rho)$. Substituting this into the inequality above yields
	\begin{align}\label{eq2corebound_1}
		\rank(\A)/n&\leq 1-\Phi(\rho)+o(1)\qquad\mbox\whp
	\end{align}
 }
The following theorem shows that the 2-core bound is tight in several cases of interest.

\begin{theorem}\label{Thm_tight}
	Assume that
	\begin{enumerate}[(i)]
		\item either $\Var(\vd)=0$ or $\vd\disteq\Po_{\geq\ell}(\lambda)$ for an integer $\ell\geq0$ and $\lambda>0$, and
		\item either $\Var(\vk)=0$ or $\vk\disteq\Po_{\geq\ell'}(\lambda')$ for an integer $\ell'\geq0$ and $\lambda'>0$.
	\end{enumerate}
	Then 
	\begin{align*}
		\lim_{n\to\infty}\rank(\A)/n=1-\max\{\Phi(0),\Phi(\rho)\}\qquad\mbox{in probability}.
	\end{align*}
\end{theorem}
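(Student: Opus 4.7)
The plan is to deduce Theorem~\ref{Thm_tight} by combining the rank formula of Theorem~\ref{thm:rank} with the deterministic $2$-core upper bound \eqref{eq2corebound}. Theorem~\ref{thm:rank} yields $\rank(\A)/n\to 1-\max_{\alpha\in[0,1]}\Phi(\alpha)$ in probability, while \eqref{eq2corebound} gives $\rank(\A)/n\le 1-\max\{\Phi(0),\Phi(\rho)\}+o(1)$ \whp\ Since $\max\{\Phi(0),\Phi(\rho)\}\le\max_{\alpha\in[0,1]}\Phi(\alpha)$ trivially, Theorem~\ref{Thm_tight} reduces to the purely analytic statement
\begin{equation*}
\max_{\alpha\in[0,1]}\Phi(\alpha)=\max\{\Phi(0),\Phi(\rho)\}
\end{equation*}
under hypotheses (i) and (ii). No further probabilistic argument is needed; the remaining task is a calculus problem about the shape of $\Phi$.

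To prove the analytic claim I would study the critical points of $\Phi$ via the zeros of $\phi$. Using $\Phi'(\alpha)=(dK''(\alpha)/k)\phi(\alpha)$ together with $K''\ge 0$, every interior local maximum of $\Phi$ on $(0,1)$ occurs at a zero of $\phi$ where $\phi$ passes from positive to negative. The assumption $\phi'(\rho)<0$ forces precisely such a sign change at $\rho$, making $\rho$ a local maximum of $\Phi$. Moreover, because $\rho$ is the largest zero of $\phi$, one has $\phi<0$ on $(\rho,1]$, whence $\Phi$ is non-increasing on $[\rho,1]$ and in particular $\Phi(1)\le\Phi(\rho)$. It therefore suffices to rule out the existence of any further interior local maximum $\alpha^*\in(0,\rho)$ of $\Phi$ with $\Phi(\alpha^*)>\max\{\Phi(0),\Phi(\rho)\}$.

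Under hypotheses (i) and (ii), $D'/d$ and $K'/k$ have explicit forms: they are monomials $x^{c-1}$ when $\vd=c$ or $\vk=k$ is constant, and normalized incomplete-exponential expressions of the shape $\mathrm{e}^{\lambda(x-1)}$ times a truncated partial sum when $\vd\disteq\Po_{\ge\ell}(\lambda)$ or $\vk\disteq\Po_{\ge\ell'}(\lambda')$. Writing $\phi(\alpha)=(1-\alpha)-g(\alpha)$ with $g(\alpha)=D'(1-K'(\alpha)/k)/d$, a Rolle-type argument bounds the number of sign changes of $\phi$ in $(0,\rho)$ in terms of the zeros of $g'(\alpha)+1$. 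Case by case across the four combinations of (i) and (ii), I would verify that the additional zeros of $\phi$ in $(0,\rho)$ (if any) correspond only to local minima of $\Phi$ rather than to local maxima, so that $\rho$ is the unique interior local maximum of $\Phi$ exceeding $\Phi(0)$. The last boundary value $\Phi(1)=D(0)=\Pr[\vd=0]$ vanishes unless $\vd\disteq\Po(\lambda)$ with truncation threshold $\ell=0$; in that remaining sub-case a direct comparison verifies $\Phi(1)\le\max\{\Phi(0),\Phi(\rho)\}$.

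The hard part will be the mixed case where both $\vd$ and $\vk$ are genuinely truncated Poissons. Here $g$ is a composition of two non-monomial exponential-polynomial maps with awkward normalization factors, and monotonicity or sign-count properties of $g'+1$ are not transparent. I expect to reduce the needed estimate to a log-concavity inequality for ratios of incomplete Poisson tails $\Pr[\Po(\mu)\ge\ell]$; once such an inequality is in place the sign-change count goes through in all cases, and combining with the reduction in the first paragraph completes the proof of Theorem~\ref{Thm_tight}.
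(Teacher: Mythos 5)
Your opening reduction is exactly the paper's: given Theorem~\ref{thm:rank}, the statement of Theorem~\ref{Thm_tight} is equivalent to the purely analytic identity $\max_{\alpha\in[0,1]}\Phi(\alpha)=\max\{\Phi(0),\Phi(\rho)\}$, and the paper then, like you, studies the sign changes of $\phi$ case by case over the four combinations in (i)--(ii). (Incidentally, you do not need -- and should not invoke -- the bound \eqref{eq2corebound} here: its first half rests on Theorem~\ref{thm:2core}, which assumes $\phi'(\rho)<0$, and in any case it only gives the trivial direction.) The first genuine problem is precisely that you treat $\phi'(\rho)<0$ as an assumption. It is a hypothesis of Theorem~\ref{thm:2core}, not of Theorem~\ref{Thm_tight}, and under (i)--(ii) it can fail: the paper shows it fails exactly when $\pr(\vd=1)=0$ and $2(k-1)\pr(\vd=2)>d$ (the exceptional situation \eqref{exception}), in which case $\rho=1$, $\phi'(1)>0$, $\phi<0$ on $(0,1)$, and the maximiser of $\Phi$ is $\alpha=0$; there is no sign change of $\phi$ from $+$ to $-$ at $\rho$ and $\rho$ is not a local maximum. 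So the step ``$\phi'(\rho)<0$ forces a sign change at $\rho$, making $\rho$ a local maximum'' must either be proved under (i)--(ii) or replaced by a separate treatment of this exceptional configuration, as the paper does.

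The second, larger gap is that the case-by-case verification you defer is the entire substance of the proof. The paper shows in each of the four cases (after disposing of the degenerate values $k\le 2$, $d\le 2$) that $\phi''$ has at most one zero in $[0,1]$; together with $\phi(0)=0$, $\phi'(0)=-1$ and $\phi(1)=-D'(0)/d\le0$ this limits $\phi$ to at most three zeros and pins down its sign pattern, so that any intermediate zero is a local minimum of $\Phi$ and the only candidates for the maximum are $0$ and $\rho$, which is claim (a). The ``log-concavity inequality for ratios of incomplete Poisson tails'' that you hope will make the mixed case work is exactly the ingredient used there: with $h_r(x)=\sum_{j\ge r}x^j/j!$ (and $h_r(x)=\eul^x$ for $r<0$), the ratio $h_{r-2}(x)/h_{r-3}(x)=1-\bc{x^{r-3}/(r-3)!}/h_{r-3}(x)$ is nondecreasing (constant for $r\le2$), proved by dividing out the lowest-order term; this makes one side of the equation $\phi''(\alpha)=0$ increasing and the other decreasing in every case, including the doubly truncated one. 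So the key lemma you anticipate is true and is the right one, but your proposal neither proves it nor carries out the sign-change bookkeeping that converts it into (a); as written it is a correct plan pointing at the paper's argument rather than a proof of it.
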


\fixx{
	\begin{remark}\label{remark:condition}
		Under either condition of Theorem~\ref{Thm_tight} (i) or (ii), the condition $\phi'(\rho)<0$ of Theorem~\ref{thm:2core} is satisfied, unless $\pr(\bfd=1)=0$ and $2(k-1)\pr(\bfd=2)\ge d$. We will prove this in the proof of Theorem~\ref{Thm_tight}.
	\end{remark}
}

On the basis of a canny but non-rigorous statistical physics approach called the cavity method several authors predicted  that (over finite fields) the 2-core bound \eqref{eq2corebound} is universally tight for all $\vd,\vk$.
Alamino and Saad reached this conclusion by way of numerical experiments~\cite{AlaminoSaad}, while \Mezard{} and Montanari~\cite{MM} posed a non-rigorous but analytical derivation as an exercise.
However, the prediction turns out to be erroneous.
Indeed, Lelarge~\cite{Lelarge} produced an example of $\vd,\vk$ whose function $\Phi(\alpha)$ attains its unique maximum at a value $0<\alpha<\rho$.
We will see another counterexample momentarily.
On the positive side, \Thm~\ref{Thm_tight} verifies that the 2-core bound actually is tight in all the cases for which Alamino and Saad~\cite{AlaminoSaad} conducted numerical experiments. 

\subsection{Examples}
Let us conclude this section by investigating a few examples of degree distributions $\vd,\vk$ and their resulting rank formulas.

\aco{\begin{example}[the identity matrix]\label{Ex_0}\upshape
	As was brought to our attention by an anonymous reviewer, in the case $\vd=\vk=1$ deterministically the matrix $\A$ is just a permutation matrix, which clearly has full rank.
	Accordingly, we find $D(x)=K(x)=x$ and $\Phi(x)=0$.
	Hence, \eqref{eqthm:rank} boils down to the trivial fact $\rk\A\sim n$.
\end{example}
}

\begin{example}[the adjacency matrix of random bipartite graphs]\label{Ex_1}\upshape
Let $\GG=\GG(n,n,p)$ be a random bipartite graph on vertices $v_1,\ldots,v_n,v_1',\ldots,v_n'$ such that for any $i,j\in[n]$ the edge $\{v_i,v_j'\}$ is present with probability $p$ independently.
With $p=\Delta/n$ for a fixed $\Delta>0$ for large $n$ the vertex degrees asymptotically have distribution $\Po(\Delta)$.
Indeed, with the choice $\vd\disteq\Po(\Delta)$ and $\vk\disteq\Po(\Delta)$ the adjacency matrix $A(\GG(n,n,p))$ and the random matrix $\vA$ can be coupled such that
$\rk{A(\GG(n,n,p))}=\rk\bc{\vA}+o(n)$ \whp\ 
Hence, \Thm~\ref{thm:rank} shows that over any field $\FF$,
\begin{align*}
\lim_{n\to\infty}	\frac{\rk(A(\GG(n,n,p)))}{n}&=2-
\max\cbc{
	\exp(-\Delta\exp(\Delta(\alpha-1)))+(1+(1-\alpha)\Delta)\exp(\Delta(\alpha-1)):{\alpha\in[0,1]}}
\end{align*}
in probability.
\Thm~\ref{Thm_tight} implies that the 2-core bound is tight in this example.
\end{example}

\begin{example}[fixed row sums]\label{Ex_XOR}\upshape
Motivated by the minimum spanning tree problem on weighted random graphs, Cooper, Frieze and Pegden~\cite{CFP} studied the rank of the random matrix with degree distributions $\vk=k\geq3$ fixed and $\vd\disteq\Po(d)$ over the field $\FF_2$.
The same rank formula was obtained independently in~\cite{Ayre} for arbitrary finite fields.
Extending both these results, \Thm~\ref{thm:rank} shows that the rank of the random matrix with these degrees over any field $\FF$ with any choice $\chi$ of non-zero entries is given by
\begin{align*}
\lim_{n\to\infty}	\frac{\rk\vA}{n}&=1-\max\cbc{\exp(-d\alpha^{k-1})-\frac{d}{k}\bc{1-k\alpha^{k-1}+(k-1)\alpha^k}:{\alpha\in[0,1]}}.
\end{align*}
Once more \Thm~\ref{Thm_tight} shows that the 2-core bound is tight.
\end{example}

\begin{example}[non-exact 2-core bound]\label{Ex_3}\upshape
There are plenty of choices of $\vd,\vk$ where the 2-core bound fails to be tight. Degree distributions that render graphs $\G$ with an unstable 2-core furnish particularly egregious offenders.
In such graphs the removal of a small number of randomly chosen checks $a_i$ likely causes the 2-core to collapse.
Analytically, the instability manifests itself in $\rho$ from \eqref{def:rho} being a local minimum of $\Phi(x)$.
\aco{For instance, letting $\vd,\vk$ be the distributions with $D(x)=(22x^2+3x^{11})/25$ and $K(x)=x^3$, we obtain
	\begin{align}\nonumber
		\Phi(x)=&-\frac{3}{25} \, x^{22} + \frac{33}{25} \, x^{20} - \frac{33}{5} \, x^{18} + \frac{99}{5} \, x^{16} - \frac{198}{5} \, x^{14} + \frac{1386}{25} \, x^{12} - \frac{1386}{25} \, x^{10} + \frac{198}{5} \, x^{8}\\& - \frac{99}{5} \, x^{6} + \frac{187}{25} \, x^{4} - \frac{154}{75} \, x^{3} - \frac{2}{75}.
			\label{eqEx16Phi}
\end{align}}
Hence, $\rho=1$ and $\Phi''(1)>0$, while the global maximum is attained at $\alpha\approx0.75$.
\end{example}

\begin{figure}
\includegraphics[height=40mm]{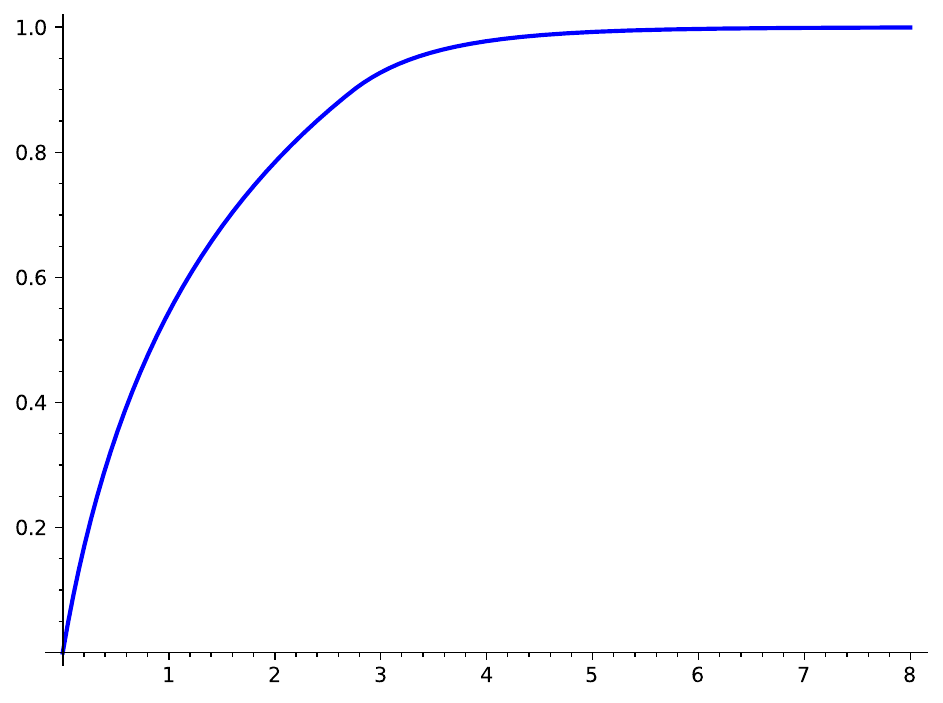}
\includegraphics[height=40mm]{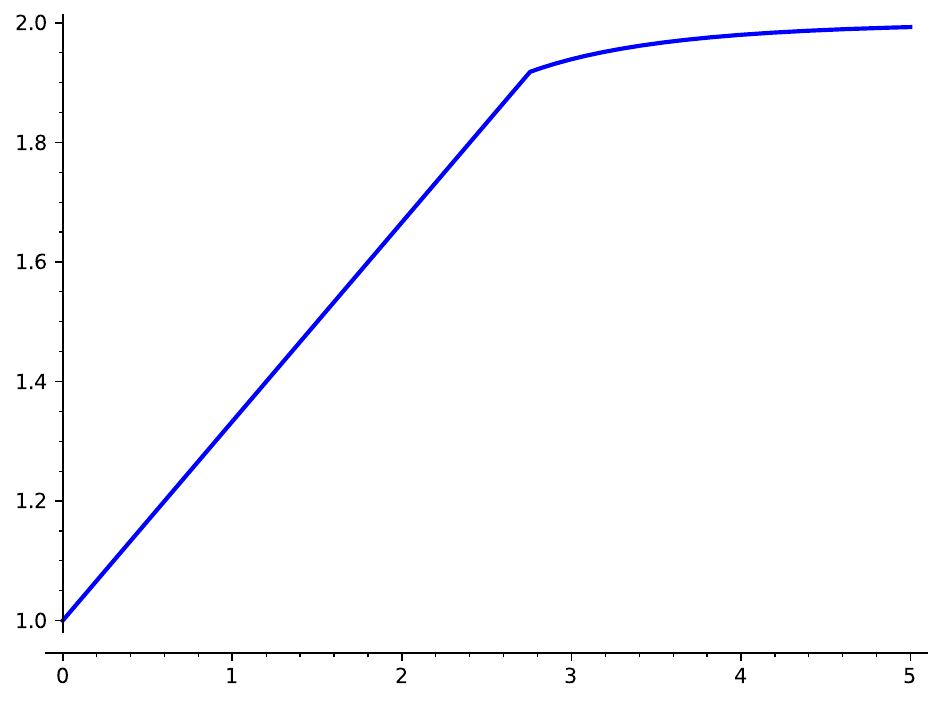}
\includegraphics[height=40mm]{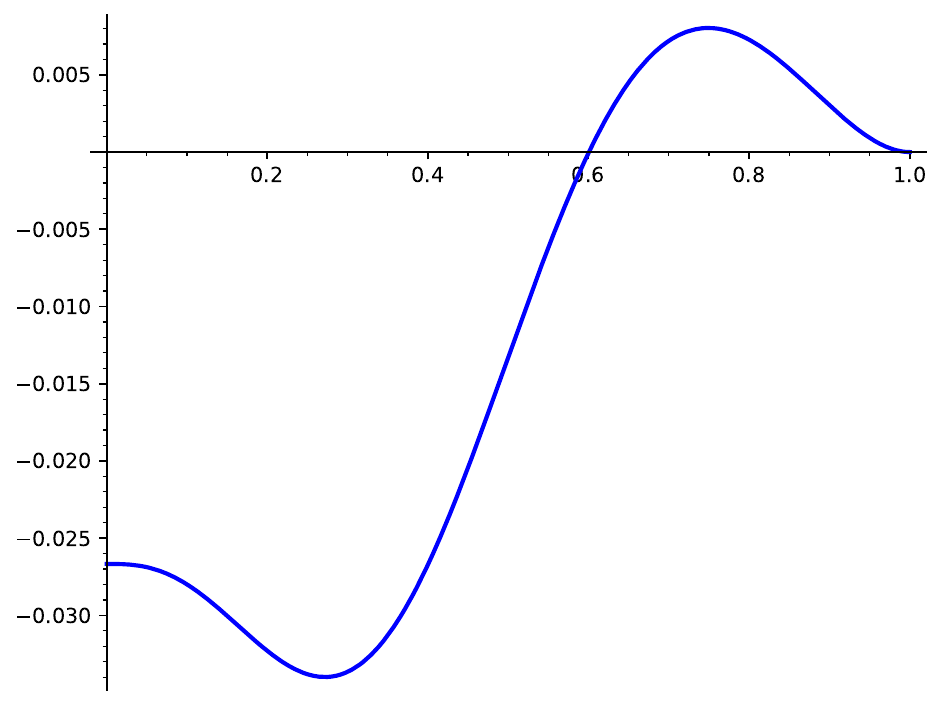}
\caption{Left: the function $\Delta\mapsto2-\max_{\alpha\in[0,1]}
	\exp(-\Delta\exp(\Delta(\alpha-1)))+(1+(1-\alpha)\Delta)\exp(\Delta(\alpha-1))$ for Example~\ref{Ex_1}.
	Middle: the function $d\mapsto1-\max_{\alpha\in[0,1]}\exp(-d\alpha^{k-1})-d(1-k\alpha^{k-1}+(k-1)\alpha^k)/k$ from Example~\ref{Ex_XOR} with $k=3$.
Right: the function $\Phi(x)$ from \eqref{eqEx16Phi} for Example~\ref{Ex_3}.}
\end{figure}

\subsection{Preliminaries}\label{Sec_pre}
Throughout the paper we consistently keep the assumptions on the distributions $\vd,\vk$ listed in \Sec~\ref{Sec_intro}.
In particular, $\Erw[\vd^r]+\Erw[\vk^r]<\infty$ for some real $r>2$.
Because all-zero rows and columns do not add to the rank, we may assume that $\vd\geq1,\vk\geq1$.
We write $\gcd(\vk)$ and $\gcd(\vd)$ for the greatest common divisor of the support of $\vd$ and $\vk$, respectively.
When working with $\A$ we tacitly assume that $\gcd(\vk)$ divides $n$.
In order to highlight the number of columns we write $\vA_n=\vA$ and $\G_n=\G$ for the corresponding Tanner graph.
The following proposition,  whose proof can be found in \Sec~\ref{Sec_welldef}, shows that $\A_n$ is well-defined.

\begin{proposition}\label{Lemma_welldef}
With probability $\Omega(n^{-1/2})$ over the choice of $\vm$, $(\vd_i)_{i\geq1}$, $(\vk_i)_{i\geq1}$ the condition \eqref{eqWellDef1} is satisfied and there exists a simple Tanner graph $\G$ with variable degrees $\vd_1,\ldots,\vd_n$ and check degrees $\vk_1,\ldots,\vk_{\vm}$.
\end{proposition}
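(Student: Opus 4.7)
The plan is to decompose the event into two sub-events and estimate each separately. Set $S_v := \sum_{i=1}^n \vd_i$ and $S_c := \sum_{i=1}^{\vm} \vk_i$, so that \eqref{eqWellDef1} reads $S_v = S_c$. I first establish that $\pr[S_v = S_c] = \Omega(n^{-1/2})$ via a local central limit theorem, and then show that, conditional on the degree-sum identity and the specific degree sequences, a simple bipartite Tanner graph realising those degrees exists with probability $\Omega(1)$.  Combining the two bounds gives the proposition.

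For the first estimate, the assumption $\Erw[\vd^r] + \Erw[\vk^r] < \infty$ for some $r > 2$ ensures that both $\vd$ and $\vk$ have finite variance. Thus $S_v$ has mean $dn$ and variance $n\Var(\vd) = \Theta(n)$, while $\vm \sim \Po(dn/k)$ has mean and variance $dn/k$, so by the law of total variance $S_c$ has mean $dn$ and variance of order $n$. The integer-valued difference $Z := S_v - S_c$ is then zero-mean with standard deviation $\Theta(\sqrt n)$, and by independence of $S_v$ from $(\vm,(\vk_i))$ it splits as a sum of two independent parts to which a Gnedenko-style local CLT applies on the common support lattice. This yields $\pr[Z = z] = \Theta(n^{-1/2})$ uniformly for $z$ in the support lattice with $|z| = O(\sqrt n)$. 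The hypothesis that $\gcd(\vk)$ divides $n$ (together with $\vd,\vk \ge 1$) guarantees that $0$ is attained by this lattice, and hence $\pr[Z = 0] = \Omega(n^{-1/2})$.

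For the second estimate, condition on $\vm$ and on degree sequences $(\vd_i)_{i \le n}$, $(\vk_j)_{j \le \vm}$ satisfying \eqref{eqWellDef1}, and realise the random bipartite graph via the configuration model: attach $\vd_j$ resp.\ $\vk_i$ half-edges to each variable resp.\ check node and pair them uniformly at random. The moment bound $\Erw[\vd^r]+\Erw[\vk^r]<\infty$ with $r>2$ implies via standard concentration that \whp\ $\sum_j \vd_j^2 = O(n)$ and $\sum_i \vk_i^2 = O(n)$, and that the maximum degree is $o(\sqrt n)$.  Under these conditions a classical \Bollobas-style Poisson approximation shows that the number of parallel edges in the configuration multigraph converges in distribution to a Poisson random variable with bounded mean; hence the multigraph is simple with probability $\Omega(1)$, which in particular forces the existence of a simple bipartite graph with the prescribed degrees.

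The chief technical obstacle lies in Step~1: because $S_c$ is a random sum of random variables, the exact support lattice of $Z$ is a non-trivial function of $\gcd(\vd)$, $\gcd(\vk)$, and the Poisson weights of $\vm$, and one must verify that $0$ is attainable rather than a near-miss offset from the lattice by a constant. The divisibility hypothesis on $n$ earns its keep at precisely this point. With the lattice structure correctly identified, the local CLT and the configuration-model calculations are both routine.
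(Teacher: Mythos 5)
Your proposal follows essentially the same route as the paper: a local limit theorem gives $\pr[\sum_i\vd_i=\sum_i\vk_i]=\Theta(n^{-1/2})$, and a factorial-moment/Poisson-approximation argument in the configuration model shows the multigraph is simple with probability $\Omega(1)$ conditional on that event and on a well-behaved degree sequence. The paper merely carries out explicitly the case analysis (e.g.\ $\Var(\vd)=0$ or $\Var(\vk)=0$, truncation of large check degrees to pin down the lattice) that your appeal to a `Gnedenko-style local CLT on the common lattice' compresses, and your slip that $n\Var(\vd)=\Theta(n)$ is harmless since the Poisson fluctuations of $\vm$ alone provide the required $\Theta(\sqrt n)$ spread.
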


We introduce the size-biased random variables
\begin{align}\label{eqSizeBiasd}
\pr\brk{\hat\vd=\ell}&=\ell\pr\brk{\vd=\ell}/{d},&
\pr\brk{\hat\vk=\ell}&=\ell\pr\brk{\vk=\ell}/{k}&(\ell\geq0).
\end{align}
Throughout the paper we let $(\vk_i,\vd_i,\hat\vk_i,\hat\vd_i)_{i\geq1}$ denote mutually independent copies of $\vk,\vd,\hat\vk,\hat\vd$.
Unless specified otherwise, all these random variables are assumed to be independent of any other sources of randomness.

We use common notation for graphs and multi-graphs.
For instance, for a vertex $v$ of a multi-graph $G$ we denote by $\partial_G v$ the set of neighbours of $v$.
More generally, for an integer $\ell\geq1$ we let $\partial_G^\ell v$ be the set of vertices at distance precisely $\ell$ from $v$.
We omit the reference to $G$ where possible.

The proofs of the main results rely on taking a double limit where we first take the number $n$ of columns to infinity and subsequently send an error parameter $\eps$ to zero.
We use the asymptotic symbols with an index $n$ such as $O_n(\nix)$, $o_n(\nix)$ to refer to the inner limit $n\to\infty$ only.
Thus, for functions $f(\eps,n),g(\eps,n)$ we write
\begin{align*}
f(\eps,n)&=O_n(g(n,\eps))&\mbox{if pointwise for every $\eps>0$, }&&
	\limsup_{n\to\infty}\abs{\frac{f(\eps,n)}{g(\eps,n)}}&<\infty,\\
f(\eps,n)&=o_n(g(n,\eps))&\mbox{if pointwise for every $\eps>0$, }&&
	\limsup_{n\to\infty}\abs{\frac{f(\eps,n)}{g(\eps,n)}}&=0.
\end{align*}
For example, $1/(\eps n)=o_n(1)$.
Additionally, we will use the symbols $O_{\eps,n}$, $o_{\eps,n}$, etc.\ to refer to the double limit $\eps\to 0$ after $n\to\infty$.
Thus, 
\begin{align*}
f(\eps,n)&=O_{\eps,n}(g(\eps,n))&\mbox{if }&&\limsup_{\eps\to0}\limsup_{n\to\infty}\abs{\frac{f(\eps,n)}{g(\eps,n)}}&<\infty,\\
f(\eps,n)&=o_{\eps,n}(g(\eps,n))&\mbox{if }&&\limsup_{\eps\to0}\limsup_{n\to\infty}\abs{\frac{f(\eps,n)}{g(\eps,n)}}&=0.
\end{align*}
For instance, $\eps+1/(\eps n)=o_{\eps,n}(1)$.

Finally, we need the following basic lemma on sums of independent random variables.

\begin{lemma}\label{Lemma_sums}
Let $r>2$, $\delta>0$ and suppose that $(\vec\lambda_i)_{i\geq1}$ are independent copies of a random variable $\vec\lambda\geq0$ with $\Erw[\vec\lambda^r]<\infty$.
Further, let $s=\Theta_n(n)$.
Then
$\pr\brk{\abs{\sum_{i=1}^{s}(\vec\lambda_i-\Erw[\vec\lambda])}>\delta n}=o_n(1/n).$
\end{lemma}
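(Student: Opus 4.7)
The plan is to reduce the claim to a direct application of Markov's inequality applied to the $r$th absolute moment of the centered sum, using a standard moment bound such as Rosenthal's inequality (or equivalently Marcinkiewicz--Zygmund). Set $X_i=\vec\lambda_i-\Erw[\vec\lambda]$; these are i.i.d.\ mean-zero random variables, and since $r>2$ and $\Erw[\vec\lambda^r]<\infty$, we have both $\Erw|X_i|^r<\infty$ and $\Var(\vec\lambda)<\infty$.

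First I would invoke Rosenthal's inequality: for independent centered random variables and any $r\geq 2$ there exists a constant $C_r$ with
\begin{align*}
\Erw\left|\sum_{i=1}^{s}X_i\right|^r \leq C_r\left(s\,\Erw|X_1|^r+\bigl(s\,\Var(\vec\lambda)\bigr)^{r/2}\right).
\end{align*}
Because $s=\Theta_n(n)$ and both $\Erw|X_1|^r$ and $\Var(\vec\lambda)$ are finite absolute constants, and since $r/2>1$ makes the second term dominate the first, this gives $\Erw|\sum_{i=1}^{s}X_i|^r=O_n(n^{r/2})$.

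Second, Markov's inequality applied to the $r$th power yields
\begin{align*}
\pr\left[\,\left|\sum_{i=1}^{s}X_i\right|>\delta n\,\right]\leq\frac{\Erw|\sum_{i=1}^{s}X_i|^r}{(\delta n)^r}=O_n(n^{-r/2}).
\end{align*}
Since $r>2$ we have $r/2>1$, so $n^{-r/2}=o_n(1/n)$, which is exactly the claim.

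There is no real obstacle: the assumption $r>2$ is precisely what is needed to push the Markov exponent past $1$ and convert an $O(1/n)$ Chebyshev-type tail into the required $o_n(1/n)$ tail. If one prefers to avoid quoting Rosenthal directly, the same bound can be obtained by truncating at level $T=n^{(2+\eta)/r}$ (so that $s\,\Erw[\vec\lambda^r]/T^r=o_n(1/n)$ handles the tail event that some $\vec\lambda_i$ exceeds $T$) and then applying Markov with the $r$th moment to the truncated, centered sum, which has $\Erw|\cdot|^r=O(s^{r/2})$ by the same Rosenthal/Marcinkiewicz--Zygmund estimate. Either route completes the proof in a few lines.
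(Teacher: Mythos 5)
Your proof is correct, but it follows a genuinely different route from the paper's. The paper argues by hand: it first restricts to the event $\max_{i\leq s}\vec\lambda_i\leq n/\ln^9 n$ (which has probability $1-o_n(1/n)$ by a union bound and Markov applied to $\vec\lambda^r$), then splits the range of $\vec\lambda$ into a bounded part $[0,L]$ plus geometrically growing buckets $(L(1+\eta)^{h-1},L(1+\eta)^h]$, controls each bucket count by the Chernoff bound, and sums the contributions; note that this bucketing argument is written for integer-valued $\vec\lambda$, which suffices for the paper's applications. You instead center the summands, quote Rosenthal (or Marcinkiewicz--Zygmund plus the power-mean inequality) to get $\Erw\bigl|\sum_{i=1}^s(\vec\lambda_i-\Erw[\vec\lambda])\bigr|^r=O_n(n^{r/2})$, and finish with Markov at exponent $r$, yielding a tail of order $n^{-r/2}=o_n(1/n)$ precisely because $r>2$. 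Your route is shorter, needs no integrality or bucketing, and makes transparent where the hypothesis $r>2$ enters; what it costs is an appeal to a (standard but nontrivial) moment inequality, whereas the paper's argument is self-contained, using only Chernoff bounds and a union bound. Your sketched truncation alternative is also fine, with the small caveat (worth a sentence if you write it out) that truncation at $T=n^{(2+\eta)/r}$ shifts the mean by $s\,\Erw[\vec\lambda\vecone\{\vec\lambda>T\}]\leq s\,\Erw[\vec\lambda^r]/T^{r-1}=o_n(n)$, so the centering error is harmless.
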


\noindent
For the sake of completeness the proof of \Lem~\ref{Lemma_sums} is included in the appendix.

\section{Overview}\label{Sec_overview}

\noindent
We survey the proof of \Thm~\ref{thm:rank} and  subsequently compare these techniques with those employed in prior work.
The main contribution of the paper is the `$\geq$'-part of \eqref{eqthm:rank}, i.e., the lower bound on the rank.
We prove this lower bound via a technique inspired by the physicists' cavity method.
The scaffolding of the proof is provided by a coupling argument reminiscent of a proof strategy known in  mathematical physics jargon under the name `Aizenman-Sims-Starr scheme'~\cite{Aizenman} or `cavity ansatz'~\cite{MM}:
\begin{quote}
To calculate the mean of a random variable $X_n$ on a random system of size $n$ in the limit $n\to\infty$, calculate the difference $\Erw[X_{n+1}]-\Erw[X_n]$ upon going to a system of size $n+1$.
Perform this calculation by coupling the systems of sizes $n$ and $n+1$ such that the latter results from the former by adding only a bounded number of elements.
\end{quote}
We will apply this approach to $X_n=\nul\vA_n$.
The coupling will be such that $X_{n+1}$ is the nullity of a random matrix obtained from $\vA_n$ obtained by adding a few rows and columns.
Thus, we need to calculate the ensuing change in nullity upon adding to a matrix several rows/columns whose number is random and bounded in expectation.

In general, such a calculation hardly seems possible.
To carry it out we would need to understand the linear dependencies among the coordinates where the new rows sport non-zero entries, an exceedingly complicated task.
Two facts deliver us from this complexity.
First, the positions of the non-zero entries of the new rows are (somewhat) random.
Second, we develop a random perturbation, applicable to any matrix, that diminishes the number of short linear relations (\Prop~\ref{Prop_Alp} below).
\aco{To be precise, we will conclude that by applying the perturbation, for any fixed $\ell$ the probability that a set of $\ell$ coordinates forms a proper relation in the sense of Definition~\ref{Def_Alp} below can be made negligibly small without substantially altering the nullity.}
In effect, the probability that there will be linear dependencies among the positions of the non-zero entries of the new rows will turn out to be negligible.
Since this perturbation argument is the linchpin of the entire proof, this is what we shall begin with.
Subsequently we will explain how this general perturbation renders the desired lower bound on the rank.

\subsection{Short linear relations}\label{Sec_stoch}
Define the {\em support} of a vector $\xi\in\FF^U$ as $\supp(\xi)=\cbc{i\in U:\xi_i\neq0}$.

\begin{definition}\label{Def_Alp}
Let $A$ be an $m\times n$-matrix over a field $\FF$.
\begin{itemize}
\item A set $\emptyset\neq I\subset[n]$ is a \bemph{relation} of $A$ if there exists a row vector 
$y\in\FF^{1\times m}$ such that $\emptyset\neq\supp(y A)\subset I$.
\item If $I=\cbc{i}$ is a relation of $A$, then we call $i$ \bemph{frozen} in $A$. Let $\fF(A)$ be the set of all frozen $i\in[n]$.
\item A set $I\subset[n]$ is a \bemph{proper relation} of $A$ if $I\setminus\fF(A)$ is a relation of $A$.
\item For $\delta>0$, $\ell\geq1$ we say that $A$ is \bemph{$(\delta,\ell)$-free} if there are no more than $\delta n^\ell$ proper relations $I\subset[n]$ of size $|I|=\ell$.
\end{itemize}
\end{definition}

Thus, if $I\subset[n]$ is a relation of $A$, then by adding up suitable multiples of the rows of the homogeneous linear system $Ax=0$ we can infer a non-trivial linear relation involving the variables $(x_i)_{i\in I}$ only.
In the simplest case the set $I=\{i\}$ may be a singleton.
Then the equation $x_i=0$ is implicit in $Ax=0$ and we call coordinate $i$ frozen.
\aco{In particular, $i$ is frozen if $A$ contains a row whose only non-zero entry appears in column $i$.
	However, this is not the only possibility.
	For instance, in the following $\FF_2$-matrix variable $x_1$ is frozen because the sum of all three rows equals $(1\ 0\ 0)$:
	\begin{align}\label{eqKernelEx}
	\begin{pmatrix}1&0&1&1\\1&1&0&1\\1&1&1&0\end{pmatrix}.
\end{align}
In effect, for any vector $\xi$ in the kernel of \eqref{eqKernelEx} we have 
\begin{align}\label{eqfact_frozen1}
0=(1\ 1\ 1)\begin{pmatrix}0\\0\\0\end{pmatrix}=(1\ 1\ 1)\brk{\begin{pmatrix}1&0&1&1\\1&1&0&1\\1&1&1&0\end{pmatrix}\xi}=\brk{(1\ 1\ 1)\begin{pmatrix}1&0&1&1\\1&1&0&1\\1&1&1&0\end{pmatrix}}\xi=(1\ 0\ 0)\xi=\xi_1.
\end{align}
Generally, a linear number $\Omega(n)$ of rows may have to collude to cause freezing.
Moreover, although the proof is just a bit of routine linear algebra, it is worthwhile including the following characterisation of frozen coordinates.}

\begin{fact}\label{fact_frozen}
	\aco{A coordinate $i$ is frozen in the matrix $A$ iff $\xi_i=0$ for all $\xi\in\ker A$.}
\end{fact}
\begin{proof}
\aco{	Let $A$ be an $m\times n$ matrix over an arbitrary field.
The calculation from \eqref{eqfact_frozen1} readily generalises to arbitrary matrices and implies that $\xi_i=0$ for any frozen coordinate $i\in[n]$ and any $\xi\in\ker A$.}

\aco{	Conversely, assume that for coordinate $i\in[n]$ we have $\xi_i=0$ for all $\xi\in\ker A$.
	Let $e^{(i)}\in\FF^{1\times n}$ be the vector whose $i$-th coordinate equals one and whose other coordinates are equal to zero.
	Moreover, obtain $A^+$ from $A$ by adding $e^{(i)}$ as an extra row.
	Because $\xi_i=0$ for all $\xi\in\ker A$ we have $\ker A^+=\ker A$.
	Therefore, $\rank A=\rank A^+$ and thus $e^{(i)}$ is a linear combination of the rows of $A$.
Hence, $i\in\fF(A)$.}
\end{proof}

Furthermore, excluding frozen coordinates, a proper relation $I$ of $A$ renders a non-trivial linear relation amongst at least two of the variables $(x_i)_{i\in I}$.
Finally, $A$ is $(\delta,\ell)$-free if only few $\ell$-subsets $I\subset[n]$ are proper relations.

We proceed to put forward a small random perturbation that will mostly rid a given matrix of short proper relations, an observation that we expect to be of independent interest.

\begin{definition}\label{Def_pin}
Let $A$ be an $m\times n$ matrix and let $\theta\geq0$ be an integer.
Let $\vi_1,\vi_2,\ldots,\vi_\theta\in[n]$ be uniformly random and mutually independent column indices.
Then the matrix $A[\theta]$ is obtained by adding $\theta$ new rows to $A$ such that for each $j\in[\theta]$ the $j$-th new row has precisely one non-zero entry, namely a one in the $\vi_j$-th column.
\end{definition}

\noindent
In other words, in $A[\theta]$ we expressly peg $\theta$ randomly chosen variables $x_{\vi_1},\ldots,x_{\vi_\theta}$ of the linear system $Ax=0$ to zero.
The proof of the following proposition is based on a blend of algebraic and probabilistic ideas. 

\begin{proposition}\label{Prop_Alp}
For any $\delta>0$, $\ell>0$ there exists $\cT=\cT(\delta,\ell)>0$ such that for any matrix $A$ over any field $\FF$ the following is true.
With $\THETA\in[\cT]$ chosen uniformly at random we have
\begin{align}\label{eqAlp}
\pr\brk{\mbox{$A[\THETA]$ is $(\delta,\ell)$-free}}>1-\delta.
\end{align}
\end{proposition}  

\noindent
The key feature of \Prop~\ref{Prop_Alp} is that the maximum number $\cT$ of variables that get pegged to zero does not depend on the matrix $A$ or its size but on $\delta$ and $\ell$ only.
Moreover, since adding a single row can change the nullity by at most one, we obtain
$|\nul(A)-\nul A[\THETA]|\leq\cT$.
Hence, while eliminating short proper relations, the perturbation does not shift the nullity significantly.
\Prop~\ref{Prop_Alp} is a sweeping generalisation of a probabilistic result from~\cite{Ayre}, where the perturbation from Definition~\ref{Def_pin} was applied to matrices over finite fields to diminish stochastic dependencies amongst entries of randomly chosen vectors in the kernel.
That argument, in turn, was inspired by ideas from information theory~\cite{CKPZ,Montanari,Raghavendra}.
We will come back to this in \Sec~\ref{Sec_discussion}.

We will incorporate the perturbation from \Prop~\ref{Prop_Alp} into the Aizenman-Sims-Starr coupling argument, which reduces the rank calculation to studying the impact of a few additional rows and columns on the rank.
The following lemma, whose proof consists of a few lines of linear algebra, shows how 
the impact of such operations can be tracked in the absence of proper relations.
Specifically, the lemma shows that all we need to know about the matrix $A$ to which we add rows/columns is the set $\fF(A)$ of frozen variables.

\begin{lemma}\label{Cor_free}
Let $A,B,C$ be matrices of size $m \times n$, $m'\times n$ and $m'\times n'$, respectively, and let $I\subset[n]$ be the set of all indices of non-zero columns of $B$.
Moreover, obtain $B_*$ from $B$ by replacing for each $i\in I\cap\fF(A)$ the $i$-th column of $B$ by zero.
Unless $I$ is a proper relation of $A$ we have
\begin{align}\label{eqLemma_free}
\nul\begin{pmatrix}A&0\\B&C\end{pmatrix}-\nul A=n'-\rk(B_*\ C).
\end{align}
\end{lemma}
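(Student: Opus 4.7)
The plan is to parametrise the kernel of the block matrix $M=\begin{pmatrix}A&0\\B&C\end{pmatrix}$ directly, reducing the entire identity to the single linear-algebraic fact that $B_*(\ker A)$ fills up all of $\Im B_*$ whenever $I$ fails to be a proper relation of $A$.

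First I would observe that $(x,y)\in\FF^{n+n'}$ lies in $\ker M$ iff $x\in\ker A$ and $Cy=-Bx$. For every $x\in\ker A$ all frozen coordinates vanish, so $Bx=B_*x$ (the two matrices differ only on columns indexed by $I\cap\fF(A)$, and $x$ is zero there). Setting $V=\{x\in\ker A:B_*x\in\Im C\}$, the fibre of admissible $y$'s above each $x\in V$ is a coset of $\ker C$ of dimension $n'-\rk C$, and is empty otherwise, which gives
\begin{align*}
\nul M=\dim V+n'-\rk C.
\end{align*}
Next, I would evaluate $\dim V$ via the linear map $\psi:\ker A\to\FF^{m'}/\Im C$ sending $x\mapsto B_*x+\Im C$; its kernel is $V$ and its image is $(B_*(\ker A)+\Im C)/\Im C$. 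Granting momentarily that $B_*(\ker A)=\Im B_*$, this yields $\dim V=\nul A-\rk(B_*\ C)+\rk C$, and substituting back produces exactly \eqref{eqLemma_free}.

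The heart of the argument is therefore to establish $B_*(\ker A)=\Im B_*$ under the hypothesis that $I$ is not a proper relation of $A$. The inclusion $B_*(\ker A)\subseteq\Im B_*$ is automatic, so by dimension counting the identity reduces to $\ker A+\ker B_*=\FF^n$. Taking orthogonal complements, this becomes $\mathrm{rowsp}(A)\cap\mathrm{rowsp}(B_*)=\{0\}$. Suppose for contradiction there were a nonzero $w=yA=vB_*$ in this intersection. Because every column of $B_*$ with index outside $I\setminus\fF(A)$ vanishes, we must have $\supp(w)\subseteq I\setminus\fF(A)$; and since $w=yA\neq0$, this shows that $I\setminus\fF(A)$ is a relation of $A$, making $I$ a proper relation --- contradicting the hypothesis. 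I expect this duality step to be the only real obstacle, with the surrounding manipulations amounting to routine bookkeeping.
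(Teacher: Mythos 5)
Your argument is correct, but it takes a genuinely different route from the paper's. The paper first disposes of the frozen columns on the row side: for each $i\in\fF(A)$ the $i$-th unit row vector lies in the row space of $A$, so elementary row operations replace $B$ by $B_*$ without changing the nullity of the block matrix; it then proves the core identity (\Lem~\ref{Lemma_free}) by adding the linearly independent rows of $B$ one at a time and checking that each strictly increases the rank of the stacked matrix --- the no-relation hypothesis is invoked precisely to forbid a new row from being a combination of earlier $B$-rows and rows of $A$ --- and finally applies this to the pair $(A\ 0)$, $(B\ C)$. You instead stay on the kernel side: you fibre $\ker\begin{pmatrix}A&0\\B&C\end{pmatrix}$ over $V=\{x\in\ker A:B_*x\in\Im C\}$, using that kernel vectors vanish on frozen coordinates so that $Bx=B_*x$ on $\ker A$, compute $\dim V$ via the map $x\mapsto B_*x+\Im C$ into $\FF^{m'}/\Im C$, and reduce everything to $B_*(\ker A)=\Im B_*$, which you dualise to $\mathrm{rowsp}(A)\cap\mathrm{rowsp}(B_*)=\{0\}$ and deduce from the hypothesis by the same contradiction the paper uses (a nonzero element of $\mathrm{rowsp}(A)$ supported in $I\setminus\fF(A)$ would make $I$ a proper relation). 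Your handling of the frozen coordinates is arguably cleaner --- no row reduction is needed, since $x_i=0$ on $\ker A$ for $i\in\fF(A)$ does the work --- at the price of invoking the annihilator duality $(\ker A)^\perp=\mathrm{rowsp}(A)$, $\dim W^\perp=n-\dim W$, which is valid over any field for the standard pairing but is a slightly heavier tool than the paper's bare span manipulations. One cosmetic caution: call $W^\perp$ the annihilator rather than the orthogonal complement, since over finite fields $W\cap W^\perp$ need not be trivial; your argument never uses that, so nothing breaks.
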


\noindent
\aco{Observe that the quantity on the l.h.s.\ of \eqref{eqLemma_free} (and thus the one on the r.h.s.\ as well) may be either positive or negative, depending on $A,B,C$.}

To put \Prop~\ref{Prop_Alp} and \Lem~\ref{Cor_free} to work, we need to explain the construction of the telescoping series of random variables upon which the Aizenman-Sims-Starr argument is based.
That is our next step.

\subsection{The Aizenman-Sims-Starr scheme}\label{Sec_outline1}
In order to derive the desired lower bound on the rank we need to bound the nullity of $\vA_n$ from above.
\aco{In line with the Aizenman-Sims-Starr scheme~\cite{Aizenman,Panchenko}, a first stab at this problem might be to write a telescoping sum
\begin{align}\label{eqtelescope1}
	\limsup_{n\to\infty}\frac1n\Erw[\nul(\vA_n)]&=\limsup_{\nup\to\infty}\frac1{\nup}\sum_{n=1}^{\nup-1}\Erw[\nul(\vA_{n+1})]-\Erw[\nul(\vA_{n})].
\end{align}
Providing that $\Erw[\nul(\vA_{n+1})]-\Erw[\nul(\vA_{n})]$ is bounded, the lim sup of the sequence of summands exists.
In this case, due to the normalising factor $1/N$ on the r.h.s.\ of \eqref{eqtelescope1}, we obtain
\begin{align}\label{eqtelescope2}
	\limsup_{\nup\to\infty}\frac1{\nup}\sum_{n=1}^{\nup-1}\Erw[\nul(\vA_{n+1})]-\Erw[\nul(\vA_{n})]&\leq\limsup_{n\to\infty}\Erw[\nul(\vA_{n+1})]-\Erw[\nul(\vA_{n})].
\end{align}
Hence, combinig \eqref{eqtelescope1} and \eqref{eqtelescope2}, we obtain the bound
\begin{align*}
	\limsup_{n\to\infty}\frac1n\Erw[\nul(\vA_n)]\leq\limsup_{n\to\infty}\Erw[\nul(\vA_{n+1})]-\Erw[\nul(\vA_{n})].
\end{align*}}
To obtain an explicit estimate, we should thus attempt to couple $\vA_{n+1}$ and $\vA_{n}$ so that we can write a single expectation
\begin{align}\label{eqASS1}
\Erw[\nul(\vA_{n+1})]-\Erw[\nul(\vA_{n})]=\Erw\brk{\nul(\vA_{n+1})-\nul(\vA_{n})}.
\end{align}
Ideally, to bring the tools from \Sec~\ref{Sec_stoch} to bear, under this coupling $\vA_{n+1}$ should be obtained from $\vA_n$ by adding one column and a few rows.

Unfortunately, this direct approach flounders for obvious reasons.
For instance, depending on the distributions $\vd,\vk$, due to divisibility issues $\A_{n+1}$ may not even be defined for all $n$.%
	\footnote{For instance, suppose that $\vd=3$ and $\vk=4$ deterministically.
		Then \eqref{eqWellDef1} boils down to $4\vm=3n$, and thus $\vA_n$ is well-defined only if $n$ is divisible by four.}
To deal with this issue we introduce a more malleable version of the random matrix model, without significantly altering the rank.
Specifically, we introduce a parameter $\eps>0$, for which we choose a large enough $\cT=\cT(\eps)>0$.
Then for integers $n\geq\cT$ we construct a random matrix  $\vA_{\eps,n}$ as follows.
Like in \Sec~\ref{Sec_rankFormula} let $\chi:[0,1]^2\to\FF^*$ be a measurable map and let $(\row_i,\col_i)_{i\geq1}$ be uniformly distributed $[0,1]$-valued random variables.
Further, let
 \begin{align*}
  \vm_{\eps,n}&\disteq\Po((1-\eps)dn/k)
 \end{align*}
Additionally, choose $\THETA\in[\cT]$ uniformly at random and, as before, let $(\vd_i)_{i\geq1}$, $(\vk_i)_{i\geq1}$ be copies of $\vd$, $\vk$.
All of these random variables are mutually independent. 
Further, let $\vec\Gamma_{\eps,n}$ be a uniformly random maximal matching of the complete bipartite graph with vertex classes
\begin{align*}
\bigcup_{i=1}^{\vm_{\eps,n}}\cbc{a_i}\times[\vk_i]\qquad\mbox{and}\qquad\bigcup_{j=1}^{n}\cbc{x_j}\times[\vd_j].
\end{align*}
As in the well known configuration model of random graphs, 
we think of $\cbc{a_i}\times[\vk_i]$ as a set of clones of $a_i$ and of $\{x_j\}\times[\vd_j]$ as a set of clones of $x_j$.
We obtain a random Tanner graph $\G_{\eps,n}$ with variable nodes $x_1,\ldots,x_n$ and check nodes $a_1,\ldots,a_{\vm_{\eps,n}},p_1,\ldots,p_{\THETA}$ 
by inserting an edge between $a_i$ and $x_j$ for each matching edge that joins the sets $\cbc{a_i}\times[\vk_i]$ and $\{x_j\}\times[\vd_j]$.
Additionally, check node $p_i$ is adjacent to $x_i$ for each $i\in[\THETA]$.
\aco{To be clear, we do not need to set aside any unmatched variable clones as partners of the $p_i$. We simply add the $x_i$-$p_i$-edges on top of the configuration model.
Since ultimately $\cT$ will be chosen to be of order $o(n)$, the number of these additional edges is relatively small.}

Since there may be several edges joining clones of the same variable and check node, $\G_{\eps,n}$ may be a multigraph.
Finally, we construct a random matrix $\vA_{\eps,n}$ whose rows are indexed by the check nodes $a_1,\ldots,a_{\vm_{\eps,n}}$ and whose columns are indexed by $x_1,\ldots,x_n$ such that the non-zero entries of $\vA_{\eps,n}$ represent the edges of the matching $\vec\Gamma_{\eps,n}$.
Specifically, the matrix entries read
\begin{align*}
(\vA_{\eps,n})_{p_i,x_j}&=\vecone\cbc{i=j}&&(i\in[\THETA],j\in[n]),\\
(\vA_{\eps,n})_{a_i,x_j}&=\chi_{\row_i,\col_j}\sum_{s=1}^{k_i}\sum_{t=1}^{\vd_j}\vecone\cbc{\{(a_i,s),(x_j,t)\}\in\vec\Gamma_{\eps,n}}&&(i\in[\vm_{\eps,n}],j\in[n]).
\end{align*}

Morally, $\vA_{\eps,n}$ mimics the matrix obtained from the original model $\vA_n$ by deleting every row with probability $\eps$ independently (which, of course, would be unworkable because still the model is not generally defined for all $n$).
Furthermore, the purpose of the check nodes $p_1,\ldots,p_{\THETA}$ is to ensure that $\vA_{\eps,n}$ is $(\delta,\ell)$-free for a small enough $\delta=\delta(\eps)$ and a large enough $\ell=\ell(\eps)$.
Indeed, while \Prop~\ref{Prop_Alp} requires that a {\em random} set of $\THETA$ variables be pegged, the checks $p_1,\ldots,p_{\THETA}$ just freeze the first $\THETA$ variables.
But since the distribution of the Tanner graph $\G_{\eps,n}-\{p_1,\ldots,p_{\THETA}\}$ is invariant under permutations of the variable nodes,  both constructions are equivalent.
The following proposition shows that going to $\vA_{\eps,n}$ does not shift the rank significantly.

\begin{proposition}\label{Cor_lower}
For any any $0<C<C'$  and any function $\cT=\cT(\eps)\geq0$ the following is true.
If 
\begin{align}\label{eqCor_lower_ub}
\limsup_{\eps\to0}\limsup_{n\to\infty}\frac1n\Erw[\nul(\vA_{\eps,n})]&\leq C
\qquad\mbox{then}\qquad\lim_{n\to\infty}\pr\brk{\nul(\vA_{n})\leq C'n}=1.
\end{align}
Analogously, if
\begin{align*}
\liminf_{\eps\to0}\liminf_{n\to\infty}\frac1n\Erw[\nul(\vA_{\eps,n})]\geq C'
\qquad\mbox{then}\qquad
\lim_{n\to\infty}\pr\brk{\nul(\vA_{n})\geq Cn}=1.
\end{align*}
\end{proposition}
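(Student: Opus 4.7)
The plan is to couple $\vA_n$ and $\vA_{\eps,n}$ on a common probability space so that the expected nullity difference is at most $\eps dn/k+o_n(n)$, and then use Azuma--Hoeffding to convert the resulting expectation bound into a high-probability statement. I describe the upper-bound implication; the lower-bound direction is perfectly symmetric.

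For the coupling I would couple $\vm\disteq\Po(dn/k)$ and $\vm_{\eps,n}\disteq\Po((1-\eps)dn/k)$ by independent $\eps$-thinning of the checks, while sharing the degree sequences $(\vd_i)_{i\ge1},(\vk_i)_{i\ge1}$ and the entry-function $\chi$. Starting from $\vA_{\eps,n}$ I would transform it into $\vA_n$ by (a) deleting the $\THETA\le\cT(\eps)$ pegging rows, (b) adjoining the $\vm-\vm_{\eps,n}\disteq\Bin(\vm,\eps)$ extra configuration-model check rows with degrees $\vk_{\vm_{\eps,n}+1},\ldots,\vk_{\vm}$, (c) resolving the parallel edges to pass from the multigraph to a uniform simple bipartite graph, and (d) conditioning on the divisibility event \eqref{eqWellDef1}. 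Operation (a) perturbs the nullity by at most $\cT(\eps)=O_\eps(1)$; (b) by at most $\vm-\vm_{\eps,n}$, since adjoining one row changes the rank by at most one; and (c) by at most the number of multi-edges, which is $O(1)$ in expectation under the second-moment assumption on $\vd,\vk$. For step (d) I would argue via contiguity: \Prop~\ref{Lemma_welldef} provides $\pr[\eqref{eqWellDef1}]=\Omega(n^{-1/2})$, so any event of probability $o(n^{-1/2})$ under the unconditional configuration model remains negligible after conditioning. Summing the contributions from (a)--(c) and taking expectations yields $\Erw\brk{|\nul(\vA_n)-\nul(\vA_{\eps,n})|}\le\eps dn/k+O_\eps(1)+o_n(n)$.

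For the concentration step I would view the nullity of the unconditional configuration-model matrix $\tilde\vA_n$ as a function of the $O(n)$ matching edges. Swapping a single matching pair modifies at most two rows and hence shifts the rank by at most two, so an edge-exposure Doob martingale together with Azuma--Hoeffding yields $\pr\bc{|\nul(\tilde\vA_n)-\Erw[\nul(\tilde\vA_n)]|>\eta n}=\exp(-\Omega(n))$ after truncating on the $1-o_n(1/n)$ event (controlled by \Lem~\ref{Lemma_sums}) that no degree exceeds $n^{1/2-\eta}$. This super-polynomial tail easily absorbs the $\Omega(n^{-1/2})$ conditioning of step (d), transferring the concentration to $\vA_n$ itself: $\pr\bc{|\nul(\vA_n)-\Erw[\nul(\vA_n)]|>\eta n}=o_n(1)$. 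Combining with the coupling, the hypothesis $\limsup_\eps\limsup_n\Erw[\nul(\vA_{\eps,n})]/n\le C$ propagates (via $\eps\to0$, the left-hand side being independent of $\eps$) to $\limsup_n\Erw[\nul(\vA_n)]/n\le C$, and concentration then delivers $\nul(\vA_n)\le C'n$ \whp\ for any $C'>C$.

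The main technical obstacle I anticipate is the joint handling of steps (c) and (d): both the configuration-to-simple transition and the removal of the divisibility conditioning rely on the a priori bound $\nul(\vA_n)\le n$ and on \Prop~\ref{Lemma_welldef}, and the coupling has to be arranged so that neither operation disturbs the Lipschitz structure needed for the Azuma step. The saving grace is that Azuma delivers $\exp(-\Omega(n))$ tails while the conditioning has only polynomial cost, so the two effects never meet on the same scale; verifying this separation rigorously is the heart of the argument, but once established the proposition follows.
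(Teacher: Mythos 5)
Your overall architecture is legitimate in principle, but it is genuinely different from the paper's and it hinges on a concentration claim that you have not established and that is in fact false at the strength you assert. Because you couple the two \emph{unconditional} models and then remove the conditioning on \eqref{eqWellDef1} (and on simplicity) by paying the contiguity price $\pr\brk{\cD\cap\cS}=\Omega(n^{-1/2})$, every bad event you use must have \emph{unconditional} probability $o(n^{-1/2})$. In particular, from the hypothesis $\Erw[\nul(\vA_{\eps,n})]\leq(C+\delta)n$ you need $\pr\brk{\nul(\vA_{\eps,n})>(C+\delta+\eta)n}=o(n^{-1/2})$, since Markov alone gives only a constant. Your edge-exposure martingale does not deliver this: revealing the matching one edge at a time (with swap-Lipschitz constant $2$) concentrates $\nul$ only around its \emph{conditional} mean given $\vm_{\eps,n},(\vd_i),(\vk_i)$ — this is exactly \Lem~\ref{Lemma_conc} — and says nothing about the fluctuation of that conditional mean with the degree data. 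Under the paper's assumption only $\Erw[\vd^r]+\Erw[\vk^r]<\infty$ for some $r>2$, the degree sequence is heavy-tailed: the claimed tail $\exp(-\Omega(n))$ is unobtainable (deviations driven by a few very large degrees have only polynomially small probability), your truncation event $\max_i\vd_i\leq n^{1/2-\eta}$ has probability $1-O(n^{1-(1/2-\eta)r})$, which for $r$ close to $2$ is not $1-o_n(1/n)$ and not even $1-o(n^{-1/2})$ (\Lem~\ref{Lemma_sums} controls sums, not maxima), and your final step ``propagates'' the expectation bound to $\Erw[\nul(\vA_n)]$ through conditioning on an event of probability $\Theta(n^{-1/2})$, which would require tails of order $o(n^{-3/2})$ — unavailable when $r<3$. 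So the missing ingredient is a proof that the conditional mean $\Erw[\nul(\vA_{\eps,n})\mid\vm_{\eps,n},(\vd_i),(\vk_i)]$ deviates from the unconditional mean by $\eta n$ with probability $o(n^{-1/2})$; this is a genuine piece of work (a continuity-in-the-degree-sequence coupling plus truncated Bernstein-type bounds), not a routine Azuma application, and without it the contiguity route does not close.

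For contrast, the paper sidesteps the $\sqrt n$ price entirely: \Lem~\ref{Lemma_JanesCoupling} couples $\vA_{\eps,n}$ directly with the already-conditioned model $\vA_{n,\cD}$, explicitly repairing the distortion of the check-degree counts caused by conditioning on $\cD$ (this is the ``catch'' the paper flags), so that afterwards only the $\Omega_n(1)$ bound $\pr\brk{\cS\mid\cD}$ from \Lem~\ref{Lemma_simple} and the mild conditional concentration of \Lem~\ref{Lemma_conc} are needed, with failure probabilities of size $O(\eps)$ rather than $o(n^{-1/2})$. If you want to keep your route, you must either prove the strong unconditional tail bound sketched above (at level $o(n^{-1/2})$, formulated as a probability statement rather than an expectation transfer), or replace steps (c)--(d) by a direct coupling with the conditioned model in the spirit of \Lem~\ref{Lemma_JanesCoupling}.
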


By construction, the degrees of the checks $a_i$ and the variables $x_j$ in $\G_{\eps,n}-\{p_1,\ldots,p_{\THETA}\}$ are upper-bounded by $\vk_i$ and $\vd_j$, respectively.
We thus refer to $\vk_i$ and $\vd_j$ as the {\em target degrees} of $a_i$ and $x_j$.
Indeed, since $\G_{\eps,n}$ will turn out to feature few if any multi-edges and $\vm_{\eps,n}$ is significantly smaller than $dn/k$ and thus
$$\pr\brk{\sum_{i=1}^{\vm_{\eps,n}}\vk_i\leq\sum_{i=1}^{n}\vd_i}=1-o_n(1),$$
most check nodes $a_i$ have degree precisely $\vk_i$ \whp\
But we expect that about $\eps dn$ variable nodes $x_i$ will have degree less than $\vd_i$.
In fact, \whp\ $\vec\Gamma_{\eps,n}$ fails to cover about $\eps dn$ `clones' from the set $\bigcup_{i=1}^n\{x_i\}\times[\vd_i]$.
Let us call such unmatched clones {\em cavities}.

The cavities provide the wiggle room that we need to couple $\vA_{\eps,n}$ and $\vA_{\eps,n+1}$.
An instant idea might be to couple $\G_{\eps,n+1}$ and $\G_{\eps,n}$ such that the former is obtained  by adding one variable node $x_{n+1}$ along with $\vd_{n+1}$ new adjacent check nodes.
Additionally, the new checks get connected with some random cavities of $\G_{\eps,n}$.
In effect, the coupling takes the form
\begin{align}\label{eqroughcoupling}
\nul\vA_{\eps,n+1}&=\nul\begin{pmatrix}
\vA_{\eps,n}&0\\\vB&\vC
\end{pmatrix},
\end{align}
where $\vB$ has $n$ columns and $\vd_{n+1}$ rows and $\vC$ is a column vector of size $\vd_{n+1}$ \whp{}
But this direct attempt has a subtle flaw.
Indeed, going from $\vA_{\eps,n}$ to $\vA_{\eps,n+1}$, \eqref{eqroughcoupling} adds $\Erw[\vd_{n+1}]=d$ rows on the average.
Yet actually we should be adding merely $\Erw[\vm_{\eps,n+1}-\vm_{\eps,n}]=(1-\eps)d/k$ rows.
To remedy this problem we borrow a trick from prior applications of the Aizenman-Sims-Starr scheme in combinatorics~\cite{Ayre,CKPZ,BetheLattices}.
Namely, we set up a coupling under which both $\vA_{\eps,n},\vA_{\eps,n+1}$ are obtained by adding a few rows/columns to a common `base matrix' $\vA'$.
Thus, instead of \eqref{eqroughcoupling} we obtain
\begin{align}\label{eqfinecoupling}
\nul\vA_{\eps,n}&=\nul\begin{pmatrix}\vA'\\\vB\end{pmatrix},&
\nul\vA_{\eps,n+1}&=\nul\begin{pmatrix}\vA'&0\\\vB'&\vC'\end{pmatrix}.
\end{align}
To be precise, $\vC'$ above is a column vector with an expected $(1-\eps)d$ non-zero entries and $\vB,\vB'$ are matrices whose numbers of non-zero entries are bounded in expectation.
Furthermore, the base matrix $\vA'$ itself is quite similar to $\vA_{\eps,n}$, except that $\vA'$ has a slightly smaller number of rows.
In \Sec~\ref{Sec_lower} we will present the construction in full detail and apply \Prop~\ref{Prop_Alp} and \Lem~\ref{Cor_free}  to prove the following upper bound on the change in nullity.
Recall the function $\Phi$ from \eqref{eqBFE} and recall that $\THETA\in[\cT]$ with $\cT=\cT(\eps)$ dependent on $\eps$ only is the number of pinned variables in the construction of $\vA_{\eps,n}$.

\begin{proposition}\label{Prop_coupling}
There exists a function $\cT=\cT(\eps)>0$ such that
\begin{align*}
\limsup_{\eps\to0}\limsup_{n\to\infty}\Erw[\nul(\vA_{\eps,n+1})]-\Erw[\nul(\vA_{\eps,n})]\leq\max_{\alpha\in[0,1]}\Phi(\alpha).
\end{align*}
\end{proposition}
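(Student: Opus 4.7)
The plan is to execute the Aizenman--Sims--Starr scheme by constructing a three-way coupling of $\vA_{\eps,n+1}$, $\vA_{\eps,n}$, and a common base matrix $\vA'$, and then reducing the change in nullity to an explicit rank computation via \Prop~\ref{Prop_Alp} and \Lem~\ref{Cor_free}. I would take $\vA'$ to live on $n$ variable nodes with target degrees $\vd_1,\ldots,\vd_n$ together with $\vec M\sim\Po((1-\eps)dn/k-\mu)$ check nodes for a suitable $\mu=\Theta(1)$, as well as the $\THETA$ pinning checks from \Def~\ref{Def_pin}. Both $\vA_{\eps,n}$ and $\vA_{\eps,n+1}$ are then realised by adjoining $O(1)$ further rows (plus, in the latter case, one new column for the variable $x_{n+1}$), with each new row selecting its non-zero entries among uniformly random \emph{cavities}, i.e., unmatched variable clones of $\vA'$.

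Setting $\delta=\delta(\eps)$ and $\ell=\ell(\eps)$ with $\delta(\eps)\to0$ and $\ell(\eps)\to\infty$ as $\eps\to 0$ and choosing $\cT(\eps)$ correspondingly large, \Prop~\ref{Prop_Alp} guarantees that $\vA'$ is $(\delta,\ell)$-free with probability at least $1-\delta$. Because the added rows and column have a bounded expected number of non-zero entries (exploiting the moment assumption $\Erw[\vd^r]+\Erw[\vk^r]<\infty$ with $r>2$), the set of variable indices they touch fails to be a proper relation of $\vA'$ with probability $1-o_{\eps,n}(1)$. Hence \Lem~\ref{Cor_free} applies to both coupled matrices and yields
\begin{align*}
\nul\vA_{\eps,n+1}-\nul\vA'&=1-\rk(\vB'_*\ \vC'),&\nul\vA_{\eps,n}-\nul\vA'&=-\rk(\vB_*),
\end{align*}
reducing the difference in nullities to the expected ranks of these $O(1)\times O(n)$ matrices.

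The next step is the explicit rank calculation. Let $\alpha=\alpha(\vA')\in[0,1]$ denote the fraction of cavities of $\vA'$ whose variable endpoint is frozen. Conditional on $\vA'$, each extra check row has target degree distributed as $\vk$ and picks its $\vk$ non-zero coordinates uniformly among the cavities; after zeroing out frozen coordinates, such a row contributes one to $\rk(\vB_*)$ precisely when at least one chosen cavity is non-frozen, which occurs with probability $1-\Erw[\alpha^{\vk}]=1-K(\alpha)$. The new column $\vC'$ has target degree $\vd_{n+1}$ and attaches to independent cavities; the probability that all the \emph{other} cavities of the check at the opposite end of a given attachment are frozen equals $\Erw[\alpha^{\hat\vk-1}]=K'(\alpha)/k$, which captures precisely when that check forces a zero coefficient on $x_{n+1}$. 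Combining the contributions and averaging over the Poisson number of extra rows and the independent degrees produces the Bethe functional \eqref{eqBFE} and gives
\begin{align*}
\Erw[\nul\vA_{\eps,n+1}]-\Erw[\nul\vA_{\eps,n}]\leq\Erw\brk{\Phi(\alpha(\vA'))}+o_{\eps,n}(1)\leq\max_{\alpha\in[0,1]}\Phi(\alpha)+o_{\eps,n}(1),
\end{align*}
and taking $\limsup_{n\to\infty}$ followed by $\limsup_{\eps\to 0}$ delivers the claim.

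The main obstacle is rigorously translating the cavity picture into the rank calculation. First, one must show that uniformly random cavity selections behave asymptotically as i.i.d.\ samples with frozen probability $\alpha(\vA')$; this requires an exchangeability argument for the cavities together with control of the bias introduced by the finite size of the cavity pool. Second, the $(\delta,\ell)$-freeness from \Prop~\ref{Prop_Alp} must be leveraged not just for individual added rows but for the entire $O(1)$-collection of added rows and column simultaneously, combined with concentration via \Lem~\ref{Lemma_sums} to produce the $o_{\eps,n}(1)$ error term. Finally, rare bad events---multi-edges among the new pieces, incident selection of pinning-frozen variables, atypical Poisson fluctuations in the number of extra rows---must each be shown to contribute negligibly in the double limit $n\to\infty$, $\eps\to 0$, typically by splitting on a good event where $\vA'$ is $(\delta,\ell)$-free and the added degree sequence is tame.
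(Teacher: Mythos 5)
Your skeleton is the paper's: a common base $\vA'$ with slightly fewer rows, \Prop~\ref{Prop_Alp} to rule out proper relations among the touched columns, \Lem~\ref{Cor_free} to reduce both nullity increments to ranks of the small added blocks, and the frozen-cavity fraction $\ALPHA$ as the one relevant parameter. The genuine gap is in the construction of the base and, consequently, in the degree distribution of the rows you add on the $n$-column side. For the two conditional expectations to combine into $\Phi(\ALPHA)$, the base must be obtained from the $(n+1)$-system by deleting \emph{exactly the checks adjacent to $x_{n+1}$} (this is the paper's $\vM^-_i=(\vM_i-\GAMMA_i)\vee0$, with \Lem~\ref{Cor_gamma} showing $\GAMMA\approx\hat\GAMMA$), and the rows re-added on the $n$-side are then those same checks: there are $\approx\vd$ of them and their degrees are \emph{size-biased}, i.e.\ distributed as $\hat\vk$, so each contributes $1-\Erw[\ALPHA^{\hat\vk}]=1-\ALPHA K'(\ALPHA)/k$ to the rank of $\vB_*$ (this is precisely \Lem~\ref{Lemma_A''}). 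You instead take the base to be a plain iid-$\vk$ Poisson ensemble with mean lowered by a constant $\mu$ and declare the $n$-side extra rows to have degree $\vk$, contributing $1-K(\ALPHA)$ each. Carrying your bookkeeping through (your $(n+1)$-side terms, which are correct and match \Lem~\ref{Lemma_A'''}, minus your $n$-side term) yields
\begin{align*}
D\bc{1-K'(\alpha)/k}-\frac dk\bc{1-K(\alpha)}+\frac dk K'(\alpha)-d\,K(\alpha)
\;=\;\Phi(\alpha)-d\bc{K(\alpha)-\alpha K'(\alpha)/k},
\end{align*}
which is \emph{not} $\Phi(\alpha)$: the deficit $d(K(\alpha)-\alpha K'(\alpha)/k)=-\frac dk\mathrm{Cov}(\vk,\alpha^{\vk})$ is strictly positive on $(0,1)$ whenever $\vk$ is non-degenerate. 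So the claim that "combining the contributions produces the Bethe functional" fails except when $\vk$ is deterministic.

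Why this is not merely a bookkeeping slip that happens to give an even better bound: your coupling itself is not distributionally valid. In $\vA_{\eps,n+1}$ the checks \emph{not} adjacent to $x_{n+1}$ form the full iid-$\vk$ Poisson ensemble \emph{minus about $d$ size-biased checks}; hence a base that is a plain iid-$\vk$ Poisson ensemble (mean reduced by any constant $\mu$) cannot simultaneously be a sub-ensemble of the non-adjacent checks with iid-$\vk$ leftovers --- the "extra non-adjacent" intensity at size $j$ would be $\propto(\mu+d/k)\pr[\vk=j]-d\,j\pr[\vk=j]/k$, which is negative for large $j$ in the support. If your accounting were legitimate it would prove $\limsup_n\frac1n\Erw[\nul(\vA_{\eps,n})]\le\max_\alpha\bc{\Phi(\alpha)-d(K(\alpha)-\alpha K'(\alpha)/k)}$, which for non-degenerate $\vk$ with an interior maximiser of $\Phi$ is strictly smaller than $\max_\alpha\Phi(\alpha)$ and contradicts \Thm~\ref{thm:rank}. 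The fix is exactly the paper's: strip the $\GAMMA$ checks attached to $x_{n+1}$ to form $\vA'$, re-add them (with $\hat\vk$-distributed sizes, $\approx\vd$ many) on the $n$-side, and add them together with the $\DELTA\sim\Po((1-\eps)d/k)$ fresh checks and the new column on the $(n+1)$-side; then \Lem s~\ref{Lemma_A'''} and~\ref{Lemma_A''} subtract to $\Erw[\Phi(\ALPHA)]+o_{\eps,n}(1)$. Your remaining points (exchangeability of cavity selections, using $(\delta,\ell)$-freeness for the whole added block at once, discarding rare bad events) are the right secondary issues and are handled in the paper via \Lem s~\ref{Lemma_theta}, \ref{Cor_gamma} and \ref{Lemma_valid}, but they do not repair the central size-biasing omission.
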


\noindent
As an immediate consequence of \Prop~\ref{Prop_coupling} we obtain the desired upper bound on the nullity.

\begin{corollary}\label{Prop_lower}
We have
	$$\limsup_{\eps\to0}\limsup_{n\to\infty}\frac1n\Erw[\nul(\vA_{\eps,n})]\leq\max_{\alpha\in[0,1]}\Phi(\alpha).$$
\end{corollary}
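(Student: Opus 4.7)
The plan is a routine telescoping argument in the spirit of Aizenman-Sims-Starr; once \Prop~\ref{Prop_coupling} is granted, the corollary reduces to bookkeeping. Fix $\eps>0$ and let $\cT=\cT(\eps)$ be as provided by \Prop~\ref{Prop_coupling}. For any integer $N>\cT$, write the trivial telescoping identity
\[
\Erw[\nul(\vA_{\eps,N})] = \Erw[\nul(\vA_{\eps,\cT})] + \sum_{n=\cT}^{N-1}\bigl(\Erw[\nul(\vA_{\eps,n+1})]-\Erw[\nul(\vA_{\eps,n})]\bigr),
\]
and divide both sides by $N$. The initial term $\Erw[\nul(\vA_{\eps,\cT})]/N$ is bounded above by $\cT/N$ and thus vanishes as $N\to\infty$, since the nullity never exceeds the number of columns. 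What remains on the right-hand side is a Cesàro average of the one-step increments of $n\mapsto\Erw[\nul(\vA_{\eps,n})]$.

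Next I would invoke the elementary fact that $\limsup_{N\to\infty}\frac1N\sum_{n=1}^{N}a_n\leq\limsup_{n\to\infty}a_n$ for any bounded real sequence $(a_n)$. Boundedness holds here because the coupling used to prove \Prop~\ref{Prop_coupling} produces $\vA_{\eps,n+1}$ from $\vA_{\eps,n}$ by adding a number of rows and columns with expectation bounded in terms of $\eps$, and each added row or column shifts the nullity by at most one; hence each increment is $O_\eps(1)$. Applying this Cesàro inequality yields
\[
\limsup_{N\to\infty}\frac{\Erw[\nul(\vA_{\eps,N})]}{N}\leq\limsup_{n\to\infty}\bigl(\Erw[\nul(\vA_{\eps,n+1})]-\Erw[\nul(\vA_{\eps,n})]\bigr).
\]
Finally, taking $\limsup_{\eps\to 0}$ of both sides and appealing to \Prop~\ref{Prop_coupling} on the right-hand side delivers the claimed bound $\max_{\alpha\in[0,1]}\Phi(\alpha)$. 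There is essentially no obstacle in this step: all analytic content is absorbed into \Prop~\ref{Prop_coupling}, and the corollary is just the standard device for passing from a difference estimate $\Erw[\nul(\vA_{\eps,n+1})]-\Erw[\nul(\vA_{\eps,n})]$ to an asymptotic density $\Erw[\nul(\vA_{\eps,n})]/n$.
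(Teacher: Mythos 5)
Your argument is correct and is essentially the paper's own proof: both telescope the one-step increments $\Erw[\nul(\vA_{\eps,n+1})]-\Erw[\nul(\vA_{\eps,n})]$ and average, with \Prop~\ref{Prop_coupling} controlling the limsup of the increments, your explicit Ces\`aro inequality being exactly the averaging step the paper performs implicitly. Note only that uniform boundedness of the increments is not actually needed (and your justification of it via the coupling is a little loose, since the coupling passes through a common base matrix rather than from $\vA_{\eps,n}$ to $\vA_{\eps,n+1}$ directly); finiteness of each term, immediate from $\nul(\vA_{\eps,n})\leq n$, already suffices for the Ces\`aro bound.
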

\begin{proof}
 \Prop~\ref{Prop_coupling} yields
\begin{align*}
\frac1n\Erw[\nul(\vA_{\eps,n})]&=\frac1n\brk{\Erw[\nul(\vA_{\eps,1})]+\sum_{N=1}^{n-1}\bc{\Erw[\nul(\vA_{\eps,N+1})]-\Erw[\nul(\vA_{\eps,N})]}}\leq\max_{\alpha\in[0,1]}\Phi(\alpha)+o_{\eps,n}(1),
\end{align*}
as claimed.
\end{proof}

\begin{proof}[Proof of \Thm~\ref{thm:rank}]
The desired lower bound on the rank of $\vA_n$ is an immediate consequence of \Prop~\ref{Cor_lower} and \Cor~\ref{Prop_lower}.
\end{proof}

\subsection{Discussion}\label{Sec_discussion}

\noindent
Before delving into the technical details of the proofs of the various propositions, we compare the proof strategy and the results with previous work.
We begin with a discussion of related work on the rank problem.
Roughly speaking, prior work on the rank of random matrices relies on separate strands of techniques, depending on whether the average number of non-zero entries per row/column is bounded or unbounded.
Subsequently we discuss the physicists' (non-rigorous) cavity method and explain how it led to an erroneous prediction.

\subsubsection{Dense matrices}
The difficulty of the rank problem for dense random matrices strongly depends on the distribution of the matrix entries.
For instance, a square matrix with independent Gaussian entries in each row has full rank with probability one simply because the submanifold of singular matrices has Lebesgue measure zero.
By contrast, the case of matrices with independent uniform $\pm1$ entries is more subtle.
\Komlos~\cite{Komlos} proved that such matrices are regular \whp\ 
Vu~\cite{Vu} subsequently presented a simpler proof, based on collision probabilities and \Erdos' Littlewood-Offord inequality.
An intriguing conjecture, which has inspired a distinguished line of research~\mr{\cite{Jai,Lit,KKS,TaoVu,Tikhomirov}},  asserts that the dominant reason for a random  $\pm1$-matrix being singular is the existence of a pair of identical rows or columns.

Interesting enough, the limiting probability that a dense square matrix with entries drawn uniformly from a finite field is singular lies strictly between zero and one.
Kovalenko and Levitskaya~\cite{Kovalenko,Kovalenko2,Lev1,Lev2} obtained a precise formula for the distribution of the rank of dense random matrices with independent entries over finite fields via the method of moments.
For more recent improvements see~\cite{Fulman,Luh} and the references therein.

A further line of work deals with random $m\times n$ matrices in which the number of non-zero entries per row diverges in the limit of large $n$ but is of order $o_n(n)$.
Relating the permanent and the determinant, Balakin~\cite{Balakin2} and, using delicate moment calculations,  Bl\"omer, Karp and Welzl~\cite{BKW} dealt with the rank of such matrices over finite fields.
Moreover, using expansion arguments, Costello and Vu~\cite{costello2008rank,costello2010rank} studied the real rank of random symmetric matrices of a similar density.
They find that such matrices essentially have full rank \whp, apart from a small defect based on local phenomena.
In the words of~\cite{costello2010rank}, ``dependency [comes] from small configurations''.

\subsubsection{Sparse matrices}
Matters are quite different in the sparse case where the average number of non-zero entries per row is bounded.
In fact, as we will discover in due course
the formula from \Thm~\ref{thm:rank} is driven by ``dependency coming from large configurations'', i.e., by minimally linearly dependent sets of unbounded size.

The first major contribution dedicated to sparse matrices was a paper by Dubois and Mandler~\cite{DuboisMandler} on the random $3$-XORSAT problem.
Translated into random matrices, this problem asks for what ratios $m/n$  a random $m\times n$-matrix over $\FF_2$ with precisely three one-entries in each row has full rank (i.e., equal to $m\wedge n$) \whp\
Thus, the random matrix model is just the one from Example~\ref{Ex_XOR} with $k=3$.
Dubois and Mandler pinpointed the precise full row  rank threshold $m/n\approx2.75$.
The proof relies on the first moment method applied to $|\ker\vA|$, which boils down to a one-dimensional calculus problem.
Matters get more complicated when one considers a greater number $k>3$ of non-zero entries per row.
This more general problem, known as random $k$-XORSAT, was solved independently by Dietzfelbinger et al.~\cite{Dietzfelbinger} and by Pittel and Sorkin~\cite{PittelSorkin} via technically demanding moment calculations.
Unfortunately, considering fields $\FF_q$ with $q>2$ complicates the moment calculation even further.
Yet undertaking a computer-assisted tour-de-force Falke and Goerdt~\cite{GoerdtFalke} managed to extend the method to $\FF_3$.
However, extending this strategy to infinite fields is a non-starter as $|\ker\vA|$ may be infinite.

In a previous paper Ayre, Coja-Oghlan, Gao and M\"uller~\cite{Ayre} applied the Aizenman-Sims-Starr scheme to the study of sparse random matrices with precisely $k$ non-zero entries per row as in Example~\ref{Ex_XOR}, over finite fields.
The present paper goes beyond that earlier contribution in two crucial ways.
First, we develop a far more delicate coupling scheme that accommodates general degree distributions $\vd,\vk$ rather than just the Poisson-constant degrees from Example~\ref{Ex_XOR},
including degree sequences for which the 2-core bound fails to be tight (in contrast to Example~\ref{Ex_XOR}).
Apart from rendering a proof of Lelarge's conjecture, we expect that this more general coupling scheme will find further uses in the theory of random factor graphs; for instance, it seems applicable to generalisations of the models from~\cite{CKPZ}.

Second, the rank calculation in~\cite{Ayre} is based on a probabilistic view that does not extend to infinite fields.
Indeed, the proof there is based on a close study of a uniformly random element $\SIGMA$ of the kernel of the random matrix $\vA$.
Specifically, \cite[\Lem~3.1]{Ayre} analyses the impact of the perturbation from Definition~\ref{Def_pin} on a matrix $A\in\FF^{m\times n}$ for a finite field $\FF$.
With $\SIGMA=(\SIGMA_1,\ldots,\SIGMA_n)\in\ker(A)$ a uniformly random element of the kernel, the lemma shows that for a large enough $\cT=\cT(\delta,\FF)>0$ and a uniformly random $0\leq\THETA\leq\cT$,
\begin{align}\label{eqstoch}
\sum_{\substack{1\leq i<j\leq n\\\omega,\omega'\in\FF}}
	\Erw\abs{\pr\brk{\SIGMA_i=\omega,\SIGMA_j=\omega'\mid A[\THETA]}-
		\pr\brk{\SIGMA_i=\omega\mid A[\THETA]}\cdot\pr\brk{\SIGMA_j=\omega'\mid A[\THETA]}}&<\delta n^2,
\end{align}
As in \Prop~\ref{Prop_Alp}, the necessary value of $\cT$ is independent of $n,m$ and~$A$.
Thus, the random perturbation renders the vector entries $(\SIGMA_i,\SIGMA_j)$ nearly stochastically independent, for most $i,j$.
Thanks to general results from~\cite{Victor}, \eqref{eqstoch} extends from pairwise independence to $\ell$-wise independence, albeit with a weaker error bound $\delta$.
The result \cite[\Lem~3.1]{Ayre} was inspired by general statements about probability measures on discrete cubes from~\cite{CKPZ,Montanari,Raghavendra}.

Inherently, this stochastic approach does not generalise to infinite fields, where, for starters, it does not even make sense to speak of a uniformly random element of the kernel.
That is why here we replace the stochastic approach from the earlier paper by the more versatile algebraic approach summarised in \Prop~\ref{Prop_Alp}, which are applicable to any field---say, the reals, the field $\mathbb Q_p$ of $p$-adic numbers, the algebraic closure of a finite field or a structure as complex as a function field.
Instead of showing stochastic independence, \Prop~\ref{Prop_Alp} renders linear independence amongst most bounded-size subsets of coordinates.
Apart from being more general, this algebraic viewpoint allows for a cleaner, more direct proof of the rank formula.
Additionally, on finite fields the stochastic independence \eqref{eqstoch} follows from the linear independence provided by \Prop~\ref{Prop_Alp}, with a significantly improved bound on $\cT(\delta)$.
We work this out in detail in Appendix~\ref{Apx_0pinning}.

The single prior contribution on the rational rank of sparse random matrices is due to Bordenave, Lelarge and Salez~\cite{BLS}, who computed the rational rank of the (symmetric) adjacency matrix of a random graph with a given vertex degree distribution.
The proof is based on local weak convergence and the `objective method'~\cite{AldousSteele}.
An intriguing question for future research is to extend the techniques from the present paper to symmetric random matrices.

\subsubsection{The cavity method (and its caveats)}\label{Sec_cavity}
On the basis of the cavity method, an analytic but non-rigorous technique inspired by the statistical mechanics of disordered systems, 
it had been predicted erroneously that over finite fields the 2-core bound \eqref{eq2corebound} on the rank of $\vA$ is universally tight for general degree distributions $\vd,\vk$~\cite{AlaminoSaad,MM}.
Where did the cavity method go astray?

The method comes in two instalments, the simpler {\em replica symmetric ansatz} and the more elaborate {\em one-step replica symmetry breaking ansatz} (`1RSB').
The former predicts that the rank of $\A$  over a finite field $\FF_q$ converges in probability to the solution of an optimisation problem on an  infinite-dimensional space of probability measures.
To be precise, let $\cP(\FF_q)$ be the space of probability measures on $\FF_q$.
Identify this space with the standard simplex in $\RR^q$.
Further, let $\cP^2(\FF_q)$ be the space of all probability measures on $\cP(\FF_q)$.
Given $\pi\in\cP^2(\FF_q)$ let $(\MU_{i,j}^\pi)_{i,j\geq1}$ be a sequence of independent samples from $\pi$. 
Recalling \eqref{eqSizeBiasd},  the {\em Bethe free entropy} is defined by
\begin{align*}
\cB(\pi)=\Erw&\brk{\log_q\sum_{\sigma_1\in\FF_q}\prod_{i=1}^{\vd}\sum_{\sigma_2,\ldots,\sigma_{\hat\vk_i}\in\FF_q}
		\vecone\cbc{\sum_{j=1}^{\hat\vk_i}\sigma_j\CHI_{i,j}=0}\prod_{j=2}^{\hat\vk_i}\MU_{i,j}^\pi(\sigma_j)}\\
	&-\frac{d}k\Erw\brk{(\vk-1)\log_q\sum_{\sigma_1,\ldots,\sigma_{\vk}\in\FF_q}
			\vecone\cbc{\sum_{i=1}^{\vk}\sigma_i\CHI_{1,i}=0}
		\prod_{i=1}^{\vk}\MU_{1,i}^\pi(\sigma_i)}.&&\mbox{(cf.~\cite[\Chap~14]{MM}).}
\end{align*}
The replica symmetric ansatz predicts that
\begin{align}\label{eqRS}
\lim_{n\to\infty}\frac1n\nul\A&=\sup_{\pi\in\cP^2(\FF_q)}\cB(\pi)&\mbox{in probability.}
\end{align}

For a detailed (heuristic) derivation of the Bethe free entropy and the prediction~\eqref{eqRS} we refer to~\cite{AlaminoSaad}.
But let us briefly comment on the intended semantics of $\pi$.
Consider the Tanner graph $\G$ representing $\A$.
Suppose that variable node $x_i$ and check node $a_j$ are adjacent.
Then for $\sigma\in\FF_q$ we define the {\em Belief Propagation message} $\mu_{\A,x_j\to a_i}(\sigma)$ from $x_j$ to $a_i$ as follows.
Obtain $\A_{x_j\to a_i}$ from $\A$ by changing the $ij$-th matrix entry to zero; this corresponds to deleting the $x_j$-$a_i$-edge from the Tanner graph.
Then $\mu_{\A,x_j\to a_i}(\sigma)$ is the probability that in a uniformly random vector $\SIGMA\in\ker\A_{x_j\to a_i}$ we have $\SIGMA_j=\sigma$.
Further, define $\pi_{\A}$ as the empirical distribution of the $\mu_{\A,x_j\to a_i}$ over the edges of the Tanner graph:
\begin{align*}
\pi_{\A}&=\frac1{\sum_{i=1}^n\vd_i}\sum_{j=1}^n\sum_{i=1}^{\vm}\vecone\{\A_{ij}\neq0\}\delta_{\mu_{\A,x_j\to a_i}}\in\cP^2(\FF_q).
\end{align*}
Then the replica symmetric ansatz predicts that $\pi_{\A}$ is asymptotically a maximiser of the Bethe free energy, i.e.,
$\sup_{\pi\in\cP^2(\FF_q)}\cB(\pi)=\cB(\pi_{\A})+o_n(1)$ \whp\
Thus, the maximiser $\pi$ in \eqref{eqRS} is deemed to encode the Belief Propagation messages on the edges of the Tanner graph of $\vA$.

A bit of linear algebra that seems to have gone unnoticed in the physics literature reveals that the messages actually have a very special form~\cite[\Lem~2.3]{Ayre}.
Namely, any message $\mu_{\A,x_j\to a_i}$ is either the uniform distribution $q^{-1}\vecone$ on $\FF_q$ or the atom $\delta_0$ on $0$.
In effect, the rank should come out as the Bethe free entropy $\cB(\pi_\alpha)$ of a convex combination
\begin{align}\label{eqpialpha}
\pi_\alpha&=\alpha\delta_{\delta_0}+(1-\alpha)\delta_{q^{-1}\vecone}&&(\alpha\in[0,1]).
\end{align}
In fact, a simple calculation yields $\Phi(\alpha)=\cB(\pi_\alpha)$ for all $\alpha\in[0,1]$.
Thus, \Thm~\ref{thm:rank} shows that
\begin{align*}
\lim_{n\to\infty}\frac{\rk\vA}{n}&=1-\sup_{\alpha\in[0,1]}\cB(\pi_\alpha)&&\mbox{in probability},
\end{align*}
vindicating the cavity method to an extent.
However, we do not know whether the Bethe free entropy possesses other spurious maximisers
$\pi\in\cP^2(\FF_q)$ with $\cB(\pi)>\sup_{\alpha\in[0,1]}\cB(\pi_\alpha)$.

Alamino and Saad~\cite{AlaminoSaad} tackled the optimisation problem~\eqref{eqRS} by means of a numerical heuristic called population dynamics, without noticing the restriction to $(\pi_\alpha)_{\alpha\in[0,1]}$.
In all the examples that they studied they found that $\pi\in\{\pi_0,\pi_\rho\}$, with $\rho$ from \eqref{def:rho};
	in fact, all their examples fall within the purview of \Thm~\ref{Thm_tight}.%
	\footnote{Strictly speaking, Alamino and Saad, who worked numerically with $n$ in the hundreds, reported $\pi\in\{\pi_0,\pi_1\}$. Indeed, $\rho\in\{0,1\}$ in the first class of examples that they studied, but not in the other two. For instance, in their example (3) the actual value of $\rho$ is either $0$ or a number strictly smaller than one, although $\rho>0.97$ whenever $\Phi(\rho)>\Phi(0)$.}
This led Alamino and Saad to conjecture that the maximiser $\pi$ is generally of this form, although they cautioned that further evidence seems necessary.
Example~\ref{Ex_3} and \cite{Lelarge} provide counterexamples.
The more sophisticated 1RSB cavity method is presented in \cite[\Chap~19]{MM}, where
an exercise asks the reader to verify that the 2-core bound is tight (over finite fields).
While \Thm~\ref{Thm_tight} gives sufficient conditions for this to be correct, the aforementioned counterexamples apply.

\subsection{Organisation}
We proceed to prove \Prop~\ref{Prop_Alp}, the `key lemma' upon which the proof of \Thm~\ref{thm:rank} rests, in \Sec~\ref{Sec_Alp}.
Subsequently in \Sec~\ref{Sec_conc} we use concentration inequalities and the local limit theorem for sums of independent random variables to prove \Prop~\ref{Cor_lower}.
Additionally, \Sec~\ref{Sec_conc} contains \Prop~\ref{Lemma_welldef}, which shows that the random matrix model \eqref{eqWellDef1} is well defined, a standard argument that we include for the sake of completeness.
Dealing with the full details of the coupling scheme, \Sec~\ref{Sec_lower} contains the proof of  \Prop~\ref{Prop_coupling}.
Further, \Sec~\ref{sec:2core} deals with the proof of \Thm~\ref{thm:2core} and in \Sec~\ref{Sec_tight} we prove \Thm~\ref{Thm_tight}.
For the sake of completeness a proof of \Lem~\ref{Lemma_sums} is included in Appendix~\ref{Sec_sums}.
Moreover, in Appendix~\ref{Apx_0pinning} we elaborate on the relation between the algebraic perturbation from \Prop~\ref{Prop_Alp} and the stochastic version from~\cite{Ayre}.
Finally, Appendix~\ref{Sec_interp} contains a self-contained proof of the upper bound on the rank for \Thm~\ref{thm:rank} via the interpolation method from mathematical physics.

\section{Linear relations: proof of \Prop~\ref{Prop_Alp}}\label{Sec_Alp}

\noindent
In this section we prove \Prop~\ref{Prop_Alp} and \Lem~\ref{Cor_free}.
The somewhat delicate proof of the former is based on a blend of probabilistic and algebraic arguments.
The proof of the latter is purely algebraic and fairly elementary.

\subsection{Proof of \Prop~\ref{Prop_Alp}}
\aco{Observe that \Prop~\ref{Prop_Alp} is not an asymptotic statement to the extent that we need to exhibit a function $\cT=\cT(\delta,\ell)$ such that \eqref{eqAlp} holds for all matrices $A$ (ultimately in \eqref{eqTdeltaell} we will see that $\cT(\delta,\ell)$ scaling as $\ell^3/\delta^4$ does the trick).
Nevertheless, letting $n$ denote the number of columns of $A$, we may safely assume that $n>n_0=n_0(\delta,\ell)$ for any specific $n_0$ that depends on $\delta,\ell$ only.
Indeed, to deal with $n\leq n_0$ for any fixed value $n_0$ we could just pick $\cT\geq\cT_0(\delta,\ell)$ for a large enough $\cT_0(\delta,\ell)$ so that with probability at least $1-\eps$ we have $\{\vec i_1,\ldots,\vec i_{\vec\theta}\}=[n]$.
Note that we do not need to worry about the possibility that $\cT>n$ because the $\vec i_j$ are drawn with replacement.
Further, if $\{\vec i_1,\ldots,\vec i_{\vec\theta}\}=[n]$, then a glimpse at Definition~\ref{Def_Alp} shows that {\em all} coordinates are frozen.
Therefore, $A[\vec\theta]$ is $(\delta,\ell)$-free.
Hence, from now on we assume that $n\geq n_0=n_0(\delta,\ell)$ for a sufficiently large $n_0$.
}

Given any matrix $M$ we define a {\em minimal $h$-relation of $M$} as a relation $I$  of $M$ of size $|I|=h$ that does not contain a proper subset that is a relation of $M$.
Let $\cR_h(M)$ be the set of all minimal $h$-relations of $M$ and set $R_h(M)=|\cR_h(M)|$.
Thus, $R_1(M)=|\fF(M)|$ is just the number of frozen variables of $M$.
Additionally, let $\cR_{\leq h}(M)=\bigcup_{1\leq i\leq h}\cR_i(M)$ and $R_{\leq h}(M)=|\cR_{\leq h}(M)|$.
Let $\vi_1,\vi_2,\vi_3,\ldots\in[n]$ be uniformly distributed independent random variables.

The proof of \Prop~\ref{Prop_Alp} is based on a potential function argument.
To get started we observe that
\begin{align}\label{eqProp_Alp_1}
\cR_1(A[t])\subset\cR_1(A[t+1])&&\mbox{ for all $t\geq0$}.
\end{align}
This inequality implies that the random variable
\begin{align*}
\Delta_t&=\frac{\Erw[R_1(A[t+\ell])\mid A[t]]-R_1(A[t])}{n}
\end{align*}
is non-negative.
The random variable $\Delta_t$ gauges the increase in frozen variables upon addition of $\ell$ more rows that expressly freeze specific variables.
Thus, `big' values of $\Delta_t$, say $\Delta_t=\Omega_n(1)$, witness a kind of instability as pegging a few variables to zero entails that another $\Omega_n(n)$ variables get frozen to zero due to implicit linear relations.
We will exploit the observation that, since $\Delta_t\in[0,1]$ and $\Erw[\Delta_t]$ is monotonically increasing in $t$, such instabilities cannot occur for many $t$.
Thus, the expectation $\Erw[\Delta_{\THETA}]$ will serve as our potential.
A similar potential was used in~\cite{Ayre} to study stochastic dependencies in the case of finite fields $\FF$.
But in the present more general context the analysis of the potential is significantly more subtle.
The following lemma puts a lid on the potential.

\begin{lemma}\label{Claim_Alp1}
	We have $\Erw[\Delta_{\THETA}]\leq\ell/\cT$.
\end{lemma}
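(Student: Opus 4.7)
The plan is a direct telescoping argument built on the monotonicity \eqref{eqProp_Alp_1}. First I would unpack the definition of $\Delta_t$ by the tower property: since $A[t+\ell]$ is obtained from $A[t]$ by appending $\ell$ pinning rows indexed by fresh uniform column indices $\vi_{t+1},\ldots,\vi_{t+\ell}$, averaging $\Erw[R_1(A[t+\ell])\mid A[t]]$ over $A[t]$ yields
\begin{align*}
n\,\Erw[\Delta_t]&=\Erw\brk{R_1(A[t+\ell])}-\Erw\brk{R_1(A[t])}.
\end{align*}
The monotonicity \eqref{eqProp_Alp_1} guarantees that each individual summand is non-negative, so the sequence $f(t):=\Erw[R_1(A[t])]$ is non-decreasing in $t$.

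Next, since $\THETA$ is uniform on $[\cT]$ and independent of everything else, I would write
\begin{align*}
\Erw[\Delta_{\THETA}]&=\frac1{\cT}\sum_{t=1}^{\cT}\Erw[\Delta_t]=\frac1{n\cT}\sum_{t=1}^{\cT}\bc{f(t+\ell)-f(t)}.
\end{align*}
Rearranging the sum by shifting indices turns the right-hand side into the genuinely telescoping expression
\begin{align*}
\sum_{t=1}^{\cT}\bc{f(t+\ell)-f(t)}&=\sum_{t=\cT+1}^{\cT+\ell}f(t)-\sum_{t=1}^{\ell}f(t).
\end{align*}
Because at most every column of a matrix can be frozen, the trivial bounds $0\leq f(t)\leq n$ apply, so the right-hand side is at most $\ell n$, and I obtain $\Erw[\Delta_{\THETA}]\leq \ell n/(n\cT)=\ell/\cT$ as required.

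There is no real obstacle here: the whole content of the lemma is encoded in the monotonicity of $\cR_1(A[t])$ in $t$, which bounds the partial variations of a non-decreasing, $[0,n]$-valued sequence when averaged over a window of length $\cT$. I would present the argument exactly in the three lines above. The payoff is that, since $\Erw[\Delta_{\THETA}]\to 0$ as $\cT\to\infty$ independently of $A$ and $n$, a typical realisation of $\THETA$ yields a matrix $A[\THETA]$ for which pegging $\ell$ further random coordinates produces essentially no new frozen coordinates, which is the lever used in the subsequent proof of \Prop~\ref{Prop_Alp}.
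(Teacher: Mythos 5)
Your proof is correct and takes essentially the same route as the paper: both rest on rewriting $n\Erw[\Delta_t]=\Erw[R_1(A[t+\ell])]-\Erw[R_1(A[t])]$ via the tower property, using the trivial bounds $0\leq R_1\leq n$, and averaging over the uniform $\THETA\in[\cT]$. The only difference is bookkeeping: the paper splits the indices into $\ell$ residue classes modulo $\ell$ and telescopes each (infinite, monotone) subsequence, whereas you telescope the finite window $t=1,\ldots,\cT$ directly by an index shift; both give the bound $\ell n$ on the sum and hence $\Erw[\Delta_{\THETA}]\leq\ell/\cT$.
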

\begin{proof}
	For any $r\in\{0,1,\ldots,\ell-1\}$ we have
	\begin{align}\label{eqClaim_Alp1_0}
		\sum_{j\geq0}\Erw[\Delta_{r+j\ell}]&=\frac{1}{n}\sum_{j\geq0}\Erw[R_1(A[r+(j+1)\ell])]-\Erw[R_1(A[r+j\ell])]\leq\frac{1}{n}\lim_{j\to\infty}\Erw[R_1(A[r+j\ell])].
	\end{align}
	\aco{Observe that there is no problem here taking the limit $j\to\infty$ as the coordinates $\vec i_j$ from Definition~\ref{Def_pin} are chosen independently with replacement.
	In the case that $j\gg n$ the likely outcome is thus that {\em all} coordinates of $A[r+j\ell]$ are frozen, which is why $\lim_{j\to\infty}\Erw[R_1(A[r+j\ell])]=n$.}
	Hence, \eqref{eqClaim_Alp1_0} yields
	\begin{align}\label{eqClaim_Alp1_00}
		\sum_{j\geq0}\Erw[\Delta_{r+j\ell}]&\leq1.
	\end{align}
	Summing \eqref{eqClaim_Alp1_00} on $r$, we obtain
	\begin{align}\label{eqProp_Alp_2}
		\sum_{\theta\in[\cT]}\Erw[\Delta_\theta]&\leq\sum_{r=0}^{\ell-1}\sum_{j\geq0}\Erw[\Delta_{r+j\ell}]\leq\ell.
	\end{align}
\aco{Since $\THETA\in[\cT]$ is chosen uniformly and independently of everything else, dividing \eqref{eqProp_Alp_2} by $\cT$ yields
\begin{align}\label{eqProp_Alp_3}
	\Erw[\Delta_{\THETA}]&=\frac1{\cT}\sum_{\theta\in[\cT]}\Erw[\Delta_\theta]\leq\frac1{\cT}\sum_{r=0}^{\ell-1}\sum_{j\geq0}\Erw[\Delta_{r+j\ell}]\leq\frac\ell\cT,
\end{align}
as desired.}
\end{proof}

\aco{\begin{remark}
	\Lem~\ref{Claim_Alp1} provides a bound on the mean of $\Erw[\Delta_{\THETA}]$ for a random $\vec\theta$.
	The requirement that $\vec\theta$ be random stems from the fact that the proof is based on an averaging argument.
	It is an open question whether this random value could be replaced by a deterministic value, and whether the choice of such a deterministic value would have to depend on $A$.
\end{remark}}

\noindent
The following lemma shows that unless $A[t]$ is $(\delta,\ell)$-free, there exist many minimal $h$-relations for some $2\leq h\leq\ell$.

\begin{lemma}\label{Claim_Alp2a}
If $A[t]$ fails to be $(\delta,\ell)$-free then there exists $2\leq h\leq\ell$ such that $R_h(A[t])\geq\delta n^h/\ell$.
\end{lemma}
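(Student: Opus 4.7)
The plan is to show that every proper $\ell$-relation of $A[t]$ must contain a minimal $h$-relation of $A[t]$ for some $2\le h\le \ell$, and then to double count pairs (minimal relation, containing proper relation) to arrive at the desired lower bound on some $R_h(A[t])$.

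First I would argue the structural statement. Suppose $I\subset[n]$ is a proper relation of $A[t]$ with $|I|=\ell$. By definition $I\setminus \fF(A[t])$ is itself a (non-empty) relation, so it contains at least one relation that is minimal under inclusion; call it $J$, and set $h=|J|$. Clearly $h\le\ell$. Moreover $h\ge 2$: indeed, if $h=1$ then $J=\{i\}$ would be a relation, which by Definition~\ref{Def_Alp} means $i\in\fF(A[t])$, contradicting $J\subset I\setminus\fF(A[t])$. Hence $J\in \cR_h(A[t])$ for some $2\le h\le\ell$, and $J\subset I$.

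Second I would perform the counting. For a given $J\in\cR_h(A[t])$, the number of sets $I\subset[n]$ with $|I|=\ell$ and $I\supset J$ is at most $\binom{n-h}{\ell-h}\le n^{\ell-h}$. Since every proper $\ell$-relation $I$ arises in this way for at least one $(h,J)$, we obtain
\begin{align*}
\#\{\text{proper relations of size }\ell\}\;\le\;\sum_{h=2}^{\ell} R_h(A[t])\, n^{\ell-h}.
\end{align*}
Now by hypothesis $A[t]$ is not $(\delta,\ell)$-free, so the left-hand side exceeds $\delta n^\ell$. The sum on the right has $\ell-1\le \ell$ terms, hence by pigeonhole at least one of them satisfies $R_h(A[t])\, n^{\ell-h}\ge \delta n^\ell/\ell$, which rearranges to $R_h(A[t])\ge \delta n^h/\ell$ as required.

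There is no serious obstacle here; the only subtle point is confirming the lower bound $h\ge 2$, which is immediate from the definition of frozen variables and of a minimal relation. The remaining steps are simple containment counting and an application of the pigeonhole principle to the $\ell-1$ possible values of $h$.
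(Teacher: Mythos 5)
Your proof is correct and follows essentially the same route as the paper's: the key observation that every proper $\ell$-relation contains a minimal $h$-relation with $2\leq h\leq\ell$, combined with counting the at most $n^{\ell-h}$ supersets of each minimal relation. The paper phrases this contrapositively and leaves the counting and the bound $h\geq2$ implicit, whereas you spell them out, but the argument is the same.
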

\begin{proof}
Assume that
\begin{align}\label{eqClaim_Alp2_1}
R_h(A[t])&<\delta n^h/\ell&\mbox{for all }2\leq h\leq\ell.
\end{align}
Since every proper relation $I$ of size $|I|=\ell$ contains a minimal $h$-relation $J\subset I$ for some $2\leq h\leq\ell$, \eqref{eqClaim_Alp2_1} implies that $A[t]$ possesses fewer than $\delta n^\ell$ proper relations of size $\ell$ in total.
Hence, if \eqref{eqClaim_Alp2_1} holds, then $A[t]$ is $(\delta,\ell)$-free.
\end{proof}

\noindent
As a next step we show that $\Delta_t$ is large if $A[t]$ possesses many minimal $h$-relations for some $2\leq h\leq\ell$.

\begin{lemma}\label{Claim_Alp2}
If $R_h(A[t])\geq\delta n^h/\ell$ for some $2\leq h\leq\ell$, then $\Delta_t\geq\delta^2/\ell^2$.
\end{lemma}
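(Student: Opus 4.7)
The plan is to show that the hypothesis $R_h(A[t]) \geq \delta n^h/\ell$ forces a large expected increase in the frozen set when the $\ell$ new pegging rows are added, and to do so by a very economical argument that inspects only the first $h-1$ samples $\vi_{t+1},\ldots,\vi_{t+h-1}$. In fact I expect to prove the stronger bound $\Delta_t\geq 2\delta/\ell$, which dominates $\delta^2/\ell^2$ because the trivial inequality $R_h(A[t])\leq\binom{n}{h}$ forces $\delta\leq \ell/h!\leq\ell/2$.

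First I would unpack minimality: for each $I\in\cR_h(A[t])$ there is a row combination $y$ with $\supp(yA[t])=I$ (equality, since a proper subset being a relation would contradict minimality), so in particular no element of $I$ lies in $\fF(A[t])$. Pegging any $h-1$ variables indexed by $I$ to zero therefore enforces the remaining one via the nondegenerate equation $(yA[t])_m\cdot x_m=0$, and that $m$ becomes freshly frozen in $A[t+\ell]$.

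Next, restrict attention to $J:=\{\vi_{t+1},\ldots,\vi_{t+h-1}\}$. With probability $1-O(h^2/n)$ these samples are distinct, and conditional on distinctness $J$ is a uniformly random $(h-1)$-subset of $[n]$, so $\Pr[J=J']=(h-1)!/n^{h-1}$ for every fixed $(h-1)$-subset $J'$. For such $J'$ set
\[
N_{J'}\;:=\;|\{m\in[n]:J'\cup\{m\}\in\cR_h(A[t])\}|.
\]
By the preceding paragraph, whenever $J=J'$ every $m$ counted by $N_{J'}$ becomes newly frozen, and these $m$ are automatically distinct column indices. Hence the number of newly frozen variables is at least $N_J$. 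Using the double-counting identity $\sum_{|J'|=h-1}N_{J'}=h\cdot R_h(A[t])$, I obtain
\begin{align*}
\Erw\!\left[N_J\,\middle|\,A[t]\right]=\frac{(h-1)!}{n^{h-1}}\cdot h R_h(A[t])=\frac{h!\,R_h(A[t])}{n^{h-1}}\geq\frac{h!\,\delta n}{\ell}\geq\frac{2\delta n}{\ell},
\end{align*}
so $\Delta_t\geq 2\delta/\ell\geq\delta^2/\ell^2$, as desired.

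The main obstacle, or rather the main temptation to resist, is the instinct to run an inclusion-exclusion / Bonferroni or Cauchy-Schwarz argument over all $\binom{\ell}{h-1}$ possible $(h-1)$-subsets of the full sample $\{\vi_{t+1},\ldots,\vi_{t+\ell}\}$. That route immediately tangles with correlated second-moment terms and with the fact that distinct minimal relations can force the \emph{same} variable to freeze, producing overcounting that is awkward to control. Restricting the analysis to the single ordered $(h-1)$-tuple of \emph{first} samples completely sidesteps these issues, because for a fixed $J$ different completions $m$ are, by construction, different newly frozen variables; no overcounting or second moment ever appears.
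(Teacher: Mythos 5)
Your proof is correct, and it reaches the stated bound by a genuinely different (and quantitatively stronger) piece of bookkeeping than the paper, although the engine is the same in both arguments: conditioning on the first $h-1$ pegged indices $\vi_{t+1},\ldots,\vi_{t+h-1}$ forming the complement of one coordinate of a minimal $h$-relation, which (since minimality gives a $y$ with $\supp(yA[t])=I$ exactly and guarantees no element of $I$ is already frozen) freezes that remaining coordinate in $A[t+h-1]$, combined with the hitting probability $(h-1)!/n^{h-1}$ and monotonicity of the frozen set up to $A[t+\ell]$. The paper then proceeds in two steps: it isolates ``popular'' variables $v$ lying in at least $\delta h n^{h-1}/(2\ell)$ minimal $h$-relations, shows by averaging that there are at least $\delta h n/(2\ell)$ of them, and lower-bounds each one's freezing probability by $\delta h/(2\ell)$ using the disjointness of the events indexed by the relations through $v$; the product of the two $\delta$-factors yields $\delta^2h^2/(4\ell^2)$. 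You instead perform a single global double count over $(h-1)$-subsets $J'$ and completions $m$, exploiting that for a fixed realisation of the pegged set all completions freeze simultaneously and are distinct variables, so no per-vertex probability estimate (and no second moment or inclusion–exclusion) is needed; the identity $\sum_{J'}N_{J'}=h\,R_h(A[t])$ gives $\Delta_t\geq h!\,\delta/\ell\geq 2\delta/\ell$, which indeed dominates $\delta^2/\ell^2$ because the hypothesis itself forces $\delta\leq\ell/h!\leq\ell/2$. Your linear-in-$\delta$ bound would propagate: in the proof of \Prop~\ref{Prop_Alp} one could then take $\cT$ of order $\ell^2/\delta^3$ rather than $4\ell^3/\delta^4$. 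Two cosmetic remarks: the probability $(h-1)!/n^{h-1}$ you use is the unconditional probability that the sample multiset equals $J'$ (distinctness is built in, and $N_J=0$ when samples collide, so the expectation identity is exact without the aside on conditioning), and the final passage from $A[t+h-1]$ to $A[t+\ell]$ deserves the explicit invocation of the monotonicity \eqref{eqProp_Alp_1}, just as in the paper.
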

\begin{proof}
Let $\cR_{v,h}(A[t])$ be the set of all relations $I\in\cR_h(A[t])$ that contain $v\in[n]$ and set $r_{v,t,h}=|\cR_{v,h}(A[t])|$.
Moreover, let $\cV_{t,h}$ be the set of all $v\in[n]$ with $r_{v,t,h}\geq \delta hn^{h-1}/(2\ell)$.
We assumed $|R_h(A[t])|\geq\delta n^h/\ell$, and every $h$-relation is affiliated with an $h$-element subset of $[n]$.
Consequently,
\begin{align*}
\delta hn^h/\ell\leq hR_h(A[t]) \leq |\cV_{t,h}|n^{h-1}+\bc{n-|\cV_{t,h}|}\cdot\delta hn^{h-1}/(2\ell),
\end{align*}
whence
\begin{align}\label{eqClaim_Alp2_3}
|\cV_{t,h}|\geq\frac{\delta h n}{2\ell}.
\end{align}

Consider $v\in \cV_{t,h}$ along with a minimal $h$-relation $I\in\cR_{v,h}(A[t])$.
If $I=\{v,\vi_{t+1},\ldots,\vi_{t+h-1}\}$, i.e., $I$ comprises $v$ and the next $h-1$ indices that get pegged, then $v\in\fF(A[t+h-1])$.
Indeed, since $I$ is a minimal $h$-relation of $A[t]$ there is a row vector $y$ such that $\supp(yA[t])=I$.
Hence, if $I\setminus\cbc v=\cbc{\vi_{t+1},\ldots,\vi_{t+h-1}}$, then we can extend $y$ to a row vector $y'$ such that $\supp(y'A[t+\ell])=\cbc v$, and thus $v\in\fF(A[t+h-1])$.
Furthermore, since $(\vi_{t+1},\ldots,\vi_{t+h-1})\in[n]^{h-1}$ is uniformly random, we conclude that
\begin{align}\label{eqClaim_Alp2_4}
\pr\brk{I=\cbc{v, \vi_{t+1},\ldots,\vi_{t+h-1}}\mid A[t]}&=(h-1)!/n^{h-1}\geq n^{1-h}.
\end{align}
Now, because every $v\in\cV_{t,h}$ satisfies $r_{v,t,h}\geq \delta hn^{h-1}/(2\ell)$, \eqref{eqClaim_Alp2_4} implies that
\begin{align}\label{eqClaim_Alp2_5}
\pr\brk{v\in\fF(A[t+h-1])\mid A[t]}&\geq r_{v,t,h}/n^{h-1}\geq  \delta h/(2\ell).
\end{align}
We also notice that $\cV_{t,h}\cap\fF(A[t])=\emptyset$ because no minimal $h$-relation contains a frozen variable.
Therefore, combining \eqref{eqProp_Alp_1}, \eqref{eqClaim_Alp2_3} and \eqref{eqClaim_Alp2_5} and using linearity of expectation, we obtain
\begin{align*}
\Delta_t&\geq\frac{1}{n}\sum_{v\in \cV_{t,h}}\pr\brk{v\in\fF(A[t+h-1])\mid A[t]}\geq
\frac{\delta h|\cV_{t,h}|}{2\ell n}\geq\frac{\delta^2 h^2}{4\ell^2}\geq\frac{\delta^2}{\ell^2},
\end{align*}
as desired.
\end{proof}

\noindent
Combining \Lem s~\ref{Claim_Alp2a} and~\ref{Claim_Alp2}, we immediately obtain the following.

\begin{corollary}\label{Claim_Alp3}
If $A[t]$ fails to be $(\delta,\ell)$-free then $\Delta_t\geq\delta^2/\ell^2$.
\end{corollary}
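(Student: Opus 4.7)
The plan is to simply chain the two preceding lemmas together, since Corollary~\ref{Claim_Alp3} is essentially a direct syllogism of Lemmas~\ref{Claim_Alp2a} and~\ref{Claim_Alp2}. First I would invoke \Lem~\ref{Claim_Alp2a}: the hypothesis that $A[t]$ fails to be $(\delta,\ell)$-free produces some integer $h$ with $2\leq h\leq \ell$ such that $R_h(A[t])\geq \delta n^h/\ell$. Then I would feed this same $h$ into \Lem~\ref{Claim_Alp2}, whose hypothesis is precisely the bound $R_h(A[t])\geq \delta n^h/\ell$ for some $2\leq h\leq\ell$, and whose conclusion is $\Delta_t\geq \delta^2/\ell^2$. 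This yields the corollary directly with no additional work.

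There is no real obstacle here because both quantifications match up cleanly: \Lem~\ref{Claim_Alp2a} produces the existence of $h$ that \Lem~\ref{Claim_Alp2} needs, and the conclusion of \Lem~\ref{Claim_Alp2} is uniform in $h$ (the final bound $\delta^2/\ell^2$ does not depend on $h$, having absorbed the extra factor $h^2\geq 4$ into the constant). So the `proof' amounts to a one-sentence combination; the genuine content lives in the two lemmas, particularly \Lem~\ref{Claim_Alp2}, whose averaging argument over $\cV_{t,h}$ is where the interplay between minimal $h$-relations and freshly pegged variables actually happens.

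The only thing I would be careful about, if I were writing this out in full rigour, is to note that \Lem~\ref{Claim_Alp2} is stated as a deterministic implication conditional on the event $\{R_h(A[t])\geq \delta n^h/\ell\}$, so the conclusion $\Delta_t\geq \delta^2/\ell^2$ holds pointwise on the event from \Lem~\ref{Claim_Alp2a}, which is all that Corollary~\ref{Claim_Alp3} asserts. Hence the proof is complete in one line.
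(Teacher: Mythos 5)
Your proposal is correct and matches the paper exactly: the paper obtains Corollary~\ref{Claim_Alp3} by combining \Lem s~\ref{Claim_Alp2a} and~\ref{Claim_Alp2} in precisely this one-line syllogism. Nothing further is needed.
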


\noindent
We have all the ingredients in place to complete the proof of \Prop~\ref{Prop_Alp}.

\begin{proof}[Proof of \Prop~\ref{Prop_Alp}.]
We define $T=\cbc{t\in[\cT]:\pr\brk{A[t]\mbox{ fails to be $(\delta,\ell)$-free}}\geq\delta/2}$
so that
\begin{align}\label{eqpfProp_Alp1}
\pr\brk{A[\THETA]\mbox{ is $(\delta,\ell)$-free}}&> 1-\delta/2-\pr\brk{\THETA\in T}.
\end{align}
Hence, we are left to estimate $\pr\brk{\THETA\in T}$.
Applying \Cor~\ref{Claim_Alp3}, we obtain for every $t\in T$,
\begin{align}\label{eqpfProp_Alp2}
\Erw[\Delta_t]&\geq\frac{\delta^2}{\ell^2}\cdot\pr\brk{A[t]\mbox{ fails to be $(\delta,\ell)$-free}}
	\geq\frac{\delta^3}{2\ell^2}.
\end{align}
Moreover, averaging \eqref{eqpfProp_Alp2} on $t\in[\cT]$ and applying \Lem~\ref{Claim_Alp1}, we obtain
\begin{align*}
\frac{\delta^3}{2\ell^2}\cdot\pr\brk{\THETA\in T}
	&=\frac{\delta^3}{2\ell^2}\cdot\frac{|T|}{\cT}
	\leq\frac{1}{\cT}\sum_{t\in T}\Erw[\Delta_t]\leq\Erw[\Delta_{\THETA}]\leq\frac\ell\cT.
\end{align*}
Consequently, choosing 
	\begin{align}\label{eqTdeltaell}\cT>4\ell^3/\delta^4\end{align}
ensures  $\pr\brk{\THETA\in T}\leq\delta/2$.
Thus, the assertion follows from \eqref{eqpfProp_Alp1}.
\end{proof}

\begin{remark}\label{Prop_Alp++}
The proof presented in this section actually renders a slightly stronger statement than \Prop~\ref{Prop_Alp}.
Specifically, let $A$ be an $m\times N$-matrix and let $n\leq N$.
Obtain $A[\theta,n]$ by pegging $\theta$ random variables from among the first $n$ variables $x_1,\ldots,x_n$ of the linear system $Ax=0$ to zero.
Then with $\THETA=\THETA(\delta,\ell)$ chosen as in \Prop~\ref{Prop_Alp} we find that with probability at least $1-\delta$, there are no more than $\delta n^\ell$ proper relations $I\subset[n]$.
Thus, in order to rid a subset of the columns of short linear relations, it suffices to peg $\THETA$ random variables from that subset to zero.
The proof of this stronger statement proceeds as above, except that we confine ourselves to minimal relations among the first $n$ columns.
\end{remark}

\subsection{Proof of \Lem~\ref{Cor_free}}
We are going to derive \Lem~\ref{Cor_free} from the following simpler, \aco{deterministic and non-asymptotic} statement.

\begin{lemma}\label{Lemma_free}
	\aco{Let $m,n,m',n'\geq1$ be integers.}
	Let $A$ be an $m \times n$ matrix, let $B$ be an $m'\times n$ matrix and let $C$ be an $m'\times n'$ matrix.
	Let $I\subset[n]$ be the set of all indices of non-zero columns of $B$.
	Unless $I$ is a relation of $A$ we have
	\begin{align*}
		\nul A-\nul\begin{pmatrix}A&0\\B&C\end{pmatrix}=\rk(B\ C)-n'.
	\end{align*}
\end{lemma}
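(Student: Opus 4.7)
The plan is to reduce the identity to a rank comparison via the rank-nullity theorem and then dissect the row space of the block matrix. Write $M = \begin{pmatrix} A & 0 \\ B & C \end{pmatrix}$, which has $n+n'$ columns. Rank-nullity gives the unconditional identity
\[
\nul A - \nul M \;=\; (n - \rk A) - \bigl((n+n') - \rk M\bigr) \;=\; \rk M - \rk A - n',
\]
so the lemma is equivalent to proving that $\rk M = \rk A + \rk(B\ C)$ whenever $I$ fails to be a relation of $A$.

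To control $\rk M$, I will view its row space as the sum $V_1 + V_2$ of two subspaces of $\FF^{1\times(n+n')}$: the subspace $V_1 = \mathrm{rowspan}(A\ 0)$ of dimension $\rk A$ and the subspace $V_2 = \mathrm{rowspan}(B\ C)$ of dimension $\rk(B\ C)$. The elementary identity $\dim(V_1 + V_2) = \dim V_1 + \dim V_2 - \dim(V_1 \cap V_2)$ then reduces the task to showing that $V_1 \cap V_2 = \{0\}$ whenever $I$ is not a relation of $A$.

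I will argue this last step by contrapositive. Any nonzero $v \in V_1 \cap V_2$ can be written as $v = y(A\ 0) = z(B\ C)$ for suitable $y \in \FF^{1\times m}$ and $z \in \FF^{1\times m'}$. Comparing the final $n'$ coordinates yields $zC = 0$, while comparing the first $n$ coordinates gives $yA = zB$; since $v \neq 0$ and its last $n'$ coordinates vanish, we must have $yA = zB \neq 0$. But the support of $zB$ is contained in $I$, because by definition every column of $B$ outside $I$ is zero. Hence $\emptyset \neq \supp(yA) \subset I$, exhibiting $I$ as a relation of $A$ and closing the contrapositive. I do not anticipate any real obstacle: the argument is essentially a one-line application of the subspace dimension formula combined with a coordinate-wise comparison, and the only degeneracies (such as $y = 0$ or $z = 0$) are automatically excluded by $yA \neq 0$.
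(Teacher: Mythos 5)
Your proof is correct, and it takes a somewhat different route from the paper's. You convert the nullity identity into the rank identity $\rk\begin{pmatrix}A&0\\B&C\end{pmatrix}=\rk A+\rk(B\ C)$ via rank--nullity and then prove it in one stroke with the subspace dimension formula, by showing that the row spaces of $(A\ 0)$ and $(B\ C)$ intersect trivially: a nonzero common vector $y(A\ 0)=z(B\ C)$ forces $zC=0$ and $\emptyset\neq\supp(yA)=\supp(zB)\subset I$, exhibiting $I$ as a relation of $A$. The paper instead first proves the $C$-free statement $\nul A-\nul\bigl(\begin{smallmatrix}A\\B\end{smallmatrix}\bigr)=\rk B$ by adding a maximal linearly independent family of rows of $B$ one at a time and checking that each addition raises the rank (the same contradiction: a dependence would produce a nonzero vector $\sum_i y_iA_i$ supported inside $I$), and then deduces the general case by applying this special case to the padded matrices $(A\ 0)$ and $(B\ C)$. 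The underlying mechanism is identical in both arguments --- the hypothesis that $I$ is not a relation forbids any nontrivial combination of the $B$-rows from lying in the row space of $A$ --- but your packaging is slightly shorter, since the observation $zC=0$ absorbs the paper's reduction step and no induction over rows is needed; the paper's incremental version is equally elementary. Note also that your contrapositive silently covers the degenerate case $I=\emptyset$ (there $zB=0$, contradicting $yA\neq0$), so no separate check is required.
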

\begin{proof}
	Suppose that $I$ is not a relation of $A$.
	We begin by showing that
	\begin{align}\label{eqLemma_free_1}
		\nul A-\nul\begin{pmatrix}A\\B\end{pmatrix}=\rk(B).
	\end{align}
	Writing $B_1,\ldots,B_{m'}$ for the rows of $B$ and $r=\rk(B)$ for the rank and applying a row permutation if necessary, we may assume that $B_1,\ldots,B_r$ are linearly independent.
	Hence, to establish \eqref{eqLemma_free_1} it suffices to prove that for all $0\leq \ell<r$,
	\begin{align}\label{eqLemma_free_2}
		\rk\begin{pmatrix}A\\B_1\\\vdots\\B_\ell\end{pmatrix}<\rk\begin{pmatrix}A\\B_1\\\vdots\\B_{\ell+1}\end{pmatrix}.
	\end{align}
	In other words, we need to show that $B_{\ell+1}$ does not belong to the space spanned by  $B_1,\ldots,B_\ell$ and the rows $A_1,\ldots,A_m$ of $A$.
	Indeed, assume that $B_{\ell+1}=\sum_{i=1}^\ell x_iB_i+\sum_{i=1}^m y_iA_i$.
	Then $0\neq B_{\ell+1}-\sum_{i=1}^\ell x_iB_i=\sum_{i=1}^m y_iA_i$ and thus $\emptyset\neq\supp\sum_{i=m}^\ell y_iA_i\subset I$, in contradiction to the assumption that $I$ is no relation of $A$.
	Hence, we obtain \eqref{eqLemma_free_2} and thus \eqref{eqLemma_free_1}.
	Finally, to complete the proof of \eqref{eqLemma_free} we apply \eqref{eqLemma_free_1} to the matrices $(A\ 0)$ and $(B\ C)$, obtaining
	\begin{align*}
		\nul(A)+n'-\nul\begin{pmatrix}A&0\\B&C\end{pmatrix}&=\nul(A\ 0)-\nul\begin{pmatrix}A&0\\B&C\end{pmatrix}=\rk(B\ C),
	\end{align*}
	as desired.
\end{proof}

\begin{proof}[Proof of \Lem~\ref{Cor_free}]
\aco{Recall tha $A$ has size $m\times n$.
By Definition \ref{Def_Alp} a coordinate $i$ is frozen iff the vector $e^{(i)}\in\FF^{1\times n}$ whose $i$-th entry equals one and whose other entries equal zero can be written as a linear combination of the rows of $A$.
	For every $i\in\fF(A)$ we can therefore apply elementary row operations (like in Gaussian elimination) to zero out the entire $i$-column of $B$.
	Since elementary row operations do not alter the nullity of a matrix, we therefore obtain the idenitity
	\begin{align*}
		\nul\begin{pmatrix}A&0\\B&C\end{pmatrix}&=\nul\begin{pmatrix}A&0\\B_*&C\end{pmatrix}.
	\end{align*}
The assertion thus follows from \Lem~\ref{Lemma_free}.}
\end{proof}

\section{Concentration}\label{Sec_conc}

\noindent
The principal aim of this section is to prove \Prop~\ref{Cor_lower}, i.e., to argue that the rank of the actual matrix $\vA_n$ that does not have any cavities and whose Tanner graph is simple is close to the expected rank of $\vA_{\eps,n}$ \whp\
In other words, we need to show that the rank of a random matrix is sufficiently concentrated that conditioning on
	\begin{align*}
	\cD=\cbc{\sum_{i=1}^n\vd_i=\sum_{i=1}^{\vm}\vk_i}
	\end{align*}
and on the event $\cS$ that the Tanner graph is simple is inconsequential.
The main tool will be the following local limit theorem for sums of independent random variables, which we use in \Sec~\ref{Sec_llt} to calculate the probability of $\cD$.

\begin{theorem}[{\cite[p.~130]{Durrett}}]\label{Thm_llt}
Suppose that $(\vX_i)_{i\geq1}$ is a sequence of i.i.d.\ variables that take values in $\ZZ$ such that the greatest common divisor of the support of $\vX_1$ is one.
Also assume that $\Var[\vX_1]=\sigma^2\in(0,\infty)$.
Then
\begin{align*}
\lim_{n\to\infty}\sup_{z\in\ZZ}\abs{\sqrt n\pr\brk{\sum_{i=1}^n\vX_i=z}-\frac{\exp(-(z-n\Erw[\vX_1])^2/(2n\sigma^2))}{\sqrt{2\pi}\sigma}}&=0.
\end{align*}
\end{theorem}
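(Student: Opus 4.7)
The plan is to prove this classical local limit theorem via Fourier inversion, exploiting the fact that $\vX_1$ takes integer values and that its support generates all of $\ZZ$ (the aperiodicity/gcd hypothesis). Writing $\mu=\Erw[\vX_1]$ and $\phi(t)=\Erw[\eul^{\mathrm{i}t\vX_1}]$, the characteristic function of $S_n=\sum_{i=1}^n\vX_i$ is $\phi(t)^n$, and the Fourier inversion formula for $\ZZ$-valued variables gives
\begin{align*}
\pr\brk{S_n=z}=\frac{1}{2\pi}\int_{-\pi}^{\pi}\eul^{-\mathrm{i}tz}\phi(t)^n\,\dd t.
\end{align*}
The Gaussian target admits a parallel Fourier representation
\begin{align*}
\frac{\exp\bc{-(z-n\mu)^2/(2n\sigma^2)}}{\sqrt{2\pi n\sigma^2}}=\frac{1}{2\pi}\int_{-\infty}^{\infty}\eul^{-\mathrm{i}tz}\exp\bc{\mathrm{i}tn\mu-n\sigma^2t^2/2}\,\dd t.
\end{align*}
Thus, after multiplying by $\sqrt n$, the difference to be controlled is an integral of a single explicit integrand, uniformly in $z$.

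Next, I would substitute $t=s/\sqrt n$ in both integrals and reduce everything to a comparison between $\phi(s/\sqrt n)^n\eul^{-\mathrm{i}s\sqrt n\mu}$ and $\exp(-\sigma^2 s^2/2)$. A second-order Taylor expansion of the characteristic function of the centered variable $\vX_1-\mu$, justified by $\Erw[\vX_1^2]<\infty$, yields $\phi(s/\sqrt n)\eul^{-\mathrm{i}s\mu/\sqrt n}=1-\sigma^2 s^2/(2n)+o(1/n)$ pointwise in $s$, hence $\phi(s/\sqrt n)^n\eul^{-\mathrm{i}s\sqrt n\mu}\to\exp(-\sigma^2s^2/2)$. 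The standard CLT-style bound $|\phi(s/\sqrt n)|\le\exp(-cs^2/n)$ for $|s|\le\delta\sqrt n$ with a suitable $c>0$ furnishes an integrable majorant, so dominated convergence handles the central region $|s|\le\delta\sqrt n$.

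The crucial use of the gcd hypothesis enters in the tail region. Because the support of $\vX_1$ generates $\ZZ$, one has $|\phi(t)|<1$ for every $t\in[-\pi,\pi]\setminus\{0\}$; compactness then gives a uniform bound $\sup_{\delta\le|t|\le\pi}|\phi(t)|\le\eul^{-\eta}$ for some $\eta=\eta(\delta)>0$. Consequently the contribution of the integral $\int_{\delta\le|t|\le\pi}\eul^{-\mathrm{i}tz}\phi(t)^n\dd t$, multiplied by $\sqrt n$, is at most $O(\sqrt n\eul^{-\eta n})=o(1)$ uniformly in $z$. The analogous Gaussian tail $\int_{|s|\ge\delta\sqrt n}\exp(-\sigma^2 s^2/2)\dd s$ is likewise negligible. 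Combining the central and tail estimates, and taking $\delta\to 0$ in the end if needed, yields the claimed uniform convergence.

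The main obstacle is packaging the two error sources so that the supremum over $z\in\ZZ$ is genuinely uniform: the central part must be controlled by a bound independent of $z$ (which is automatic because the factor $\eul^{-\mathrm{i}tz}$ has modulus one and drops out after taking absolute values inside the integral), while the tail bound depends only on the $L^\infty$-norm of $\phi$ away from $0$. Once that is arranged, the rest is a textbook Taylor-expansion-plus-dominated-convergence argument, and I would expect the final write-up to be essentially the standard one found in Durrett or in Gnedenko--Kolmogorov.
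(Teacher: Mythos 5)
The paper does not actually prove this statement: it is quoted (as Theorem~\ref{Thm_llt}) from Durrett's textbook and used as a black box. Your proposal is the standard characteristic-function proof of the lattice local limit theorem---Fourier inversion on $[-\pi,\pi]$, the parallel Gaussian Fourier representation, the substitution $t=s/\sqrt n$, a second-order Taylor expansion plus dominated convergence in the central region, and a uniform bound on $|\phi(t)|$ away from the origin for the tails---which is essentially the proof in the cited source, so there is no alternative argument in the paper to compare it with.

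One caveat, which you inherit from the statement as transcribed rather than introduce yourself: the hypothesis that the gcd of the support of $\vX_1$ is one does \emph{not} by itself yield $|\phi(t)|<1$ for all $t\in[-\pi,\pi]\setminus\{0\}$. For instance, if $\vX_1\in\{1,3\}$ with equal probability, the support has gcd one (and generates $\ZZ$ as a group), yet $\phi(\pi)=-1$; correspondingly $\sum_{i\leq n}\vX_i$ always has the parity of $n$, and the stated uniform convergence fails for $z$ of the wrong parity. What the tail estimate (and the theorem itself) really requires is aperiodicity, i.e.\ span one: $\vX_1$ is not supported on any coset $a+h\ZZ$ with $h\geq 2$, equivalently the differences of support points generate $\ZZ$. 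This is the hypothesis in Durrett's formulation; under it, your argument that $|\phi(t)|=1$ forces $t\in2\pi\ZZ$, combined with compactness of $\{\delta\leq|t|\leq\pi\}$, is correct, and the rest of your write-up (integrable Gaussian majorant on $|s|\leq\delta\sqrt n$, exponentially small tail of order $\sqrt n\,\eul^{-\eta n}$, uniformity in $z$ because $\eul^{-\mathrm{i}tz}$ has modulus one) goes through verbatim. So your proof is sound for the intended span-one statement; just replace the justification ``the support generates $\ZZ$'' by the span-one condition when asserting $\sup_{\delta\leq|t|\leq\pi}|\phi(t)|<1$.
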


\noindent
 Subsequently, 
in \Sec~\ref{Sec_welldef} we calculate the probability of the event $\cS$, proving \Prop~\ref{Lemma_welldef} along the way.
Finally, in \Sec~\ref{Sec_Cor_lower} we complete the proof of \Prop~\ref{Cor_lower}.

\subsection{The event $\cD$}\label{Sec_llt}

\noindent
Because $\Erw[\vd^r]+\Erw[\vk^r]<\infty$ for an $r>2$, the event
\begin{align}\label{eqM}
\cM=\cbc{\max_{i\in[n]}\vd_i+\max_{i\in[\vm]}\vk_i\leq \sqrt n/\log^9 n}\qquad\mbox{satisfies}\qquad
\pr\brk{\cM}=1-o_n(1).
\end{align}
As an application of \Thm~\ref{Thm_llt} we obtain the following estimate.

\begin{lemma}\label{Cor_llt}
If $\gcd(\vk)$ divides $n$, then \jtd{$\pr\brk{\cD}=\Theta_n(n^{-1/2})$ and} $\pr\brk{\cD\mid\cM}=\Theta_n(n^{-1/2})$.
\end{lemma}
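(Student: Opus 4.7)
The approach is to apply the local limit theorem (\Thm~\ref{Thm_llt}) to $S_1 = \sum_{i=1}^n \vd_i$ and to $\sum_{i=1}^m \vk_i$ (for each fixed $m$) and then combine via independence. Since $\vm \sim \Po(dn/k)$, a Chernoff bound yields $\pr[|\vm - dn/k| > n^{2/3}] = o_n(n^{-1/2})$, so I may restrict to the range $M = \{m : |m - dn/k| \leq n^{2/3}\}$. By independence of $\vm$ and the degree sequences,
\[
\pr[\cD] = \sum_{m \in M} \pr[\vm = m] \sum_{z \in \ZZ} \pr[S_1 = z]\, \pr\bigg[\sum_{i=1}^m \vk_i = z\bigg] + o_n(n^{-1/2}).
\]
Write $g_1 = \gcd(\vd)$, $g_2 = \gcd(\vk)$, and fix $d_0 \in \supp(\vd)$, $k_0 \in \supp(\vk)$. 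Because $(\vd - d_0)/g_1$ is $\ZZ$-valued with gcd $1$, \Thm~\ref{Thm_llt} applied to it gives
\[
\pr[S_1 = z] = \frac{g_1\exp(-(z - nd)^2/(2n\Var(\vd)))}{\sqrt{2\pi n \Var(\vd)}} + o_n(n^{-1/2})
\]
uniformly for $z \in nd_0 + g_1 \ZZ$, and $\pr[S_1 = z] = 0$ off this lattice; the analogous estimate with $k_0, g_2, mk$ in place of $d_0, g_1, nd$ holds for $\pr[\sum_{i=1}^m \vk_i = z]$ on the lattice $mk_0 + g_2 \ZZ$.

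The inner sum then collapses to a sum over the intersection of these two arithmetic progressions. By Bezout the intersection is non-empty iff $\gcd(g_1, g_2) \mid (nd_0 - mk_0)$; since $g_2 \mid k_0$ forces $g_2 \mid mk_0$ and the hypothesis $g_2 \mid n$ forces $g_2 \mid nd_0$, this divisibility holds unconditionally, and the intersection is an arithmetic progression of common difference $\mathrm{lcm}(g_1, g_2)$. For $m$ with $|m - dn/k| \leq \sqrt n$, both Gaussian factors are $\Omega(n^{-1/2})$ throughout a window of width $\Omega(\sqrt n)$ around $nd$, so the intersection meets this window in $\Theta(\sqrt n)$ lattice points each contributing $\Theta(n^{-1})$ to the inner sum, yielding an inner sum of order $\Theta(n^{-1/2})$. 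Since $\pr[\vm = m] = \Theta(n^{-1/2})$ for such $m$ and there are $\Theta(\sqrt n)$ of them, the lower bound $\pr[\cD] \geq \Omega(n^{-1/2})$ follows; the matching upper bound $O(n^{-1/2})$ is obtained by dominating the inner sum on all of $M$ via Gaussian tails of the LLT and integrating against $\pr[\vm = m]$. The conditional statement $\pr[\cD \mid \cM] = \Theta_n(n^{-1/2})$ follows because $\pr[\cM] = 1 - o_n(1)$ by \eqref{eqM} and the LLT estimates are insensitive to the low-probability event $\neg\cM$, which only rules out atypically large individual degrees.

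The main obstacle is the arithmetic-progression bookkeeping: one must verify that the hypothesis $g_2 \mid n$, combined with the automatic $g_2 \mid k_0$, is precisely what forces the support lattices of $S_1$ and $\sum_{i=1}^m \vk_i$ to intersect in a progression of density $\Theta(1)$ near $nd$, so that both the lower and upper bounds of order $n^{-1/2}$ kick in---without this hypothesis one could easily have $\pr[\cD]=0$ identically. A secondary complication is the degenerate case $\Var(\vd) = 0$ or $\Var(\vk) = 0$, where $S_1$ or the inner sum is deterministic and one reduces directly to the local limit for the Poisson variable $\vm$, which still delivers order $n^{-1/2}$.
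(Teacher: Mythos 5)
Your overall skeleton---decomposing over the value of $\vm$, applying the local limit theorem to $\sum_{i\le n}\vd_i$ and to $\sum_{i\le m}\vk_i$ separately and then convolving---is genuinely different from the paper's proof (which splits into four cases according to whether $\Var(\vd)$ or $\Var(\vk)$ vanishes and combines a CLT for the large-degree part, the LLT for a truncated part and the Poisson local limit), and it could be made to work; but as written it contains a real arithmetic error. \Thm~\ref{Thm_llt} requires the gcd of the support of the summand to be one, and you claim $(\vd-d_0)/g_1$ with $g_1=\gcd(\vd)$ has this property. That is false in general: the correct lattice for $S_1=\sum_{i\le n}\vd_i$ is $nd_0+h_1\ZZ$ with $h_1=\gcd\cbc{d-d':d,d'\in\supp(\vd)}$ the span of the distribution, which can strictly exceed $g_1$. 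For instance, if $\vd$ is uniform on $\cbc{2,6}$ then $g_1=2$ but $(\vd-2)/2$ has support $\cbc{0,2}$, of gcd $2$, so the LLT does not apply as invoked, and your formula for $\pr\brk{S_1=z}$ asserts positive mass at points of $nd_0+g_1\ZZ$ that are not attainable. The same applies to $\vk$. Consequently your claim that the two progressions intersect ``unconditionally'' for every $m$ also fails: with $\supp(\vd)=\supp(\vk)=\cbc{2,6}$ and $n$ even the lattices are $2n+4\ZZ$ and $2m+4\ZZ$, which meet only when $m\equiv n\pmod 2$. The final order $\Theta_n(n^{-1/2})$ survives, since a constant fraction of the $\Theta_n(\sqrt n)$ values of $m$ in the window still contribute, but to see this you must redo the bookkeeping with $h_1,h_2$, and the hypothesis $\gcd(\vk)\mid n$ then enters to show that the congruence $mk_0\equiv nd_0\pmod{\gcd(h_1,h_2)}$ is solvable (any common divisor of $k_0$ and $h_2$ divides every element of $\supp(\vk)$, hence divides $\gcd(\vk)$, hence $nd_0$); it is not the trivial divisibility statement you give.

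A second gap is the conditional statement. Since only $r>2$ is assumed, $\pr\brk{\neg\cM}$ is merely $o_n(1)$ (roughly $n^{1-r/2}$ up to logarithms), which may be far larger than $n^{-1/2}$; hence the lower bound $\pr\brk{\cD\mid\cM}=\Omega_n(n^{-1/2})$ does not follow from $\pr\brk{\cD}=\Theta_n(n^{-1/2})$ and $\pr\brk{\cM}=1-o_n(1)$ by declaring the estimates ``insensitive'' to $\neg\cM$. One has to carry the conditioning on $\cM$ through the LLT computation, i.e.\ work with the truncated degree variables (whose mean, variance and span are asymptotically unchanged), which is what the paper does by stating every estimate given $\cM$. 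Two smaller points: \Thm~\ref{Thm_llt} is stated for a single sequence $n\to\infty$, so applying it uniformly over all $m$ with $|m-dn/k|\le n^{2/3}$ requires a (standard, but not free) uniformity remark; and in the case $\Var(\vd)=0$, $\Var(\vk)>0$ your reduction ``directly to the local limit for the Poisson variable $\vm$'' is inaccurate---the inner sum $\sum_{i\le m}\vk_i$ is still random---although your general scheme does cover this case once the lattice issue is repaired.
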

\begin{proof} \jtd{For $\pr\brk{\cD\mid\cM}$}
there are several cases to consider.  
First, that $\Var(\vd)=\Var(\vk)=0$, i.e., $\vd,\vk$ are both atoms.
Since $\vm$ is a Poisson variable with mean $dn/k$ we find $\pr\brk{\cD\mid\cM}=\pr\brk{\vm=d n/k}=\Theta_n(n^{-1/2})$.

Second, suppose that $\Var(\vd)>0$ but $\Var(\vk)=0$.
Then \Thm~\ref{Thm_llt} 
and~\eqref{eqM} show that
\begin{align}\label{eqCor_llt1}
\pr\brk{\abs{dn-\sum_{i=1}^n\vd_i}\leq\sqrt n\wedge k\mbox{ divides }\sum_{i=1}^n\vd_i\mid\cM}&=\Omega_n(1).
\end{align}
Further, given $\abs{dn-\sum_{i=1}^n\vd_i}\leq\sqrt n$ and given $k$ divides $\sum_{i=1}^n\vd_i$, the event $k\vm=\sum_{i=1}^n\vd_i$
has probability $\Theta_n(n^{-1/2}) $ by the local limit theorem for the Poisson distribution.

The case that $\Var(\vd)=0$ but $\Var(\vk)>0$ can be dealt with similarly.
Indeed, pick a large enough number $L>0$ and let $I=\cbc{i\in[\vm]:\vk_i>L}$, $\vm'=|I|$, $\vm''=\vm-|I|$, $S'=\sum_{i\in I}\vk_i$
and $S''=\sum_{i\in[\vm]\setminus I}\vk_i$.
Then $\vm',\vm''$ are stochastically independent, as are $S',S''$.
Moreover, since $S'$ satisfies the central limit theorem we have
\begin{align}\label{eqCor_llt1a}
\pr\brk{|S'-\Erw[S'\mid\cM]|\leq\sqrt n\mid\cM}&=\Omega_n(1).
\end{align}
Further, \Thm~\ref{Thm_llt} applies to $S''$, which is distributed as $\sum_{i=1}^{\vm}\vk_i\vecone\{\vk_i\leq L\}$.
Hence, as $n$ is divisible by $\gcd(\vk)$, for large enough $L$ we have
\begin{align}\label{eqCor_llt1b}
\pr\brk{S'+S''=dn\mid|S'-\Erw[S'\mid\cM]|\leq\sqrt n,\cM }&=\Omega_n(n^{-1/2}).
\end{align}
Thus, \eqref{eqCor_llt1a} and \eqref{eqCor_llt1b} show that $\pr\brk{\cD\mid\cM}=\Omega_n(n^{-1/2})$.
The upper bound $\pr\brk{\cD\mid\cM}=O_n(n^{-1/2})$ follows from the uniform upper bound from \Thm~\ref{Thm_llt}.

A similar argument applies in the final case $\Var(\vd)>0$, $\Var(\vk)>0$.
Indeed, \Thm~\ref{Thm_llt} and~\eqref{eqM} yield
\begin{align}\label{eqCor_llt3}
\pr\brk{\gcd(\vk)\mbox{ divides }\sum_{i=1}^{n}\vd_i\mbox{ and }\abs{dn-\sum_{i=1}^{n}\vd_i}\leq\sqrt n\mid\cM}=\Omega_n(1).
\end{align}
Moreover, \eqref{eqCor_llt1a} remains valid regardless the variance of $\vd$.
Hence, applying \Thm~\ref{Thm_llt} to $S''$, we obtain
\begin{align}\label{eqCor_llt1c}
\pr\brk{S'+S''\negmedspace=\negmedspace\sum_{i=1}^n\vd_i\,\bigg|\,
\gcd(\vk)\mbox{ divides }\negmedspace\sum_{i=1}^{n}\vd_i,\,\abs{dn-\sum_{i=1}^{n}\vd_i}\leq\negmedspace\sqrt n,\,
|S'-\Erw[S'\mid\cM]|\leq\sqrt{n},\cM }\negmedspace=\Omega_n(n^{-1/2}).
\end{align}
Combining~\eqref{eqCor_llt3} and~\eqref{eqCor_llt1c}, we see that $\pr\brk{\cD\mid\cM}=\Omega_n(n^{-1/2})$.
The matching upper bound $\pr\brk{\cD\mid\cM}=O_n(n^{-1/2})$ follows from the universal upper bound from \Thm~\ref{Thm_llt} once more. 
The treatment of the unconditional $\pr\brk{\cD}$ is similar but slightly simpler.
\end{proof}

\subsection{The event $\cS$}\label{Sec_welldef}
The random matrix $\vA_n$ for \Thm~\ref{thm:rank} is identical in distribution to the random matrix $\vA_{0,n}$ with $\eps=0$ conditioned on the event $\cD$ and on the event $\cS$ that the Tanner graph $\G_{0,n}$ does not contain any multi-edges.
Therefore, \Prop~\ref{Lemma_welldef} is going to be a consequence of \Lem~\ref{Cor_llt} and the following statement.

\begin{lemma}\label{Lemma_simple}
We have $\pr\brk{\vA_{0,n}\in\cS|\cD}=\Omega_n(1)$.
\end{lemma}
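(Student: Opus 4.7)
The statement is that in the bipartite configuration model with the given degree sequences, conditioning on the degree sums matching leaves a constant probability of producing a simple graph. This is a classical Poisson-approximation computation for the number of multi-edges, and the only nonstandard point is confirming that the $r$-th moment assumption ($r>2$) is strong enough to control the random degree sequence after conditioning on $\cD$.

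\textbf{Step 1 (unpacking the conditioning).} When $\eps=0$ and we further condition on $\cD$, the construction of $\G_{0,n}$ amounts to drawing a uniformly random \emph{perfect} matching between the $N:=\sum_{i\le n}\vd_i=\sum_{j\le\vm}\vk_j$ variable clones and check clones, and $\cS$ is the event that this matching projects to a simple bipartite graph.

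\textbf{Step 2 (typical degree sequences).} Let $\cF$ be the $\sigma$-algebra generated by $\vm$ and the full degree sequence $(\vd_i)_{i\le n},(\vk_j)_{j\le\vm}$, and let $\cG$ be the event
\begin{align*}
\cG=\Bigl\{\,dn/2\le N\le 2dn,\ \textstyle\sum_i\vd_i(\vd_i-1)\le Cn,\ \textstyle\sum_j\vk_j(\vk_j-1)\le Cn\,\Bigr\}
\end{align*}
for a sufficiently large constant $C$. Since $\vd^2$ and $\vk^2$ have finite $r/2$-th moment with $r/2>1$, Lemma~\ref{Lemma_sums} gives $\pr[\cG^c]=o_n(n^{-1/2})$, while Lemma~\ref{Cor_llt} says $\pr[\cD]=\Theta_n(n^{-1/2})$. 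Hence $\pr[\cG\mid\cD]=1-o_n(1)$.

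\textbf{Step 3 (Poisson approximation).} Let $\vX$ count unordered pairs of matching edges that project to a parallel edge. A direct computation on a uniformly random matching yields
\begin{align*}
\Erw[\vX\mid\cF]=\frac{\bigl(\sum_i\vd_i(\vd_i-1)\bigr)\bigl(\sum_j\vk_j(\vk_j-1)\bigr)}{2N(N-1)},
\end{align*}
which on $\cG$ is bounded above by some absolute $\lambda_{\max}$. By the standard factorial-moment argument for the configuration model (of Bollob\'as/Wormald type), for every fixed $\ell$ the $\ell$-th factorial moment of $\vX$ given $\cF$ equals $\Erw[\vX\mid\cF]^\ell(1+o_n(1))$ on $\cG$, so $\vX$ is asymptotically Poisson with a bounded random parameter. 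In particular $\pr[\vX=0\mid\cF]\ge e^{-\lambda_{\max}}-o_n(1)$ on $\cG$.

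\textbf{Step 4 (averaging).} Combining Steps 2 and 3,
\begin{align*}
\pr[\cS\mid\cD]\ge \pr[\cG\mid\cD]\cdot\bigl(e^{-\lambda_{\max}}-o_n(1)\bigr)=\Omega_n(1),
\end{align*}
as required.

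\textbf{Main obstacle.} The delicate point is Step~3: the factorial-moment computation that normally assumes a \emph{fixed} degree sequence with uniformly bounded maximum degree must here be run with a \emph{random} sequence whose maximum clone count can grow like $n^{1/r}$. Controlling the lower-order correction terms (contributions from vertices of atypically high degree and from interactions between several multi-edges sharing a vertex) is exactly where the slack $r>2$, rather than merely a finite second moment, is used; the bound~\eqref{eqM} on the maximum degree after truncation provides the needed uniform control.
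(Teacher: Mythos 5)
Your overall strategy (Poisson/factorial-moment approximation for the number of multi-edges, combined with transferring typicality of the degree sequence through the conditioning on $\cD$ via $\pr[\cD]=\Theta_n(n^{-1/2})$) is the same as the paper's, but Step~2 contains a genuine gap. You invoke Lemma~\ref{Lemma_sums} for the summands $\vd_i^2,\vk_j^2$, yet that lemma requires a finite moment of order $r>2$ of the summand itself; here $\vd^2$ is only guaranteed a finite moment of order $r/2>1$, so the lemma does not apply. Worse, the claimed bound $\pr[\cG^c]=o_n(n^{-1/2})$ is false in general under the paper's hypothesis: if, say, $\pr[\vd\geq t]\asymp t^{-21/10}$ (so $\Erw[\vd^r]<\infty$ for some $r>2$), then a single column of degree at least $\sqrt{Cn}$ already forces $\sum_i\vd_i(\vd_i-1)>Cn$, and this occurs with probability of order $n^{1-21/20}=n^{-1/20}\gg n^{-1/2}$. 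Consequently $\pr[\cG\mid\cD]=1-o_n(1)$ cannot be obtained by the crude comparison $\pr[\cG^c]/\pr[\cD]$, and the same obstruction blocks your appeal to \eqref{eqM} in the ``main obstacle'' paragraph: $\pr[\cM]=1-o_n(1)$ alone does not give $\pr[\cM\mid\cD]$ close to one, since dividing by $\pr[\cD]=\Theta_n(n^{-1/2})$ destroys the bound.

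The paper circumvents exactly this issue. First, Lemma~\ref{Cor_llt} proves the local limit estimate both unconditionally and conditionally on $\cM$, i.e.\ $\pr[\cD]=\Theta_n(n^{-1/2})$ \emph{and} $\pr[\cD\mid\cM]=\Theta_n(n^{-1/2})$; this yields $\pr[\cM\mid\cD]=\pr[\cD\mid\cM]\pr[\cM]/\pr[\cD]=\Omega_n(1)$, so one only needs the simplicity estimate conditionally on $\cD\cap\cM$, and an $\Omega_n(1)$ (rather than $1-o_n(1)$) retention of the truncation event suffices. Second, on $\cM$ the empirical second moments are controlled (Claim~\ref{Lemma_sm}) not by tail bounds for sums of the unbounded variables $\vd_i^2$, but by Chernoff bounds for the \emph{counts} of degrees in geometric buckets truncated at $\sqrt n/\log^9n$; these counts are sums of indicators, so the failure probability is $o_n(1/n)$, which does survive division by $\pr[\cD\mid\cM]$. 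Only then is the factorial-moment computation (your Step~3) carried out, with the $\cM$-truncation also used to kill the contributions of multi-edges of multiplicity at least three. To repair your proof you would need either to establish a conditional local limit theorem given your event $\cG$ (and a max-degree truncation) playing the role of $\pr[\cD\mid\cM]=\Theta_n(n^{-1/2})$, or to replace your tail estimates by bucket-count concentration as in the paper; as written, Steps~2 and the transfer of the max-degree control through the conditioning do not go through.
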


\noindent
We proceed to prove \Lem~\ref{Lemma_simple}.
Recall the event $\cM$ from \eqref{eqM}.
The proof of \Lem~\ref{Lemma_simple} is essentially based on the routine approach of showing by way of a moment calculation that the number of multi-edges of $\G_{0,n}$ is asymptotically Poisson with a finite mean.
This argument has been carried out illustratively for the case of random regular graphs in~\cite[\Chap~9]{JLR}.
But since here we work with very general degree distributions, technical complications arise.
For instance, as a first step we need to estimate the empirical variance of the degree sequences.

\begin{claim}\label{Lemma_sm}
On the event $\cD\cap\cM$ we have
$\frac1n\sum_{i=1}^n\vd_i^2\to \Erw[\vd^2]$, $\frac1n\sum_{i=1}^{\vm}\vk_i^2\to d\Erw[\vk^2]/k$ in probability.
\end{claim}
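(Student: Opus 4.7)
The plan is to leverage the lower bound $\pr[\cD\cap\cM]=\Omega_n(n^{-1/2})$ supplied by \Lem~\ref{Cor_llt}: any event $\cE$ with $\pr[\cE\cap\cM]=o_n(n^{-1/2})$ automatically satisfies $\pr[\cE\mid\cD\cap\cM]=o_n(1)$. So it suffices to show that the deviation events have unconditional probability $o_n(n^{-1/2})$ on $\cM$. On $\cM$ every $\vd_i$ and $\vk_i$ is bounded by $\sqrt n/\log^9 n$, so the natural tool is Bernstein's inequality applied to the truncated variables $\tilde\vd_i=\vd_i\vecone\{\vd_i\leq\sqrt n/\log^9 n\}$ and $\tilde\vk_i=\vk_i\vecone\{\vk_i\leq\sqrt n/\log^9 n\}$.

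For the $\vd$-sum, first observe that $\sum_{i=1}^n\vd_i^2=\sum_{i=1}^n\tilde\vd_i^2$ on $\cM$. Using $\vd^{2-r}\leq(\sqrt n/\log^9 n)^{2-r}$ on $\{\vd>\sqrt n/\log^9 n\}$ (which holds since $r>2$),
\[
\Erw[\vd^2]-\Erw[\tilde\vd_1^2]=\Erw[\vd^2\vecone\{\vd>\sqrt n/\log^9 n\}]\leq(\sqrt n/\log^9 n)^{2-r}\Erw[\vd^r]=o_n(1).
\]
An analogous splitting yields $\Var(\tilde\vd_1^2)=o_n(n)$. Since $\tilde\vd_i^2\in[0,M]$ with $M=n/\log^{18} n$, Bernstein's inequality gives
\[
\pr\Big[\Big|\sum_{i=1}^n(\tilde\vd_i^2-\Erw[\tilde\vd_1^2])\Big|>\delta n/2\Big]\leq 2\exp\Big(-\frac{(\delta n/2)^2/2}{n\Var(\tilde\vd_1^2)+M\delta n/6}\Big)=2\exp\bigl(-\Omega(\log^{18} n)\bigr),
\]
because the $M\delta n$ term dominates the denominator. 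This is $o_n(n^{-C})$ for every $C$, and on $\cM$ the same tail applies to $|\tfrac{1}{n}\sum\vd_i^2-\Erw[\vd^2]|>\delta$; dividing by $\pr[\cD\cap\cM]$ delivers the convergence in probability.

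For the $\vk$-sum, one first strips off the randomness of $\vm\sim\Po(dn/k)$ via a Chernoff bound: $\pr[|\vm-dn/k|>n^{2/3}]\leq\exp(-\Omega(n^{1/3}))$. For each $m$ in the typical range $[dn/k\pm n^{2/3}]$, conditional on $\vm=m$ the variables $\vk_1,\dots,\vk_m$ are iid copies of $\vk$, and the same truncation-plus-Bernstein argument yields
\[
\pr\Big[\Big|\sum_{i=1}^m\tilde\vk_i^2-m\Erw[\tilde\vk_1^2]\Big|>\delta n/4\,\Big|\,\vm=m\Big]\leq\exp(-\Omega(\log^{18}n)).
\]
Since $m\Erw[\tilde\vk_1^2]=m\Erw[\vk^2]+o_n(n)=(dn/k)\Erw[\vk^2]+o_n(n)$ uniformly over the typical range, averaging over $m$ and adding the Chernoff tail give $\pr[\text{bad}\cap\cM]=o_n(n^{-1/2})$, and another appeal to \Lem~\ref{Cor_llt} completes the proof.

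The main obstacle is a moment deficit: the hypothesis $\Erw[\vd^r]+\Erw[\vk^r]<\infty$ with $r>2$ does not secure an $(r/2)$-moment of $\vd^2$ or $\vk^2$ that exceeds $2$, so neither Chebyshev's inequality nor the untruncated \Lem~\ref{Lemma_sums} applied to the squared degrees yields a tail better than $o_n(1)$, which is insufficient to absorb the $\Omega(n^{-1/2})$ prior probability of $\cD\cap\cM$. The polylogarithmic truncation built into $\cM$ is what circumvents this: it makes the squared degrees bounded, so Bernstein's inequality operates in its $Mt$-dominated regime and delivers the superpolynomial decay we need.
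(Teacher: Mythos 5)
Your proposal is correct and rests on the same outer mechanism as the paper: bound the failure probability on $\cM$ by something well below $n^{-1/2}$, then divide by $\pr\brk{\cD\cap\cM}=\Theta_n(n^{-1/2})$ from \Lem~\ref{Cor_llt} (together with \eqref{eqM}). The concentration step, however, is genuinely different. The paper conditions on $\vm=m$ (via a Poisson tail bound), applies Chernoff bounds to the empirical counts $Q_j$ of each degree value $j\le L$, and covers the range $L<j\le \sqrt n/\ln^9n$ by geometric bucketing with relative-error Chernoff bounds, reassembling $\sum_i\vk_i^2$ from the buckets up to a factor $1\pm\delta$; this needs no moment estimates for the squared degrees and yields an $o_n(1/n)$ failure probability, which suffices. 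You instead hit the truncated squares $\tilde\vd_i^2,\tilde\vk_i^2\in[0,M]$, $M=n/\log^{18}n$, with a single Bernstein application, which is shorter and gives a superpolynomially small tail; your handling of the Poisson number of rows and of the truncation bias is the same in spirit as the paper's.

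One point to tighten: the bound you record, $\Var(\tilde\vd_1^2)=o_n(n)$, does not by itself justify the assertion that the $M\delta n$ term dominates the Bernstein denominator --- a priori $n\Var(\tilde\vd_1^2)$ could be of order $n^2/\log n\gg Mn$, in which case the exponent would only be $\omega_n(1)$ and the tail $n^{-c\delta^2}$, which does not beat $n^{-1/2}$ for small $\delta$ (note Hoeffding alone fails here for the same reason, so the variance bound is genuinely load-bearing). The very truncation estimate you use for the mean does give the sharper bound $\Var(\tilde\vd_1^2)\le\Erw[\tilde\vd_1^4]\le M^{2-r/2}\,\Erw[\vd^r]=o_n(M)$ for $2<r\le 4$ (and $O_n(1)$ when $r\ge4$), and likewise for $\tilde\vk_1^2$; with that recorded, the denominator is $O_n(Mn)$, the exponent is $\Omega_n(\delta^2\log^{18}n)$, and your argument goes through as stated.
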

\begin{proof}
We will only prove the statement about the $\vk_i$; the same (actually slightly simplified) argument applies to the $\vd_i$.
Thanks to Bennett's tail bound for the Poisson distribution we may condition on $\{\vm=m\}$ for some integer $m$ with $|m-dn/k|\leq\sqrt n\ln n$.
Fix a small $\delta>0$ and a large enough $L=L(\delta)>0$.
Given $\vm=m$ the variables $Q_j=\sum_{i\in[\vm]}\vecone\{\vk_i=j\}$ have a binomial distribution.
Therefore, the Chernoff bound yields
\begin{align*}
\pr\brk{\abs{Q_j-dn\pr\brk{\vk=j}/k}\leq\sqrt n\ln n\mid\vm=m}&=1-o_n(1/n)&&\mbox{for any $j\leq L$.} 
\end{align*}
Hence, \eqref{eqM} and \Lem~\ref{Cor_llt} yield
\begin{align}\label{eqLemma_sm_1}
\pr\brk{\forall j\leq L:|Q_j-dn\pr[\vk=j]/k|\leq \sqrt n\ln n\mid\cD\cap\cM,\,\vm=m}=1-o_n(1).
\end{align}
Further, let 
\begin{align*}
R_h&=\sum_{j\geq1}\vecone\{(1+\delta)^{h-1}L< j\leq (1+\delta)^{h}L\wedge \sqrt n/\ln^9n\}Q_j,\\
\bar R_h&=m\sum_{j\geq1}\vecone\{(1+\delta)^{h-1}L< j\leq (1+\delta)^{h}L\wedge \sqrt n/\ln^9n\}\pr\brk{\vk=j}
\end{align*}
and let $\cH$ be the set of all integers $h\geq1$ with $(1+\delta)^{h-1}L\leq \sqrt n/\ln^9n$.
Then the Chernoff bound implies that
\begin{align}\label{eqLemma_sm_2}
\pr\brk{\forall h\in\cH:\abs{R_h-\bar R_h}>\delta\bar R_h+\ln^2n\mid\cD\cap\cM,\vm=m}&=o_n(n^{-1}).
\end{align}
Finally, if $|Q_j-dn\pr\brk{\vk=j}/k|\leq\sqrt n\ln n$ for all $j\leq L$ and 
$\abs{R_h-\bar R_h}\leq\delta\bar R_h+\ln^2n$ for all $h\in\cH$, then
\begin{align*}
&\frac1n\sum_{i=1}^m\vk_i^2\leq o_n(1)+\frac dk\Erw\brk{\vk^2\vecone\{\vk\leq L\}}
	+\frac{d}{kn}\sum_{h\in\cH}(1+\delta)^{2h}L^2R_h\\
&=o_n(1)+\frac dk\Erw\brk{\vk^2\vecone\{\vk\leq L\}}
	+\frac{d}{kn}\sum_{h\in\cH}(1+\delta)^{2h+1}L^2\bar R_h\leq\frac{(1+\delta)d}k\Erw[\vk^2]+o_n(1),& \\
	&\mbox{and analogously}\hspace{4pt}\frac1n\sum_{i=1}^m\vk_i^2\geq \frac{(1-\delta)d}k\Erw[\vk^2]+o_n(1).
\end{align*}
Since this holds true for any fixed $\delta>0$, the assertion follows from \eqref{eqLemma_sm_1} and \eqref{eqLemma_sm_2}.
\end{proof}

\begin{claim}\label{Lemma_doubleEdges}
Let $Y$ be the number of multi-edges of the Tanner graph $\G_{0,n}$ and let $\ell\geq1$.
There is $\lambda>0$ such that on
\begin{align*}
\cM\cap\cD\cap
\cbc{\sum_{i=1}^n\vd_i=dn+o_n(n),\,\sum_{i=1}^n\vd_i^2=n\Erw[\vd^2]+o_n(n)} \\
\cap\cbc{
\sum_{i=1}^{\vm}\vk_i^2=dn\Erw[\vk^2]/k+o_n(n)}\cap\{\vm=dn/k+o_n(n)\}
\end{align*}
we have
$$
\Erw\brk{\prod_{i=1}^\ell Y-i+1\,\bigg|\,(\vd_i)_{i\in[n]},(\vk_i)_{i\in\vm}}=\lambda^\ell+o_n(1).
$$
\end{claim}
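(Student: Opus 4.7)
The plan is to compute the joint factorial moments of $Y$ conditionally on the degree sequences $(\vd_j)_j,(\vk_i)_i$, along the standard method-of-moments recipe for Poisson convergence in the configuration model (cf.\ \cite[\Chap~9]{JLR}). On the conditioning event, $\vec\Gamma_{0,n}$ is a uniformly random perfect matching of the bipartite configuration graph on $M:=\sum_{j}\vd_j=\sum_i\vk_i=dn+o_n(n)$ clones. For each pair $(a_i,x_j)$ let $\mu_{ij}$ denote the multiplicity of the $a_ix_j$-edge in $\vec\Gamma_{0,n}$, so that $Y=\sum_{i,j}\vecone\{\mu_{ij}\geq 2\}$. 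The target value is
\begin{equation*}
\lambda\;=\;\frac{\Erw[\vd(\vd-1)]\,\Erw[\vk(\vk-1)]}{2dk}.
\end{equation*}

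First, I would pin down the first moment via the auxiliary count $Z=\sum_{i,j}\binom{\mu_{ij}}{2}$. A uniformly random perfect matching places any two prescribed clones of $a_i$ onto any two prescribed clones of $x_j$ with probability $2(M-2)!/M!=2/(M(M-1))$. Summing over the $\binom{\vk_i}{2}\binom{\vd_j}{2}$ choices of clone pairs,
\begin{equation*}
\Erw[Z\mid(\vd_j),(\vk_i)]\;=\;\frac{2}{M(M-1)}\;\sum_i\binom{\vk_i}{2}\sum_j\binom{\vd_j}{2}\;\longrightarrow\;\lambda,
\end{equation*}
using the asymptotics $\sum_i\vk_i^2=(dn/k)\Erw[\vk^2]+o(n)$ and $\sum_j\vd_j^2=n\Erw[\vd^2]+o(n)$ granted by the conditioning event. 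The discrepancy $Z-Y=\sum_{i,j}\binom{\mu_{ij}}{2}\vecone\{\mu_{ij}\geq 3\}$ has expectation bounded by $\sum_{i,j}\binom{\vk_i}{3}\binom{\vd_j}{3}\cdot 6(M-3)!/M!=O((\max_i\vk_i)(\max_j\vd_j)\lambda/M)$, which is $o_n(1)$ thanks to the bound $\max_i\vk_i+\max_j\vd_j\leq\sqrt n/\log^9 n$ from $\cM$. Hence $\Erw[Y\mid(\vd_j),(\vk_i)]=\lambda+o_n(1)$.

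For the $\ell$-th factorial moment, decompose
\begin{equation*}
\Erw[(Y)_\ell\mid(\vd_j),(\vk_i)]\;=\;\sum_{(i_1,j_1),\ldots,(i_\ell,j_\ell)\text{ distinct}}\Pr[\mu_{i_r j_r}\geq 2\ \forall r].
\end{equation*}
I would split the sum according to whether the multiset $\{i_1,\ldots,i_\ell\}$ and $\{j_1,\ldots,j_\ell\}$ are each sets. For the diagonal part (all $i_r$ distinct, all $j_r$ distinct), pass once more to $Z$: using that the probability of prescribing $\ell$ specific double-edges at $\ell$ locations is $2^\ell(M-2\ell)!/M!=(2/M^2)^\ell(1+O(\ell^2/M))$, summing over all $\prod_r\binom{\vk_{i_r}}{2}\binom{\vd_{j_r}}{2}$ clone-pair selections factorises and yields $\lambda^\ell+o_n(1)$. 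The off-diagonal terms, in which some check or variable is shared between two of the $\ell$ pairs, contribute at most $O((\max\vk)(\max\vd)\lambda^{\ell-1}/M)=o_n(1)$ by the same degree bound, and triple-or-higher multiplicities are handled as in the first-moment step.

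The main obstacle is precisely the last bookkeeping step: simultaneously controlling (i) the Bonferroni-style passage from clone-pair selections, which may overlap in one shared clone, to the actual event $\mu_{ij}\geq 2$, (ii) the off-diagonal tuples with repeated checks/variables, and (iii) higher-multiplicity corrections. All three are absorbed into a single error of order $(\max\vk)(\max\vd)/M=O(\log^{-18}n)=o_n(1)$ using the $\sqrt n/\log^9 n$ buffer provided by $\cM$, which delivers $\Erw[(Y)_\ell\mid(\vd_j),(\vk_i)]=\lambda^\ell+o_n(1)$ as claimed.
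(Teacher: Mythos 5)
Your proposal is correct and takes essentially the same route as the paper: a conditional factorial-moment calculation in the bipartite configuration model, in which the main contribution comes from $\ell$-tuples of plain double edges (factorising to $\lambda^\ell$ via the conditioned degree statistics) while all higher-multiplicity and repeated-check/variable contributions are absorbed into an $o_n(1)$ error using the cap $\max_i\vd_i+\max_i\vk_i\leq\sqrt n/\log^9 n$ supplied by $\cM$. Incidentally, your constant $\lambda=\Erw[\vd(\vd-1)]\,\Erw[\vk(\vk-1)]/(2dk)$ is the correct evaluation of the limit on the conditioning event (the denominator $2d^2$ displayed in the paper's final formula appears to be a slip for $2dk$), which is immaterial here since the claim only asserts the existence of some $\lambda>0$.
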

\begin{proof}
To estimate the $\ell$-th factorial moments of $Y$ for $\ell\geq1$, we split the random variable into a sum of indicator variables.
Specifically, let $U_\ell$ be the set of all families $(u_i,v_i,w_i)_{i\in\ell}$ with $u_i\in[\vm]$, $v_i\in[n]$ and $2\leq w_i\leq \vk_{u_i}\wedge\vd_{v_i}\leq\sqrt n/\log^9n$ such that the pairs $(u_1,v_1),\ldots,(u_\ell,v_\ell)$ are pairwise distinct.
Moreover, let $Y[(u_i,v_i,w_i)_{i\in[\ell]}]$ be the number of ordered $\ell$-tuples of multi-edges comprising precisely $w_i$ edges between check $a_{u_i}$ and variable $x_{v_i}$ for each $i$.
Then
\begin{align*}
\prod_{h=1}^\ell Y-h+1&=\sum_{(u_i,v_i,w_i)_{i\in[\ell]}\in U_\ell}Y[(u_i,v_i,w_i)_{i\in[\ell]}].
\end{align*}
Moreover, letting $w=\sum_iw_i$, we claim that
\begin{align}\label{eqLemma_doubleEdges5}
\Erw[Y[(u_i,v_i,w_i)_{i\in[h]}]\mid(\vd_i)_{i\in[n]},(\vk_i)_{i\in\vm}]&
		\sim\frac{1}{(dn)^w}\prod_{i=1}^\ell\bink{\vk_{u_i}}{w_i}\bink{\vd_{v_i}}{w_i}w_i!\enspace.
\end{align}
Indeed, the factors $\bink{\vd_{v_i}}{w_i}\bink{\vk_{u_i}}{w_i}w_i!$ count the number of possible matchings between $w_i$ clones of the variable node $x_{v_i}$, whose degree equals $\vd_{v_i}$, and of the check node $a_{u_i}$ of degree $\vk_{u_i}$.
Further, since $\ell$ is bounded, the probability that all these matchings are realised in $\G_{0,n}$ is asymptotically equal to $(dn)^{-w}$.

Now, for a sequence $\vw=(w_1,\ldots,w_\ell)$ let $Y_{\vw}=\sum_{(u_i,v_i,w_i)_{i\in[\ell]}\in U_\ell}Y[(u_i,v_i,w_i)_{i\in[\ell]}]$.
Then \eqref{eqLemma_doubleEdges5} yields
\begin{align*}\nonumber
\Erw[Y_{\vw}\mid(\vd_i)_{i\in[n]},(\vk_i)_{i\in\vm}]\leq O_n(n^{-w})\prod_{i=1}^\ell\bc{\sum_{j=1}^n\vd_{j}^{w_i}}\bc{\sum_{j=1}^{\vm}\vk_{j}^{w_i}} \\
	\leq O_n(n^{-w})\max_{j\in[n]}\vd_j^{w-2\ell}\max_{j\in[\vm]}\vk_j^{w-2\ell}\bc{\sum_{j=1}^n\vd_{j}^{2}}^\ell\bc{\sum_{j=1}^{\vm}\vk_{j}^{2}}^\ell\\
\leq O_n(n^{2\ell-w})\max_{j\in[n]}\vd_j^{w-2\ell}\max_{j\in[\vm]}\vk_j^{w-2\ell}=O_n(\ln^{2\ell-w} n);
\end{align*}
the last bound follows from our conditioning on $\cM$.
As a consequence,
\begin{align}\label{eqLemma_doubleEdges7}
\sum_{\vw:w>2\ell}\Erw[Y_{\vw}\mid(\vd_i)_{i\in[n]},(\vk_i)_{i\in\vm}]&=o_n(1).
\end{align}
Further, invoking \eqref{eqLemma_doubleEdges5}, we obtain
\begin{align}\label{eqLemma_doubleEdges8}
\Erw[Y_{(2,\ldots,2)}\mid(\vd_i)_{i\in[n]},(\vk_i)_{i\in\vm}]&\sim\vec\lambda^\ell,\qquad\mbox{where}&
\vec\lambda&\sim
\frac{\bc{\sum_{i=1}^n\vd_i(\vd_i-1)}\bc{\sum_{i=1}^n\vk_i(\vk_i-1)}}{2(dn)^2}.
\end{align}
Combining \eqref{eqLemma_doubleEdges7} and \eqref{eqLemma_doubleEdges8}, we conclude that on $\cD\cap\cM\cap\{\vm=dn/k+o_n(n)\}$,
\begin{align}\label{eqLemma_doubleEdges77}
\Erw[Y^\ell]\mid(\vd_i)_{i\in[n]},(\vk_i)_{i\in\vm}]&\sim\vec\lambda^\ell.
\end{align}
Finally, on $\cbc{\sum_{i=1}^n\vd_i=dn+o_n(n),\,\sum_{i=1}^n\vd_i^2=n\Erw[\vd^2]+o_n(n)}\cap\cbc{
\sum_{i=1}^{\vm}\vk_i^2=dn\Erw[\vk^2]/k+o_n(n)}$ we have
\begin{align}\label{eqLemma_doubleEdges78}
\vec\lambda&\sim\lambda=\frac{(\Erw[\vd^2]-d)(\Erw[\vk^2]-k)}{2d^2},
\end{align}
and the assertion follows from \eqref{eqLemma_doubleEdges77}--\eqref{eqLemma_doubleEdges78}.
\end{proof}

\begin{claim}\label{Cor_doubleEdges}
We have $\pr\brk{\cS\mid\cD\cap\cM}=\Omega_n(1)$.
\end{claim}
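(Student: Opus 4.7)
The plan is to apply the method of moments for Poisson limits, using Claim~\ref{Lemma_doubleEdges} as the factorial-moment input. Observe that the event $\cS$ coincides with $\{Y=0\}$, where $Y$ is the number of multi-edges of $\G_{0,n}$. If we can show that, conditional on $\cD\cap\cM$, the random variable $Y$ converges in distribution to $\Po(\lambda)$ with $\lambda=(\Erw[\vd^2]-d)(\Erw[\vk^2]-k)/(2d^2)$, then $\pr[\cS\mid\cD\cap\cM]=\pr[Y=0\mid\cD\cap\cM]\to\eul^{-\lambda}=\Omega_n(1)$, which is exactly what we want.

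First, I would establish that the `regularity' events appearing as conditioning hypotheses in Claim~\ref{Lemma_doubleEdges} hold with probability $1-o_n(1)$ under $\pr[\,\cdot\mid\cD\cap\cM]$. Since $\pr[\cD\mid\cM]=\Theta_n(n^{-1/2})$ by \Lem~\ref{Cor_llt}, it suffices to show that each failure probability is $o_n(n^{-1/2})$ under $\pr[\,\cdot\mid\cM]$, and this in turn is implied by an unconditional bound of $o_n(n^{-1})$ because $\pr[\cM]=1-o_n(1)$. For $\sum_{i=1}^n\vd_i=dn+o_n(n)$ and (after conditioning on $\vm$) for $\sum_{i=1}^{\vm}\vk_i^2=dn\Erw[\vk^2]/k+o_n(n)$, \Lem~\ref{Lemma_sums} delivers exactly such $o_n(n^{-1})$ tail bounds, using the assumption $\Erw[\vd^r]+\Erw[\vk^r]<\infty$ for some $r>2$. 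The concentration $\sum_{i=1}^n\vd_i^2=n\Erw[\vd^2]+o_n(n)$ follows from the companion argument to Claim~\ref{Lemma_sm} applied to the $\vd_i$ (which Claim~\ref{Lemma_sm} itself already supplies for the $\vk_i$). Finally $\vm=dn/k+o_n(n)$ holds $\pr[\,\cdot\mid\cM]$-almost surely with the required quantitative control by Bennett's bound for the Poisson distribution.

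Second, on the intersection of all these regularity events together with $\cD\cap\cM$, Claim~\ref{Lemma_doubleEdges} yields
\begin{equation*}
\Erw\!\brk{\prod_{i=1}^{\ell}(Y-i+1)\,\bigg|\,(\vd_i)_i,(\vk_i)_i}=\lambda^\ell+o_n(1)
\end{equation*}
for every fixed $\ell\geq1$. Averaging over the (highly probable) conditioning sequences and using that $Y\le n^2$ deterministically so the $o_n(1)$ term is uniformly dominated, this transfers to $\Erw\brk{\prod_{i=1}^\ell(Y-i+1)\mid\cD\cap\cM}\to\lambda^\ell$. Since $\lambda^\ell$ are precisely the factorial moments of $\Po(\lambda)$ and the Poisson distribution is determined by its moments, the standard method-of-moments criterion (see e.g.~\cite[\Chap~6]{JLR}) implies $Y\mid\cD\cap\cM\stackrel{d}{\to}\Po(\lambda)$. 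In particular $\pr[Y=0\mid\cD\cap\cM]\to\eul^{-\lambda}$, which is a positive constant, proving the claim.

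The main technical point—though not a deep obstacle—is the bookkeeping around the conditioning on $\cD$: because $\pr[\cD\mid\cM]$ is only polynomially small, one must use concentration bounds with tail probability $o_n(n^{-1})$ rather than merely $o_n(1)$, which is precisely what \Lem~\ref{Lemma_sums} was stated to provide. Once the quantitative concentration is in place, the factorial-moment computation in Claim~\ref{Lemma_doubleEdges} and the Poisson method of moments do all the work.
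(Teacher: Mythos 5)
Your overall strategy—Poisson approximation for the number $Y$ of multi-edges via the factorial moments of Claim~\ref{Lemma_doubleEdges}, with the regularity hypotheses supplied by concentration as in Claim~\ref{Lemma_sm}, \Lem~\ref{Lemma_sums} and \Lem~\ref{Cor_llt}—is the same as the paper's. However, the step where you pass from the conditional factorial moments given the degree sequence to $\Erw\brk{\prod_{i=1}^\ell(Y-i+1)\mid\cD\cap\cM}\to\lambda^\ell$ has a genuine gap, and your justification (``$Y\le n^2$ deterministically so the $o_n(1)$ term is uniformly dominated'') runs in the wrong direction. The quantity $\prod_{i=1}^\ell(Y-i+1)$ is \emph{unbounded} (polynomially large in $n$ on bad degree sequences: even on $\cM$ the conditional mean of $Y$ scales like $\bc{\sum_i\vd_i(\vd_i-1)}\bc{\sum_i\vk_i(\vk_i-1)}/n^2$, which can be polynomially large when the empirical second moments are atypical), while the exceptional set of degree sequences only has conditional probability $o_n(1)$ given $\cD\cap\cM$. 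A product of the form $o_n(1)\cdot n^{\Theta(\ell)}$ need not vanish, so the averaged factorial moments need not converge to $\lambda^\ell$; and under the standing assumption of only $r$-th moments for some $r>2$ you cannot upgrade the failure probability of the second-moment concentration to $o_n(n^{-2\ell})$ to force this contribution to die. So the method of moments applied to the \emph{mixed} law of $Y$ given $\cD\cap\cM$ is not justified as written.

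The fix is to apply the moment/inclusion--exclusion argument \emph{before} averaging over degree sequences, which is exactly what the paper does: on the high-probability regularity event, Claim~\ref{Lemma_doubleEdges} together with the Bonferroni inequalities yields $\pr\brk{Y=0\mid(\vd_i)_{i\in[n]},(\vk_i)_{i\in[\vm]}}=\eul^{-\lambda}+o_n(1)$ \whp\ on $\cD\cap\cM$. Since a conditional probability is bounded by $1$, averaging over degree sequences is now harmless: the exceptional set contributes at most $o_n(1)$, so $\pr\brk{\cS\mid\cD\cap\cM}\ge(1-o_n(1))\eul^{-\lambda}+o_n(1)=\Omega_n(1)$. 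With that reordering (condition first, conclude the bounded probability, then average), the rest of your argument—in particular your correct observation that the regularity events hold with conditional probability $1-o_n(1)$ because the unconditional failure probabilities are $o_n(1/n)$ while $\pr\brk{\cD\mid\cM}=\Theta_n(n^{-1/2})$—goes through and reproduces the paper's proof.
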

\begin{proof}
Claims~\ref{Lemma_sm} and~\ref{Lemma_doubleEdges} show together with inclusion/exclusion  (e.g., \cite[\Thm~1.22]{BB}) that 
\whp\ on $\cM\cap\cD$,
$$\pr\brk{Y=0\mid(\vd_i)_{i\in[n]},(\vk_i)_{i\in\vm}}=\exp(-\lambda)=\Omega_n(1).$$
Since $\cS=\{Y=0\}$, the assertion follows.
\end{proof}

\begin{proof}[Proof of \Lem~\ref{Lemma_simple}]
The assertion follows immediately from \eqref{eqM}, \Cor~\ref{Cor_llt} and \Cor~\ref{Cor_doubleEdges}.
\end{proof}

\begin{proof}[Proof of \Prop~\ref{Lemma_welldef}]
The proposition is immediate from \Lem s~\ref{Cor_llt} and~\ref{Lemma_simple}.
\end{proof}

\subsection{Proof of \Prop~\ref{Cor_lower}}\label{Sec_Cor_lower}

\noindent
The random matrix $\vA_n$ has $n$ columns and $\vm\disteq\Po(dn/k)$ rows, with the column and row degrees drawn from the distributions $\vd$ and $\vk$.
By comparison, $\vA_{\eps,n}$ has slightly fewer, namely $\vm_{\eps,n}\disteq\Po((1-\eps)dn/k)$ rows.
One might therefore think that the proof of \Prop~\ref{Cor_lower} is straightforward, as it appears that $\vA_n$ is obtained from $\vA_{\eps,n}$ by simply adding another random $\Po(\eps dn/k)$ rows.
Since adding $O_{\eps,n}(\eps n)$ rows cannot reduce the nullity by more than $O_{\eps,n}(\eps n)$, the bound on
 $\Erw[\nul(\vA_n)] -\Erw[\nul(\vA_{\eps,n})]$ appears to be immediate.
But there is a catch.
Namely, in constructing $\vA_n$  we condition on the event $\cD=\{\sum_{i=1}^{n}\vd_i=\sum_{i=1}^{\vm}\vk_i\}$.
Thus,  $\vA_{\eps,n}$ does not have the same distribution as the top $\Bin(\vm,1-\eps)$ rows of $\vA_n$ since the conditioning might distort the degree distribution.
We need to show that this distortion is insignificant.
To this end, recall that $\vm_{\eps,n}\disteq\Po((1-\eps)dn/k)$.


\begin{lemma}\label{Lemma_conc}
\Whp\ we have
\begin{align*}
\pr\brk{\abs{\nul(\vA_{\eps,n})-\Erw[\nul(\vA_{\eps,n})\mid\vm_{\eps,n},(\vd_i)_{i\geq1},\,(\vk_i)_{i\geq1}]}>\sqrt n\ln n\mid\vm_{\eps,n},(\vd_i)_{i\geq1},\,(\vk_i)_{i\geq1}}&=o_n(1),
\end{align*}
\end{lemma}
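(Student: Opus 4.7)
The plan is to apply Azuma--Hoeffding to a suitable Doob martingale obtained by sequentially revealing the randomness that generates $\vA_{\eps,n}$ after conditioning on $\bc{\vm_{\eps,n},(\vd_i)_{i\geq1},(\vk_i)_{i\geq1}}$. First I would restrict attention to the high-probability event $\cM$ from \eqref{eqM} intersected with the event (furnished by \Lem~\ref{Lemma_sums}) that $\sum_{i\leq\vm_{\eps,n}}\vk_i+\sum_{j\leq n}\vd_j=O_n(n)$; on the complement of this event there is nothing to prove since its probability is $o_n(1)$.

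Given the conditioning, the matrix $\vA_{\eps,n}$ is determined by four pieces of randomness: (i) the uniformly random matching $\vec\Gamma_{\eps,n}$; (ii) the i.i.d.\ uniform samples $(\row_i)_{i\leq\vm_{\eps,n}}$ and $(\col_j)_{j\leq n}$; (iii) the uniform pin count $\THETA\in[\cT]$ with $\cT=O(1)$; and (iv) the i.i.d.\ uniform pinning indices $(\vi_s)_{s\leq\cT}\subset[n]$. The matching itself I would encode as a single uniform random permutation $\vec\pi$ of the variable-side clones that are paired, in a canonical ordering, with the check-side clones. Altogether the randomness becomes one uniform permutation of length $O_n(n)$ together with $O_n(n)$ mutually independent uniform variables.

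The crucial step is to bound the effect on $\nul(\vA_{\eps,n})$ of each elementary change. Changing a single $\row_i$ or $\col_j$ modifies at most one row or column of the matrix, hence by rank-one perturbation shifts the nullity by at most $2$. Changing a single pin index $\vi_s$ moves one row of $\vA_{\eps,n}$, again shifting the nullity by at most $2$. A transposition inside $\vec\pi$ swaps the endpoints of two matching edges, which alters at most four entries of $\vA_{\eps,n}$; since each entry change is a rank-one perturbation the nullity again changes by at most a bounded constant. Feeding these deterministic Lipschitz bounds into Azuma--Hoeffding for the independent uniforms, and into the transposition version of McDiarmid's inequality for the matching permutation, and applying both conditionally on $\bc{\vm_{\eps,n},(\vd_i),(\vk_i)}$, yields
\begin{align*}
\pr\brk{\abs{\nul(\vA_{\eps,n})-\Erw[\nul(\vA_{\eps,n})\mid\vm_{\eps,n},(\vd_i),(\vk_i)]}>t\mid\vm_{\eps,n},(\vd_i),(\vk_i)}\leq 2\exp(-ct^2/n)
\end{align*}
for some universal $c>0$ on the high-probability event identified above, and taking $t=\sqrt n\ln n$ produces the required $o_n(1)$ bound.

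The main obstacle I anticipate is the bounded-differences bound for the matching. Naively, perturbing a single clone-to-clone assignment forces a cascade of re-pairings elsewhere, which could in principle affect $\Omega(n)$ matrix entries and render a direct exposure martingale useless. The permutation/transposition encoding is what defuses this: every elementary perturbation rewires only a bounded number of edges, and so the Lipschitz constant stays $O(1)$ per coordinate. Setting up this encoding carefully before invoking the concentration inequalities is the only non-routine ingredient; once it is in place, the remaining sources of randomness plug straight into Azuma--Hoeffding.
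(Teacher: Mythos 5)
Your proposal is correct and takes essentially the same route as the paper: restrict to the likely event that $\sum_i\vd_i$ and $\sum_i\vk_i$ are $O_n(n)$, then get concentration with Lipschitz constant $O_{\eps,n}(1)$ via a bounded-differences/Azuma argument over the randomness generating the matrix, which at $t=\sqrt n\ln n$ gives a vanishing (indeed exponentially small) conditional probability. Your permutation/transposition encoding of $\vec\Gamma_{\eps,n}$ (together with the explicit treatment of the $\row_i,\col_j$ uniforms and the pins, which in the actual construction of $\vA_{\eps,n}$ are attached to $x_1,\ldots,x_\THETA$ deterministically rather than at random indices) is just a more carefully packaged version of the paper's edge-exposure Doob martingale on the matching, whose increments are likewise bounded by $O_{\eps,n}(1)$.
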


\begin{proof}
	\Lem~\ref{Lemma_sums} shows that $\sum_{i=1}^n\vd_i,\sum_{i=1}^{\vm_{\eps,n}}\vk_i=O_{\eps,n}(n)$
	and $\sum_{i=1}^{\vm_{\eps,n}}\vk_i\leq\sum_{i=1}^n\vd_i$  with probability $1-o_n(n^{-1})$.
	Assuming that this is so, consider a filtration $(\fA_t)_{t\leq\sum_{i=1}^{\vm_{\eps,n}}\vk_i}$ that reveals the random matching $\vec\Gamma_{\eps,n}$ one edge and the corresponding matrix entry at a time.  Consider
	\begin{align*}
	 \vX(\row,\col )  =  \Erw[\nul(\vA_{\eps,n})\mid\vm_{\eps,n},(\vd_i)_{i\geq1},\,(\vk_i)_{i\geq1}, (\row_i)_{i\geq1}, (\col_i) _{i\geq1} ]
	\end{align*}
	and with averaging over $\col_i$ obtain
	\begin{align*}
	\vX'(\row )  =  \Erw[\nul(\vA_{\eps,n})\mid \vm_{\eps,n},(\vd_i)_{i\geq1},\,(\vk_i)_{i\geq1}, (\row_i)_{i\geq1} ] &&
	\abs { \Erw[\vX(\row,\col ) \mid\fA_{t+1}] - \Erw[\vX(\row,\col ) \mid\fA_{t}] } \leq 1.
	\end{align*}
	Applying Azuma's inequality we conclude
	\begin{align*}
		\pr\brk{\abs{ \vX(\row,\col ) - \vX'(\row ) } > \sqrt n\ln n  \mid \vm_{\eps,n}, (\vd_i)_{i\geq1},\,(\vk_i)_{i\geq1}, (\row_i)_{i\geq1} }&=o_n(1).
	\end{align*}
	Then repeating this step again by averaging over $\row_i$ and applying Azuma's inequality we have
	\begin{align*}
	\vX'' =  \Erw[\nul(\vA_{\eps,n})\mid\vm_{\eps,n},(\vd_i)_{i\geq1},\,(\vk_i)_{i\geq1} ]  && 
	\abs { \Erw[\vX'(\row) \mid\fA_{t+1}] - \Erw[\vX'(\row) \mid\fA_{t}] } \leq 1 
	\end{align*}
	and
	\begin{align*}
		\pr\brk{\abs{ \vX'(\row) - \vX'' } > \sqrt n\ln n  \mid  \vm_{\eps,n}, (\vd_i)_{i\geq1},\,(\vk_i)_{i\geq1} }&=o_n(1).
	\end{align*}
	The same step again for $\nul(\vA_{\eps,n})$ and $X''$ results in the assertion.
\end{proof}


Let $\vA_{n,\cD}$ be the conditional version of the random matrix $\vA_{0,n}$ given $\cD$.
Thus, given $\sum_{i=1}^n\vd_i=\sum_{i=1}^{\vm}\vk_i$, we construct a random Tanner multi-graph with variable degrees $\vd_1,\ldots,\vd_n$ and check degrees $\vk_1,\ldots,\vk_{\vm}$.
Hence, the difference between $\vA_n$ and $\vA_{n,\cD}$ is merely that in the case of $\vA_n$ we also condition on the event $\cS$ that the Tanner graph is simple.

\begin{lemma}\label{Lemma_JanesCoupling}
There exists a coupling of $\vA_{n,\cD}$ and $\vA_{\eps,n}$ such that with probability at least $1-\eps$ the two matrices agree in all but $O_{\eps,n}(\eps n)$ rows.
\end{lemma}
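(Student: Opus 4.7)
The plan is to construct the coupling by generating $\vA_{\eps,n}$ first and then extending it to $\vA_{n,\cD}$ via a random number of ``auxiliary'' rows drawn from the right conditional distribution.

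\emph{Construction.} First I would sample $\vA_{\eps,n}$ from its definition, producing $\vm_{\eps,n}$, i.i.d.\ degrees $(\vd_i),(\vk_j)_{j\leq\vm_{\eps,n}}$, and a uniformly random maximum matching; set $S=\sum_i\vd_i-\sum_{j\leq\vm_{\eps,n}}\vk_j$, the number of unmatched variable-clones. Next, conditionally on $\vA_{\eps,n}$, I would sample $\vec Z\geq 0$ and auxiliary check degrees $(\vk'_j)_{j=1}^{\vec Z}$ from the tilted law
\[
\pr[\vec Z=z,(\vk'_j)=\boldsymbol\kappa\mid\vA_{\eps,n}]\;\propto\;\pr[\Po(\eps dn/k)=z]\cdot\prod_{j=1}^{z}\pr[\vk=\kappa_j]\cdot\vecone\Bigl\{\textstyle\sum_j\kappa_j=S\Bigr\}.
\]
Finally, I would extend the matching to a perfect one by uniformly pairing the $\vec Z$ auxiliary check-clones with the $S$ unmatched variable-clones; after discarding the $\theta=O(1)$ pinning rows of $\vA_{\eps,n}$, the resulting augmented matrix will play the role of $\vA_{n,\cD}$.

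\emph{Distribution correctness.} By Poisson superposition, the unconditional joint law of $\bigl(\vm_{\eps,n}+\vec Z,\,(\vk_j)\cup(\vk'_j),\,(\vd_i),\,\text{matching}\bigr)$ coincides with the law underlying the unconditional random matrix $\vA_{0,n}$. Conditioning on $\cD=\{\sum_j\vk_j+\sum_j\vk'_j=\sum_i\vd_i\}$ recovers $\vA_{n,\cD}$, and by the tower property the tilted law above is exactly the correct conditional distribution of $(\vec Z,(\vk'_j))$ given $\vA_{\eps,n}$ and $\cD$. Exchangeability of the clones guarantees that a uniform random perfect matching of the full clone-graph, restricted to the primary check-clones, is the uniform random max matching used in the construction of $\vA_{\eps,n}$; conversely the uniform extension reconstitutes the uniform perfect matching.

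\emph{Agreement and bound on $\vec Z$.} Under this coupling, the two matrices share the $\vm_{\eps,n}$ primary rows exactly, and differ only on the $\theta=O(1)$ pinning rows of $\vA_{\eps,n}$ together with the $\vec Z$ auxiliary rows of $\vA_{n,\cD}$. By \Lem~\ref{Lemma_sums}, $|S-\eps dn|\leq\sqrt n\log n$ with probability $1-o_n(1)$; conditionally on such $S$, the local limit theorem (\Thm~\ref{Thm_llt}) applied to the compound Poisson sum shows that the tilted distribution concentrates $\vec Z$ around $S/k\approx\eps dn/k$ with fluctuations of order $\sqrt{\eps n}$. Hence $\vec Z\leq 2\eps dn/k=O_{\eps,n}(\eps n)$ with probability at least $1-\eps$, which gives the claim.

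\emph{Main obstacle.} The hardest step will be to justify the tilted law: one must compute the normalising constant $\pr[\sum_{j=1}^{\vec Z}\vk'_j=S\mid\vA_{\eps,n}]$ with $\vec Z\sim\Po(\eps dn/k)$, verify that the resulting tower construction really does reproduce the $\cD$-conditioned marginal of $\vA_{n,\cD}$, and show that $\vec Z$ concentrates at the right scale under the tilted measure. All three tasks reduce to a careful local limit theorem argument for the compound Poisson sum, in the spirit of the proof of \Lem~\ref{Cor_llt}.
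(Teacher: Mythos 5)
There is a genuine gap, and it sits exactly at the step you label ``Distribution correctness''. Your construction takes the \emph{unconditional} law of the primary data (the variable degrees $(\vd_i)$, $\vm_{\eps,n}$, the primary check degrees) and composes it with the correct conditional law of the auxiliary rows given the primary data and $\cD$. But the tower property gives the $\cD$-conditional joint law only if the primary part is itself drawn from its \emph{$\cD$-conditioned} marginal, which is the unconditional law reweighted by the normalising constant $N(S)=\pr\brk{\sum_{j\le \vec Z'}\vk_j'=S}$ with $\vec Z'\disteq\Po(\eps dn/k)$ — a factor that depends on the realised gap $S$. This factor is far from constant: it decays like a Gaussian in $S-\eps dn$ at scale $\sqrt{\eps n}$, whereas under your unconditional sampling $S$ fluctuates at scale $\sqrt n$. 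Concretely, under your coupling $\sum_i\vd_i$ keeps its unconditioned distribution (variance $\approx\Var(\vd)\,n$), while under the true law of $\vA_{n,\cD}$ the conditioning on $\cD$ tilts it to a strictly smaller variance whenever $\Var(\vd)>0$; the total variation distance between your produced law and $\vA_{n,\cD}$ is therefore bounded away from zero (and tends to $1$ as $\eps\to0$), so this cannot be repaired by a small-TV correction. There is also a secondary defect: if $\gcd(\supp\vk)>1$, the realised $S$ need not be divisible by $\gcd(\vk)$, in which case the event $\{\sum_j\kappa_j=S\}$ has probability zero and your tilted law is not even defined. In short, your argument assumes away precisely the difficulty the lemma is about — the paper states it explicitly just before the lemma: $\vA_{\eps,n}$ does \emph{not} have the distribution of a thinned $\vA_{n,\cD}$, because the conditioning on $\cD$ distorts the degree statistics.

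The paper's proof goes the other way around and keeps both marginals exact by construction: it generates the $\cD$-conditioned check-degree profile $(\vec r_j)_j$ of $\G_{n,\cD}$ from its correct law alongside the unconditioned rows of $\G_{\eps,n}$, and uses Chernoff/Azuma bounds transferred through $\pr[\cD\mid\cM]=\Theta_n(n^{-1/2})$ (\Lem~\ref{Cor_llt}) to show that w.h.p.\ the conditioned profile dominates the unconditioned one for every degree $j\le L$ while only $O_{\eps,n}(\eps n)$ checks fall outside this range; it then realises both Tanner graphs from a shared random matching (the red/uncoloured/blue colouring), so the two matrices agree on all but $O_{\eps,n}(\eps n)$ rows with probability at least $1-\eps$. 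If you want to salvage your ``sample $\vA_{\eps,n}$ first, then extend'' idea, you would have to first prove that the $\cD$-conditioned marginal of the primary data is within TV distance $o_{\eps,n}(1)$ (or at least $O(\eps)$) of the unconditional one — which is false at the level of the full degree sequence — or restrict attention to statistics for which the tilt is harmless, which is in effect what the paper's domination argument accomplishes.
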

\begin{proof}
Let $\G_{n,\cD}$ and $\G_{\eps,n}$ denote the Tanner graphs corresponding to $\vA_{n,\cD}$ and $\vA_{\eps,n}$, respectively.
It suffices to construct a coupling of $\G_{n,\cD}$ and $\G_{\eps,n}$ such that these graphs differ in edges incident with at most $O_{\eps,n}(\eps n)$ check nodes.
To construct the coupling we first generate the following parameters for $\vA_{\eps,n}$.
Parameter $\cT=\cT(\eps)$ is given.
Generate $\bftheta\in [\cT]$ uniformly at random.
Then generate $\vm\sim\Po(dn/k)$ and $\bfm_{\eps,n}=\Bin(\vm,1-\eps)$ and then check nodes $a_1,\ldots,a_{\vm_{\eps,n}}$.
Each check node $a_i$ is associated with an integer $\bfk_i$ which is an independent copy of $\bfk$. To distinguish $\G_{\eps,n}$ from $\G_{n,\cD}$, we colour these check nodes red.
Add $\bftheta$ check nodes $p_1,\ldots, p_{\bftheta}$ to both  $\G_{\eps,n}$ and $\G_{n,\cD}$.

Next generate $n$ variable nodes where variable node $x_i$ is associated with $\bfd_i$, which is an independent copy of $\bfd$. 
Further, let $\vec r_j=\sum_{h=1}^{\vm}\vecone\cbc{\vk_h=j}$ denote the prospective number of checks of $\G_{n,\cD}$ of degree $j$.
Applying Azuma's inequality and \eqref{eqM}, we see that for any $\eps>0$ there exists $L=L(\eps)>0$ such that
\[
\pr\brk{\sum_{j\ge L} \vec r_j >\eps n\mid\cM} +\pr\brk{
	\exists j\leq L:\vec r_j\leq\sum_{i=1}^{\bfm_{\eps,n}} \vecone\{\bfk_i=j\}\mid\cM} 
	+\pr\brk{\vm>\vm_{\eps,n}+2\eps dn/k}\leq 1/n.
\]
Hence,  \Lem~\ref{Cor_llt} implies that
\begin{align}\nonumber
\pr&\brk{\sum_{j\ge L} \vec r_j >\eps n\mid\cD\cap\cM}\\&+\negthickspace\pr\brk{\exists j\leq\negthickspace L:\vec r_j\leq\negthickspace\sum_{i=1}^{\bfm_{\eps,n}} \vecone\{\bfk_i\negthickspace=\negthickspace j\} \ \mbox{for all $j\le L$}\negthickspace\mid\cD\cap\cM\negthickspace}\negthickspace 
+\negmedspace\pr\brk{\vm\negthickspace >\vm_{\eps,n}\negthickspace+2\eps dn/k\mid\cD\cap\cM}\leq\negthickspace 1/n=o_n(1).
\label{c:dominance}
\end{align}

Now condition on the event
\begin{align*}
\cR&=\cD\cap\cM\cap\cbc{\sum_{j\ge L} \bfn_j\leq\eps n}\cap\cbc{\forall j\leq L:\bfn_j>\sum_{i=1}^{\bfm_{\eps,n}} \vecone\{\bfk_i=j\} }\cap\cbc{\vm\leq\vm_{\eps,n}+2\eps dn/k}.
\end{align*}
Uncolour all (red) check nodes $a_i$ with $\bfk_i\le L$.
Moreover, for each $j\le L$, generate $\vec r_j-\sum_{i=1}^{\bfm_{\eps,n}} \vecone\{\bfk_i=j\}$ additional check nodes of degree $j$ and  colour them blue.
Finally, for each $j>L$, generate $\vec r_j$ blue check nodes of degree $j$. 

Now $\G_{\eps,n}$ is generated by taking a random maximal matching from the clones of all {\em uncoloured} and {\em red} check nodes $\{a_i\}\times [\bfk_i]$ (excluding check nodes $p_1,\ldots,p_{\bftheta}$) to the set of variable clones $$\bigcup_{j=1}^n\{x_j\}\times [\bfd_j],$$ and then adding an edge between $p_i$ and $x_i$ for $1\le i\le \bftheta$.
The Tanner graph $\G_{n,\cD}$ is generated by removing all matching edges from the clones of the {\em red} check nodes, and removing edges between $p_i$ and $a_i$ for $1\le i\le \bftheta$, and then matching all clones of the {\em blue} check nodes to the remaining clones of the variable nodes. 
%
Finally, \eqref{c:dominance} ensures that with probability at least $1-\eps$, the two Tanner graphs differ in no more than  $O_{\eps,n}(\eps n)$ check nodes.
\end{proof}

\begin{proof}[Proof of \Prop~\ref{Cor_lower}]
Assume that \eqref{eqCor_lower_ub} is satisfied for $C>0$ and fix $C'>C$ and a small enough $\delta>0$.
Then we find a small $0<\eps=\eps(\delta)<\delta$ such that
\begin{align*}
\limsup_{n\to\infty}\Erw[\nul(\vA_{\eps,n})]/n&\leq C+\delta.
\end{align*}
Hence, combining \Lem s~\ref{Lemma_conc} and~\ref{Lemma_JanesCoupling} and taking into account that changing a single row can alter the nullity by at most one, we conclude that
\begin{align}\label{eqCor_lower_ub_1}
\pr\brk{\nul(\vA_{n,\cD})/n\leq C+O_{\eps,n}(\eps)}&>1-\eps+o_n(1).
\end{align}
Finally, combining \eqref{eqCor_lower_ub_1} and \Lem~\ref{Lemma_simple}, we conclude that
\begin{align}\label{eqCor_lower_ub_2}
\pr\brk{\nul(\vA_{n,\cD})/n\leq C+O_{\eps,n}(\eps)\mid\cS}&>1-\delta+o_n(1),
\end{align}
provided that $\eps=\eps(\delta)$ is small enough.
Since $\vA_{n,\cD}$ given $\cS$ is identical to $\vA_n$, the desired upper bound on the nullity of $\vA_n$ follows from \eqref{eqCor_lower_ub_2}.
The same argument renders the lower bound.
\end{proof}

\section{The Aizenman-Sims-Starr scheme: proof of \Prop~\ref{Prop_coupling}}\label{Sec_lower}

\noindent
In this section we prove \Prop~\ref{Prop_coupling}.
As set out in \Sec~\ref{Sec_outline1}, we are going to bound the difference of the nullities of $\vA_{\eps,n+1}$ and $\vA_{\eps,n}$ via \Prop~\ref{Prop_Alp} and \Lem~\ref{Cor_free}.
This requires a coupling of the random variables $\nul(\vA_{\eps,n+1})$ and $\nul(\vA_{\eps,n})$.

\subsection{The coupling}
We begin by introducing a more fine-grained description of the random matrices $\vA_{\eps,n}$ and $\vA_{\eps,n+1}$ to facilitate the construction of the coupling.
To this end, let $\vM=(\vM_j)_{j\geq1}$ and $\DELTA=(\DELTA_j)_{j\geq1}$ be sequences of Poisson variables with means
\begin{align}\label{eqPoissons}
\Erw[\vM_j]&=(1-\eps)\pr\brk{\vk=j}dn/k,&\Erw[\DELTA_j]&=(1-\eps)\pr\brk{\vk=j}d/k.
\end{align}
All of these random variables are mutually independent and independent of $\THETA$ and the $(\vd_i)_{i\geq1}$.
Further, let
\begin{align}\label{eqm}
\vM_j^+&=\vM_j+\DELTA_j,
&\vm_{\eps,n}&=\sum_{j\geq1}\vM_j,&\vm_{\eps,n}^+&=\sum_{j\geq1}\vM_j^+.
\end{align}
Since  $\sum_{j\geq1}\vM_j\disteq\Po((1-\eps)dn/k)$, (\ref{eqm}) is consistent with the earlier convention that $\vm_{\eps,n}\disteq\Po((1-\eps)dn/k)$.

The random vectors $(\vd_1,\ldots\vd_n),\vM$ naturally define a random Tanner (multi-)graph $\G_{n,\vM}$ with variable nodes $x_1,\ldots,x_n$ 
and check nodes $p_1,\ldots,p_{\THETA}$ and $a_{i,j}$, $i\geq1$, $j\in[\vM_i]$.
Its edges are induced by a random maximal matching  $\vec\Gamma_{n,\vM}$ of the complete bipartite graph with vertex classes
\begin{align*}
\bigcup_{h=1}^n\{x_h\}\times[\vd_h]\quad\mbox{and}\quad\bigcup_{i\geq1}\bigcup_{j=1}^{\vM_i}\{a_{i,j}\}\times[i].
\end{align*}
Each matching edge $(x_h,s,a_{i,j},t)\in \vec\Gamma_{n,\vM}$ induces an edge between $x_h$ and $a_{i,j}$ in the Tanner graph.
In addition, there is an edge between $p_i$ and $x_i$ for every $i\in[\THETA]$.

To define the random matrix $\vA_{n,\vM}$ to go with $\G_{n,\vM}$, let $\chi:[0,1]^2\to\FF^*$ be a measurable map and let $(\row_{i,j},\col_i)_{i,j\geq1}$ be uniformly distributed on $[0,1]$, mutually independent and independent of all other randomness.%
\footnote{\aco{Unfortunately at this point there does not seem to be an ideal notation for the matrix and its entries. Because the random vector $\vM$ depends on $n$ and to preserve the analogy with common random graphs notation, we denote the random Tanner graph by $\G_{n,\vM}$ and its associated random matrix by $\vA_{n,\vM}$. At the same time, in line with linear algebra conventions, when indexing matrix entries we let the first index refer to the row of the matrix and the second index to the column. Since the variable $n$ nodes correspond to the columns, a degree of incoherence seems unavoidable.}}
With the rows of $\vA_{n,\vM}$ indexed by the check nodes of $\G_{n,\vM}$ and the columns indexed by the variable nodes, we define the matrix entries by letting
\begin{align*}
(\vA_{n,\vM})_{p_i,\vx_h}&=\vecone\cbc{i=j}&&(i\in[\THETA],h\in[n]),\\
(\vA_{n,\vM})_{a_{i,j},\vx_h}&=\chi_{\row_{i,j},\col_h}\sum_{s=1}^i\sum_{t=1}^{\vd_h}
	\vecone\cbc{\{(x_h,t),(a_{i,j},s)\}\in\vec\Gamma_{n,\vM}}&&(i\geq1,j\in[\vM_i],h\in[n]).
\end{align*}
The Tanner graph $\G_{n+1,\vM^+}$  and its associated random matrix $\vA_{n+1,\vM^+}$ are defined analogously.

\begin{lemma}\label{Lemma_ComplicatedModel}
For any $\theta>0$ we have
$\Erw[\nul(\vA_{\eps,n})]=\Erw[\nul(\vA_{n,\vM})]$, 
$\Erw[\nul(\vA_{\eps,n+1})]=\Erw[\nul(\vA_{n+1,\vM^+})]$.
\end{lemma}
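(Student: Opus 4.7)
The plan is to show that $\vA_{\eps,n}$ and $\vA_{n,\vM}$ are in fact identically distributed as random matrices (and likewise $\vA_{\eps,n+1}$ and $\vA_{n+1,\vM^+}$), after which the equality of expected nullities is immediate. The only substantial ingredient is the Poisson splitting (``colouring'') property applied to the check target degrees; everything else amounts to recognising that the two constructions build the same object from two equivalent angles.

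First I would record the relevant distributional identity. In the definition of $\vA_{\eps,n}$ we drew $\vm_{\eps,n}\sim\Po((1-\eps)dn/k)$ check nodes and attached to each an independent target degree distributed as $\vk$. Setting $\vN_i:=\#\{t\in[\vm_{\eps,n}]:\vk_t=i\}$, Poisson splitting yields that the variables $(\vN_i)_{i\ge 1}$ are mutually independent with $\vN_i\disteq\Po((1-\eps)\pr[\vk=i]dn/k)$, which is precisely the law of $(\vM_i)_{i\ge 1}$ specified in \eqref{eqPoissons}. Hence the two models can be coupled so that $\vM_i=\vN_i$ for every $i$, and in particular the multiset of (target-degree, check) pairs agrees in the two constructions up to a relabelling of the checks.

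Next I would observe that, conditional on this check data, on $(\vd_h)_{h\in[n]}$ and on $\THETA$, the rest of both constructions is identical. In both cases one draws a uniformly random maximal matching of the complete bipartite graph with variable clones $\bigcup_{h=1}^n\{x_h\}\times[\vd_h]$ on one side and check clones on the other, and then attaches the pinning checks $p_1,\ldots,p_{\THETA}$ to $x_1,\ldots,x_{\THETA}$. The non-zero entries are determined by evaluating the fixed measurable map $\chi$ at independent $[0,1]^2$-uniform coordinates attached to each check/variable pair. Hence, after permuting the rows of $\vA_{\eps,n}$ so as to match checks of equal target degree with the appropriate $a_{i,j}$, the two matrices agree as random matrices, and since the nullity is invariant under row and column permutations we obtain $\Erw[\nul\vA_{\eps,n}]=\Erw[\nul\vA_{n,\vM}]$.

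The $n+1$ statement follows by exactly the same argument: since $\vM_j^+=\vM_j+\DELTA_j$ is a sum of independent Poissons it has law $\Po((1-\eps)\pr[\vk=j]d(n+1)/k)$, so Poisson splitting again identifies $(\vM_j^+)_{j\geq 1}$ with the counts of check nodes of each target degree in $\vA_{\eps,n+1}$. There is essentially no obstacle beyond pedantic bookkeeping; one only needs to check that the $(\row_{i,j},\col_h)$ arrays underlying $\vA_{n,\vM}$ can be coupled consistently with the $(\row_i,\col_j)$ underlying $\vA_{\eps,n}$ across the relabelling of checks, which is automatic since both are independent $[0,1]$-uniform families and $\chi$ is applied in the same symmetric way in both models.
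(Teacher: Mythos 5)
Your proof is correct and follows essentially the same route as the paper: the paper's own argument also rests on the Poisson splitting observation that the number of checks of target degree $i$ among the $\vm_{\eps,n}\disteq\Po((1-\eps)dn/k)$ rows is distributed as $\vM_i$, independently across $i$, so that $\nul(\vA_{\eps,n})$ and $\nul(\vA_{n,\vM})$ (and likewise the $n+1$ versions) are identically distributed. Your additional remarks about coupling the $\chi$-entries and the invariance of the nullity under row relabelling are just the bookkeeping the paper leaves implicit.
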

\begin{proof}
We defined $\vA_{\eps,n}$ as the $\vm_{\eps,n}\times n$-matrix with target column and row degrees drawn from $\vd$ and $\vk$ independently with a $\THETA\times\THETA$ identity matrix affixed at top.
In effect, because $\vm_{\eps,n}$ is a Poisson variable, the number of rows of with target degree $i$ is distributed as $\vM_i$, and these numbers are mutually independent.
Hence, $\nul\vA_{\eps,n}$ and $\nul\vA_{n,\vM}$ are identically distributed.
The same argument applies to $\vA_{\eps,n+1}$.
\end{proof}

Up to this point we merely introduced a new description of $\vA_{\eps,n}$ and $\vA_{\eps,n+1}$.
To actually couple them we introduce a third random matrix whose nullity we can easily compare to 
$\nul(\vA_{n,\vM})$ and $\nul(\vA_{n+1,\vM^+})$.
Specifically, let $\GAMMA_i\ge0$ be the number of checks $a_{i,j}$, $j\in[\vM_i^+]$, adjacent to the last variable node $x_{n+1}$ in $\G_{n+1,\vM^+}$.
Also let $\GAMMA=(\GAMMA_i)_{i\geq1}$ and set
\begin{equation}\label{eqminus}
	\vM_i^-=\max\{\vM_i-\GAMMA_i,0\}.
\end{equation}
 \aco{In \eqref{eqminus} the max is necessary because potentially $\vec\GAMMA_i$ might exceed $\vM_i$ as $\vec\GAMMA_i$ might include some of the ``extra'' $\vec\Delta_i$ checks included in $\G_{n+1,\vM^+}$.}
Consider the random Tanner graph $\G'=\G_{n,\vM^-}$ induced by a random maximal matching $\vec\Gamma'$ of the complete bipartite graph with vertex classes
\begin{align*}
\bigcup_{h=1}^n\{x_h\}\times[\vd_h]\quad\mbox{and}\quad\bigcup_{i\geq1}\bigcup_{j=1}^{\vM_i^-}\{a_{i,j}\}\times[i].
\end{align*}
For each variable $x_i$, $i=1,\ldots,n$, let $\cC$ be the set of clones from $\bigcup_{i\in[n]}\{x_i\}\times[\vd_i]$ that $\vec\Gamma_{n,\vM^-}$ leaves unmatched.
We call the elements of $\cC$ {\em cavities}.

Now, obtain the Tanner graph $\G''$ from $\G'$ by adding new check nodes 
\begin{align}\label{eqcplng1}
	a''_{i,j}\mbox{ with target degree $i$ for each }i\geq1,\ j\in[\vM_i-\vM_i^-].
\end{align}
The new checks are joined by a random maximal matching $\vec\Gamma''$ of the complete bipartite graph with vertex classes 
 $$\mbox{$\cC\qquad$ and}\qquad\bigcup_{i\geq1}\bigcup_{j\in[\vM_i-\vM_i^-]}\{a_{i,j}''\}\times[i],$$
i.e., for each matching edge we insert a corresponding variable-check edge.

Analogously, obtain $\G'''$ by adding one variable node $x_{n+1}$ as well as check nodes $a_{i,j}'''$, $i\geq1$, $j\in[\GAMMA_i]$
and $b_{i,j}'''$, $i\geq1$, $j\in[\vM_i^+-\vM_i^--\GAMMA_i]$ to $\G'$.
The new checks are connected to $\G'$ via a random maximal matching $\vec\Gamma'''$ of the complete bipartite graph with vertex classes
 $$\mbox{ $\cC\qquad$ and }\qquad\bigcup_{i\geq1}\bc{\bigcup_{j\in[\GAMMA_i]}\{a_{i,j}'''\}\times[i-1]
			\cup\bigcup_{j\in[\vM_i^+-\vM_i^--\GAMMA_i]}\{b_{i,j}'''\}\times[i]}.$$
For each matching edge we insert the corresponding variable-check edge and
in addition each of the check nodes $a_{i,j}'''$ gets connected to $x_{n+1}$ by exactly one edge.

Finally, we introduce the random matrices $\vA',\vA'',\vA'''$ whose non-zero entries represent the edges of $\G',\G'',\G'''$.
Recalling that $(\row_{i,j},\col_i)_{i,j\geq1}$ are uniform on $[0,1]$ and independent of everything else, we additionally introduce independent random variables $(\row_{i,j}',\row_{i,j}'')_{i,j\geq1}$, also uniform on $[0,1]$.
With the rows and columns indexed by check and variable nodes, respectively, we define
\begin{align*}
\vA'_{p_i,j}&=\vA''_{p_i,j}=\vA'''_{p_i,j}=\vecone\cbc{i=j}\hspace{12pt}(i\in[\THETA],j\in[n]),\\
\vA'_{a_{i,j},x_h}&=\vA''_{a_{i,j},x_h}=\vA'''_{a_{i,j},x_h}
	=\chi_{\row_{i,j},\col_h}\sum_{s=1}^i\sum_{t=1}^{\vd_h}
	\vecone\cbc{\{(x_h,t),(a_{i,j},s)\}\in\vec\Gamma'}\hspace{12pt}(i\geq1,j\in[\vM_i^-],h\in[n]),\\
\vA''_{a_{i,j}'',x_h}&=\chi_{\row_{i,j}',\col_h}\sum_{s=1}^i\sum_{t=1}^{\vd_h}
	\vecone\cbc{\{(x_h,t),(a_{i,j}'',s)\in\vec\Gamma''\}}\hspace{12pt}(i\geq1,j\in[\vM_i-\vM_i^-,h\in[n]),\\
\vA'''_{a_{i,j}''',x_h}&=\chi_{\row_{i,j}',\col_h}\sum_{s=1}^{i-1}\sum_{t=1}^{\vd_h}
	\vecone\cbc{\{(x_h,t),(a_{i,j}''',s)\in\vec\Gamma'''\}}\hspace{12pt}(i\geq1,j\in[\GAMMA_i,h\in[n]),\\
\vA'''_{b_{i,j}''',x_h}&=\chi_{\row_{i,j}'',\col_h}\sum_{s=1}^{i}\sum_{t=1}^{\vd_h}
	\vecone\cbc{\{(x_h,t),(b_{i,j}''',s)\in\vec\Gamma'''\}}\hspace{12pt}(i\geq1,j\in[\vM_i^+-\vM_i^--\GAMMA_i],h\in[n]).
\end{align*}
In line with the strategy outlined in \Sec~\ref{Sec_overview}, this construction ensures that $\vA''$ and $\vA'''$ are obtained from $\vA'$ by adding a bounded expected number of rows and, in the case of $\vA'''$, one column.
The following lemma links $\vA'',\vA'''$ to the random matrices $\vA_{n,\vM}$, $\vA_{n+1,\vM^+}$ from the beginning of the section.

\begin{lemma}\label{Lemma_valid}
We have  $\Erw[\nul(\vA'')]=\Erw[\nul(\vA_{n,\vM})]+o_n(1)$ and  $\Erw[\nul(\vA''')]=\Erw[\nul(\vA_{n+1,\vM^+})]+o_n(1).$
\end{lemma}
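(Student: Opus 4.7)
The plan is to prove both identities by showing that $\vA''$ and $\vA'''$ are distributionally equivalent to $\vA_{n,\vM}$ and $\vA_{n+1,\vM^+}$, respectively, modulo bad events whose expected contribution to the nullity is $o_n(1)$. Since changing one row of a matrix alters its nullity by at most one, I would reduce each claim to bounding (probability of bad event)$\times$(number of affected rows), which I will control using the moment condition $\Erw[\vd^r]+\Erw[\vk^r]<\infty$ with $r>2$ together with standard configuration-model estimates.

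For the first identity I would condition on $(\vd_i)_i,(\vM_i)_i,(\GAMMA_i)_i$ and $\THETA$. Under this conditioning both $\vA''$ and $\vA_{n,\vM}$ have the same check set (namely $p_1,\ldots,p_{\THETA}$ plus $\vM_i$ checks of target degree $i$ for each $i$) and the same variables, so only the random matching differs. In $\vA_{n,\vM}$ it is drawn in one shot, while in $\vA''$ it is drawn in two stages, first matching the clones of the $\vM_i^-$ stage-1 checks and subsequently matching the remaining $\vM_i-\vM_i^-$ checks to the cavities. By the elementary principle that a uniform maximal matching on a bipartite graph can be built by sequentially drawing uniform matchings on disjoint subsets of one side, as long as no intermediate stage exhausts the other side, the two constructions produce the same distribution on the Tanner multigraph up to a relabelling of checks, which does not affect the nullity. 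The only obstruction is the event $\sum_i i\vM_i^->\sum_j\vd_j$, which has probability $o_n(1/n)$ by \Lem~\ref{Lemma_sums} and Poisson concentration for $\vM$, because $\Erw[\sum_i i\vM_i^-]\le(1-\eps)dn$ while $\Erw[\sum_j\vd_j]=dn$; on this event the nullity gap is trivially at most $n$, contributing $o_n(1)$ to the expected difference.

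For the second identity I would run the same argument, with one added ingredient: each $a'''_{i,j}$ in $\vA'''$ is attached to $x_{n+1}$ by exactly one edge, so multi-edges at $x_{n+1}$ are forbidden by construction, whereas $\vA_{n+1,\vM^+}$ allows them. A routine first-moment calculation in the configuration model gives $\pr[\text{multi-edge at }x_{n+1}]=O_n(1/n)$; on this bad event the nullity gap is at most $\vd_{n+1}$, and splitting on $\vd_{n+1}\le n^{1/r}$ versus $\vd_{n+1}>n^{1/r}$ combined with Markov's inequality on $\Erw[\vd^r]$ bounds the total contribution by $o_n(1)$. On the complementary good event $\sum_i\GAMMA_i=\vd_{n+1}$ and each of the $\GAMMA_i$ degree-$i$ checks adjacent to $x_{n+1}$ is so via a single edge, exactly mirroring the structure of the $a'''_{i,j}$; by exchangeability of the $\vM_i^+$ degree-$i$ checks of $\vA_{n+1,\vM^+}$ their identity can be chosen uniformly, after which the two-stage matching lemma of the previous paragraph identifies the residual matching to $x_1,\ldots,x_n$ with that of $\vA'''$. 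The principal technical obstacle is the degenerate regime $\GAMMA_i>\vM_i$, where $\vM_i^-$ is truncated to zero and the constructions must be checked to remain coherent; but this regime has $o_n(1)$ probability and affects only an $O(1)$ expected number of rows, so is absorbed into the error term.
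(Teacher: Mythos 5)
Your argument for the first identity is essentially the paper's: on the likely event that the check clones do not outnumber the variable clones, the two-stage matching coincides in distribution with the one-shot matching by deferred decisions, and the complementary event of probability $o_n(1/n)$ is absorbed by the trivial bound $\nul\leq n$. (Small slip: the obstruction must be stated for $\sum_{i\geq1}i\vM_i$, not $\sum_{i\geq1}i\vM_i^-$, since the stage-two checks also have to be accommodated; the same probability estimate applies.)

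For the second identity there is a genuine gap in the bad-event analysis. You assert that on the event of a multi-edge at $x_{n+1}$ the nullity gap is at most $\vd_{n+1}$, but you never exhibit a coupling under which, on that event, $\vA'''$ and $\vA_{n+1,\vM^+}$ agree outside the rows incident with $x_{n+1}$ --- and this is not automatic, because conditioning $\vA_{n+1,\vM^+}$ on a multi-edge at $x_{n+1}$ distorts the law of the rest of the matching, while $\vA'''$ by construction never has such multi-edges, so the ``remainders'' of the two matrices are no longer exchangeable. The coupling one can actually write down (the paper's Claim~\ref{Claim_Lemma_valid_6}) makes the two matrices agree only on the $\vA'$-block, i.e.\ on the edges joining $x_1,\ldots,x_n$ to the checks $a_{i,j}$ with $j\leq\vM_i^-$, and the resulting bound on the gap is $2\sum_{i\geq1}i(\DELTA_i+\GAMMA_i)$: it involves the additional checks $b_{i,j}'''$ (the $\DELTA$-checks, which your sketch ignores entirely) and the full degrees $i$ of the checks meeting $x_{n+1}$, not merely $\vd_{n+1}$. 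Controlling the expectation of this unbounded quantity restricted to the bad events is exactly where the work lies, since those events (a multi-edge at $x_{n+1}$, a large $\vd_{n+1}$, a huge check attaching to $x_{n+1}$) are positively correlated with $\sum_{i\geq1}i(\DELTA_i+\GAMMA_i)$; the paper therefore first restricts to $\vd_{n+1}\leq\ln n$, $\sum_{i\geq1}i(\DELTA_i+\GAMMA_i)\leq\ln^4n$ and check degrees at most $n/\ln^9n$, shows the no-multi-edge probability is $1-o_n(\ln^{-6}n)$ there, and handles the remaining events by the separate splitting argument of Claim~\ref{Claim_Q123}, which uses $\Erw[\vd^r]+\Erw[\vk^r]<\infty$ for $r>2$. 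Your Markov split on $\vd_{n+1}$ only treats the $\vd_{n+1}$ contribution and presupposes the unproven $\vd_{n+1}$ gap bound. (A further minor error: on the good event you claim $\sum_i\GAMMA_i=\vd_{n+1}$; in fact a $\Theta(\eps)$-fraction of the clones of $x_{n+1}$ are typically left unmatched, so this equality fails with probability $\Theta(\eps)$ --- harmless for the identification itself, but a sign that the cavity structure at $x_{n+1}$ needs more care than the sketch provides.)
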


\noindent
The proof of \Lem~\ref{Lemma_valid}, deferred to \Sec~\ref{Sec_Lemma_valid}, is tedious but relatively straightforward.

As a next step we are going to calculate the  differences $\nul(\vA''')-\nul(\vA')$ and $\nul(\vA'')-\nul(\vA')$.
We obtain expressions of one parameter of $\vA'$, namely the fraction of cavities `frozen' to zero. 
To be precise, a cavity $(x_i,h)\in\cC$ is {\em frozen} if $x_i\in\fF(\vA')$.
Let $\cF\subset\cC$ be the set of all frozen cavities and define $\vec\alpha=|\cF|/|\cC|$;
in the unlikely event that $\cC=\emptyset$, we agree that $\vec\alpha=0$.
In \Sec s~\ref{Sec_A'''} and~\ref{Sec_A''} we are going to establish the following two estimates.

\begin{lemma}\label{Lemma_A'''}
We have  $\Erw[\nul(\vA''')-\nul(\vA')]=\Erw[D(1- K'(\vec\alpha)/k)+d(K'(\vec\alpha)+K(\vec\alpha)-1)/k]-d+o_\eps(1).$
\end{lemma}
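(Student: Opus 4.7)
The plan is to combine Lemma \ref{Cor_free} with the $(\delta,\ell)$-freeness guaranteed by Proposition \ref{Prop_Alp}, thereby reducing the nullity change to a rank computation on the small block of new rows, which I then evaluate by a generic combinatorial argument. I would begin by writing $\vA'''=\begin{pmatrix}\vA'&0\\\vB&\vC\end{pmatrix}$, where the rows of $(\vB\ \vC)$ are indexed by the new checks $a'''_{i,j}$ and $b'''_{i,j}$ and the column $\vC$ (for $x_{n+1}$) is supported only on the $a'''$-rows. Writing $\vB_*$ for $\vB$ with columns in $\fF(\vA')$ zeroed out, the union of column-supports of the new rows has bounded expected size for fixed $\eps$. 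Hence Proposition \ref{Prop_Alp} applied to $\vA'$ (which inherits its pinning from $\vA_{\eps,n}$ via the pins $p_1,\ldots,p_{\THETA}$) implies that this union fails to be a proper relation of $\vA'$ with probability $1-o_{\eps,n}(1)$. On that event, Lemma \ref{Cor_free} yields $\nul(\vA''')-\nul(\vA')=1-\rk(\vB_*\ \vC)$.

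The next step is to establish a combinatorial formula for $\rk(\vB_*\ \vC)$. Let $N_a=\sum_i\GAMMA_i$ and $N_b=\sum_i(\vM_i^+-\vM_i^--\GAMMA_i)$ count the $a'''$- and $b'''$-rows; let $Z_a$ count the $a'''$-rows all of whose $i-1$ non-$x_{n+1}$ cavity-neighbours are frozen (so the row collapses to a scalar multiple of $e_{n+1}$ in $\vB_*$), and $Z_b$ the $b'''$-rows all of whose $i$ cavity-neighbours are frozen (so the row vanishes). A short block-matrix argument -- eliminating column $x_{n+1}$ using one $a'''$-row and inspecting the residual in the cavity columns -- gives, generically,
\[\rk(\vB_*\ \vC)=N_a+N_b-Z_b-(Z_a-1)^+.\]
Here ``generically'' means that the nonzero rows span the maximal possible dimension. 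This rests on the absence both of short relations of $\vA'$ (secured by Proposition \ref{Prop_Alp}) and of coincidental cancellations amongst the few new rows, the latter being checked by a union bound over bounded-size row subsets together with the asymptotic uniformity of the cavity endpoints and the negligible incidence of matching multi-edges.

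The last step is to evaluate each expectation in the double limit $n\to\infty$ then $\eps\to 0$. Since $\sum_i\GAMMA_i$ differs from $\vd_{n+1}$ only by the negligible number of multi-edges, $\Erw[N_a]\to d$, while $\Erw[N_b]\to(1-\eps)d/k$ follows directly from the Poisson means of the $\DELTA_i$. Conditional on $\vec\alpha$, each cavity is asymptotically frozen independently with probability $\vec\alpha$, so $\Erw[Z_b\mid\vec\alpha]\to(1-\eps)dK(\vec\alpha)/k$. The checks adjacent to $x_{n+1}$ have size-biased degree $\hat\vk$, and $\Erw[\vec\alpha^{\hat\vk-1}]=K'(\vec\alpha)/k$ yields $\Erw[Z_a\mid\vec\alpha]\to dK'(\vec\alpha)/k$. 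Moreover, $\{Z_a\ge 1\}$ coincides asymptotically with the event that $x_{n+1}$ is forced frozen by at least one of its $\vd_{n+1}$ (asymptotically independent) checks, so $\pr[Z_a\ge 1\mid\vec\alpha]\to 1-D(1-K'(\vec\alpha)/k)$. Using $(Z_a-1)^+=Z_a-\vecone\{Z_a\ge 1\}$ and summing, the arithmetic collapses onto the announced formula $\Erw[D(1-K'(\vec\alpha)/k)+d(K'(\vec\alpha)+K(\vec\alpha)-1)/k]-d$, modulo an $o_\eps(1)$ error absorbing the factor $(1-\eps)$ and the negligible events.

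The main obstacle will be the generic-rank identity $\rk(\vB_*\ \vC)=N_a+N_b-Z_b-(Z_a-1)^+$. Proposition \ref{Prop_Alp} eliminates short proper relations of $\vA'$, but one additionally has to ensure that the new rows do not exhibit coincidental linear dependencies amongst themselves once projected onto the cavity columns. Making this quantitative requires a careful union bound exploiting the finite-$r$-th-moment hypothesis $\Erw[\vd^r]+\Erw[\vk^r]<\infty$ together with the concentration estimates of \Sec~\ref{Sec_conc}; this verification is the most delicate part of the argument.
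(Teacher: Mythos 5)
Your proposal is correct and follows essentially the same route as the paper: the reduction via Proposition~\ref{Prop_Alp} and Lemma~\ref{Cor_free} to $1-\rk(\vB_*\ \vC)$, your rank identity $\rk(\vB_*\ \vC)=N_a+N_b-Z_b-(Z_a-1)^+$ (which, on the event that the new checks attach to pairwise distinct variables, is exactly the paper's computation of $\rk(\vQ_{\cA})+\rk(\vQ_{\cB})$ in Claim~\ref{Claim_A'''6}), and the size-biased/independent-freezing evaluation all reproduce the paper's argument. One remark: you place the main difficulty in excluding coincidental dependencies among the new rows, but that is automatic once their cavity supports are disjoint; the genuinely delicate points in the paper are instead the asymptotic independence of the degree profile $\GAMMA$ from $(\vA',\vec\alpha)$ (Lemma~\ref{Cor_gamma}, needed because $\vM^-$ is obtained by subtracting $\GAMMA$, so conditioning on $\vec\alpha$ could a priori distort the law of the adjacent check degrees) and the truncation/uniform-integrability control of the rare events with unbounded degree contributions (Claims~\ref{Claim_A'''1}--\ref{Claim_A'''3}).
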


\begin{lemma}\label{Lemma_A''}
We have $\Erw[\nul(\vA'')-\nul(\vA')]=d\Erw[\vec\alpha K'(\vec\alpha)]/k-d+o_\eps(1).$
\end{lemma}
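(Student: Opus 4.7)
The plan is to invoke \Lem~\ref{Cor_free} with $A=\vA'$, $B$ equal to the block of new rows of $\vA''$ (those indexed by the added checks $a_{i,j}''$), and $C$ absent. Since $\vA''$ has no new columns compared to $\vA'$, this would give $\nul(\vA'')-\nul(\vA')=-\rk(B_*)$, with $B_*$ the matrix $B$ after zeroing out every column indexed by $\fF(\vA')$, provided that the set $\vI=\bigcup_r\supp(r)$ of non-zero columns of $B$ is not a proper relation of $\vA'$. The task then reduces to verifying this hypothesis and computing $\Erw[\rk(B_*)]$.

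To this end I would fix $\delta=\delta(\eps)>0$ small, $L=L(\eps)>0$ large, and choose the pinning parameter $\cT=\cT(\delta,L)$ in the definition of $\vA'$ accordingly. First I would restrict attention to a ``good event'' of probability $1-o_\eps(1)$ on which
\begin{enumerate}[(i)]
\item $\vA'$ is $(\delta,\ell)$-free for every $1\le\ell\le L^2$;
\item $\vd_{n+1}\le L$ and every check adjacent to $x_{n+1}$ in $\G_{n+1,\vM^+}$ has target degree at most $L$, so $|\vI|\le L^2$;
\item the supports of distinct rows of $B$ are pairwise disjoint and no row of $B$ has two of its non-zero entries on the same variable.
\end{enumerate}
Part (i) is \Prop~\ref{Prop_Alp} combined with the invariance of the conditional distribution of $\G'-\{p_1,\ldots,p_{\THETA}\}$ under relabellings of the variable nodes, which renders pinning the first $\THETA$ variables distributionally equivalent to pinning a uniformly random $\THETA$-subset; part (ii) is a tail bound using $\Erw[\vd^r]+\Erw[\vk^r]<\infty$ with $r>2$; part (iii) is a birthday-style estimate relying on $|\cC|=\Omega_n(n)$ and $|\vI|=O_\eps(1)$. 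On this event the random matching makes $\vI$ essentially uniform among subsets of variables of size $|\vI|$, so $(\delta,\ell)$-freeness for all relevant $\ell\le L^2$ forces $\pr[\vI\text{ is a proper relation of }\vA']=o_\eps(1)$, permitting the application of \Lem~\ref{Cor_free}; and by (iii) the non-zero rows of $B_*$ have pairwise disjoint supports and are therefore linearly independent, so $\rk(B_*)=|R_1|$, where $R_1$ denotes the set of rows of $B$ whose support is not contained in $\fF(\vA')$.

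Next I would compute $\Erw[|R_1|]$. Since $\Erw[\vM_i]=\Omega_n(n)$ whenever $\pr[\vk=i]>0$ while $\GAMMA_i\le\vd_{n+1}\le L$ on the good event, we have $\vM_i-\vM_i^-=\GAMMA_i$ for every $i$ that contributes. A row of target degree $i$ has its $i$ non-zero entries on $i$ uniformly random distinct cavities of $\vA'$, so conditionally on $\vec\alpha$ it lies in $R_1$ with probability $1-\vec\alpha^i+o_{\eps,n}(1)$, whence
\begin{align*}
\Erw[|R_1|]=\Erw\Bigl[\sum_i\GAMMA_i(1-\vec\alpha^i)\Bigr]+o_\eps(1).
\end{align*}
The $\GAMMA_i$ record how $x_{n+1}$'s clones are matched to check clones in $\G_{n+1,\vM^+}$; by a standard configuration-model calculation each matched clone of $x_{n+1}$ attaches to a check of target degree $i$ with probability $\pr[\hat\vk=i]=i\pr[\vk=i]/k$, and this matching is to leading order independent of $\vA'$ and hence of $\vec\alpha$. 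Writing $\vd_{n+1}'=\sum_i\GAMMA_i$ for the realized degree of $x_{n+1}$, the cavity-versus-check-clone imbalance gives $\Erw[\vd_{n+1}']=(1-\eps)d+o_n(1)=d+o_\eps(1)$ and
\begin{align*}
\Erw\Bigl[\sum_i\GAMMA_i\vec\alpha^i\,\Big|\,\vec\alpha,\vd_{n+1}'\Bigr]=\vd_{n+1}'\cdot\vec\alpha K'(\vec\alpha)/k+o_{\eps,n}(1).
\end{align*}
Assembling these estimates yields $\Erw[|R_1|]=d-d\Erw[\vec\alpha K'(\vec\alpha)]/k+o_\eps(1)$, and \Lem~\ref{Cor_free} then delivers the claim.

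The delicate step will be item (i) of the good event: \Prop~\ref{Prop_Alp} only guarantees freeness when $\THETA$ \emph{uniformly random} variables are pegged, whereas $\vA'$ pegs specifically $x_1,\ldots,x_\THETA$. Bridging the two requires the column-exchangeability of $\G'$ minus its pinning checks, and without this input the reduction $\rk(B_*)=|R_1|$ breaks down. A secondary subtle point is justifying the approximate independence of $\vec\alpha$ (a function of $\vA'$ alone) and $\vd_{n+1}'$ (which depends on the subsequent matching extension of $x_{n+1}$'s clones), which is obtained by noting that $\vd_{n+1}'$ is governed essentially by $\vd_{n+1}$ and $|\cC|=\Omega_n(n)$, quantities that decouple from $\vec\alpha$ up to $o_\eps(1)$.
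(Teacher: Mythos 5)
Your proposal follows essentially the same route as the paper's proof: write $\vA''$ as $\vA'$ plus the block $\vB$ of new rows, rule out proper relations among the attachment set via \Prop~\ref{Prop_Alp} (transferred to the cavity distribution through column exchangeability and $|\cC|=\Omega_n(n)$), apply \Lem~\ref{Cor_free} so that the nullity change is $-\rk(\vB_*)$, and evaluate $\Erw[\rk(\vB_*)]$ through the frozen-cavity fraction $\vec\alpha$ and the size-biased check degrees. Your direct configuration-model computation of $\Erw\brk{\sum_{i\geq1}\GAMMA_i(1-\vec\alpha^i)}$ plays the role of the paper's \Lem~\ref{Cor_gamma} (approximate independence of $\GAMMA$ from $\Sigma'$ and closeness to $\hat\GAMMA$), so the argument is correct and matches the paper's in substance.
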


\aco{We emphasise that the r.h.s.\ expressions in \Lem s~\ref{Lemma_A'''} and~\ref{Lemma_A''} involve expectations on the random variable $\vec\alpha$.
A key feature of the present argument is that we manage to avoid an analysis of $\vec\alpha$ altogether.
This is because, as the following proof of \Prop~\ref{Prop_coupling} shows, we can just replace the difference of the expectations by the largest conceivable value.}

\begin{proof}[Proof of \Prop~\ref{Prop_coupling}]
\aco{Combining \Lem s~\ref{Lemma_ComplicatedModel} and~\ref{Lemma_valid}, we see that
	\begin{align}\nonumber
		\Erw[\nul(\vA_{\eps,n+1})]-\Erw[\nul(\vA_{\eps,n})]&=\Erw[\nul(\vA_{n+1,\vM^+})]-\Erw[\nul(\vA_{n,\vM})]\\
														   &=\Erw[\nul(\vA''')]-\Erw[\nul(\vA'')]+o_n(1)\nonumber\\
														   &=(\Erw[\nul(\vA''')]-\Erw[\nul(\vA')])-(\Erw[\nul(\vA'')]-\Erw[\nul(\vA')])+o_n(1).\label{eqProp_coupling_1}
	\end{align}
	Further, combining \eqref{eqProp_coupling_1} with \Lem s~\ref{Lemma_A'''} and \ref{Lemma_A''}, we obtain
\begin{align}\nonumber
	\Erw[\nul(\vA_{\eps,n+1})]-\Erw[\nul(\vA_{\eps,n})]
		&\leq\Erw[D(1- K'(\vec\alpha)/k)+d(K'(\vec\alpha)+K(\vec\alpha)-1)/k-d\vec\alpha K'(\vec\alpha)]/k+o_\eps(1)\\
		&=\Erw[\Phi(\vec\alpha)]+o_\eps(1)\leq\max_{\alpha\in[0,1]}\Phi(\alpha)+o_\eps(1).\label{eqProp_coupling_2}
	\end{align}
The proposition is an immediate consequence of \eqref{eqProp_coupling_2}.}
\end{proof}

While proving \Lem s~\ref{Lemma_A'''} and~\ref{Lemma_A''} in full detail requires a fair bit of work because we are dealing with very general degree distributions $\vd,\vk$, it is not at all difficult to fathom where the right hand side expressions come from.
They actually arise naturally from \Lem~\ref{Cor_free} and the scarcity of short proper relations supplied by \Prop~\ref{Prop_Alp}.
Indeed, we can write the matrices $\vA''$, $\vA'''$ in the form
\begin{align}\label{eqA''A'''}
\vA''&=\begin{pmatrix}\vA'\\\vB\end{pmatrix},&
\vA'''&=\begin{pmatrix}\vA'&0\\\vB'&\vC'\end{pmatrix}
\end{align}
with $\vB,(\vB'\ \vC')$ representing the new rows and, in the case of $\vA'''$, the additional column.
To calculate $\Erw[\nul(\vA''[\THETA])-\nul(\vA'[\THETA])]$ we basically need to assess the impact of adding a few more checks $a_{i,j}''$ to the Tanner graph $\G'$ of $\vA'$.
The new checks connect to randomly chosen cavities of $\A'$.
Let $k_1,\ldots,k_L$ denote the degrees of the new checks.
Since the distribution $\vk$ of the check degrees has a finite second moment, the total degree $k_1+\cdots+k_L$ is bounded \whp\
The random matrix $\vB$ therefore encodes the non-zero entries corresponding to the edges that connect the $a_{i,j}'$ with the cavities of $\vA'$ where the new checks attach.
Furthermore, the construction of $\vA'$ ensures that \whp\ the number of cavities is as large as $(1+o_n(1))\eps dn$, and the $a_{i,j}'$ hatch on to randomly chosen cavities.
Therefore, \Prop~\ref{Prop_Alp}, applied with $\cT=\cT(\eps)$ large enough, implies that the probability that the set $I$ of non-zero columns of $\vB$ forms a proper relation \aco{of $\vA'$} is $o_\eps(1)$.
Consequently, \Lem~\ref{Cor_free} yields
\begin{align}\label{eqxLemma_A''_1}
\Erw[\nul(\vA'')-\nul(\vA')]=-\Erw[\rk(\vB_*)]+o_n(1),
\end{align}
where $\vB_*$ is obtained from $\vB$ by zeroing out all columns indexed by $\fF(\vA')$.
Further, since the number of cavities of $\vA'$ is as large as $\Omega_n(n)$ while $k_1+\cdots+k_L=o_n(\sqrt n)$ \whp, the matrix $\vB$ has the following form \whp: there are $L$ rows containing $k_1,\ldots,k_L$ non-zero entries, respectively, and every column of $\vB$ contains at most one non-zero entry.
Consequently, once more because there are as many as $\Omega_n(n)$ cavities out of which an $\ALPHA$ fraction are frozen to zero, $\vB_*$ is close in total variation to the matrix obtained from $\vB$ by zeroing out every column with probability $\ALPHA$ independently.
In effect, the probability that the $i$-th row of $\vB_*$ gets zeroed out entirely is $\ALPHA^{k_i}+o_n(1)$.
Thus, \whp\ we have
\begin{align}\label{eqxLemma_A''_2}
\Erw[\rk(\vB_*)\mid \ALPHA,k_1,\ldots,k_L]&=\sum_{i=1}^L\bc{1-\ALPHA^{k_i}}+o_{\eps,n}(1).
\end{align}
Substituting \eqref{eqxLemma_A''_2} into \eqref{eqxLemma_A''_1} and the correct distribution of $k_1,\ldots,k_L$ supplied by the coupling into \eqref{eqxLemma_A''_2}, we obtain the expression displayed in \Lem~\ref{Lemma_A''}.
\aco{To be explicit, the correct degrees $k_1,\ldots,k_L$ are provided by \eqref{eqcplng1}, i.e., there are $\vM_i-\vM_i^-$ checks of degree $i$ for every $i$.
	Hence, to obtain the expression in \Lem~\ref{Lemma_A'''} we need to analyse the random variables $\vec\gamma_i$ from \eqref{eqminus}.
	This analysis will be conducted in \Lem~\ref{Cor_gamma} below, which shows that the $\vec\gamma_i$ are well approximated by the $\hat{\vec\gamma}_i$ from \eqref{eqhatGamma}, which in turn come in terms of the reweighted check degree distribution from \eqref{eqSizeBiasd}.
}
A similar but slightly more complicated calculation explains the expression in \Lem~\ref{Lemma_A'''}.
We proceed to prove \Lem s~\ref{Lemma_valid}--\ref{Lemma_A''} formally.
This requires a bit of groundwork.

\subsection{Groundwork}
Let $P=P_{\G'}$ be the distribution on the set $V_n=\cbc{x_1,\ldots,x_n}$ of variables induced by choosing a cavity uniformly at random, i.e.,
\begin{align*}
P(x_i)=|\cC\cap(\{x_i\}\times[\vd_i])|/|\cC|;
\end{align*}
in the (unlikely) event that $\cC=\emptyset$, we use the convention $P(x_1)=1$.
Let $\vx_1,\vx_2,\ldots\in V_n$ be independent samples drawn from $P$.
The following lemma shows that $|\cC|$ is linear in $n$ \whp{}

\begin{lemma}\label{Lemma_cavityCount}
\Whp\ we have $|\cC|\geq\eps dn/2$.
\end{lemma}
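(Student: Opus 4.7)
The plan is to argue that the number of cavities is at least the difference between the total number of variable-side clones and the total number of check-side clones in $\G'$, and that this difference is comfortably bounded below by $\eps dn/2$ whp.

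First I would establish concentration on the two clone totals. On the variable side, $\sum_{i=1}^n \vd_i$ is a sum of $n$ i.i.d.\ copies of $\vd$ with $\Erw[\vd^r]<\infty$ for some $r>2$, so \Lem~\ref{Lemma_sums} yields $\sum_{i=1}^n \vd_i = dn + o_n(n)$ whp. On the check side, $\sum_{i\geq 1} i\,\vM_i$ is identically distributed to $\sum_{j=1}^{\vm_{\eps,n}} \vk_j$, a compound sum of $\vm_{\eps,n}\sim\Po((1-\eps)dn/k)$ i.i.d.\ copies of $\vk$. Standard Poisson concentration gives $\vm_{\eps,n} = (1-\eps)dn/k + o_n(n)$ whp, and then a second application of \Lem~\ref{Lemma_sums} (together with $\Erw[\vk^r]<\infty$) gives $\sum_{j=1}^{\vm_{\eps,n}}\vk_j = (1-\eps)dn + o_n(n)$ whp. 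Consequently, whp
\begin{align*}
\sum_{i\geq 1} i\,\vM_i^- \;\leq\; \sum_{i\geq 1} i\,\vM_i \;=\; (1-\eps)dn + o_n(n) \;<\; dn + o_n(n) \;=\; \sum_{i=1}^n \vd_i,
\end{align*}
where the first inequality is immediate from the definition \eqref{eqminus} of $\vM_i^-$.

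Next I would use the structural definition of $\G'$: the edges come from a random \emph{maximal} matching $\vec\Gamma'$ of the complete bipartite graph between the variable clone set $\bigcup_{h=1}^n\{x_h\}\times[\vd_h]$ and the check clone set $\bigcup_{i\geq 1}\bigcup_{j=1}^{\vM_i^-}\{a_{i,j}\}\times[i]$. On the event above, the check side is strictly smaller than the variable side, and in the complete bipartite graph any maximal matching saturates the smaller side (otherwise an unmatched pair across the two sides would extend it). Therefore the number of matching edges equals $\sum_{i\geq 1} i\,\vM_i^-$, and the cavities are precisely the unmatched variable clones, giving
\begin{align*}
|\cC| \;=\; \sum_{i=1}^n \vd_i \;-\; \sum_{i\geq 1} i\,\vM_i^- \;\geq\; dn - (1-\eps)dn - o_n(n) \;=\; \eps dn - o_n(n),
\end{align*}
which exceeds $\eps dn/2$ for large $n$.

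The step I expect to be most delicate is the compound-sum concentration $\sum_{i\geq 1} i\,\vM_i = (1-\eps)dn + o_n(n)$ whp, since $\vk$ is only assumed to have a finite $r$-th moment for some $r>2$ rather than a bounded or sub-Gaussian tail; however, this is exactly the regime covered by \Lem~\ref{Lemma_sums} once one conditions on $\vm_{\eps,n}$ lying in a $O(\sqrt{n}\log n)$-window around its mean (supplied by Bennett's inequality), so nothing beyond the tools already in place is needed.
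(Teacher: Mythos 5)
Your argument is correct and takes essentially the same route as the paper's proof: both bound $|\cC|$ from below by the difference between the total number of variable clones ($\approx dn$) and the total number of check clones ($\approx(1-\eps)dn$), each concentrated to within $o_n(n)$, which gives $|\cC|\geq\eps dn-o_n(n)\geq\eps dn/2$ \whp\ The only cosmetic difference is the concentration tool: the paper applies Chebyshev directly to $\sum_{j\geq1}j\vM_j$, using that the $\vM_j$ are independent Poissons and $\Erw[\vk^2]<\infty$, whereas you pass to the compound representation $\sum_{j\leq\vm_{\eps,n}}\vk_j$ and invoke \Lem~\ref{Lemma_sums} after conditioning on $\vm_{\eps,n}$; moreover the paper only needs the trivial inequality $|\cC|\geq\sum_{i=1}^n\vd_i-\sum_{j\geq1}j\vM_j$ rather than your exact saturation identity for the maximal matching, but both observations are valid.
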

\begin{proof}
The choice \eqref{eqPoissons} of $\vM$ ensures that $\Erw\sum_{j\geq1}j\vM_j=(1-\eps)dn$.
Moreover, because the $\vM_j$ are mutually independent Poissons,
\begin{align*}
\Var\sum_{j\geq1}j\vM_j=\sum_{j\geq1}j^2\Var(\vM_j)=
\sum_{j\geq1}j^2\Erw[\vM_j]
=(1-\eps)dn\Erw[\vk^2]/k=O_{\eps,n}(n).
\end{align*}
Consequently, Chebyshev's inequality shows that 
\begin{align}\label{eqLemma_cavityCount1}
\pr\brk{\abs{\sum_{j\geq1}j\vM_j-(1-\eps)dn}\leq\sqrt n\log n}&=1-o_n(1).
\end{align}
Similarly, we have $\Erw\sum_{i=1}^n\vd_i=dn$ and $\Var\sum_{i=1}^n\vd_i=\sum_{i=1}^n\Var(\vd)=O_{\eps,n}(n),$
whence
\begin{align}\label{eqLemma_cavityCount2}
\pr\brk{\abs{\sum_{i=1}^n\vd_i-dn}\leq\sqrt n\log n}&=1-o_n(1).
\end{align}
Since $|\cC|\geq\sum_{i=1}^n\vd_i-\sum_{j\geq1}j\vM_j$ by construction, the assertion follows from
\eqref{eqLemma_cavityCount1} and~\eqref{eqLemma_cavityCount2}.
\end{proof}

Further, letting $\ell_*=\lceil\exp(1/\eps^4)\rceil$ and $\delta_*=\exp(-1/\eps^4)$, consider the event
\begin{align}\label{eqE}
\cE&=\cbc{\pr\brk{\vx_1,\ldots,\vx_{\ell_*}\mbox{ form a proper relation of }\vA'\mid\vA'}<\delta_*}.
\end{align}
The following simple lemma is an application of \Prop~\ref{Prop_Alp}.

\begin{lemma}\label{Lemma_theta}
For sufficiently large $\cT=\cT(\eps)>0$ we have
$\pr\brk{\vA'\in\cE}>\exp(-1/\eps^4)$.
\end{lemma}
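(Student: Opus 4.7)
Set $Z := \pr\brk{\vx_1,\ldots,\vx_{\ell_*}\text{ form a proper relation of }\vA'\mid\vA'}$. Since $\vA'\in\cE$ is precisely the event $\{Z<\delta_*\}$, it is enough to show $\Erw[Z]\leq\delta_*(1-\delta_*)/2$, which by Markov yields $\pr\brk{\vA'\in\cE}\geq(1+\delta_*)/2>\delta_*$. The plan is to control $\Erw[Z]$ by combining \Prop~\ref{Prop_Alp} with a moment-based truncation of the cavity weights.

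First, one applies \Prop~\ref{Prop_Alp}. Let $\vA^\circ$ denote $\vA'$ with the pinning rows $p_1,\ldots,p_\THETA$ removed. Because the joint law of $\vA^\circ$ together with the cavity set $\cC$ is invariant under relabellings of $x_1,\ldots,x_n$, pinning the first $\THETA$ variables is equivalent in distribution to pinning $\THETA$ uniformly random ones. Hence for any $\delta=\delta(\eps)>0$ there is $\cT=\cT(\delta,\ell_*)$ making $\vA'$ be $(\delta,\ell_*)$-free with probability at least $1-\delta$. Next, introduce a typical event $\cG$ combining the degree bound $\cM$ from \eqref{eqM}, the cavity lower bound $|\cC|\geq\eps dn/2$ from \Lem~\ref{Lemma_cavityCount}, and the moment estimate $\sum_{i:\vd_i>T}\vd_i\leq 2n\,\Erw[\vd\vecone\{\vd>T\}]$ for a constant threshold $T=T(\eps)$ to be chosen. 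The finite $r$-th moment of $\vd$ (with $r>2$) allows $T$ to be picked so that the $P$-mass of the \emph{heavy} set $H:=\{i:\vd_i>T\}$ is at most $\delta_*(\ell_*-1)!/4$, and \Lem~\ref{Lemma_sums} yields $\pr[\cG^c]\leq\delta+o_n(1)$.

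On $\cG\cap\{\vA'\text{ is }(\delta,\ell_*)\text{-free}\}$ one bounds $Z$ pointwise by decomposing the sum over size-$\ell_*$ proper relations $I$ according to whether $I\cap H=\emptyset$. For the \emph{light} contributions, the bound $P(x_i)\leq 2T/(\eps dn)$ combined with the counting bound of at most $\delta n^{\ell_*}$ proper relations give a total of at most $\ell_*!\,\delta\,(2T/(\eps d))^{\ell_*}$, which becomes $\leq\delta_*/4$ by choosing $\delta=\delta(\eps)$ sufficiently small (this in turn fixes $\cT=\cT(\eps)$ via \Prop~\ref{Prop_Alp}). For the \emph{heavy} contributions, one drops the proper-relation constraint and uses the standard inequality $\sum_{|I'|=\ell_*-1}\prod_{i\in I'}P(x_i)\leq 1/(\ell_*-1)!$ to obtain a bound of $P(H)/(\ell_*-1)!\leq\delta_*/4$. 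Collisions $\vx_j=\vx_k$ among the samples contribute only $o_n(1)$ by the degree bound, so $Z\leq\delta_*/2$ on the good event, whence $\Erw[Z]\leq\delta_*/2+\pr[\cG^c]\leq\delta_*(1-\delta_*)/2$ for $n$ large.

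The main obstacle is the non-uniformity of the cavity distribution $P$: variables of high degree carry disproportionate mass, so the uniform count of proper relations supplied by \Prop~\ref{Prop_Alp} does not translate directly into a bound on $P^{\otimes\ell_*}$. The light/heavy decomposition circumvents this, crucially exploiting the finite $r$-th moment assumption on $\vd$ so that a constant threshold $T=T(\eps)$ suffices to make $P(H)$ small.
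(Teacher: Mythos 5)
Your proposal follows essentially the same route as the paper's proof: truncate the cavity distribution at a constant degree threshold (your $T$ is the paper's $L$), use exchangeability of the unpinned Tanner graph to invoke \Prop~\ref{Prop_Alp}, and compare the cavity-weighted samples against the uniform count of size-$\ell_*$ proper relations, with \Lem~\ref{Lemma_cavityCount}, \eqref{eqM} and \Lem~\ref{Lemma_sums} supplying the good event. The only structural deviation is that you pass through $\Erw[Z]$ and Markov's inequality, whereas the paper bounds $Z$ pointwise on a high-probability event; this costs you something (see below) and also introduces the only places where your write-up slips. First, in the heavy part you forget the orderings factor: the probability that the sample set meets $H$ is bounded by $\ell_*!\,P(H)\sum_{|I'|=\ell_*-1}\prod_{i\in I'}P(x_i)\leq \ell_*\,P(H)$, not $P(H)/(\ell_*-1)!$, so the threshold $T(\eps)$ must be chosen so that $P(H)\leq\delta_*/(4\ell_*)$ rather than $\delta_*(\ell_*-1)!/4$ -- harmless, since $\delta_*/(4\ell_*)$ is a constant depending on $\eps$ only and a finite first moment of $\vd$ already suffices (the $r$-th moment is not needed here). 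Second, your closing chain $\Erw[Z]\leq\delta_*/2+\pr\brk{\cG^c}\leq\delta_*(1-\delta_*)/2$ is impossible as written, since the middle term already exceeds $\delta_*(1-\delta_*)/2$; you must retune the per-piece targets (say $\delta_*/8$ each, with $\delta\leq\delta_*^2$), after which Markov does give $\pr\brk{\cE}\geq\tfrac12>\delta_*$. Better still, drop Markov altogether: your own good-event analysis shows $Z<\delta_*$ outside an event of probability $O(\delta)+o_n(1)$, which with $\delta\ll\delta_*$ yields $\pr\brk{\cE}\geq1-O(\delta_*)$ -- this is what the paper's argument delivers and what is effectively used downstream (cf.\ the estimate \eqref{eqClaim_A'''3_2}), whereas the Markov detour only certifies the literal (much weaker) inequality stated in \Lem~\ref{Lemma_theta}.
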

\begin{proof}
\Lem~\ref{Lemma_cavityCount} provides that $|\cC|\geq\eps n/2$ \whp\
Moreover, since $\Erw[\vd]=O_{\eps,n}(1)$ we find $L=L(\eps)>0$ such that
the event  $\cL=\cbc{\sum_{i=1}^n\vd_i\vecone\{\vd_i>L\}<\eps\delta_*^2 n/(16\ell_*)}$ has probability at least $1-\delta_*/8$.
Thus, we may condition on the event $\cA=\cL\cap\{|\cC|\geq\eps n/2\}$.

Let $\hat\vx_1,\ldots,\hat\vx_{\ell_*}$ be a sequence of independently and uniformly chosen variables from $x_1,\ldots,x_n$.
Consider a set $\cW\subset\{x_1,\ldots,x_n\}^{\ell_*}$.
How can we estimate the probability that $(\vx_1,\ldots,\vx_{\ell_*})\in\cW?$
Either one of the variables $\vx_1,\ldots,\vx_{\ell_*}$ has degree greater than $L$;
	on the event $\cA$ this occurs with probability at most $\delta_*^2/16$.
Or  all of $\vx_1,\ldots,\vx_{\ell_*}$ have degree at most $L$.
Then the probability that $(\vx_1,\ldots,\vx_{\ell_*})\in\cW$ is not much greater than the probability that $(\hat\vx_1,\ldots,\hat\vx_{\ell_*})\in\cW$.
To be precise, since $\hat\vx_1,\ldots,\hat\vx_{\ell_*}$ are chosen uniformly and there are at least $\eps n/2$ cavities, the probabilities differ by no more than a factor of $(2L/\eps)^{\ell_*}$.
Hence, {on the event }$\cA$ we have
\begin{align}
\pr\brk{(\vx_1,\ldots,\vx_{\ell_*})\in\cW\mid\vA'}
	&\leq (2L/\eps)^{\ell_*}\pr\brk{(\hat\vx_1,\ldots,\hat\vx_{\ell_*})\in\cW\mid\vA'}+\delta_*^2/8.\label{eqLemma_theta3}
\end{align}
Applying \eqref{eqLemma_theta3} to the set $\cW$ of proper relations and invoking \Prop~\ref{Prop_Alp} completes the proof.
\end{proof}

Further, consider the event
\begin{align}\label{eqE'}
\cE'=\cbc{|\cC|\geq\eps dn/2\wedge \max_{i\leq n}\vd_i\leq n^{1/2}}.
\end{align}

\begin{lemma}\label{Claim_A'''2a}
We have $\pr\brk{\cE'}=1-o_n(1).$
\end{lemma}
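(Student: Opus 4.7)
The plan is to decompose the event $\cE'$ into its two defining pieces and bound each separately. The first piece, $|\cC|\geq\eps dn/2$, is already exactly the content of \Lem~\ref{Lemma_cavityCount}, so nothing new needs to be done there; I would simply invoke that lemma. The second piece, $\max_{i\leq n}\vd_i\leq n^{1/2}$, is a routine tail-bound statement for a sequence of i.i.d.\ random variables with a finite moment of order $r>2$.

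For the max-degree bound, I would use Markov's inequality together with the hypothesis $\Erw[\vd^r]<\infty$ for some $r>2$, which was standing throughout the paper. Concretely, for each fixed $i$,
\begin{align*}
\pr\brk{\vd_i>n^{1/2}} = \pr\brk{\vd_i^r>n^{r/2}} \leq \Erw[\vd^r]\,n^{-r/2}.
\end{align*}
A union bound over $i\in[n]$ then gives
\begin{align*}
\pr\brk{\max_{i\leq n}\vd_i>n^{1/2}} \leq n\cdot\Erw[\vd^r]\,n^{-r/2} = \Erw[\vd^r]\,n^{1-r/2} = o_n(1),
\end{align*}
using $r>2$ in the last step. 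Combining this with \Lem~\ref{Lemma_cavityCount} via a final union bound yields $\pr\brk{\cE'}=1-o_n(1)$, as desired.

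There is no real obstacle here: the statement is a straightforward concentration fact whose purpose is organisational, bundling the cavity count estimate with a maximum-degree bound so that subsequent arguments (e.g.\ in \Sec~\ref{Sec_A'''} and \Sec~\ref{Sec_A''}) can condition on $\cE'$ and simultaneously exploit both that there are linearly many cavities to attach new checks to and that no single variable carries an atypically huge number of half-edges. The only mild subtlety is to make sure the tail bound is applied with the moment that is available ($r>2$), which is exactly what makes $n^{1/2}$ an admissible threshold; any threshold $n^{1/r+\eta}$ would also work, but $n^{1/2}$ is convenient since it matches the scale used in the event $\cM$ from \eqref{eqM}.
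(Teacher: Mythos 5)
Your proof is correct and follows essentially the same route as the paper: the cavity-count part is exactly \Lem~\ref{Lemma_cavityCount}, and the maximum-degree bound via Markov's inequality with the $r>2$ moment plus a union bound is just the explicit version of what the paper attributes to the moment assumption (cf.\ the event $\cM$ in \eqref{eqM}). Nothing is missing.
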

\begin{proof}
This follows from the choice of the parameters in~\eqref{eqPoissons},  \Lem~\ref{Lemma_sums} and \Lem~\ref{Lemma_cavityCount}.
\end{proof}

To prove \Lem s~\ref{Lemma_A'''} and~\ref{Lemma_A''} we need an explicit description of the vector $\vec\gamma$ that captures the degrees of the checks adjacent to the new variable node $x_{n+1}$.
Since $\vec\gamma$ is defined in terms of the the `big' Tanner graph $\G_{n+1,\vM^+}$, $\vec\gamma$ and the random variables are stochastically dependent.
However, the next lemma shows that this dependence is very weak.
Additionally, the lemma shows that the law of $\vec\gamma$ can be expressed easily in terms of the sequence $(\hat\vk_i)_{i\geq1}$ of independent copies of $\hat\vk$ from \eqref{eqSizeBiasd}.
Indeed, let
\begin{align}\label{eqhatGamma}
\hat\GAMMA_j&=\sum_{i=1}^{\vd_{n+2}}\vecone\{\hat\vk_i=j\}&\mbox{and}&&
\hat\GAMMA=(\hat\GAMMA_j)_{j\geq1}.
\end{align}
Also let $\hat\DELTA=(\hat\DELTA_j)_{j\geq1}$ be a family random variables, mutually independent and independent of everything else, with distributions
\begin{align}\label{eqhatDelta}
\hat\DELTA_j\sim\Po\bc{(1-\eps)\pr\brk{\vk=j}d/k}.
\end{align}
Further, let $\Sigma'$ be the $\sigma$-algebra generated by $\G'$, $\A'$, $\vM^-$ and $(\vd_i)_{i\in[n]}$.
We write $\GAMMA\mid\Sigma',\DELTA\mid\Sigma'$ for the conditional versions of $\GAMMA,\DELTA$ given $\Sigma'$.

\begin{lemma}\label{Cor_gamma}
With probability $1-\exp(-\Omega_{\eps,n}(1/\eps))$ over the choice of $\G'$, $\A'$, $\vM^-$ and $(\vd_i)_{i\in[n]}$ we have
$$\dTV(\GAMMA\mid\Sigma',\hat\GAMMA)+\dTV(\DELTA\mid\Sigma',\hat\DELTA)=O_{\eps,n}(\eps^{1/2}).$$
\end{lemma}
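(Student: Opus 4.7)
My plan is to bound the two total-variation distances separately via coupling arguments, after restricting to a high-probability event $\cG \in \Sigma'$ on which all relevant quantities concentrate. I define $\cG$ as the conjunction of $\sum_{j \ge 1} j \vM_j^- = (1-\eps)dn + o_n(n)$, $|\cC| \geq \eps dn/2$, $\vM^-_j = (1-\eps)\pr[\vk = j]dn/k + o_n(n)$ for each $j$ in a truncated range $j \le L(\eps)$, and analogous tail control on $\sum_{j > L} j \vM_j^-$ and on the row/column degree sequences. Each of these events is handled by a Bennett or Chernoff inequality for the Poisson variables $\vM_j$ together with \Lem~\ref{Lemma_sums} and the moment assumption $\Erw[\vd^r]+\Erw[\vk^r]<\infty$, calibrated to yield the claimed $\exp(-\Omega_{\eps,n}(1/\eps))$ tail bound on $\cG^c$.

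For $\dTV(\DELTA\mid\Sigma',\hat\DELTA)$ I exploit that $\DELTA$ is independent of $\vM$, of $(\vd_i)_{i\in[n]}$, and of the fresh matching $\vec\Gamma'$ used to build $\G'$. Hence $\Sigma'$ carries information about $\DELTA$ only through $\vM^-$, and $\vM^- = (\vM-\GAMMA)\vee 0$ depends on $\DELTA$ only via $\GAMMA$, which in turn is a function of $\vM^+ = \vM + \DELTA$ and the big matching. A Bayesian calculation then shows that on $\cG$ the likelihood ratio of the posterior of $\DELTA_j$ to the prior $\hat\DELTA_j$ is $1+o_n(1)$ per coordinate, because $\vM_j$ is a Poisson of scale $\Theta_n(n)$ in the bulk while $\GAMMA_j$ is $O_{\eps,n}(1)$. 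Summing over $j \le L$ and controlling the tail via the moment bound, this gives $\dTV(\DELTA\mid\Sigma',\hat\DELTA) = o_n(1)$ on $\cG$.

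For $\dTV(\GAMMA\mid\Sigma',\hat\GAMMA)$ I couple the sequence of target degrees of the neighbors of $x_{n+1}$ in the big Tanner graph $\G_{n+1,\vM^+}$ with the size-biased samples $\hat\vk_1, \ldots, \hat\vk_{\vd_{n+2}}$ by revealing the big matching one clone of $x_{n+1}$ at a time and coupling $\vd_{n+1}$ with $\vd_{n+2}$ by the identity. On $\cG$, each clone of $x_{n+1}$ is matched with probability $\sim (1-\eps)$ (since the excess of variable clones over check clones is $\sim \eps d n$), and given that it is matched its partner has target degree $j$ with probability proportional to $j \vM_j^+$, which is within $O(1/n)$ in TV of $\pr[\hat\vk = j]$. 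The coupling errors come from (a) the $\sim \eps$ per-clone probability of being unmatched, contributing $O(\eps \vd_{n+1})$, and (b) per-step TV slippage of $O(1/n)$ in the size-biased draw, contributing $O(\vd_{n+1}/n) = o_n(1)$. Adding the Bayesian correction from the previous paragraph for conditioning on $\vM^-$ rather than on the unconditional law, and averaging over $\vd_{n+1} \sim \vd$ (which is independent of $\Sigma'$), I obtain $\dTV(\GAMMA\mid\Sigma',\hat\GAMMA) \le O(\eps\,\Erw[\vd]) + o_n(1) = O_{\eps,n}(\eps)$ on $\cG$, which is in particular $O_{\eps,n}(\eps^{1/2})$.

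The main obstacle is the effect of conditioning on $\vM^-$ on the joint law of $(\GAMMA,\DELTA)$. Since $\vM^- = (\vM - \GAMMA)\vee 0$ entangles $\vM$, $\GAMMA$ and (through the big matching) $\DELTA$, the Bayesian calculations above must carefully exploit both the Poisson scale separation between $\vM$ (of scale $\Theta_n(n)$ per coordinate) and $(\GAMMA,\DELTA)$ (of bounded scale) and the concentration of $|\cC|$ around $\eps dn$ guaranteed by $\cG$. A secondary subtlety is that the exponential-in-$1/\eps$ tail bound on $\pr[\cG^c]$ requires calibrating the concentration inequalities and the truncation level $L = L(\eps)$ rather than settling for the softer $o_n(1)$ tail that would suffice for the \whp\ part alone.
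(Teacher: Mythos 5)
Your proposal follows essentially the same route as the paper's proof: a clone-by-clone (deferred decisions) coupling of the checks adjacent to $x_{n+1}$ with size-biased samples of $\hat\vk$, combined with a Bayes/local-limit computation exploiting that the Poisson variables $\vM_j$ have scale $\Theta_n(n)$ while $(\GAMMA,\DELTA)$ stay bounded, all restricted to a $\Sigma'$-measurable concentration event -- the paper merely packages this as an unconditional bound $\dTV(\GAMMA,\hat\GAMMA)=O_{\eps,n}(\eps^{1/2})$ plus an asymptotic-independence statement for $(\GAMMA,\DELTA)$ given $(\vM^-,\vd_1,\ldots,\vd_n)$, which is exactly your ``Bayesian correction''. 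Two details to tighten in a full write-up, both consistent with your plan: $\vd_{n+1}$ is not literally independent of $\Sigma'$ (since $\vM^-$ depends on $\GAMMA$), so it must be folded into the same scale-separation argument, and the restriction to bounded $(\GAMMA,\DELTA)$ has to be transferred to the conditional law by a Markov-inequality step (the paper's events $\cK$, $\cL$), which is precisely where the exceptional probability $\exp(-\Omega_{\eps,n}(1/\eps))$ in the statement comes from.
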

\begin{proof}
We begin by studying the unconditional distributions of $\GAMMA$ and $\DELTA$.
\newline
Let $\zeta=(\sum_{i\geq1}i\vM_i^+)/(\sum_{i=1}^{n+1}\vd_i)$.
Proceeding as in the proof of \Lem~\ref{Lemma_cavityCount}, we conclude that
$\pr\brk{1-2\eps\leq\zeta\leq1-\eps/2}=1-o_n(1)$.
Further, given $1-2\eps\leq\zeta\leq1-\eps/2$ we can think of $\G_{n+1,\vM^+}$ as being generated by the following experiment.
\begin{enumerate}[(i)]
\item Choose a set $\vec C\subset\bigcup_{h=1}^{n+1}\cbc{x_h}\times[\vd_h]$ of size $(1-\zeta) \sum_{i=1}^{n+1}\vd_i$ uniformly at random.
\item Create a random perfect matching $\vec\Gamma^\star$ of the complete bipartite graph with vertex classes
\begin{align*}
\bc{\bigcup_{h=1}^{n+1}\cbc{x_h}\times[\vd_h]}\setminus\vec C\quad\mbox{and}\quad
\bigcup_{i\geq1}\bigcup_{j=1}^{\vM_i^+}\cbc{a_{i,j}}\times[i].
\end{align*}
\item Obtain $\G^\star$ with variable nodes $x_1,\ldots,x_{n+1}$ and check nodes $a_{i,j}$, $i\geq1$, $j\in[\vM_i^+]$ by inserting an edge between $x_h$ and $a_{i,j}$ for any edge of $\vec\Gamma^\star$ that links $\{x_h\}\times[\vd_h]$ to $\{a_{i,j}\}\times[i]$.
\end{enumerate}
In other words, in the first step we designate the set of $\cC=\vec C$ of cavities and in the next two steps we connect the non-cavities randomly.

By way of this alternative description we can easily get a grip on the degree of $x_{n+1}$.
Indeed, given that $\vd_{n+1}\leq\eps^{-1/2}$, the probability that one of the clones $\{n+1\}\times[\vd_{n+1}]$ ends up in $\vC$ is $O_\eps(\eps^{1/2})$.
Hence, the actual degree $\vd^\star_{n+1}$ of $x_{n+1}$ in $\G^\star$ satisfies
\begin{align}\label{eqnasty-1}
\dTV\bc{\vd^\star_{n+1}\mid\{\vd_{n+1}\leq\eps^{-1/2}\},\vd}&=O_{\eps,n}(\eps^{1/2}).
\end{align}
Regarding the degrees of the checks adjacent to $x_{n+1}$,
by the principle of deferred decisions we can construct $\vec\Gamma^\star$ by matching one variable clone at a time, starting with the clones $\{x_{n+1}\}\times[\vd_{n+1}]$.
Clearly, in this process the probability that a specific clone of $x_{n+1}$ links to a specific check is proportional to the degree of that check.
Therefore, since $\Erw\sum_{i\geq1}i\vM_i^+=O_{\eps,n}(1)$, we find a fixed number $L$ such that with probability $1-O_{\eps,n}(\eps^{-1})$ all checks adjacent to $x_{n+1}$ have degree at most $L$.
Further, Chebyshev's inequality shows that $\vM_i^+=(1-\eps)\pr\brk{\vk=i}dn/k+o_n(n)$ for all $i\leq L$ and $\sum_{i\geq1}i\vM_i^+=(1-\eps)dn+o_n(n)$ \whp\
In effect, if $\vd_{n+1}\leq\eps^{-1/2}$, the $\vd_{n+1}$ choices of the checks are asymptotically independent, and the distribution of the individual check degrees that $x_{n+1}$ joins is at total variation distance $o_n(1)$ of the distribution $\hat\vk$.
In summary, given $\vM_i^+=(1-\eps)\pr\brk{\vk=i}dn/k+o_n(n)$ for all $i\leq L$ and $\sum_{i\geq1}i\vM_i^+=(1-\eps)dn+o_n(n)$ we have
\begin{align}\label{eqnasty0}
\dTV(\GAMMA,\hat\GAMMA)=O_{\eps,n}(\eps^{1/2}).
\end{align}
Moreover, it is immediate from \eqref{eqPoissons} that the unconditional $\DELTA$ is distributed as $\hat\DELTA$.

To complete the proof we are going to argue that $\vM^-,\vec d_1,\ldots,\vec d_n$ and $\GAMMA,\DELTA$ are asymptotically independent.
Arguing along the lines of the previous paragraph, we find that for large  $L=L(\eps)>0$ the event
	$$\cK=\cbc{\sum_{i\geq1}i(\DELTA_i+\GAMMA_i)\leq L}$$
occurs with probability $\pr[\cK]\geq1-\exp(-1/\eps^2)$.
Consequently, the event $$\cL=\{\pr[\cK\mid\vM^-,\vd_1,\ldots,\vd_n]\geq1-\exp(-1/\eps)\}$$satisfies $\pr[\cL]\geq1-\exp(-1/\eps)$.
Moreover, since $\vM$ comprises independent Poisson variables, the event
$$\cM\negmedspace=\negmedspace\cbc{\forall i \negmedspace\leq\negmedspace L:|\vM_i^--\Erw[\vM_i]|\negmedspace\leq\negmedspace\sqrt n\ln n}\cap\cbc{\sum_{i=1}^n\vd_i\negmedspace=\negmedspace(1-\eps)dn+o_n(n)}\cap\cbc{\sum_{i\geq1}i\vM_i^-\negmedspace=\negmedspace(1-\eps)dn+o_n(n)}$$
satisfies $\pr[\cM]=1-o_n(1)$.
In summary,
\begin{align}\label{eqnasty1}
\pr\brk{\cK}&\geq\exp(-1/\eps^2),&\pr\brk{\cL}&\geq1-\exp(-1/\eps),&\pr\brk{\cM\mid\cK}&=1-o_n(1).
\end{align}

Further, we claim that for any outcomes $(M^-,d_1,\ldots,d_n)\in\cL\cap\cM$ and $(\gamma,\Delta)\in\cK$,
\begin{align}\label{eqnasty2}
\pr\brk{\GAMMA=\gamma,\DELTA=\Delta\mid\vM^-=M^-,\forall i\in[n]:\vd_i=d_i}
	&\sim\pr\brk{\GAMMA=\gamma}\pr\brk{\DELTA=\Delta}.
\end{align}
Indeed, on the event $\cM$ we have $\vM_i^-=\Erw[\vM_i]+O_n(\sqrt n\ln n)=\Omega_n(n)$ for any $i\leq L$ in the support of $\vk$, the local limit theorem for the Poisson distribution yields
\begin{align}
\pr&\brk{\vec M^-=M^-,\forall i\leq n:\vec d_i=d_i\mid \GAMMA=\gamma,\DELTA=\Delta}
	=\pr\brk{\vec M=M^-+\gamma,\forall i\leq n:\vec d_i=d_i\mid \GAMMA=\gamma,\DELTA=\Delta}\nonumber\\
	&=\frac{\pr\brk{\GAMMA=\gamma,\DELTA=\Delta\mid \vec M=M^-+\gamma,\forall i\leq n:\vec d_i=d_i			}}{\pr\brk{\GAMMA=\gamma,\DELTA=\Delta}}\cdot\pr\brk{\vec M=M^-+\gamma}\cdot\prod_{i=1}^n\pr\brk{\vec d_i=d_i}\nonumber\\
	&=(1+o_n(1)) \frac{\pr\brk{\GAMMA=\gamma\mid \vec M=M^-+\gamma,\forall i\leq n:\vec d_i=d_i,\DELTA=\Delta			}}{\pr\brk{\GAMMA=\gamma}}\cdot\pr\brk{\vec M=M^-}\cdot\prod_{i=1}^n\pr\brk{\vec d_i=d_i}.\label{eqnasty3}
\end{align}
Finally, given $\vM=M^-+\gamma$ and $\vec\Delta=\Delta$ we have
$\vM_i^+=(1-\eps)\pr\brk{\vk=i}dn/k+o_n(n)$ for all $i\leq L$ and $\sum_{i\geq1}i\vM_i^+=(1-\eps)dn+o_n(n)$.
Therefore, by the principle of deferred decisions, once we condition on a likely outcomes $M^-$ of $\vM^-$, $\vd_1,\ldots,\vd_n$ and of $\vec\Delta$, the conditional probability of obtaining $\vec\gamma=\gamma$ is close to the unconditional probability:
\begin{align*}
\pr\brk{\GAMMA=\gamma\mid \vec M=M^-+\gamma,\forall i\leq n:\vec d_i=d_i,\DELTA=\Delta			}=(1+o_n(1)) \pr\brk{\GAMMA=\gamma}.
\end{align*}
Hence, \eqref{eqnasty2} follows from \eqref{eqnasty1} and \eqref{eqnasty3}.

Finally, to complete the proof we combine \eqref{eqnasty1} and \eqref{eqnasty2} to conclude that
with probability $1-\exp(-\Omega_{\eps,n}(1/\eps))$,
\begin{align}
\pr\brk{\GAMMA=\gamma,\DELTA=\Delta\mid\Sigma'}&=
\pr\brk{\GAMMA=\gamma,\DELTA=\Delta\mid\vM^-,\vd_1,\ldots,\vd_n}=(1+o_n(1)) \pr\brk{\GAMMA=\gamma}\pr\brk{\DELTA=\Delta}.\label{eqnasty5}
\end{align}
The assertion follows from \eqref{eqnasty0} and \eqref{eqnasty5}.
\end{proof}

\subsection{Proof of \Lem~\ref{Lemma_A'''}}\label{Sec_A'''}
The proof comprises several steps, each relatively simple individually.
Let
	$$X=\sum_{i\geq1}\DELTA_i,\qquad Y=\sum_{i\geq1}i\DELTA_i,\qquad Y'=\sum_{i\geq1}i\GAMMA_i.$$
Then the total number of new non-zero entries upon going from $\vA'$ to $\vA'''$ is bounded by $Y+Y'$.
Let $$\cE''=\cbc{X\vee Y\vee Y'\leq1/\eps}.$$

\begin{claim}\label{Claim_A'''1}
We have $\pr\brk{\cE''}=1-O_{\eps,n}(\eps)$.
\end{claim}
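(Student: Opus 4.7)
The plan is to apply a union bound
\begin{align*}
\pr\brk{\cE''^c} \;\leq\; \pr\brk{X > 1/\eps} + \pr\brk{Y > 1/\eps} + \pr\brk{Y' > 1/\eps}
\end{align*}
and control each of the three summands via Markov's inequality, once the relevant first moments are shown to be $O_{\eps,n}(1)$.

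The contributions of $X$ and $Y$ are immediate from the Poisson parameters in \eqref{eqPoissons}: linearity of expectation yields $\Erw[X] = (1-\eps)d/k$ and $\Erw[Y] = (1-\eps)d\Erw[\vk]/k = (1-\eps)d$, both $O_{\eps,n}(1)$. Markov therefore supplies $\pr\brk{X > 1/\eps} + \pr\brk{Y > 1/\eps} = O_{\eps,n}(\eps)$.

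The delicate summand is $Y' = \sum_{i \geq 1} i\GAMMA_i$, the total target degree of the checks adjacent to $x_{n+1}$ in $\G_{n+1,\vM^+}$. I would estimate $\Erw[Y']$ through the matching description of $\G_{n+1,\vM^+}$. Conditional on $\vd$ and $\vM^+$, and working on the likely event $\sum_i i\vM_i^+ \leq \sum_h \vd_h$, any variable-clone / check-clone pair is paired in the uniformly random maximal matching with probability $1/\sum_{h=1}^{n+1}\vd_h$, so summing over the $\vd_{n+1}$ clones of $x_{n+1}$ and the $i\vM_i^+$ clones of the target-degree-$i$ checks yields
\begin{align*}
\Erw\brk{Y' \,\bigg|\, \vd, \vM^+} \;\leq\; \frac{\vd_{n+1}\sum_{i\geq1} i^2\vM_i^+}{\sum_{h=1}^{n+1}\vd_h}.
\end{align*}
Because $\vd_{n+1}$ is independent of $\vM^+$, the numerator has expectation $d\cdot(1-\eps)dn\Erw[\vk^2]/k$, which is finite thanks to $\Erw[\vk^r] < \infty$ for some $r > 2$, while \Lem~\ref{Lemma_sums} ensures $\sum_h\vd_h = (1 + o_n(1))dn$ with probability $1 - o_n(1/n)$. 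Combining these with the uniform upper bound above, one obtains $\Erw[Y'] = O_{\eps,n}(1)$, and Markov delivers $\pr\brk{Y' > 1/\eps} = O_{\eps,n}(\eps)$, completing the union bound.

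The main obstacle is precisely the $Y'$ estimate: $\GAMMA$ is entangled with the entire random matching $\vec\Gamma_{n+1,\vM^+}$, so one cannot treat it as an independent sum and must extract a first-moment bound through the size-biased matching computation. An alternative route would invoke \Lem~\ref{Cor_gamma} to replace the conditional law of $\GAMMA$ by the explicit $\hat\GAMMA$ and then bound $\hat Y' = \sum_{j=1}^{\vd_{n+2}}\hat\vk_j$ by Markov (its expectation is $d\Erw[\hat\vk] = d\Erw[\vk^2]/k$); however, the $O_{\eps,n}(\eps^{1/2})$ total-variation cost incurred by \Lem~\ref{Cor_gamma} would degrade the rate, whereas the direct size-biased computation above retains the claimed $O_{\eps,n}(\eps)$.
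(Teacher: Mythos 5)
Your argument is correct and takes essentially the same route as the paper: Markov's inequality for $X$ and $Y$ via the Poisson means \eqref{eqPoissons}, and for $Y'$ a clone-pair union bound giving $\Erw[Y']\leq\Erw\brk{\vd_{n+1}\sum_{i\in[\vm_{\eps,n}^+]}\vk_i^2}/n=O_{\eps,n}(1)$, followed by Markov. The only simplification to note: since $\vd_h\geq1$ for all $h$, the denominator satisfies $\sum_{h=1}^{n+1}\vd_h\geq n$ deterministically, so your detour through \Lem~\ref{Lemma_sums} (which, as stated, would still leave the contribution of $Y'$ on the exceptional event unaccounted for) is unnecessary — replacing the denominator by $n$ outright is exactly what the paper does.
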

\begin{proof}
Since \eqref{eqPoissons} yields $\Erw[X],\Erw[Y]=O_{\eps,n}(1)$, Markov's inequality yields $\pr\brk{X>1/\eps}=O_{\eps,n}(\eps)$ and  $\pr\brk{Y>1/\eps}=O_{\eps,n}(\eps)$.
Further, 
we can bound the probability that a check of degree $i$ is adjacent to $x_{n+1}$ by $i\vd_{n+1}/n$, because one of the $i$ clones of the check has to be matched to one of the $\vd_{n+1}$ clones of $x_{n+1}$ and $\sum_{i=1}^n\vd_i\geq n$.
Hence,
\begin{align*}
\Erw\brk{Y'}=\Erw\sum_{i\geq1}i\GAMMA_i\leq\Erw\sum_{i\in[\vm_{\eps,n}^+]}\vk_i^2\vd_{n+1}/n=O_{\eps,n}(1).
\end{align*}
Thus, the assertion follows from Markov's inequality.
\end{proof}

Going from $\G'$ to $\G'''$ we add checks $a_{i,j}'''$, $i\geq1$, $j\in[\GAMMA_i]$ and $b_{i,j}'''$, $i\geq1$, $j\in[\vM_i^+-\vM_i^--\GAMMA_i]$.
Let 
$$\cX=\bc{\bigcup_{i\geq1}\bigcup_{j=1}^{\GAMMA_i}\partial a_{i,j}'''\setminus\{x_{n+1}\}}\cup 
		\bc{\bigcup_{i\geq1}\bigcup_{j\in[\vM_i^+-\vM_i^--\GAMMA_i]}\partial b_{i,j}'''}$$
comprise all the variable nodes adjacent to the new checks, except for $x_{n+1}$.
Further, let
$$\cE'''=\cbc{|\cX|=Y+\sum_{i\geq1}(i-1)\GAMMA_i}$$
be the event that the variables of $\G'$ where the new checks attach are all distinct.

\begin{claim}\label{Claim_A'''2b}
We have $\pr\brk{\cE'''\mid\cE'\cap\cE''}=1-o_n(1).$
\end{claim}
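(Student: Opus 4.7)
The plan is a direct birthday-style union bound, combined with a brief check that the parameters $\vM_i^-$ align correctly with $\vM_i-\GAMMA_i$. On $\cE'\cap\cE''$ the number of clones of the new check nodes that must be matched into $\cC$ (i.e., clones of $a_{i,j}'''$ other than the one attached to $x_{n+1}$, plus all clones of $b_{i,j}'''$) is at most
\[
\sum_{i\geq1}(i-1)\GAMMA_i+\sum_{i\geq1}i(\vM_i^+-\vM_i^--\GAMMA_i)\leq Y'+Y\leq 2/\eps,
\]
while $|\cC|\geq\eps dn/2$. In particular there are only $\binom{2/\eps}{2}=O_{\eps,n}(1)$ pairs of new clones to worry about, and since the number of new clones is far below $|\cC|$ every new clone actually gets matched by the random maximal matching $\vec\Gamma'''$.

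For the defining equation $|\cX|=Y+\sum_{i\geq1}(i-1)\GAMMA_i$ to hold we additionally need $\vM_i\geq\GAMMA_i$ for every $i$ (so that $\vM_i^-=\vM_i-\GAMMA_i$, whence $\vM_i^+-\vM_i^--\GAMMA_i=\DELTA_i$ and the total $b'''$-contribution is precisely $Y$). But $\GAMMA_i\leq Y'\leq 1/\eps$ on $\cE''$, while for every $i$ in the support of $\vk$ the variable $\vM_i\sim\Po((1-\eps)\pr[\vk=i]dn/k)$ has mean $\Omega_{\eps,n}(n)$, so a Chernoff bound gives $\vM_i\geq 1/\eps$ simultaneously for all relevant $i$ with probability $1-o_n(1)$.

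It therefore remains to show that on $\cE'\cap\cE''$ no two new clones are matched to clones of the \emph{same} variable of $\G'$. For any fixed pair of new clones, by the principle of deferred decisions I can reveal their matches one at a time; given that the first lands in a cavity belonging to $x_j$, the second is uniform over the remaining $|\cC|-1\geq\eps dn/2-1$ cavities, at most $\vd_j-1\leq\sqrt n$ of which belong to $x_j$ (using $\max_{i\leq n}\vd_i\leq\sqrt n$ on $\cE'$). Hence a specific pair collides with probability $O_{\eps,n}(1/\sqrt n)$, and a union bound over the $O_{\eps,n}(1)$ pairs yields a total collision probability of $O_{\eps,n}(1/\sqrt n)=o_n(1)$.

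No genuine obstacle arises here. The only point to check is that conditioning on $\cE'\cap\cE''$ does not disturb the distribution of $\vec\Gamma'''$ given $(\vM,\DELTA,\GAMMA,(\vd_i))$: but $\cE'$ depends only on $|\cC|$ and the degree sequence, $\cE''$ depends only on $\DELTA$ and $\GAMMA$, so given these parameters $\vec\Gamma'''$ remains uniform over maximal matchings and the deferred-decisions computation is legitimate.
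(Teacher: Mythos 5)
Your proof is correct and follows essentially the same birthday-type argument as the paper: on $\cE''$ only $O_{\eps,n}(1)$ new clones must be matched into $\cC$, while on $\cE'$ there are $\Omega_n(n)$ cavities with at most $O_n(\sqrt n)$ belonging to any one variable, so a deferred-decisions union bound over the $O_{\eps,n}(1)$ pairs gives collision probability $o_n(1)$. Your extra verification that $\vM_i\geq\GAMMA_i$ (so that the $b'''$-clone count is exactly $Y$ and the defining equality of $\cE'''$ can hold) is a sound precaution faithful to the literal definition of $\cE'''$; the paper handles that point separately in the proof of \Lem~\ref{Lemma_A'''}, where it argues $\vM_i^-=\vM_i-\GAMMA_i$ \whp\ for the bounded set of relevant degrees.
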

\begin{proof}
Given $\cE'$ there are $\Omega_n(n)$ cavities in total, while the maximum number belonging to any one variable is $O_n(\sqrt n)$.
Further, given $\cE''$ we merely pick a bounded number $Y+Y'=O_{\eps,n}(1/\eps)$ of these cavities randomly as neighbours of the new checks.
Thus, the probability of hitting the same variable twice is $o_n(1)$.
\end{proof}

\begin{claim}\label{Claim_A'''3}
We have $\Erw\brk{\abs{\nul(\vA''')-\nul(\vA')}(1-\vecone\cE\cap\cE'\cap\cE''\cap\cE''')}=o_{\eps,n}(1)$.
\end{claim}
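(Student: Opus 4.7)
The plan is to bound the nullity change by the crude change in matrix size and then sum the expected contribution over the four bad events. Going from $\vA'$ to $\vA'''$ adjoins one column (for $x_{n+1}$) and $\sum_i(\vM_i^+-\vM_i^-)$ rows; using $\vM_i^+=\vM_i+\DELTA_i$ and $\vM_i^-=(\vM_i-\GAMMA_i)\vee 0$, we have $\vM_i^+-\vM_i^-\le\DELTA_i+\GAMMA_i$, so elementary linear algebra yields
\begin{align*}
|\nul(\vA''')-\nul(\vA')|\le X+\sum_i\GAMMA_i+1\le X+Y'+1.
\end{align*}
Since $1-\vecone_{\cE\cap\cE'\cap\cE''\cap\cE'''}\le\vecone_{\bar\cE}+\vecone_{\bar\cE'}+\vecone_{\bar\cE''}+\vecone_{\bar\cE'''}$, the claim reduces to showing $\Erw\brk{(X+Y'+1)\vecone_F}=o_{\eps,n}(1)$ for each of the four bad events $F$.

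On $\cE''$ we have $X+Y'\le 2/\eps$, so the piece on $\bar\cE\cap\cE''$ is controlled via \Lem~\ref{Lemma_theta} (whose intended conclusion $\pr\brk{\bar\cE}\le\exp(-1/\eps^4)$ is the standard corollary of \Prop~\ref{Prop_Alp}):
\begin{align*}
\Erw\brk{(X+Y'+1)\vecone_{\bar\cE}\vecone_{\cE''}}\le(2/\eps+1)\exp(-1/\eps^4)=o_\eps(1),
\end{align*}
and the leftover $\bar\cE\cap\bar\cE''$ is absorbed into the $\bar\cE''$ term. For $F\in\{\bar\cE',\bar\cE'',\bar\cE'''\}$ I apply H\"older's inequality with exponent $p=r-1>1$:
\begin{align*}
\Erw\brk{(X+Y'+1)\vecone_F}\le\Erw\brk{(X+Y'+1)^{r-1}}^{1/(r-1)}\pr\brk{F}^{(r-2)/(r-1)}.
\end{align*}
Claims~\ref{Claim_A'''2a},~\ref{Claim_A'''1} and~\ref{Claim_A'''2b} give $\pr\brk{\bar\cE'}=o_n(1)$, $\pr\brk{\bar\cE''}=O_{\eps,n}(\eps)$ and $\pr\brk{\bar\cE'''}\le\pr\brk{\bar\cE'\cup\bar\cE''}+o_n(1)$, each of which---raised to the positive power $(r-2)/(r-1)$---is $o_{\eps,n}(1)$.

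What remains is the uniform-in-$n$ moment bound $\Erw\brk{(X+Y'+1)^{r-1}}=O_\eps(1)$. The variable $X$ is a Poisson with mean $(1-\eps)d/k$, so all of its moments are finite. For $Y'$, \Lem~\ref{Cor_gamma} couples $\GAMMA$ with $\hat\GAMMA=(\sum_{i\le\vd_{n+2}}\vecone\{\hat\vk_i=j\})_{j\ge1}$ up to TV error $O_{\eps,n}(\eps^{1/2})$; on the coupled event $Y'$ equals $\sum_{i=1}^{\vd_{n+2}}\hat\vk_i$, whose $(r-1)$-th moment is bounded by $\Erw[\vd^{r-1}]\Erw[\hat\vk^{r-1}]$ via Minkowski's inequality conditional on $\vd_{n+2}$, and this quantity is finite because $\Erw[\hat\vk^{r-1}]=\Erw[\vk^r]/k<\infty$ and $\Erw[\vd^{r-1}]<\infty$ are both implied by the standing hypothesis $\Erw[\vd^r]+\Erw[\vk^r]<\infty$. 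Transferring this moment bound back to $Y'$ through the TV coupling---by truncating on the coupling-failure event and using the crude worst-case estimate $Y'\le\vd_{n+1}\max_i\vk_i$ available on $\cM$---is the principal technical hurdle of the proof; once in hand, each of the four bad-event contributions is $o_{\eps,n}(1)$ and the claim follows.
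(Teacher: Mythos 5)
There is a genuine gap, and you have flagged it yourself: the uniform-in-$n$ moment bound $\Erw[(X+Y'+1)^{r-1}]=O_\eps(1)$ is never established, and the route you sketch for it cannot work. \Lem~\ref{Cor_gamma} only gives closeness of $\GAMMA$ to $\hat\GAMMA$ in \emph{total variation} (with error $O_{\eps,n}(\eps^{1/2})$, and even that only on an event of probability $1-\exp(-\Omega_{\eps,n}(1/\eps))$ over $\Sigma'$); total-variation closeness transfers probabilities of events, not moments. On the coupling-failure event, whose probability is of order $\eps^{1/2}$ and in particular is \emph{not} $o_n(1)$, your only available control is the worst-case bound $Y'\le\vd_{n+1}\max_i\vk_i$, which on $\cM$ is only $O_n\bigl(n/\log^{18}n\bigr)$; hence the truncation term you would have to absorb is of order $\eps^{1/2}\cdot\bigl(n/\log^{18}n\bigr)^{r-1}$, which diverges with $n$. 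So the ``principal technical hurdle'' is not a technicality: with the tools you invoke, the H\"older step has no finite right-hand side, and the contributions of $\bar\cE'$, $\bar\cE''$, $\bar\cE'''$ are not controlled. (A direct computation of a fractional moment of $Y'$ could be attempted, but nothing in your argument, nor in \Lem~\ref{Cor_gamma}, supplies it.)

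The gap disappears once you use the sharper deterministic bound that the paper uses: the new rows of $\vA'''$ consist of the checks $a_{i,j}'''$, of which there are $\sum_i\GAMMA_i\le\vd_{n+1}$ (each such check meets a clone of $x_{n+1}$), and the checks $b_{i,j}'''$, of which there are at most $\sum_i\DELTA_i=X$; hence $\abs{\nul(\vA''')-\nul(\vA')}\le X+\vd_{n+1}+1$, with $\vd_{n+1}$ in place of $Y'$. Since $\Erw[X^2]$ and $\Erw[\vd_{n+1}^2]$ are $O_{\eps,n}(1)$ directly from the standing hypotheses, Cauchy--Schwarz handles the piece $1-\vecone\cE''$, and on $\cE''$ the increment is at most $O_{\eps,n}(\eps^{-1})$, so the remaining pieces $\cE''\setminus\cE$, $\cE''\setminus\cE'$ and $\cE''\cap\cE'\setminus\cE'''$ are bounded by $O_{\eps,n}(\eps^{-1})$ times $\exp(-1/\eps^4)$, $o_n(1)$ and $o_n(1)$ respectively, exactly as in your treatment of $\bar\cE$. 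This avoids H\"older with fractional exponents, any moment of $Y'$, and any use of \Lem~\ref{Cor_gamma} altogether.
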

\begin{proof}
Clearly $\abs{\nul(\vA''')-\nul(\vA')}\leq X+\vd_{n+1} +1$ because going from $\vA'$ to $\vA'''$ we add one column and at most $X+\vd_{n+1}$ new rows.
Consequently, as $\Erw[X^2],\Erw[\vd_{n+1}^2]=O_{\eps,n}(1)$, the Cauchy-Schwarz inequality yields
\begin{align}
\Erw\brk{\abs{\nul(\vA''')-\nul(\vA')}(1-\vecone\cE'')}&\leq
		\Erw\brk{(X+\vd_{n+1}+1)^2}^{1/2}\bc{1-\pr\brk{\cE''}}^{1/2}=
o_{\eps,n}(1).
	\label{eqClaim_A'''3_1}
\end{align}
Furthermore, \Lem~\ref{Lemma_theta} and Claims~\ref{Claim_A'''2a}--\ref{Claim_A'''2b} readily imply that
\begin{align}	\label{eqClaim_A'''3_2}
\Erw&\brk{\abs{\nul(\vA''')-\nul(\vA')}\vecone\cE''\setminus\cE}	
	\leq O_{\eps,n}(\eps^{-1})\exp(-1/\eps^4)=o_{\eps,n}(1),\\
\Erw&\brk{\abs{\nul(\vA''')-\nul(\vA')}\vecone\cE''\setminus\cE'},\Erw\brk{\abs{\nul(\vA''')-\nul(\vA')}\vecone\cE''\cap\cE'\setminus\cE'''}=o_n(1).\label{eqClaim_A'''3_3}
\end{align}
The assertion follows from \eqref{eqClaim_A'''3_1}--\eqref{eqClaim_A'''3_3}.
\end{proof}

We obtain $\G'''$ by adding checks $a_{i,j}'''$ adjacent to $x_{n+1}$ and $b_{i,j}'''$ not adjacent to $x_{n+1}$.
Recall that $\vec\alpha$ signifies the fraction of frozen cavities.
Further, let $\Sigma''\supset\Sigma'$ be the $\sigma$-algebra generated by  $\G'$, $\A'$, $\vM_-$, $(\vd_i)_{i\in[n+1]}$, $\GAMMA,\vM$ and $\DELTA$.
The random variable $\ALPHA$ and the events $\cE,\cE',\cE''$ are $\Sigma''$-measurable, but $\cE'''$ is not.
Indeed, given $\Sigma''$ the specific cavities of $\G'$ that the new checks $a_{i,j}''',b_{i,j}'''$ join are still random.

\begin{claim}\label{Claim_A'''6}
On the event 
$\cE\cap\cE'\cap\cE''$ we have
\begin{align*}
\Erw\brk{\bc{\nul(\vA''')\negmedspace-\negmedspace\nul(\vA')}\vecone\cE'''\negmedspace\mid\Sigma''}\negmedspace=\negmedspace
	o_{\eps,n}(1)\negmedspace+\negmedspace\prod_{i\geq1}(1\negmedspace-\negmedspace\ALPHA^{i-1})^{\GAMMA_i}
		\negmedspace-\negmedspace\sum_{i\geq1}\negmedspace(1\negmedspace-\negmedspace\vec\alpha^{i-1})\GAMMA_i
	\negmedspace-\negmedspace\sum_{i\geq1}\negmedspace(1-\vec\alpha^{i})(\vM_i^+\negmedspace-\negmedspace\vM_i^-\negmedspace-\negmedspace\GAMMA_i).
\end{align*}
\end{claim}
\begin{proof}
Let
\begin{align*}
\cA=\cbc{a_{i,j}''':i\geq1,\ j\in[\GAMMA_i]}
\end{align*}
 be the set of all the new checks connected to $x_{n+1}$ and let
\begin{align*}
\cB=\cbc{b_{i,j}''':i\geq1,\ j\in[\vM_i^+-\vM_i^--\GAMMA_i]}
\end{align*}
 be the set of all the new checks not connected to $x_{n+1}$.
Let $\tilde \vB$ be the $\cbc{0,1}$-matrix whose rows are indexed by $\cA\cup\cB$ and whose columns are indexed by $V_n=\{x_1,\ldots,x_n\}$ such that for each $a\in\cA\cup\cB'$ and each $x\in V_n$ the corresponding entry equals one iff $x\in\partial_{\G'''} a$.
Further, obtain $\vB$ from $\tilde\vB$ by replacing each one-entry by the entry supplied by $\chi$ that represents the respective new edge of the Tanner graph.
If the event $\cE'''$ occurs, then each column of $B$ contains at most one non-zero entry and each row contains at least one non-zero entry.
In effect,  $\vB$ has full rank, i.e.,
\begin{align*}
 \rk(\vB)=|\cA\cup\cB|=\sum_{i\geq1}\vM_i^+-\vM_i^-.
\end{align*}
Further, let $\vB_*$ be the matrix obtained from $\vB$ by replacing all entries in the $x$-column by zero for every $x\in\fF(\vA')$.
Finally, let $\vC\in\FF^{\cA\cup\cB}$ be a column vector whose entries $\vC_a$, $a\in\cA$, are the entries from $\chi$ representing the edges of the Tanner graph $\G'''$ incident with $x_{n+1}$ and whose remaining entries $\vC_b$, $b\in\cB$, are equal to zero.

By construction, on the event $\cE\cap\cE'\cap\cE''\cap\cE'''$ we have
\begin{align*}
\nul\vA'''&=\nul\begin{pmatrix}\vA'&0\\\vB&\vC\end{pmatrix}.
\end{align*}
Moreover, on $\cE'$ the set $\cX'''$ of non-zero columns of $\vB$ has size at most $|\cX'''|\leq Y+Y'\leq2/\eps$, while there are at least $\eps dn/2$ cavities.
As a consequence, even though the sequence of cavities that the new checks join are drawn without replacement, this sequence is at total variation distance $o_n(1)$ from a sequence of independent samples from the distribution $P$.
Therefore, on $\cE\cap\cE'\cap\cE''$ the conditional probability given $\cE'''$ that $\cX'''$ forms a proper relation is bounded by $O_{\eps,n}(\exp(-1/\eps^4))$.
Hence, \Lem~\ref{Cor_free} implies that on $\cE\cap\cE'\cap\cE''$,
\begin{align}\label{eqClaim_A'''6_2}
\Erw\brk{\bc{\nul(\vA''')-\nul(\vA')}\vecone\cE'''\mid\Sigma''}&=1-\Erw\brk{\rk\bc{\vB_*\ \vC}\mid\Sigma''}+o_{\eps,n}(1).
\end{align}

On $\cE'''$ the matrix $\vQ=\bc{\vB_*\ \vC}$ is a block matrix that decomposes into the $\cA$-rows $\vQ_{\cA}$ and the $\cB$-rows $\vQ_{\cB}$.
Hence, $\rk(\vQ)=\rk(\vQ_{\cA})+\rk(\vQ_{\cB})$.
To complete the proof, we claim that
\begin{align}\label{eqQ'}
\Erw\brk{\rk\bc{\vQ_{\cB}}\mid\Sigma''}&=o_n(1)+\sum_{i\geq1}\bc{1-\ALPHA^i}(\vM_i^+-\vM_i^--\GAMMA_i),\\
\Erw\brk{\rk(\vQ_{\cA})\mid\Sigma''}&=o_n(1)+
	\sum_{i\geq1}\bc{1-\ALPHA^{i-1}}\GAMMA_i+1-\prod_{i\geq1}\bc{1-\ALPHA^{i-1}}^{\GAMMA_i},\label{eqQ''}
\end{align}
where, as we recall, $\ALPHA$ is the probability that a cavity chosen from $p(\nix)$ is frozen.
Indeed, the probability that a $\cB$-row of $\vB$ that contains precisely $i$ non-zero entries gets zeroed out completely in $\vB_*$ equals $\vec\alpha^i+o_n(1)$ and there are $\vM_i^+-\vM_i^--\GAMMA_i$ such rows; hence \eqref{eqQ'}.

Similarly, the probability that an $\cA$-row of $\vB$ with $i-1$ non-zero entries gets zeroed out completely in $\vB_*$ equals $\vec\alpha^{i-1}+o_n(1)$ and there are $\GAMMA_i$ such rows.
Hence, the expected rank of the $\cA$-rows of $\vB_*$ equals $\sum_{i\geq1}\bc{1-\ALPHA^{i-1}}\GAMMA_i+o_n(1)$, which is the first summand in \eqref{eqQ''}.
Moreover, the presence of the $\vC$-column adds one to the rank of $\vQ_{\cA}$ unless not a single one of the $\cA$-rows of $\vB$ gets zero out, 
which occurs with probability $\prod_{i\geq1}\bc{1-\ALPHA^{i-1}}^{\GAMMA_i}+o_n(1)$.
Hence,  we obtain \eqref{eqQ''}.
Finally, the assertion follows from \eqref{eqClaim_A'''6_2}--\eqref{eqQ''}.
\end{proof}

\begin{proof}[Proof of \Lem~\ref{Lemma_A'''}]
Let $\fE=\cE\cap\cE'\cap\cE''\cap\cE'''$.
Combining Claims~\ref{Claim_A'''1}--\ref{Claim_A'''6}, we see that 
$$
\Erw\abs{\Erw\brk{\nul(\vA''')-\nul(\vA')\mid\Sigma''}-
	\bc{\prod_{i\geq1}(1-\ALPHA^{i-1})^{\GAMMA_i}
		-\sum_{i\geq1}(1-\vec\alpha^{i-1})\GAMMA_i
	-\sum_{i\geq1}(1-\vec\alpha^{i})(\vM_i^+-\vM_i^--\GAMMA_i)}\vecone\fE}$$
\begin{align}\label{eqLemma_A'''_0}
=o_{\eps,n}(1).
\end{align}
Since on $\fE$ all degrees $i$ with $\vM_i^+-\vM_i^--\GAMMA_i>0$ are bounded and Chebyshev's inequality shows that  $\vM_i\sim\Erw[\vM_i]=\Omega_n(n)$ for any fixed $i$ \whp, \eqref{eqminus} yields $\vM_i^-=\vM_i-\GAMMA_i$ \whp\
Hence, \eqref{eqLemma_A'''_0} turns into
\begin{align}\label{eqLemma_A'''_1}
\Erw\abs{\Erw\brk{\nul(\vA''')-\nul(\vA')\mid\Sigma''}-
	\bc{\prod_{i\geq1}(1-\ALPHA^{i-1})^{\GAMMA_i}
		-\sum_{i\geq1}(1-\vec\alpha^{i-1})\GAMMA_i
	-\sum_{i\geq1}(1-\vec\alpha^{i})\DELTA_i}\vecone\cE''}&=o_{\eps,n}(1).
\end{align}
Further, since $\sum_{i\geq1}\GAMMA_i\leq\vd_{n+1}$ and $\Erw[\vd_{n+1}]=O_{\eps,n}(1)$, we obtain
\begin{align}
\Erw&\brk{
	\bc{\prod_{i\geq1}(1-\ALPHA^{i-1})^{\GAMMA_i}
		-\sum_{i\geq1}(1-\vec\alpha^{i-1})\GAMMA_i}\vecone\fE}\nonumber\\
&=\Erw\brk{
	\bc{\prod_{i\geq1}(1-\ALPHA^{i-1})^{\GAMMA_i}
		-\sum_{i\geq1}(1-\vec\alpha^{i-1})\GAMMA_i}\vecone\fE\cap\cbc{\sum_{i\geq1}\GAMMA_i\leq\eps^{-1/4}}}+o_{\eps,n}(1)\nonumber\\
&=\Erw\brk{
	\bc{\prod_{i\geq1}(1-\ALPHA^{i-1})^{\GAMMA_i}
		-\sum_{i\geq1}(1-\vec\alpha^{i-1})\GAMMA_i}\vecone\cbc{\sum_{i\geq1}\GAMMA_i\leq\eps^{-1/4}}}+o_{\eps,n}(1)
\hspace{4pt}\mbox{
	\begin{tiny} [by \Lem s~\ref{Lemma_theta}--\ref{Claim_A'''2a}/Claims~\ref{Claim_A'''1}--\ref{Claim_A'''2b}]\end{tiny}}\nonumber\\
&=\Erw\brk{
	\bc{\prod_{i\geq1}(1-\ALPHA^{i-1})^{\hat\GAMMA_i}
		-\sum_{i\geq1}(1-\vec\alpha^{i-1})\hat\GAMMA_i}\vecone\cbc{\sum_{i\geq1}\hat\GAMMA_i\leq\eps^{-1/4}}}+o_{\eps,n}(1)
\hspace{8pt}\mbox{[by \Lem~\ref{Cor_gamma}]}\nonumber\\
&=\Erw\brk{(1-\ALPHA^{\hat\vk-1})^{\vd}-\vd-\vd\ALPHA^{\hat\vk-1}}+o_{\eps,n}(1)
	\hspace{8pt}\mbox{[by the def.\ of $\hat\GAMMA$]}
		\nonumber\\
&	=\Erw\brk{D(1-K'(\ALPHA)/k)-d-\frac dk K'(\ALPHA)}+o_{\eps,n}(1)
	\hspace{8pt}\mbox{[by \eqref{eqSizeBiasd}].}\label{eqAddingVar100}
\end{align}
Similarly, Claim~\ref{Claim_A'''1}, \Lem~\ref{Cor_gamma} and the construction \eqref{eqhatDelta} of $\hat\DELTA$ yield
\begin{align}
&\Erw\brk{\bc{\sum_{i\geq1}(1\negmedspace-\negmedspace\vec\alpha^{i})\DELTA_i}\vecone\cE''}\negmedspace=\negmedspace
\Erw\brk{\bc{\sum_{i\geq1}(1\negmedspace-\negmedspace\vec\alpha^{i})\DELTA_i}\vecone\cbc{\sum_{i\geq1}\DELTA_i\leq\eps^{-1/3}}}\negmedspace+\negmedspace o_{\eps,n}(1)
\negmedspace=\negmedspace\Erw\brk{\sum_{i\geq1}(1\negmedspace-\negmedspace\vec\alpha^{i})\hat\DELTA_i}\negmedspace+\negmedspace o_{\eps,n}(1)\nonumber\\
&=o_{\eps,n}(1)+(1-\eps)\frac dk\sum_{i\geq1}\pr\brk{\vk=i}\Erw[1-\vec\alpha^{i}]
=o_{\eps,n}(1)+\frac dk-\frac dk\Erw[K(\ALPHA)].\label{eqAddingVar101}
\end{align}
Finally, the assertion follows from \eqref{eqLemma_A'''_1}, \eqref{eqAddingVar100} and \eqref{eqAddingVar101}.
\end{proof}

\subsection{Proof of \Lem~\ref{Lemma_A''}}\label{Sec_A''}
The argument resembles the one from the proof of \Lem~\ref{Lemma_A'''} but the details are considerably more straightforward as we merely add checks to obtain $\vA''$ from $\vA'$.
As before we consider the events $\cE,\cE'$ from \eqref{eqE} and \eqref{eqE'}
Moreover, recalling that the total number of new non-zero entries when going from $\vA'$ to $\vA''$ is bounded by $\vd_{n+1}$,
we introduce $\cE''=\cbc{\vd_{n+1}\leq1/\eps}.$

\begin{claim}\label{Claim_A''_0}
We have $\pr\brk{\cE''}=1-O_{\eps,n}(\eps^2).$
\end{claim}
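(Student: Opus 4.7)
The plan is a one-line application of Markov's inequality to a higher moment of $\vd$. By construction $\vd_{n+1}$ is distributed as $\vd$, and the hypothesis $\Erw[\vd^r] + \Erw[\vk^r] < \infty$ for some $r > 2$ gives us a finite $r$-th moment to exploit.

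Specifically, I would write
\begin{align*}
\pr\brk{\cE''^c} = \pr\brk{\vd_{n+1} > 1/\eps} = \pr\brk{\vd^r > \eps^{-r}} \leq \eps^r \Erw[\vd^r].
\end{align*}
Since $r > 2$ is fixed and $\Erw[\vd^r] = O_{\eps,n}(1)$ (it does not depend on $\eps$ or $n$), and since $\eps \in (0,1)$ implies $\eps^r \leq \eps^2$, the right-hand side is $O_{\eps,n}(\eps^2)$, giving the claimed bound.

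There is no real obstacle here: the whole point of imposing the $r$-th moment assumption with $r > 2$ in \Sec~\ref{Sec_rankFormula} is precisely to make tail bounds of this form available, and this claim is just cashing that in. The only thing to be slightly careful about is that $\eps^r$ is compared to $\eps^2$ in the statement's $O_{\eps,n}(\eps^2)$ notation; since $r > 2$ the bound is even slightly stronger than stated, but the weaker form $O_{\eps,n}(\eps^2)$ is what is actually used in the subsequent arguments (analogous to the use of $\cE''$ in the proof of \Lem~\ref{Lemma_A'''}, where it is paired with Cauchy--Schwarz on second moments).
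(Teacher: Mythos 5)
Your proof is correct and uses essentially the same idea as the paper: a one-line moment tail bound, the only (immaterial) difference being that you apply Markov's inequality to the $r$-th moment, whereas the paper invokes Chebyshev's inequality with the second moment $\Erw[\vd_{n+1}^2]=O_{\eps,n}(1)$, which already yields the stated $O_{\eps,n}(\eps^2)$.
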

\begin{proof}
This follows from the assumption $\Erw[\vd_{n+1}^2]=O_{\eps,n}(1)$ and Chebyshev's inequality.
\end{proof}

Further, similarly as in the proof of \Lem~\ref{Lemma_A'''} we consider the set
$$\cX=\bigcup_{i\geq1}\bigcup_{j\in[\vM_i-\vM_i^-]}\partial_{\G''}a_{i,j}''$$ 
of variable nodes adjacent to the new checks. 
Let $\cE'''$ be the event that none of the variable nodes in $\cX$ is connected with the set of new checks by more than one edge.

\begin{claim}\label{Claim_A''_E'''}
We have 
$\pr\brk{\cE'''\mid\cE'\cap\cE''}=1-o_n(1).$
\end{claim}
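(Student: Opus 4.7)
The plan is to adapt the argument of Claim~\ref{Claim_A'''2b} in a simpler form, since going from $\vA'$ to $\vA''$ only adds check rows and does not involve $x_{n+1}$. The event $\cE'''$ fails precisely when two distinct clones of the new checks $a_{i,j}''$ are matched by $\vec\Gamma''$ to clones of the same variable of $\G'$. This is a birthday-style collision event, and it should be rare because $\cC$ contains $\Omega_{\eps,n}(n)$ cavities while only $O_{\eps,n}(1)$ of them will be selected by $\vec\Gamma''$.

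The first step is to control the number of new edges. Setting $L=\sum_{i\geq 1}i(\vM_i-\vM_i^-)$ and using $\vM_i-\vM_i^-\leq\GAMMA_i$, one has $L\leq\sum_{i\geq 1}i\GAMMA_i=Y'$ in the notation of Section~\ref{Sec_A'''}. Exactly as in the proof of Claim~\ref{Claim_A'''1}, $\Erw\brk{Y'}=O_{\eps,n}(1)$. Since Claims~\ref{Claim_A'''2a} and~\ref{Claim_A''_0} give $\pr\brk{\cE'\cap\cE''}=1-o_n(1)$, this yields $\Erw\brk{Y'\mid\cE'\cap\cE''}=O_{\eps,n}(1)$ as well, and Markov's inequality produces $\pr\brk{Y'>\log n\mid\cE'\cap\cE''}=o_n(1)$.

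The second step is a routine collision count. On the event $\cE'\cap\cE''\cap\{Y'\leq\log n\}$ the matching $\vec\Gamma''$ selects at most $L\leq\log n$ cavities from a set of size $|\cC|\geq\eps dn/2$ in which every variable contributes at most $\max_i\vd_i\leq\sqrt n$ cavities. Since the sample drawn without replacement is at total variation distance $o_n(1)$ from an i.i.d.\ sample from $P$, a union bound over the $\binom{L}{2}\leq\log^2 n$ pairs of chosen cavities yields collision probability at most $\binom{L}{2}\cdot\max_i\vd_i/|\cC|=O_{\eps}(\log^2 n/\sqrt n)=o_n(1)$. Combining this with the Markov bound completes the proof. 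The only subtlety, namely that conditioning on $\cE'\cap\cE''$ might inflate the moments of $Y'$, is neutralised because both events have probability $1-o_n(1)$ for fixed $\eps$; the argument is otherwise strictly simpler than that of Claim~\ref{Claim_A'''2b}, and I do not anticipate any substantial obstacle.
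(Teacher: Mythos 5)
Your proposal is correct and is essentially the paper's own argument: on $\cE'$ there are at least $\eps dn/2$ cavities with each variable contributing at most $\sqrt n$ of them, only a small number of cavities get matched to the new checks, and a birthday-type union bound rules out collisions; your extra step of bounding the total degree of the new checks by $Y'=\sum_{i\geq1}i\GAMMA_i$ and using $\Erw[Y']=O_{\eps,n}(1)$ plus Markov is a legitimate (indeed somewhat more careful) way of justifying the ``few chosen cavities'' step, which the paper asserts directly from $\cE''$. One small correction: Claim~\ref{Claim_A''_0} only yields $\pr\brk{\cE''}=1-O_{\eps,n}(\eps^2)$, not $1-o_n(1)$, but since $\pr\brk{\cE'\cap\cE''}$ is still bounded away from zero for fixed small $\eps$ and large $n$, the bound $\Erw\brk{Y'\mid\cE'\cap\cE''}=O_{\eps,n}(1)$ and hence the rest of your argument are unaffected.
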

\begin{proof}
Given $\cE'$ there are $\Omega_n(n)$ cavities in total, with each variable node contributing no more than $O_n(\sqrt n)$ cavities.
Moreover, given $\cE''$ we choose $O_{\eps,n}(1/\eps)$ of cavities randomly to attach the new checks.
Consequently, the probability of twice choosing a cavity with the same underlying variable is $o_n(1)$.
\end{proof}

\begin{claim}\label{Claim_A''_1}
We have $\Erw\brk{\abs{\nul(\vA'')-\nul(\vA')}(1-\vecone\cE\cap\cE'\cap\cE''\cap\cE''')}=o_{\eps,n}(1)$.
\end{claim}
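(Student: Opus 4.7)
The plan is to mirror the proof of Claim~\ref{Claim_A'''3}, exploiting the simplification that going from $\vA'$ to $\vA''$ we only adjoin rows (no new column). First I would establish the pointwise bound
$$\abs{\nul(\vA'')-\nul(\vA')}\;\le\;\sum_{i\ge1}(\vM_i-\vM_i^-)\;\le\;\sum_{i\ge1}\GAMMA_i\;\le\;\vd_{n+1}.$$
The first inequality uses that each added row changes the nullity by at most one and that $\G''$ differs from $\G'$ precisely by the insertion of $\sum_i(\vM_i-\vM_i^-)$ new check nodes; the middle inequality follows from the definition~\eqref{eqminus}, since $\vM_i-\vM_i^-=\min(\vM_i,\GAMMA_i)\le\GAMMA_i$; the last inequality is because $\sum_i\GAMMA_i$ counts the total number of checks adjacent to $x_{n+1}$ in $\G_{n+1,\vM^+}$, which cannot exceed $\vd_{n+1}$.

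Second, I would split the bad event into four pieces,
$$1-\vecone_{\cE\cap\cE'\cap\cE''\cap\cE'''}\;\le\;\vecone_{\bar\cE''}+\vecone_{\cE''\setminus\cE}+\vecone_{\cE''\cap\cE\setminus\cE'}+\vecone_{\cE''\cap\cE\cap\cE'\setminus\cE'''},$$
and control each piece separately. For the $\bar\cE''$ summand, Cauchy--Schwarz combined with $\Erw[\vd_{n+1}^2]=O_{\eps,n}(1)$ and Claim~\ref{Claim_A''_0} (which provides $\pr\brk{\bar\cE''}=O_{\eps,n}(\eps^2)$) yields a contribution of $O_{\eps,n}(\eps)=o_{\eps,n}(1)$.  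On $\cE''$ the pointwise bound sharpens to $|\nul(\vA'')-\nul(\vA')|\le\vd_{n+1}\le1/\eps$, so the remaining three summands are each at most $\eps^{-1}$ times the probability of a single bad event. These probabilities are dispatched by Lemma~\ref{Lemma_theta} (supplying $\pr\brk{\bar\cE}\le\exp(-1/\eps^4)$ once $\cT(\eps)$ is chosen large enough), Claim~\ref{Claim_A'''2a} (giving $\pr\brk{\bar\cE'}=o_n(1)$) and Claim~\ref{Claim_A''_E'''} (giving $\pr\brk{\bar\cE'''\mid\cE'\cap\cE''}=o_n(1)$); together they yield a contribution of $\eps^{-1}\exp(-1/\eps^4)+\eps^{-1}o_n(1)=o_{\eps,n}(1)$.

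The only real obstacle is the elementary bookkeeping for the pointwise inequality $\sum_i(\vM_i-\vM_i^-)\le\vd_{n+1}$, which requires unfolding the definition~\eqref{eqminus} of $\vM^-$, the definition of $\GAMMA$, and the construction of the Tanner graphs $\G',\G''$. Once that is in place, each of the four pieces is a one-line invocation of a claim already proved in the excerpt. In particular, no new ideas beyond those used for Claim~\ref{Claim_A'''3} are needed, and the whole argument is strictly shorter than its $\vA'''$ counterpart since there is no extra column to track.
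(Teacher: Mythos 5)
Your proposal is correct and follows essentially the same route as the paper's own proof: the pointwise bound $\abs{\nul(\vA'')-\nul(\vA')}\le\vd_{n+1}$ (which the paper states directly, and you justify via $\sum_i(\vM_i-\vM_i^-)\le\sum_i\GAMMA_i\le\vd_{n+1}$), Cauchy--Schwarz together with Claim~\ref{Claim_A''_0} on the complement of $\cE''$, and then the crude $\eps^{-1}$ bound combined with \Lem~\ref{Lemma_theta}, \Lem~\ref{Claim_A'''2a} and Claim~\ref{Claim_A''_E'''} for the remaining bad events. The only difference is that you spell out the bookkeeping for the number of added rows in more detail; no new ideas or gaps.
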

\begin{proof}
We have $\abs{\nul(\vA'')-\nul(\vA')}\leq \vd_{n+1}$ as we add at most $\vd_{n+1}$ rows.
Because $\Erw[\vd_{n+1}]=O_{\eps,n}(1)$, Claim~\eqref{Claim_A''_0} and the Cauchy-Schwarz inequality yield
\begin{align}\label{eqClaim_A''_1_1}
\Erw\brk{\abs{\nul(\vA'')-\nul(\vA')}(1-\vecone\cE'')}&\leq
		\Erw\brk{\vd_{n+1}^2}^{1/2}(1-\pr\brk{\cE})^{1/2}
			=o_{\eps,n}(1).
\end{align}
Moreover, 
\Lem~\ref{Lemma_theta}, \Lem~\ref{Claim_A'''2a} and Claim~\ref{Claim_A''_E'''}
show that

$$\Erw\brk{\abs{\nul(\vA'')-\nul(\vA')}\vecone\cE''\setminus\cE},
\Erw\brk{\abs{\nul(\vA'')-\nul(\vA')}\vecone\cE''\setminus\cE'},
\Erw\brk{\abs{\nul(\vA'')-\nul(\vA')}\vecone\cE''\setminus\cE'''}$$
\begin{align}\label{eqClaim_A''_1_2}
=o_{\eps,n}(1).
\end{align}
The assertion follows from \eqref{eqClaim_A''_1_1} and\eqref{eqClaim_A''_1_2}.
\end{proof}

The matrix $\vA''$ results from $\vA'$ by adding checks $a_{i,j}''$, $i\geq1$, $j\in[\vM_i-\vM_i^-]$ that are connected to random cavities of $\A'$.
Moreover,  as before let $\Sigma''\supset\Sigma'$ be the $\sigma$-algebra generated by  $\G'$, $\A'$, $\vM_-$, $(\vd_i)_{i\in[n+1]}$, $\GAMMA,\vM$ and $\DELTA$.
Then $\cE,\cE',\cE''$ are $\Sigma''$-measurable, but $\cE'''$ is not.

\begin{claim}\label{Claim_A''_6}
On the event $\cE\cap\cE'\cap\cE''$ we have
$\Erw\brk{(\nul(\vA'')-\nul(\vA'))\vecone\cE'''\mid{\Sigma''}}=o_{\eps,n}(1)-\sum_{i\geq1}(1-\vec\alpha^{i})(\vM_i-\vM_i^-).$
\end{claim}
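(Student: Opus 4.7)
The plan is to mimic the proof of Claim~\ref{Claim_A'''6}, simplified by the fact that the transition $\vA'\to\vA''$ only adds rows and no new variable. First I would form the block decomposition: let $\vB$ be the matrix indexed by the new checks $\{a_{i,j}'':i\geq 1,\ j\in[\vM_i-\vM_i^-]\}$ (rows) and $V_n=\{x_1,\ldots,x_n\}$ (columns), with entries given by $\chi$ evaluated at the positions prescribed by the matching $\vec\Gamma''$. On $\cE'''$ the supports of distinct rows of $\vB$ are pairwise disjoint, and each row coming from a check of target degree $i$ contains precisely $i$ non-zero entries. Let $\vB_*$ be obtained from $\vB$ by zeroing out every column indexed by $\fF(\vA')$.

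Next I would invoke \Lem~\ref{Cor_free} with empty $C$ (i.e.\ $n'=0$): letting $\cX\subset[n]$ denote the set of indices of non-zero columns of $\vB$, the lemma gives, provided $\cX$ is not a proper relation of $\vA'$, the identity $\nul(\vA'')-\nul(\vA')=-\rk(\vB_*)$. On $\cE\cap\cE'\cap\cE''$ the number of new non-zero entries is uniformly bounded, while $|\cC|\geq\eps dn/2$; consequently, the sequence of cavities attached by the new checks is at total-variation distance $o_n(1)$ from an i.i.d.\ sample of bounded length from $P$. By the definition \eqref{eqE} of $\cE$, the conditional probability given $\cE'''$ that $\cX$ forms a proper relation of $\vA'$ is therefore $O_{\eps,n}(\exp(-1/\eps^4))=o_{\eps,n}(1)$, so
\[
\Erw\brk{(\nul(\vA'')-\nul(\vA'))\vecone\cE'''\mid\Sigma''}=-\Erw\brk{\rk(\vB_*)\vecone\cE'''\mid\Sigma''}+o_{\eps,n}(1).
\]

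Finally I would compute $\Erw[\rk(\vB_*)\vecone\cE'''\mid\Sigma'']$. Because the rows of $\vB_*$ have pairwise disjoint supports on $\cE'''$, $\rk(\vB_*)$ equals the number of rows of $\vB_*$ that are not identically zero. A row coming from a check of target degree $i$ vanishes in $\vB_*$ precisely when all $i$ of its neighbouring cavities lie in $\fF(\vA')$. Since, again by the cavity-sampling approximation, the neighbours are at TV-distance $o_n(1)$ from i.i.d.\ $P$-samples and $P$ places mass $\vec\alpha$ on frozen variables, this happens with probability $\vec\alpha^{\,i}+o_n(1)$. Summing the contributions $1-\vec\alpha^{\,i}$ over the $\vM_i-\vM_i^-$ rows of each target degree $i$ yields $\sum_{i\geq 1}(1-\vec\alpha^{\,i})(\vM_i-\vM_i^-)+o_{\eps,n}(1)$, which combined with the previous display delivers the claim. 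The only slightly delicate point is the cavity-sampling approximation, but this is a standard consequence of $|\cC|=\Omega_n(n)$ (supplied by \Lem~\ref{Lemma_cavityCount}) together with the boundedness of the number of new edges on $\cE\cap\cE'\cap\cE''$; everything else is mechanical in light of \Lem~\ref{Cor_free} and the event $\cE$.
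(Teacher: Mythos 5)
Your proposal is correct and follows essentially the same route as the paper: the same block decomposition with the matrix $\vB$ of new checks, the same appeal to \Lem~\ref{Cor_free} (with no new column) once the event $\cE$ and the cavity-sampling approximation rule out a proper relation among the attached columns, and the same computation of $\Erw[\rk(\vB_*)\mid\Sigma'']$ via the probability $\vec\alpha^{\,i}+o_n(1)$ that a degree-$i$ row is zeroed out. No substantive difference from the paper's argument.
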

\begin{proof}
Let $\cA$ be the set of all the new checks $a_{i,j}''$, $i\geq1$, $j\in[\vM_i-\vM_i^-]$.
Let $\tilde\vB$ be the $\cbc{0,1}$-matrix whose rows are indexed by $\cA$ and whose columns are indexed by $V_n=\{x_1,\ldots,x_n\}$ such that for each $a\in\cA$ and each $x\in V_n$ the corresponding entry equals one iff $x\in\partial_{\G''} a$.
Further, obtain $\vB$ by substituting each one-entry of $\tilde\vB$ by the appropriate non-zero field element from  $\chi$.
If $\cE'''$ occurs, then $\vB$ has rank $\rk(\vB)=|\cA|=\sum_{i\geq1}\vM_i^+-\vM_i$, because no column contains two non-zero entries and each row contains at least one non-zero entry.
Further, let $\vB_*$ be the matrix obtained from $\vB$ by replacing all entries in the $x$-column by zero if $x\in\fF(\vA')$ is frozen to zero in $\vA'$.

On the event $\cE\cap\cE'\cap\cE''\cap\cE'''$ we have
\begin{align}\label{eqClaim_A''6_1}
\nul\vA''&=\nul\begin{pmatrix}\vA'\\\vB\end{pmatrix}.
\end{align}
Moreover, on $\cE'\cap\cE''$ the set $\cX''$ of non-zero columns of $\vB$ has size at most $|\cX''|\leq \vd_{n+1}\leq1/\eps$, while there are at least $\eps dn/2$ cavities.
Hence, on $\cE\cap\cE'\cap\cE''\cap\cE'''$ the probability that $\cX''$ forms a proper relation is bounded by $\exp(-1/\eps^4)$.
Therefore, \Lem~\ref{Cor_free} implies that
\begin{align}\label{eqClaim_A''6_2}
\Erw\brk{\bc{\nul(\vA'')-\nul(\vA')}\vecone\cE'''\mid\Sigma''}&=o_{\eps,n}(1)-\Erw\brk{\rk\bc{\vB_*}\mid{\Sigma''}}.
\end{align}
Further, since an $\ALPHA$-fraction of cavities are frozen, a row of $\vB$ with $i$ non-zero entries gets zeroed out completely in $\vB_*$ with probability $\ALPHA^i+o_n(1)$.
Consequently,
\begin{align}\label{eqClaim_A''6_3}
\Erw\brk{\rk\bc{\vB_*}\mid\Sigma''}&=o_{\eps,n}(1)+\sum_{i\geq1}\bc{1-\ALPHA^i}(\vM_i-\vM_i^-).
\end{align}
Finally, the assertion follows from \eqref{eqClaim_A''6_2} and \eqref{eqClaim_A''6_3}.
\end{proof}

\begin{proof}[Proof of \Lem~\ref{Lemma_A''}]
Let $\fE=\cE\cap\cE'\cap\cE''\cap\cE'''$.
Combining Claims~\ref{Claim_A''_1}--\ref{Claim_A''_6}, we obtain
\begin{align}\label{eqAddingVar01110101}
\Erw\abs{\Erw[\nul(\vA'')-\nul(\vA')\mid{\Sigma''}]+\bc{\sum_{i\geq1}(1-\vec\alpha^{i})(\vM_i-\vM_i^-)}\vecone\fE}	&=o_{\eps,n}(1).
\end{align}
Since on $\fE$ all degrees $i$ with $\vM_i^+-\vM_i^->0$ are bounded \whp\ and $\vM_i^-=\Omega_n(n)$ \whp, we conclude that $\vM_i-\vM_i^-=\GAMMA_i$ for all $i\geq1$ \whp\
Hence,  \eqref{eqAddingVar01110101} turns into
\begin{align}\label{eqAddingVar011}
\Erw\abs{\Erw[\nul(\vA'')-\nul(\vA')\mid{\Sigma''}]+\bc{\sum_{i\geq1}(1-\vec\alpha^{i})\GAMMA_i}\vecone\fE}	&=o_{\eps,n}(1).
\end{align}
Further, because $\sum_{i\geq1}\GAMMA_i\leq\vd_{n+1}$ and $\Erw[\vd_{n+1}]=O_{\eps,n}(1)$,
\begin{align}
\Erw\brk{\bc{\sum_{i\geq1}(1-\vec\alpha^{i})\GAMMA_i}\vecone\fE}&=
\Erw\brk{\bc{\sum_{i\geq1}(1-\vec\alpha^{i})\GAMMA_i}\vecone\cbc{\sum_{i\geq1}\GAMMA_i\leq\eps^{-1/4}}}+o_{\eps,n}(1)\nonumber&&\mbox{[by Claim~\ref{Claim_A''_0}]}\\
&=
\Erw\brk{\bc{\sum_{i\geq1}(1-\vec\alpha^{i})\hat\GAMMA_i}\vecone\cbc{\sum_{i\geq1}\hat\GAMMA_i\leq\eps^{-1/4}}}+o_{\eps,n}(1)
&&\mbox{[by \Lem~\ref{Cor_gamma}]}\nonumber\\
&=d\Erw[1-\ALPHA^{\hat\vk}]+o_\eps(1)=-  d\Erw[\vec\alpha K'(\vec\alpha)]/k+d+o_{\eps,n}(1)
	&&\mbox{[by \eqref{eqSizeBiasd}].}
\label{eqAddingVar012}
\end{align}
The assertion follows from \eqref{eqAddingVar011} and~\eqref{eqAddingVar012}.
\end{proof}

\subsection{Proof of \Lem~\ref{Lemma_valid}}\label{Sec_Lemma_valid}
Once more we break the proof down into a few relatively simple steps.

\begin{claim}\label{Claim_Lemma_valid_1}
We have  $\Erw[\nul(\vA'')]=\Erw[\nul(\vA_{n,\vM})]+o_n(1)$.
\end{claim}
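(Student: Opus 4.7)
The plan is to couple $\vA''$ and $\vA_{n,\vM}$ so that the two matrices coincide on a high-probability event, and then to bound the exceptional contribution trivially. Both random matrices are built from the same raw ingredients: the variable-degree sequence $(\vd_h)_{h\in[n]}$, the $\THETA$ pinning rows, and, for each $i\geq1$, exactly $\vM_i^-+(\vM_i-\vM_i^-)=\vM_i$ checks of target degree $i$, whose non-zero entries arise via the same function $\chi$ evaluated at i.i.d.\ uniform coordinates. The only discrepancy lies in how the bipartite matching between check clones and variable clones is sampled: $\vA_{n,\vM}$ uses a single uniform maximum matching $\vec\Gamma_{n,\vM}$, while $\vA''$ composes a matching $\vec\Gamma'$ of the $\vM_i^-$ ``old'' check clones with a subsequent matching $\vec\Gamma''$ of the remaining $\vM_i-\vM_i^-$ ``new'' check clones into the set of cavities left by $\vec\Gamma'$.

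First I would introduce the event
\[
\cG=\cbc{\sum_{h=1}^{n}\vd_h\geq\sum_{i\geq 1}i\vM_i},
\]
under which a uniform maximum matching covers every check clone in both constructions. Since $\Erw\sum_h\vd_h=dn$ while $\Erw\sum_i i\vM_i=(1-\eps)dn$, the gap has positive mean $\eps dn$; applying \Lem~\ref{Lemma_sums} to $\sum_h\vd_h$ and (after conditioning on a typical value of the Poisson variable $\vm_{\eps,n}$, which by Bennett's inequality is concentrated up to an $o_n(1/n)$ tail) to $\sum_i i\vM_i\disteq\sum_{j=1}^{\vm_{\eps,n}}\vk_j$ yields $\pr[\bar\cG]=o_n(1/n)$.

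Next I would construct the coupling on $\cG$. Fix a labeling in which the first $\vM_i^-$ checks of target degree $i$ are designated ``old'' and the remaining $\vM_i-\vM_i^-$ ``new'', and identify $\row_{i,\vM_i^-+j}$ with $\row'_{i,j}$. Sample a single uniform maximum matching $\vec\Gamma$ and use it to build $\vA_{n,\vM}$. Define $\vec\Gamma'$ as the restriction of $\vec\Gamma$ to the old check clones and $\vec\Gamma''$ as its complement. By the standard factorisation of the uniform law on injections, the marginal law of $\vec\Gamma'$ is uniform over maximum matchings from the old check clones to the variable clones, and conditional on $\vec\Gamma'$ the matching $\vec\Gamma''$ is uniform over matchings from the new check clones to the cavities; this is precisely the prescription by which $\vA''$ is built. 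Hence the coupling yields $\nul(\vA'')=\nul(\vA_{n,\vM})$ on $\cG$ (up to a permutation of rows that does not affect the nullity).

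Off $\cG$, both nullities are bounded by $n$, so
\[
\abs{\Erw[\nul(\vA'')]-\Erw[\nul(\vA_{n,\vM})]}\leq n\cdot\pr[\bar\cG]=o_n(1),
\]
which is the claim. The only delicate point is the verification that the two-stage sampling of $(\vec\Gamma',\vec\Gamma'')$ really reproduces the uniform-maximum-matching distribution of $\vec\Gamma$. This reduces to noting that, conditional on $\vM$, the construction of $\vA''$ depends on the auxiliary quantity $\GAMMA$ (which determines the split $\vM=\vM^-+(\vM-\vM^-)$) only through the total count $\vM$; hence marginalising over $\GAMMA$ leaves the conditional law of the matching unchanged, and bookkeeping of the relabelings between $\{a_{i,j}\}$, $\{a''_{i,j}\}$ and their associated $\row$-variables completes the argument.
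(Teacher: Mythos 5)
Your proposal is correct and follows essentially the same route as the paper: restrict to the high-probability event (probability $1-o_n(1/n)$, via \Lem~\ref{Lemma_sums} and Poisson concentration) that the total variable degree exceeds the total check degree, observe that on this event the two-stage generation of $\vA''$ (first the $\vM_i^-$ checks, then the remaining ones into the cavities) reproduces the law of a single uniform maximal matching with $\vM_i$ checks of degree $i$ — this is exactly the paper's appeal to the principle of deferred decisions — and bound the exceptional contribution by $n$ times the failure probability. Your write-up merely makes the factorisation of the uniform matching and the marginalisation over $\GAMMA$ explicit, which the paper leaves implicit.
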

\begin{proof}
The choice of the random variables in (\ref{eqPoissons}) and \Lem~\ref{Lemma_sums} ensure that the event $\cE=\{\sum_{i\geq1}i\vM_i\leq dn/k\}$ has probability $1-o_n(1/n)$.
Further, given $\cE$ the random variables $\nul(\vA'')$ and $\nul(\vA_{n,\vM})$ are identically distributed by the principle of deferred decisions.
Because the nullity of either matrix is bounded by $n$ deterministically, the claim follows.
\end{proof}

To compare $\nul(\vA''')$ and $\nul(\vA_{n+1,\vM^+})$ we consider the event
		$$\cE^+=\cbc{\frac{dn}{2k}\leq\sum_{i\geq1}i\vM_i^+\leq \sum_{i=1}^n\vd_i,\forall i\geq  n/\ln^9n:\vM_i^+=0}.$$

\begin{claim}\label{Claim_Lemma_valid_2}
We have  $\pr\brk{\cE^+}=1-o_n(1/n)$.
\end{claim}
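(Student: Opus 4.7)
The plan is to verify each of the three conditions defining $\cE^+$ separately, and then take a union bound. Since $\vM_i$ and $\DELTA_i$ are independent Poissons with parameters $(1-\eps)\pr[\vk=i]dn/k$ and $(1-\eps)\pr[\vk=i]d/k$ respectively, $\vM_i^+$ is $\Po\bc{(1-\eps)\pr[\vk=i]d(n+1)/k}$, and the $(\vM_i^+)_{i\ge 1}$ are mutually independent. This allows the representation
\begin{equation*}
\sum_{i\geq 1} i\vM_i^+ \;\disteq\; \sum_{j=1}^{\vm^+}\vk_j,\qquad \vm^+\disteq\Po((1-\eps)d(n+1)/k),
\end{equation*}
with $(\vk_j)_{j\geq 1}$ i.i.d.\ copies of $\vk$ independent of $\vm^+$.

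First, I would handle the maximum-degree condition $\forall i\geq n/\ln^9 n:\vM_i^+=0$. By Markov's inequality, the probability that any $i\geq n/\ln^9 n$ has $\vM_i^+>0$ is at most
\begin{equation*}
\sum_{i\ge n/\ln^9 n}\Erw[\vM_i^+]\;=\;\frac{(1-\eps)d(n+1)}{k}\pr\brk{\vk\geq n/\ln^9 n}\;\leq\;O_n(n)\cdot\frac{\Erw[\vk^r]}{(n/\ln^9 n)^r}\;=\;O_n(n^{1-r}\ln^{9r}n),
\end{equation*}
which is $o_n(1/n)$ since the standing assumption is $r>2$. Equivalently, this says every non-zero $\vM_i^+$ has index $i\leq n/\ln^9 n$ with probability $1-o_n(1/n)$.

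Next, for the two-sided bound on $\sum_i i\vM_i^+$, I would first use Bennett's inequality for the Poisson $\vm^+$ to conclude $|\vm^+-(1-\eps)d(n+1)/k|\leq\sqrt n\ln n$ with probability $1-o_n(1/n)$. Conditional on this outcome of $\vm^+$, \Lem~\ref{Lemma_sums} applied to $\sum_{j=1}^{\vm^+}\vk_j$ (with $\vec\lambda=\vk$, $\delta=\eps d/4$) yields $\sum_i i\vM_i^+=(1-\eps)d(n+1)+o_n(n)$ except on an event of probability $o_n(1/n)$. An analogous application of \Lem~\ref{Lemma_sums} to $\sum_{i=1}^n\vd_i$ gives $\sum_i\vd_i=dn+o_n(n)$ off an event of probability $o_n(1/n)$. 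Comparing these two expansions, the gap $\eps dn$ dominates the error terms for each fixed $\eps>0$, so both $\sum_i i\vM_i^+\leq\sum_i\vd_i$ and $\sum_i i\vM_i^+\geq dn/(2k)$ hold with probability $1-o_n(1/n)$ (the latter bound is loose because $(1-\eps)d\geq d/(2k)$ for all sufficiently small $\eps$, as $k\geq 1$ is a fixed positive integer).

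Taking a union bound over the three events finishes the claim. The only mildly delicate step is the maximum-degree bound, since it is precisely there that the moment assumption $\Erw[\vk^r]<\infty$ for some $r>2$ is essential; with only a second-moment assumption the Markov bound would give $O_n(\ln^{18}n/n)$, which is not strong enough.
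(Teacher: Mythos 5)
Your proposal is correct and follows essentially the same route as the paper, whose proof is a one-line appeal to the definition \eqref{eqm} of the $\vM_i^+$ and \Lem~\ref{Lemma_sums}: you simply spell out the Poisson superposition, the Markov/union bound for the condition $\forall i\geq n/\ln^9n:\vM_i^+=0$ (where, as you note, $r>2$ is what makes the bound $o_n(1/n)$), and the conditional application of \Lem~\ref{Lemma_sums} for the two-sided bound on $\sum_i i\vM_i^+$. The only nitpick is the aside that $k\geq1$ is ``a fixed positive integer'' --- $k=\Erw[\vk]$ need not be an integer, but $k\geq1$ is all that is used, so nothing breaks.
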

\begin{proof}
This follows from the definition \eqref{eqm} of the random variables $\vM_i^+$ and \Lem~\ref{Lemma_sums}.
\end{proof}

Further, consider the event
	$$\cW=\cbc{\vd_{n+1}\leq\ln n,\ \sum_{i\geq1}i(\DELTA_i+\GAMMA_i)<\ln^4 n}.$$

\begin{claim}\label{Claim_Lemma_valid_3}
We have  $\pr\brk{\cW}=1-o_n(1)$.
\end{claim}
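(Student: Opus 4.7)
The plan is to verify the two conditions defining $\cW$ separately and combine via a union bound. Both come down to simple first-moment calculations plus Markov's inequality, so there is no real difficulty.

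For the degree bound $\vd_{n+1}\leq\ln n$, the moment assumption $\Erw[\vd^r]<\infty$ for some $r>2$ and Markov's inequality applied to $\vd_{n+1}^r$ give
\begin{align*}
\pr\brk{\vd_{n+1}>\ln n}\leq\Erw[\vd^r](\ln n)^{-r}=o_n(1).
\end{align*}

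For the sum, I will bound $\Erw[\sum_{i\geq1}i(\DELTA_i+\GAMMA_i)]$ by a constant depending only on $\eps$ (and the laws of $\vd,\vk$), so that Markov yields $\pr[\sum_{i\geq1}i(\DELTA_i+\GAMMA_i)\geq\ln^4n]=O_\eps(\ln^{-4}n)=o_n(1)$. The $\DELTA$-contribution is immediate from the Poisson parameters in \eqref{eqPoissons}: $\Erw[\sum_{i\geq1}i\DELTA_i]=(1-\eps)(d/k)\Erw[\vk]=(1-\eps)d=O_\eps(1)$. For the $\GAMMA$-contribution I reuse the estimate from the proof of Claim~\ref{Claim_A'''1}: a given check of degree $i$ is adjacent to $x_{n+1}$ in $\G_{n+1,\vM^+}$ with conditional probability at most $i\vd_{n+1}/\sum_{j}\vd_j\leq i\vd_{n+1}/n$, since $\vd_j\geq1$. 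Summing over checks and exploiting that $\vd_{n+1}$ is independent of the Poisson family $\vM^+$,
\begin{align*}
\Erw\brk{\sum_{i\geq1}i\GAMMA_i}\leq\frac{\Erw[\vd_{n+1}]}{n}\Erw\brk{\sum_{i\geq1}i^2\vM_i^+}=\frac{d}{n}\cdot(1-\eps)\frac{d(n+1)}{k}\Erw[\vk^2]=O_\eps(1),
\end{align*}
using $\Erw[\vk^2]<\infty$.

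Combining the two estimates by a union bound yields $\pr[\cW]=1-o_n(1)$. There is no real obstacle: the only thing to watch is that the implicit $\eps$-dependence in the $O_\eps(1)$ estimates of the first moments is harmless, because $\eps$ is fixed while $n\to\infty$ and Markov absorbs any $\eps$-dependent constant into the $o_n(1)$ factor coming from the $\ln^{-4}n$ tail.
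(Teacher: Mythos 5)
Your proposal is correct and follows essentially the same route as the paper, which simply notes that the claim follows from $\Erw[\vd^2],\Erw[\vk^2]<\infty$ — i.e.\ the first-moment/Markov argument you spell out, with the $\GAMMA$-estimate being exactly the one the paper itself uses for $\Erw[Y']$ in Claim~\ref{Claim_A'''1}. No gaps; the only thing worth noting is that your computation fills in details the paper leaves implicit.
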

\begin{proof}
This follows from the assumption that $\Erw[\vd^2],\Erw[\vk^2]$ are bounded.
\end{proof}

Moreover, let $\cU$ be the event that $x_{n+1}$ does not partake in any multi-edges of $\G_{n,\vM^+}$.

\begin{claim}\label{Claim_Lemma_valid_4} 
We have $\pr\brk{\cU\mid\cW\cap\cE^+}=1-o_n(\ln^{-6}n)$.
\end{claim}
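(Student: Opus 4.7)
The plan is to estimate $\pr[\cU^c \mid \cW \cap \cE^+]$ directly via a union bound over pairs of clones of $x_{n+1}$. A multi-edge incident to $x_{n+1}$ arises precisely when two distinct clones in $\{x_{n+1}\}\times[\vd_{n+1}]$ get matched under $\vec\Gamma_{n+1,\vM^+}$ to two clones of one and the same check node.

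First, I would fix any two distinct clones $(x_{n+1},s),(x_{n+1},t)\in\{x_{n+1}\}\times[\vd_{n+1}]$ and bound the probability that they are matched to clones of a common check. By the principle of deferred decisions, I may reveal the partner of $(x_{n+1},s)$ first; it is a uniform clone from $\bigcup_i\bigcup_{j\leq\vM_i^+}\{a_{i,j}\}\times[i]$, landing in some $\{a_{I,J}\}\times[I]$. Conditional on this, the partner of $(x_{n+1},t)$ is uniform over the remaining check-clones, so the conditional probability of hitting another clone of $a_{I,J}$ is at most $(I-1)/(T-1)$, where $T=\sum_{i\geq1}i\vM_i^+$. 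On $\cE^+$ we have the deterministic bounds $I\leq\max\{i:\vM_i^+>0\}< n/\ln^9 n$ and $T\geq dn/(2k)$, so the conditional probability in question is $O(1/\ln^9 n)$ uniformly in the choice of $(s,t)$.

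Next, on $\cW$ the variable node $x_{n+1}$ has at most $\vd_{n+1}\leq\ln n$ clones, so the number of ordered pairs of distinct clones of $x_{n+1}$ is at most $\ln^2 n$. Applying a union bound over these pairs yields
\begin{align*}
\pr[\cU^c \mid \cW\cap\cE^+]\;\leq\;\ln^2 n\cdot O(\ln^{-9} n)\;=\;O(\ln^{-7} n)\;=\;o(\ln^{-6} n),
\end{align*}
which is exactly the bound claimed.

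The only mild subtlety is to verify that conditioning on $\cW\cap\cE^+$ does not disturb the ``principle of deferred decisions'' estimate for a single pair. This is routine: $\cE^+$ is measurable with respect to $\vM^+$ and the $\vd_i$, while $\cW$ restricts $\vd_{n+1}$ and $\DELTA,\GAMMA$, so after conditioning the random matching $\vec\Gamma_{n+1,\vM^+}$ is still uniform on maximal matchings between the appropriate clone-sets, and the conditional hitting probability computed above is valid. Hence no further work is required.
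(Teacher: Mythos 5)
Your argument is correct and is essentially the paper's own proof: on $\cW$ the variable $x_{n+1}$ has at most $\ln n$ clones, on $\cE^+$ every check has degree below $n/\ln^9 n$ while the total number of check clones is $\Omega_n(n)$, and a union bound over the at most $\ln^2n$ pairs of clones of $x_{n+1}$ gives $O_n(\ln^{-7}n)=o_n(\ln^{-6}n)$. One small caveat on your final paragraph: $\GAMMA$ is itself a function of the matching, so conditioning on $\cW$ does not literally leave $\vec\Gamma_{n+1,\vM^+}$ uniform; this is harmless, since $\pr\brk{\cW\cap\cE^+}=1-o_n(1)$ lets you replace the conditioning by the matching-independent events $\cE^+\cap\{\vd_{n+1}\leq\ln n\}$ at the cost of a $1+o_n(1)$ factor, after which your deferred-decisions estimate applies verbatim.
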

\begin{proof}
Given $\cW\cap\cE^+$ variable node $x_{n+1}$ has target degree at most $\ln n$ and
all check degrees are bounded by $n/\ln^9n$.
Hence, the probability that $x_{n+1}$ joins the same check twice is $O_n(\ln^{-7}n)$.
\end{proof}

The next claim shows that $\nul(\vA''')$, $\nul(\vA_{n+1,\vM^+})$ can be coupled identically on the `bulk' event $\cE^+\cap\cU\cap\cW$.

\begin{claim}\label{Claim_Lemma_valid_5} 
Given $\cE^+\cap\cU\cap\cW$  the random variables $\nul(\vA''')$ and $\nul(\vA_{n+1,\vM^+})$ are identically distributed and thus
\begin{align}\label{eqBigBound}
\Erw\brk{\bc{\nul(\vA''')-\nul(\vA_{n+1,\vM^+})}\vecone\cU\cap\cW\cap\cE^+}&=0.
\end{align}
\end{claim}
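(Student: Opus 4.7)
The plan is to exhibit an explicit coupling of the random matchings underlying $\vA'''$ and $\vA_{n+1,\vM^+}$ so that, on the event $\cE^+\cap\cU\cap\cW$, the two matrices agree as $\FF$-valued matrices up to a relabelling of their rows; the identity~\eqref{eqBigBound} then follows at once.

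By the principle of deferred decisions I would generate $\vec\Gamma_{n+1,\vM^+}$ in two stages: Stage~A exposes only the matching edges incident with the clones of $x_{n+1}$, and Stage~B exposes the remaining edges as a uniform maximum matching of the surviving check clones into the remaining variable clones. On $\cE^+$ every check clone is saturated since $\sum_i i\vM_i^+\leq\sum_{h\leq n}\vd_h$, and on $\cU$ each check adjacent to $x_{n+1}$ is incident with only one clone of $x_{n+1}$; hence the set $\cA_i$ of degree-$i$ checks adjacent to $x_{n+1}$ has exactly $\GAMMA_i$ elements, each contributing a single clone to Stage~A and $i-1$ clones that must be matched in Stage~B.

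Next, using extra independent randomness, which is legal by exchangeability of the check labels, I would partition the $\vM_i^+-\GAMMA_i$ degree-$i$ checks not adjacent to $x_{n+1}$ uniformly at random into a subset $\cG_i$ of size $\vM_i^-$ and its complement $\cB_i$ of size $\vM_i^+-\vM_i^--\GAMMA_i$; the latter is non-negative because $\vM_i^+-\vM_i^-\geq\GAMMA_i$ regardless of whether the truncation $\vM_i^-=(\vM_i-\GAMMA_i)\vee 0$ is active. Relabel the checks in $\cG_i$ as the $\G'$-checks $a_{i,j}$, those in $\cA_i$ as the new checks $a_{i,j}'''$, and those in $\cB_i$ as the new checks $b_{i,j}'''$. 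Declare $\vec\Gamma'$ to be the restriction of $\vec\Gamma_{n+1,\vM^+}$ to the clones of $\cG_i$ and to $\bigcup_{h\leq n}\{x_h\}\times[\vd_h]$, and $\vec\Gamma'''$ to be the remaining Stage~B edges, i.e.\ those incident with clones of $\cA_i\cup\cB_i$. Finally, couple the independent uniform mark families $(\row_{i,j}), (\row_{i,j}'), (\row_{i,j}'')$ to the single family used in the definition of $\vA_{n+1,\vM^+}$ according to the bijection between the rows of $\vA'''$ and of $\vA_{n+1,\vM^+}$ induced by this relabelling.

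The heart of the matter is to verify that the pair $(\vec\Gamma',\vec\Gamma''')$ constructed this way has the joint distribution prescribed in the definition of $\vA'''$. This reduces to two applications of the standard fact that the restriction of a uniform injection to a uniformly chosen sub-bipartite graph is itself a uniform injection: first, the saturation supplied by $\cE^+$ together with the uniform choice of $\cG_i$ makes $\vec\Gamma'$ a uniform maximum matching between $\bigcup_{h\leq n}\{x_h\}\times[\vd_h]$ and the $\G'$-check clones; second, after removing the edges used by $\vec\Gamma'$, the remainder of Stage~B is a uniform maximum matching between the cavities of $\vec\Gamma'$ and the clones of $\cA_i\cup\cB_i$ not already used in Stage~A, which is exactly the law of $\vec\Gamma'''$. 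Under this coupling, $\vA'''$ and $\vA_{n+1,\vM^+}$ agree row by row on $\cE^+\cap\cU\cap\cW$; in particular their nullities coincide and~\eqref{eqBigBound} follows upon taking expectations.

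The main obstacle is making the distributional identification in the previous paragraph fully rigorous: one must verify that the uniform partition $\cG_i\sqcup\cB_i$ is compatible with the Stage~A conditioning that defines $\GAMMA$, and that the cavities of $\vec\Gamma'$ then carry precisely the exchangeable distribution needed for the subsequent matching to be uniform. The argument is essentially bookkeeping, but it deserves care because the sizes of $\cA_i,\cG_i,\cB_i$ are themselves random functions of $\vM,\DELTA,\GAMMA$ and because the truncation in the definition of $\vM_i^-$ complicates the case analysis.
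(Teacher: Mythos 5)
Your argument is correct and is essentially the paper's own proof: the paper disposes of this claim with the one-line assertion that on $\cE^+\cap\cU\cap\cW$ the two matrices are identically distributed ``by construction'', and your two-stage exposure of $\vec\Gamma_{n+1,\vM^+}$ (edges at $x_{n+1}$ first, then the rest), together with the uniform relabelling of the non-adjacent checks into $\cG_i\sqcup\cB_i$ and the sequential-uniform-injection (deferred decisions) identification of $(\vec\Gamma',\vec\Gamma''')$, is precisely the argument that assertion leaves implicit. The bookkeeping you flag at the end (compatibility of the partition with the conditioning on $\GAMMA$, and the inequality $\vM_i^+-\vM_i^-\geq\GAMMA_i$) goes through as you indicate, since $\cU$ is measurable with respect to the first exposure stage and $\cE^+,\cW$ are measurable with respect to the degree counts.
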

\begin{proof}
By construction, on $\cE^+\cap\cU\cap\cW$ the random matrices $\vA'''$ and $\vA_{n+1,\vM^+}$ are identically distributed, and hence so are their nullities.
\end{proof}

In light of Claims~\ref{Claim_Lemma_valid_2} and~\ref{Claim_Lemma_valid_5} we are left to bound the difference of the nullities on $\cE^+\setminus(\cU\cap\cW)$.

\begin{claim}\label{Claim_Lemma_valid_6} 
There is a coupling of $\vA_{n+1,\vM^+}$ and $\vA'''$ on $\cE^+$ such that
$\abs{\nul(\vA''')-\nul(\vA_{n+1,\vM^+})}\leq 2\sum_{i\geq1}i(\DELTA_i+\GAMMA_i).$
\end{claim}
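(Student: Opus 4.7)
The plan is to couple $\vA'''$ and $\vA_{n+1,\vM^+}$ so that they agree on every row except those indexed by the newly added check nodes, and then invoke the elementary bound that two matrices with the same columns whose rows differ on a set of size $k$ have nullities differing by at most $k$ (since adjoining the at-most-$k$ new rows to the common ones changes the rank by at most $k$ on each side).

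Concretely, on the event $\cE^+$ I would first sample the shared randomness $\vM,\DELTA,\GAMMA,(\vd_i)_{i\in[n+1]},\THETA$ and the $\chi$-labels $\row,\row',\row'',\col$. Next I would draw the uniform maximal matching $\vec\Gamma'$ between the base-check clones $\bigcup_i\bigcup_{j\in[\vM_i^-]}\{a_{i,j}\}\times[i]$ and the first-$n$ variable clones, building $\vA'$ as prescribed in Section~\ref{Sec_lower}. On top of $\vA'$ I would construct $\vA'''$ by adjoining the new column for $x_{n+1}$ together with the $\sum_i(\GAMMA_i+\DELTA_i)$ new check rows exactly as in the paper. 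In parallel, I would construct $\vA_{n+1,\vM^+}$ by \emph{extending} $\vec\Gamma'$ to the full bipartite configuration: conditional on $\vec\Gamma'$, sample a uniform maximal matching of the non-base-check clones with the union of the cavities of $\vA'$ and the $x_{n+1}$-clones. Standard exchangeability in the configuration model, combined with the Poisson decomposition $\vM_i^+=\vM_i^-+\GAMMA_i+\DELTA_i$, then guarantees that this construction produces a matrix with the same law as $\vA_{n+1,\vM^+}$ from Section~\ref{Sec_lower}.

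Under the coupling both matrices share verbatim the $\THETA$ pinning rows and the $\vM^-$ base-check rows, with identical all-zero $x_{n+1}$-entries on those rows, so they can differ only on the $\sum_i(\GAMMA_i+\DELTA_i)$ rows indexed by the new checks. Since the two matrices have the same column set, modifying $k$ rows of a matrix changes the nullity by at most $k$ (the union of the two row-sets has rank within $k$ of either individual rank), so
\[
\abs{\nul(\vA''')-\nul(\vA_{n+1,\vM^+})}\leq \sum_i(\GAMMA_i+\DELTA_i)\leq \sum_i i(\GAMMA_i+\DELTA_i)\leq 2\sum_i i(\DELTA_i+\GAMMA_i),
\]
as claimed.

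The main obstacle I expect is verifying the marginal law of $\vA_{n+1,\vM^+}$ under this parallel-extension construction, i.e.\ checking that building a uniform maximal matching on the full configuration by first drawing $\vec\Gamma'$ uniformly on the base-check/first-$n$ sub-configuration and then completing uniformly is equivalent to drawing the full matching uniformly in one shot. This is a routine but delicate consistency property of the configuration model: one must handle the random labelling of checks as ``base'', ``$a'''$'', or ``$b'''$'' in a way compatible with the random set of checks that happen to become adjacent to $x_{n+1}$, and Poisson-ness of $\vM,\DELTA,\GAMMA$ is what makes this bookkeeping work.
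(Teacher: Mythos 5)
Your overall strategy is the same as the paper's: share the base part of the two Tanner graphs and bound the nullity difference by the amount of material outside the base. Your linear--algebra step is fine (indeed, bounding by the number of differing \emph{rows}, at most $\sum_i(\vM_i^+-\vM_i^-)\le\sum_i(\DELTA_i+\GAMMA_i)$, is slightly sharper than the paper's count of differing \emph{entries}, which is where the factor $2\sum_i i(\DELTA_i+\GAMMA_i)$ comes from). The gap is in the coupling itself, precisely at the point you flag and then defer. Recall that $\GAMMA_i$ is \emph{defined} as the number of degree-$i$ checks adjacent to $x_{n+1}$ in $\G_{n+1,\vM^+}$, and that this same $\GAMMA$ determines $\vM^-=(\vM-\GAMMA)\vee0$ and the construction of $\vA'''$ (the checks $a'''_{i,j}$, $j\in[\GAMMA_i]$, are wired to $x_{n+1}$). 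In your construction you first sample $\GAMMA$ as ``shared randomness'' and then complete $\vA_{n+1,\vM^+}$ by an \emph{unconstrained} uniform matching of the non-base check clones with the cavities and the $x_{n+1}$-clones. Under that completion the set of checks actually adjacent to $x_{n+1}$ is random and its degree profile will typically not equal $\GAMMA$ (with positive probability no new check hits an $x_{n+1}$-clone even though $\GAMMA\neq0$), so the pair $(\GAMMA,\vA_{n+1,\vM^+})$ does not have the correct joint law; since $\GAMMA$ is a functional of $\G_{n+1,\vM^+}$ and appears both in the construction of $\vA'''$ and in the claimed bound, this is not a cosmetic issue, and the ``routine consistency property'' you invoke is false for the completion as written.

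The repair is to reverse the order of exposure, which is what the paper's ``principle of deferred decisions'' is doing: first reveal the edges of $\G_{n+1,\vM^+}$ incident with $x_{n+1}$ (this is exactly what determines $\GAMMA$), relabel so that the $x_{n+1}$-adjacent checks and $\DELTA_i$ further checks are the non-base ones. Conditionally on this information, the remaining check clones form a uniform injection into the first-$n$ variable clones (the number of admissible completions does not depend on where these clones land), so its restriction to the base checks can be identified with $\vec\Gamma'$; the leftover non-base clones must then be completed under the \emph{conditional} law, i.e.\ with the designated $a$-type checks forced to meet $x_{n+1}$ and the $b$-type checks forced to avoid it, rather than freely. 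Note also that under the true law an $x_{n+1}$-adjacent check may have several edges to $x_{n+1}$, so its row need not match the corresponding row of $\vA'''$ (which has exactly one $x_{n+1}$-entry); this is harmless for the claim because such rows are among the non-base rows you are already discarding, but it is another reason why the two new parts cannot simply be generated from one shared unconstrained matching. With this corrected exposure scheme your row-counting argument goes through and yields the stated bound.
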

\begin{proof}
We estimate the number of edges of the Tanner graph $\G_{n+1,\vM^+}$ incident with the checks $a_{i,j}$, $\vM_i^-<j\leq\vM_i^+$ or the new variable $x_{n+1}$ of $\G_{n+1,\vM^+}$.
By construction, there are at most $\sum_{i\geq1}i(\DELTA_i+\GAMMA_i)$ such edges.
Similarly, there are no more than $\sum_{i\geq1}i(\DELTA_i+\GAMMA_i)$ edges incident with the new checks $a_{i,j}'''$, $b_{i,j}'''$ added to $\vA'$ to obtain $\vA'''$.
By the principle of deferred decisions on $\cE''$ we can couple the Tanner graphs of $\vA'''$ and $\vA_{n+1,\vM^+}$ such that they coincide on all the edges that join variables $x_1,\ldots,x_n$ and checks $a_{i,j}$, $j\leq\vM_i^-$, and hence the matrices themselves so that they coincide on all the corresponding matrix entries.
Consequently, $\vA'''$ and $\vA_{n+1,\vM^+}$ differ in no more than $2\sum_{i\geq1}i(\DELTA_i+\GAMMA_i)$ entries.
\end{proof}

We proceed to bound the difference of the nullities on $\cE^+\setminus\cW$.

\begin{claim}\label{Claim_Q123}
We have $\Erw\brk{\sum_{i\geq1}i(\DELTA_i+\GAMMA_i)\vecone{\cE^+\setminus\cW}}=o_n(1)$.
\end{claim}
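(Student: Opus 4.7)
The plan is to exploit the inclusion
$$\cE^+\setminus\cW\subset\{\vd_{n+1}>\ln n\}\cup\{S\ge\ln^4 n\},\qquad S:=\sum_{i\ge1}i(\DELTA_i+\GAMMA_i),$$
and bound $\Erw[S\vecone{\vd_{n+1}>\ln n}]$ and $\Erw[S\vecone{S\ge\ln^4 n}]$ separately by H\"older's inequality. The inputs required are a finite $q$-th moment of $S$ for some $q>1$, plus the polynomial tail decay of $\vd_{n+1}$ supplied by $\Erw[\vd^r]<\infty$ for some $r>2$.

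Decompose $S=S_1+S_2$ with $S_1=\sum_i i\DELTA_i$ and $S_2=\sum_i i\GAMMA_i$. For $S_1$: by \eqref{eqPoissons}, $S_1$ has the distribution of a compound Poisson sum of i.i.d.\ copies of $\vk$, with $\Po((1-\eps)d/k)$ many summands. Standard moment estimates for compound Poisson sums together with $\Erw[\vk^r]<\infty$ yield $\Erw[S_1^r]=O_{\eps,n}(1)$; note that $S_1$ is independent of $\vd_{n+1}$. For $S_2$: I would upper-bound $S_2\le \hat S_2:=\sum_{j=1}^{\vd_{n+1}}\vY_j$, where $\vY_j$ is the degree of the check to which the $j$-th clone of $x_{n+1}$ is matched in the random maximal matching underlying $\G_{n+1,\vM^+}$. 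On $\cE^+$ the total number of check clones exceeds $dn/(2k)$ and all check degrees are at most $n/\ln^9 n$, so by the principle of deferred decisions applied to the clones of $x_{n+1}$ each $\vY_j$ is stochastically comparable (up to a factor $1+o_n(1)$) with the size-biased variable $\hat\vk$ of \eqref{eqSizeBiasd}. Since $\Erw[\hat\vk^{r-1}]=\Erw[\vk^r]/k<\infty$ and $\Erw[\vd^{r-1}]<\infty$, Jensen's inequality gives $\Erw[\hat S_2^{r-1}]\le\Erw[\vd_{n+1}^{r-1}]\,\Erw[\vY^{r-1}]=O_{\eps,n}(1)$, hence $\Erw[S_2^{r-1}]=O_{\eps,n}(1)$.

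With these moment bounds in hand the conclusion is routine. Markov gives $\pr[S\ge\ln^4 n]=O((\ln n)^{-4})$ and $\pr[\vd_{n+1}>\ln n]=O((\ln n)^{-r})$. H\"older with exponent $r-1>1$ yields
$$\Erw[S\vecone{S\ge\ln^4 n}]\le\Erw[S^{r-1}]^{1/(r-1)}\pr[S\ge\ln^4 n]^{(r-2)/(r-1)}=O((\ln n)^{-4(r-2)/(r-1)})=o_n(1),$$
while independence furnishes $\Erw[S_1\vecone{\vd_{n+1}>\ln n}]=\Erw[S_1]\pr[\vd_{n+1}>\ln n]=o_n(1)$ and a second application of H\"older gives $\Erw[S_2\vecone{\vd_{n+1}>\ln n}]=O((\ln n)^{-r(r-2)/(r-1)})=o_n(1)$. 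The main obstacle is the moment bound on $S_2$, because $\GAMMA$ is entangled with the random matching $\vec\Gamma_{n+1,\vM^+}$ rather than being a fresh sum of independent variables; the fix is the domination $S_2\le\hat S_2$ combined with the deferred-decisions argument on $\cE^+$ (the distributional template being Lemma~\ref{Cor_gamma}), which reduces the task to a sum of i.i.d.\ size-biased degrees whose $(r-1)$-st moment is finite precisely because $r>2$.
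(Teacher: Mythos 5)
Your proposal is correct in substance but follows a genuinely different route from the paper. You split $\cE^+\setminus\cW$ into the two events $\{\vd_{n+1}>\ln n\}$ and $\{S\geq \ln^4n\}$ with $S=\sum_{i\geq1}i(\DELTA_i+\GAMMA_i)$ and then trade polylogarithmically small probabilities against a bounded $(r-1)$-st moment of $S$ via H\"older, which requires $\Erw[\vk^r]<\infty$ (available under the standing assumption $\Erw[\vd^r]+\Erw[\vk^r]<\infty$). The paper instead partitions $\cE^+\setminus\cW$ into three events $\cQ_1$ (some check of degree $>\log n$ adjacent to $x_{n+1}$), $\cQ_2$ ($\vd_{n+1}>\log n$) and $\cQ_3$ ($\sum_i i\DELTA_i>\ln^3n$), and on each of them uses only first-moment/truncation bounds plus $\Erw[\vk^2]<\infty$ and $\Erw[\vd^r]<\infty$; the key observation there is that off $\cQ_1\cup\cQ_2$ one automatically has $\sum_i i\GAMMA_i\leq\log^2n$, so no higher moment of the $\GAMMA$-part is ever needed. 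Your approach is arguably more streamlined but uses slightly stronger moment information on $\vk$; the paper's is more economical in its hypotheses.

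One step of your write-up needs repair, though the conclusion it aims at is true. The assertion that each $\vY_j$ is ``stochastically comparable up to a factor $1+o_n(1)$ with $\hat\vk$'' is not correct as stated: conditionally on $\vM^+$, the clone of $x_{n+1}$ hits a check of degree $i$ with probability proportional to $i\vM_i^+$, i.e.\ $\vY_j$ follows the \emph{empirical} size-biased distribution, and in the upper tail (say at the realised maximum degree, which can be polynomial in $n$) this is not within a $1+o_n(1)$ factor of $\pr[\hat\vk=\nix]$; likewise, \Lem~\ref{Cor_gamma} only gives total variation closeness, which does not transfer $(r-1)$-st moments. The fix is elementary and is exactly the paper's trick from the proof of Claim~\ref{Claim_A'''1} with the exponent $2$ replaced by $r$: conditionally on $\vM^+$ and $(\vd_h)_{h\leq n+1}$, a given clone of $x_{n+1}$ is matched to any fixed check clone with probability $1/\sum_{h\leq n+1}\vd_h$, so on $\cE^+$ one has $\Erw[\vY_j^{r-1}\mid \vM^+,(\vd_h)]\leq \frac{2k}{dn}\sum_{i\geq1}i^r\vM_i^+$, and averaging over $\vM^+$ gives $O_{\eps,n}(1)$ because $\Erw[\sum_i i^r\vM_i^+]=(1-\eps)\frac{d(n+1)}{k}\Erw[\vk^r]=O_{\eps,n}(n)$; combined with $\hat S_2^{r-1}\leq \vd_{n+1}^{r-2}\sum_{j\leq\vd_{n+1}}\vY_j^{r-1}$ and the independence of $\vd_{n+1}$ from $\vM^+$ this yields $\Erw[S_2^{r-1}\vecone\cE^+]=O_{\eps,n}(1)$, which is all your H\"older steps require (note the moment bound is only needed, and only holds, on $\cE^+$, so the indicator should be carried along). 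With that substitution your argument goes through.
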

\begin{proof}
 The event $\cE^+\setminus\cW$ is contained in the union of the three events
\begin{align*}
\cQ_1&=\cE^+\cap\cbc{\exists i>\log n:\GAMMA_i>0},&
\cQ_2&=\cE^+\cap\cbc{\vd_{n+1}>\log n}\setminus\cQ_1,&
\end{align*}
$$
\cQ_3=\cE^+\cap\cbc{\sum_{i\geq1}i\DELTA_i>\ln^3n}\setminus(\cQ_1\cup\cQ_2).
$$
To bound the contribution of $\cQ_1$, consider $\vm_{\eps,n}^+=\sum_{i\geq1}\vM_i^+\disteq\Po((1-\eps)d(n+1)/k)$.
We claim that, with the copies $(\vk_i)_{i\geq1}$ of $\vk$ independent of everything else,
$$
\Erw\brk{\sum_{i\geq1}i\GAMMA_i\vecone\cQ_1}\leq O_n(1/n)\cdot\bc{1+\Erw\brk{\sum_{i=1}^{\vm_{\eps,n}^+}\vecone\{\vk_i\geq\log n\}\vk_i^2\vec d_{n+1}}}=$$
\begin{align}\label{eqQuickBound10}
O_n(1)\cdot \pr\brk{\vk\geq\log n}+O_n(1/n)=O_n(\log^{-2}n).
\end{align}
Indeed, the last equality sign follows from the first because $\Erw[\vk^2]=O_n(1)$ and the first equality sign follows because $\vm_{\eps,n}^+$ is independent of $\vd_{n+1}$ and the $\vk_i$.
Further, to obtain the first inequality we consider the $\vm_{\eps,n}^+$ checks one by one.
The degree of the $i$th check is distributed as $\vk_i$.
We discard it unless $\vk_i\geq\log n$.
But if $\vk_i\geq\log n$, then the probability that $\vk_i$ is adjacent to $x_{n+1}$ is bounded by $O_n(\vk_i\vd_{n+1}/\sum_{h=1}^{n+1}\vd_h)$ and $\sum_{h=1}^{n+1}\vd_h\geq n$.
Thus, we obtain (\ref{eqQuickBound10}).
Further, we observe that (\ref{eqQuickBound10}) yields
$\pr\brk{\cQ_1}\leq\Erw\sum_{i\geq1}i\GAMMA_i\vecone\cQ_1=O_n(\log^{-2}n).$
Hence,  as $\Erw\sum_{i\geq1}i\DELTA_i=O_n(1)$ we obtain
\begin{align}\label{eqQuickBound11}
\Erw\brk{\sum_{i\geq1}i\DELTA_i\vecone\cQ_1}&\leq\pr\brk{\cQ_1}\log n
		+\Erw\brk{\sum_{i\geq1}i\DELTA_i\vecone\cbc{\sum_{i\geq1}i\DELTA_i\geq\log n}}=o_n(1).
\end{align}
Combining \eqref{eqQuickBound10} and \eqref{eqQuickBound11}, we conclude that
\begin{align}\label{eqQuickBound12}
\Erw\brk{\sum_{i\geq1}i(\DELTA_i+\GAMMA_i)\vecone\cQ_1}=o_n(1).
\end{align}

Regarding $\cQ_2$, we deduce from the bound $\Erw[\vd_{n+1}^r]=O_n(1)$ for an $r>2$ that
\begin{align}\label{eqQuickBound98}
\Erw\brk{\sum_{i\geq1}i\GAMMA_i\vecone\cQ_2}&\leq O_n(\log n)\Erw\brk{\vd_{n+1}\vecone\{\vd_{n+1}>\log n\}}=o_n(1).
\end{align}
Moreover, since the $\DELTA_i$ are independent of $\vd_{n+1}$ and $\Erw\sum_{i\geq1}i\DELTA_i=O_n(1)$, we obtain $\Erw\brk{\sum_{i\geq1}i\DELTA_i\vecone\cQ_2}$
\newline
$=o_n(1)$.
Hence, \eqref{eqQuickBound98} yields
\begin{align}\label{eqQuickBound21}
\Erw\brk{\sum_{i\geq1}i(\DELTA_i+\GAMMA_i)\vecone\cQ_2}=o_n(1).
\end{align}

Moving on to $\cQ_3$ and recalling the definition \eqref{eqm} of $\vec\Delta$, we find
\begin{align}\label{eqQuickBound33}
\pr\brk{\cQ_3}&\leq\Erw\brk{\sum_{i\geq1}i\DELTA_i}\ln^{-3}n=O_n(\Erw[\vk^2]\ln^{-3}n)=o_n(\log^{-2}n).
\end{align}
Moreover, on $\cQ_3$ we have $\sum_{i\geq1}i\GAMMA_i\leq\log^2n$ because $\vd_{n+1}\leq\log n$ and $\GAMMA_i=0$ for all $i\geq\log n$.
Consequently, since the $\DELTA_i$ are mutually independent and $\sum_{i\geq1}\Erw[i\DELTA_i]=O_n(1)$, \eqref{eqQuickBound33} yields
\begin{align}\label{eqQuickBound30}
\Erw\brk{\sum_{i\geq1}i(\DELTA_i+\GAMMA_i)\vecone\cQ_3}
	&\leq o_n(1)+4\Erw\brk{\sum_{i\geq1}i\DELTA_i\vecone\cbc{\sum_{i\geq1}i\DELTA_i\geq\ln^3n}}
	=o_n(1).
\end{align}
Finally, the assertion follows from \eqref{eqQuickBound12}, \eqref{eqQuickBound21} and \eqref{eqQuickBound30}.
\end{proof}

\begin{proof}[Proof of \Lem~\ref{Lemma_valid}]
The first assertion concerning $\vA''$ and $\vA_{n,\vM}$ follows from Claim~\ref{Claim_Lemma_valid_1}.
\newline
Concerning $\vA'''$ and $\vA_{n+1,\vM^+}$, Claim~\ref{Claim_Lemma_valid_2} shows that it suffices to couple $\nul(\vA''')\vecone\cE^+$ and $\nul(\vA_{n+1,\vM^+})\vecone\cE^+$, because both random variables are bounded by $n+1$.
Indeed, thanks to Claim~\ref{Claim_Lemma_valid_5} we merely need to couple 
$\nul(\vA''')\vecone\cE^+\setminus(\cU\cap\cW)$ and $\nul(\vA_{n+1,\vM^+})\vecone\cE^+\setminus(\cU\cap\cW)$, and Claim~\eqref{Claim_Lemma_valid_6} supplies a coupling such that
\begin{align}\label{eqLemma_valid_1}
\abs{\nul(\vA''')\vecone\cE^+-\nul(\vA_{n+1,\vM^+})}\vecone\cE^+\setminus(\cU\cap\cW)
	&\leq2\sum_{i\geq1}i(\DELTA_i+\GAMMA_i)\vecone\cE^+\setminus(\cU\cap\cW).
\end{align}
Hence, it suffices to show that
\begin{align}\label{eqLemma_valid_2}
\Erw\brk{\sum_{i\geq1}i(\DELTA_i+\GAMMA_i)\vecone\cE^+\setminus(\cU\cap\cW)}=o_n(1).
\end{align}
Indeed, in light of Claim~\ref{Claim_Q123} we merely need to estimate
$\sum_{i\geq1}i(\DELTA_i+\GAMMA_i)\vecone\cE^+\cap\cW\setminus \cU$.
But since on $\cE^+\cap\cW$ we have $\sum_{i\geq1}i(\DELTA_i+\GAMMA_i)\leq\ln^4n$,  Claim~\ref{Claim_Lemma_valid_4} yields
\begin{align}\label{eqLemma_valid_3}
\Erw\brk{\sum_{i\geq1}i(\DELTA_i+\GAMMA_i)\vecone\cE^+\cap\cW\setminus \cU}
	\leq\bc{1-\pr\brk{\cU\mid\cE^+\cap\cW}}\ln^4n=o_n(1).
\end{align}
Finally, the assertion follows from Claim~\ref{Claim_Q123} and \eqref{eqLemma_valid_1}--\eqref{eqLemma_valid_3}.
\end{proof}

\section{Proof of \Thm~\ref{thm:2core}}\label{Sec_thm:2core}\label{sec:2core}

 \noindent
We describe how to extend the proof of~\cite{molloy2005cores} to $\bfG$. \fixxx{First, we will work on $G=\bfG_{0,n}$ (defined in Section~\ref{Sec_outline1}), the configuration model for $\bfG$. By Lemma~\ref{Lemma_simple}, properties that holds with probability $1-o_n(1)$ for $G$ also hold with probability $1-o_n(1)$ for $\bfG$. Second,} using the terminology in~\cite{molloy2005cores}, variable nodes in $G$ are called vertices, and each check node corresponds to a hyperedge in the following sense: if $f_a$ is a check node adjacent with variable nodes $\{v_{a_1},\ldots, v_{a_h}\}$ for some $h\ge 1$, then the set of vertices $\{v_{a_1},\ldots, v_{a_h}\}$ is called a hyperedge. A check node with size 0 corresponds to an isolated hyperedge with size 0, i.e.\ this hyperedge does not contain any vertex. 

\fixx{Suppose $\pr(\bfd\ge 2)>0$ as otherwise the theorem holds trivially by Remark~\ref{remark:specialCases}(b).}
\fixxx{We first prove Theorem~\ref{thm:2core} in the case $\bfk\ge 1$.} Consider the parallel stripping process where all vertices of degree less than 2 are deleted in each step, together with the hyperedges (if any) incident with them. Take a random vertex $v\in[n]$.  Let $\lambda_t$ be the probability that $v$ survives after $t$ iterations of the stripping process. It is easy to see that $\lambda_t$ is monotonically non-increasing and thus $\lambda=\lim_{t\to\infty} \lambda_t$ exists. For any vertex $u\in [n]$, let $\partial^j(u)$ denote the set of vertices of distance $j$ from $u$. Recall that there exists a constant $\sigma>0$ such that $\ex\bfd^{2+\sigma}<\infty$ and $\ex\bfk^{2+\sigma}<\infty$ by our assumptions on $\bfd$ and $\bfk$. We claim that
\begin{claim} \label{c:neighbourhood}
	With high probability, the maximum degree and the maximum size of hyperedges in $G$ \fixxx{are} at most \newline
	$(n\log n)^{1/(2+\sigma)}$, and for every $u\in[n]$ and for all fixed $R$, $|\cup_{j\le R}\partial^j(u)|=O_n(n^{1/(2+\sigma)}\log^2 n)$. 
\end{claim}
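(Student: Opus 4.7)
The plan is to establish the two halves of the claim separately. For the maximum degree and maximum hyperedge size, set $D:=(n\log n)^{1/(2+\sigma)}$. Markov's inequality applied to the $(2+\sigma)$-th moment gives
\[
 \pr[\vd>D]\le \frac{\Erw[\vd^{2+\sigma}]}{D^{2+\sigma}}=O\!\left(\frac{1}{n\log n}\right),
\]
and analogously $\pr[\vk>D]=O(1/(n\log n))$. A union bound over the $n$ variable nodes and the $\vm=\Theta(n)$ check nodes (whose count is concentrated via \Lem~\ref{Lemma_sums}) then furnishes the first assertion with probability $1-o(1)$.

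For the neighborhood bound I would condition on the event of the previous paragraph and, using the configuration-model description of $\bfG$, explore the breadth-first neighborhood of $u$ layer by layer. Write $n_j:=|\partial^j(u)|$. Then $n_1\le\vd_u\le D$ holds deterministically on our event, and for $j\ge 1$ the vertices discovered at depth $j+1$ satisfy the recursive bound $n_{j+1}\le\sum_{i=1}^{n_j}(Y_i-1)$, where each $Y_i$ is distributed, up to negligible total-variation error from the conditioning, as a size-biased $\hat\vk$ or $\hat\vd$ in the sense of~\eqref{eqSizeBiasd}. The $(2+\sigma)$-moment hypothesis on $\vd,\vk$ translates through~\eqref{eqSizeBiasd} into $\Erw[\hat\vd^{1+\sigma}]+\Erw[\hat\vk^{1+\sigma}]<\infty$, and the first part of the claim supplies the deterministic truncation $Y_i\le D$. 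A Bernstein-type inequality applied to the $n_j$ bounded iid summands $Y_i$, whose truncated second moment satisfies $\Erw[Y_i^2]\le D^{1-\sigma}\Erw[Y_i^{1+\sigma}]=O(D^{1-\sigma})$, then yields $n_{j+1}\le C\,n_j\log^{2}n$ with probability $1-n^{-\omega(1)}$. Iterating this bound over the $R$ fixed levels and summing gives
\[
 \Big|\bigcup_{j\le R}\partial^j(u)\Big|\;\le\;R\,D\,(C\log^{2}n)^{R-1}\;=\;O_n\!\bigl(n^{1/(2+\sigma)}\log^{2}n\bigr),
\]
where for each fixed $R$ the $R$-dependent polylog factor is absorbed into the $O_n$-constant. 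A final union bound over the $n$ choices of $u$ completes the argument.

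The principal technical obstacle is the concentration step: the bare $(1+\sigma)$-moment of $\hat\vd,\hat\vk$ alone only gives polynomial Markov tails of order $n_j/t^{1+\sigma}$, which is too weak to survive the $n$-fold union bound over $u$. The crucial observation is that the first part of the claim furnishes a deterministic truncation of each $Y_i$ at the scale $D$, turning a sum of heavy-tailed random variables into a sum of bounded iid variables to which Bernstein's inequality applies and produces super-polynomial tails. Without this truncation, iterating the recursive bound incurs losses at each of the $R$ levels that cannot be cleaned up at the end.
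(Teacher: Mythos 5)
Your first half (Markov's inequality on the $(2+\sigma)$-th moment plus a union bound over the $n$ variables and $\Theta(n)$ checks) is exactly the paper's argument, and your overall plan for the second half---a layer-by-layer exploration driven by size-biased degrees, made tractable by the deterministic truncation at $D=(n\log n)^{1/(2+\sigma)}$ supplied by the first half---is also the paper's. The gap is in the precise form of your per-level bound. You claim $n_{j+1}\le C\,n_j\log^2 n$ with failure probability $n^{-\omega(1)}$ via Bernstein, but for small layers this cannot be established: when $n_j=O(1)$ the deviation you need is only $t\asymp\log^2 n$, the summands are bounded by $D$ with truncated variance $O(D^{1-\sigma})$, so the Bernstein exponent is of order $t/D=\log^2 n/(n\log n)^{1/(2+\sigma)}=o(1)$ and the inequality yields nothing. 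Indeed a single size-biased degree exceeds $C\log^2 n$ with probability that is only polylogarithmically small (you control only its $(1+\sigma)$-th moment), which is far too large to survive the union bound over the $n$ choices of $u$. Moreover, even if the recursion were granted, iterating it over $R$ levels gives $n_R\le C^{R-1}D\log^{2(R-1)}n$; the extra $\log^{2(R-2)}n$ factor is not a constant and cannot be ``absorbed into the $O_n$-constant,'' so at best you would prove $O_n(n^{1/(2+\sigma)}\log^{2R}n)$, which is weaker than the stated claim.

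The paper's recursion avoids both problems by keeping the polylogarithm additive rather than multiplicative: with probability at least $1-n^{-2}$ one shows $N_{i+1}(u)\le C'N_i(u)+C'\,n^{1/(2+\sigma)}\log^2 n$, where the constant multiplicative factor comes from the means $\Erw[\hat\vd],\Erw[\hat\vk]=O(1)$ and the additive slack comes from a Chernoff/Bennett bound for the degrees rescaled by $D$ into $[0,1]$: after rescaling, the slack corresponds to a deviation of order $(\log n)^{2-1/(2+\sigma)}=\omega(\log n)$, which gives the $n^{-2}$ tail uniformly in $N_i(u)$, including $N_i(u)=O(1)$. Iterating a constant factor plus this additive term a fixed number $R$ of times keeps the final bound at $O_n(n^{1/(2+\sigma)}\log^2 n)$, exactly as claimed. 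Your argument is repaired by replacing your multiplicative recursion with this additive one; as written, the concentration step fails for small layers and the conclusion falls short of the statement.
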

Let $H_t$ be the subgraph of $G$ obtained after $t$ iterations of the parallel stripping process. Consider Doob's martingale $(\ex(H_t\mid e_1,\ldots, e_j))_{0\le j\le m}$ where random hyperedges are added in the order $e_1,\ldots, e_{m}$ using the configuration model, \fixxx{and $m$ denotes the number of hyperedges in $G$}. By Claim~\ref{c:neighbourhood}, swapping two clones in the configuration model would affect $H_t$ by $O_n(n^{1/(2+\sigma)}\log^2 n)$, as each altered hyperedge can only affect the vertices (if surviving the first $t$-th iteration or not) within its $t$-neighbourhood.  Standard concentration arguments \fixxx{(see, for instance, the proof of~\cite[Theorem 2.19]{Regular})} based on Azuma's inequality (with Lipschitz constant $Cn^{1/(2+\sigma)}\log^2 n$ for some fixed $C>0$) produce that $||H_t|-\lambda_t n|=O_n(n^{(4+\sigma)/(4+2\sigma)}\log^3 n)=o_n(n)$. Next we deduce an expression for $\lambda_t$.  Consider a random hypertree $T$ iteratively built as follows. The root of $T$ is $v$, which is incident to $d_v$ hyperedges of size $\bfk_1,\ldots, \bfk_{d_v}$ where the $\bfk_i$s are i.i.d.\ copies of $\hat\bfk$ where
\begin{equation}\label{hatk}
	\pr(\hat\bfk=j) = \frac{j \pr(\bfk=j)}{k}.
\end{equation}
Then the $i$-th hyperedge is incident to other $\bfk_i-1$ vertices (other than $v$) whose degrees are i.i.d.\ copies of $\hat\bfd$, where 
\begin{equation}\label{hatd}
	\pr(\hat\bfd=j) = \frac{j \pr(\bfd=j)}{d}.
\end{equation}
This builds the first neighbourhood of $v$ in $T$. Iteratively we can build the $r$-neighbourhood of $v$ in $T$ for any fixed $r$. It follows from the following claim that the $r$-neighbourhood of $v$ in $G$ converges in distribution to the $r$-neighbourhood of $T$, as $n\to\infty$, for any fixed $r\ge 1$. This is because when uniformly picking a random variable clone (or check clone), the degree of the corresponding variable node (or check node) has the distribution in~\eqn{hatd} (or~\eqn{hatk}). Let $S$ be a set of vertices in $G$. We say $S$ induces a cycle if there is a closed walk $x_0x_1\ldots x_{\ell}=x_0$ such that all $x_i\in S$, and every pair of consecutive vertices in the walk are contained in a hyperedge in $G$.\smallskip

\begin{claim}~\label{c:cycle}
	With high probability, for all fixed $R\ge 1$, $\cup_{j\le R}\partial^j(v)$ induces no cycles.
\end{claim}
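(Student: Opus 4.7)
The plan is to expose $\cup_{j\le R}\partial^j(v)$ by a breadth-first search (BFS) within the configuration-model description of $\bfG$ and to show that, with high probability, no matching step during this BFS lands in a clone that has already been revealed. Such a \emph{collision} is exactly what would close a hypergraph cycle inside the $R$-neighborhood of $v$, so establishing its absence suffices.

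The BFS is conducted layer by layer: starting from the $\bfd_v$ variable-clones at $v$, match each newly exposed variable-clone to a uniformly random still-unmatched check-clone, and for each newly reached check-node reveal its remaining clones and match them to still-unmatched variable-clones; stop after depth $R$. Let $\bfS_R$ denote the total number of clones exposed. The key step is a tightness claim for $\bfS_R$: using the standard coupling under which the BFS agrees with the random hypertree $T$ of \eqref{hatk}--\eqref{hatd} up to the first collision, $\bfS_R$ is stochastically dominated by the corresponding population $\bfS_R^T$ in the first $R$ generations of $T$. Since the moment assumption $\Erw\bfd^{2+\sigma}+\Erw\bfk^{2+\sigma}<\infty$ in particular yields $\Erw\bfd^2+\Erw\bfk^2<\infty$, and hence $\Erw\hat\bfd+\Erw\hat\bfk<\infty$, the expectation $\Erw\bfS_R^T$ is a finite polynomial in $d,k,\Erw\hat\bfd,\Erw\hat\bfk$. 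Therefore for every $\epsilon>0$ one can pick $K=K(R,\epsilon)$ with $\pr[\bfS_R^T>K]<\epsilon$ uniformly in $n$.

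The second step is a union bound on collisions. Conditional on $\{\bfS_R\le K\}$, each of the at most $K$ matching steps selects a uniformly random clone from a pool of $\Theta_n(n)$ unmatched clones---the denominator is $\Theta_n(n)$ by \Lem~\ref{Lemma_sums} applied to $\sum_{i=1}^n\bfd_i$ and $\sum_{i=1}^{\bfm}\bfk_i$---so the chance of landing in one of the at most $K$ already-exposed clones is $O_n(K/n)$. Summing over the $K$ matching steps yields collision probability $O_n(K^2/n)=o_n(1)$. Combining with the tightness step gives
\[
\pr\!\left[\cup_{j\le R}\partial^j(v)\text{ induces a cycle}\right]\le \pr[\bfS_R^T>K]+o_n(1)\le \epsilon+o_n(1),
\]
and letting $\epsilon\downarrow 0$ closes the argument.

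The main obstacle I anticipate is setting up the coupling of BFS in $\bfG$ with the hypertree $T$ cleanly: each freshly revealed hyperedge incident to the BFS frontier should have total size distributed as $\hat\bfk$ and contribute $\hat\bfk-1$ new variable children (one of its clones has already been used to enter it), and analogously each freshly discovered variable node should branch with $\hat\bfd-1$ new check children. Once these size-biased statements are justified by the principle of deferred decisions, the stochastic domination $\bfS_R\preceq\bfS_R^T$ on the no-collision event is immediate, and the remaining estimates above are elementary.
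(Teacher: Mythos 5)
Your proposal is correct and follows essentially the same route as the paper: a tightness step showing the $R$-neighbourhood is bounded except with probability $\eps$ (via the size-biased branching/hypertree approximation with $\hat\bfd,\hat\bfk$), followed by a first-moment/union bound showing that, on this bounded event, a collision closing a cycle has probability $o_n(1)$, and finally letting $\eps\to0$. The paper merely phrases the collision step as an explicit expected count of the two pair types (two vertices sharing a new hyperedge, two hyperedges sharing a new vertex) rather than as clone-collisions in a truncated BFS, which is a cosmetic difference.
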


(The proofs of Claim $6.1$ and $6.2$ can be found at the end of this section.)
If $v$ survives $t$ iterations of the stripping process then at least two hyperedges incident with $v$ survives after $t$ iterations of the stripping process. 
\fixx{On the other hand, let $x$ be a hyperedge of size at least 1 and let $u$ be a vertex incident with $x$.} Let $\rho_t$ denote the probability that $u$ is incident with at least one hyperedge other than $x$ which survives after $t$ iterations of the stripping process. \fixxx{We will deduce a recursion for $\rho_t$ and then deduce $\lambda_t$ from $\rho_t$.} Note that the degree of $u$ follows the distribution from~\eqn{hatd}. Then, ignoring an $o_n(1)$ error accounting for the probability of the complement of the events in Claims~\ref{c:neighbourhood} and~\ref{c:cycle}:
\[
\rho_0=1
\]
and
\begin{align*}
	\rho_{t+1}&=\sum_{j\ge 2} \frac{j\pr(\bfd=j)}{d} \sum_{S\subseteq[j-1], |S|\ge 1} \sum_{k_1,\ldots,k_{j-1}\fixxx{\ge 1}} \prod_{i=1}^{j-1} \pr(\fixxx{\hat\bfk_i=k_i}) \prod_{i\in S} \rho_t^{k_i-1} \prod_{i\in [j-1] \setminus S} (1-\rho_t)^{k_i-1}\\  
	&=\sum_{j\ge 2} \frac{j\pr(\bfd=j)}{d} \sum_{h\ge 1} \binom{j-1}{h} \left(\sum_{\fixxx{k'\ge 1}}\pr(\fixxx{\hat\bfk=k'}) \rho_t^{k'-1} \right)^h \left(\sum_{\fixxx{k'\ge 1}}\pr(\fixxx{\hat\bfk=k'}) (1-\rho_t)^{k'-1}\right)^{j-1-h}\\
	&=\sum_{j\ge 2} \frac{j\pr(\bfd=j)}{d} \sum_{h\ge 1}\binom{j-1}{h} \left(\frac{K'(\rho_t)}{k}\right)^h\left(1-\frac{K'(\rho_t)}{k}\right)^{j-1-h}\\
	&=\sum_{j\ge 2} \frac{j\pr(\bfd=j)}{d}\left(1-\left(1-\frac{K'(\rho_t)}{k}\right)^{j-1}\right)=1-\frac{D'(1-\frac{K'(\rho_t)}{k})}{d},
\end{align*}
noting that 
\[
\ex \rho^{\hat\bfk-1}\fixxx{=\sum_{k'\ge 1} \pr(\hat\bfk=k')\rho^{k'-1}}=\sum_{j\ge 1} \frac{j\pr(\bfk=j)}{k}\rho^{j-1}=\frac{K'(\rho)}{k}.
\]
Consequently,
\begin{align*}
	\lambda_t &= \sum_{j\ge 2} \pr(\bfd=j) \sum_{h\ge 2}\binom{j}{h} \left(\sum_{\fixxx{k'\ge 1}}\pr(\fixxx{\hat\bfk=k'}) \rho_t^{k'-1} \right)^h \left(\fixxx{1-\sum_{k'\ge 1}\pr(\hat\bfk=k') \rho_t^{k'-1}}\right)^{j-h}\\
	&= \sum_{j\ge 2} \pr(\bfd=j) \sum_{h\ge 2}\binom{j}{h} \left(\ex \rho_t^{\hat\bfk-1}\right)^h \left(1-\ex \rho_t^{\hat\bfk-1}\right)^{j-h}\\
	&= \sum_{j\ge 2} \pr(\fixxx{\bfd}=j) \left(1-\left(\fixxx{1-}\frac{K'(\rho_t)}{k}\right)^j-j\frac{K'(\rho_t)}{k}\left(1-\frac{K'(\rho_t)}{k}\right)^{j-1}\right)\\
	&=\fixxx{\pr(\bfd\ge 2)- \left(D\left(1-\frac{K'(\rho_t)}{k}\right) - \pr(\bfd=0)-\pr(\bfd=1)\left(\fixxx{1-}\frac{K'(\rho_t)}{k}\right)\right) -\left(\frac{K'(\rho_t)}{k}D'\left(\fixxx{1-}\frac{K'(\rho_t)}{k}\right)- \pr(\bfd=1) \frac{K'(\rho_t)}{k} \right)}\\
	&=1-D\left(1-\frac{K'(\rho_t)}{k}\right)-\frac{K'(\rho_t)}{k}D'\left(\fixxx{1-}\frac{K'(\rho_t)}{k}\right).
\end{align*}
Let $g(x)=1-\frac{1}{d}D'(1-\frac{K'(x)}{k})$. Then $g'(x)=\frac{1}{dk}D''(1-\frac{K'(x)}{k})K''(x)$ which is non-negative over $[0,1]$. We also have $\phi(x)=g(x)-x$, where $\phi$ is given in~\eqn{def:ff}.
Since $\phi(1)=-D'(0)/d\le 0$,  $\phi'(\rho)<0$ by the hypothesis, and $g(x)$ is nondecreasing in $[0,1]$, it follows that \fixxx{$|g'(\rho)|<1$ and thus} $\rho$ is an attractive fix point of $x=g(x)$. 
As $\rho_0=1$. It follows that $\rho_t\to\rho$ as $t\to \infty$. Consequently, for every $\hat\eps>0$ there is sufficiently large $I$ such that $|\rho_t-\rho|<\hat\eps$. Hence, after $I$ iterations of the parallel stripping process, the number of vertices remaining is $(\lambda+o(1)) n+O_n(\hat\eps n)$ where 
\begin{equation}
	\lambda=1-D\left(1-\frac{K'(\rho)}{k}\right)-\frac{K'(\rho)}{k}D'\left(\fixxx{1-}\frac{K'(\rho)}{k}\right).\label{lambda2}
\end{equation}
If \fixx{$\rho=0$ then $\lambda=0$ by Remark~\ref{remark:specialCases}(c).} Our theorem for $\bfn^*$ follows by letting $I\to\infty$. \fixxx{Since $\bfk\ge 1$, $K(0)=0$ and thus $\bfm^*/n=\frac{d}{k}K(0)+o_{\hat\eps,n}(1) = o_{\hat\eps,n}(1) $. This establishes~\eqn{eqthm:2core} when $\rho=0$.}


Suppose $\rho>0$. It is sufficient to show that the 2-core is obtained after further removing  $O_n(\hat\eps n)$ vertices, following the same approach as~\cite[Lemma 4]{molloy2005cores}. We briefly sketch it. Following the same argument as before, the probability that a random vertex has degree $j\ge 2$ after $I$ iterations of the stripping process is
\[
\sum_{i\ge \fixxx{j}} \pr(\bfd=i) \binom{i}{j} (\ex \rho_I^{\hat\bfk-1})^j (1-\ex \rho_I^{\hat\bfk-1})^{i-j}= \sum_{i\ge \fixxx{j}} \pr(\bfd=i) \binom{i}{j}\left(\frac{K'(\rho_I)}{k}\right)^j\left(1-\frac{K'(\rho_I)}{k}\right)^{i-j}.
\] 
Similarly, the probability of a uniformly random hyperedge in $G$ having size $j\ge 1$ and surviving the first $I$ iterations of the stripping process is
\[
\pr(\bfk=j) \rho_I^j.
\]
The number of vertices with degree less than 2 after $I$ iterations is bounded by $(\lambda_I-\lambda_{I+1}) n +o_n(n)$. Hence, by choosing $I$ sufficiently large, we can make these quantities arbitrarily close to those with $\rho_I$ replaced by $\rho$. Now standard concentration arguments apply to show that the number of degree $j\ge 2$ vertices is 
$\gamma_j n+O_n(\hat\eps n)$, where
\[
\gamma_j=\sum_{i=0}^{\infty} \pr(\bfd=i) \binom{i}{j}\left(\frac{K'( \rho)}{k}\right)^j\left(1-\frac{K'( \rho)}{k}\right)^{i-j},
\] 
the number of vertices of degree less than 2 is $O_n(\hat\eps n)$. \fixxx{Similarly, the number of remaining hyperedges of size $j$ is 
	$
	\pr(\bfk=j) \rho^j m +O_n(\hat\eps n)
	$,
	and the total degree of the remaining vertices is
	\begin{equation}
		m\sum_{j\ge 1} j \pr(\bfk=j)  \rho^j +O_n(\hat\eps n)=m\rho K'(\rho)+O_n(\hat\eps n)=(dn/k)\rho K'(\rho)+O_n(\hat\eps n). \label{Dt}
	\end{equation}
}
Note that $\hat\eps$ can be made arbitrarily small by choosing sufficiently large $I$.

Now we remove one hyperedge incident with a vertex with degree 1 at a time. Call this process SLOWSTRIP. Let $G_t$ denote the hypergraph obtained after $t$ steps of SLOWSTRIP and let $X_t$ denote the total degree of the vertices of degree 1 in $G_t$,  Then, \fixxx{for all $t=O_n(\hat\eps n)$ such that $X_t>0$:}
\begin{align*}
	&\ex(X_{t+1}-X_t\mid G_t)\\
	&= -1 + \sum_{j\ge 1} \frac{j\pr(\bfk=j) \rho^j m}{\rho K'(\rho) m} \cdot (j-1) \cdot \frac{2\gamma_2 \fixxx{n}}{\fixxx{(dn/k)\rho K'(\rho)}} +O_n(\hat\eps)\\
	&=-1+\frac{1}{\rho K'(\rho)}\left( \sum_{j\ge 1} j(j-1) \pr(\bfk=j) \rho^j\right) \frac{2\cdot \frac12 (K'(\rho)/k)^2 D''(1-K'(\rho)/k)}{K'(\rho)d\rho/k}+O_n(\hat\eps)\\
	&=-1+\frac{D''(1-K'(\rho)/k)K''(\rho)}{kd}+O_n(\hat\eps).
\end{align*}
Note that in the first equation above, $-1$ accounts for the removal of one variable clone $x$ from the set of vertices of degree less than 2. The term $j\pr(\bfk=j) \rho^j m/\rho K'(\rho) m$ approximates the probability that $x$ is contained in a hyperedge of size $j$, up to an $O_n(\hat\eps)$ error. In that case, $j-1$ variable clones that lie in the same hyperedge as $x$ will be removed. For each of these $j-1$ deleted variable clones, if it lies in a variable of degree 2, then it results in one new variable node of degree 1. The probability for that to happen is approximated by \fixxx{$2\gamma_2 n/D_t$, up to an $O_n(\hat\eps)$ error, where $D_t$ denotes the total degree of $G_t$ and by~\eqn{Dt}, $D_t=(dn/k)\rho K'(\rho)+O_n(\hat\eps n)$.
	For the second equation above, note that
	\begin{align*}
		\gamma_2=\sum_{i\ge 2} \pr(\bfd=i) \frac{i(i-1)}{2} \left(\frac{K'(\rho_I)}{k}\right)^2\left(1-\frac{K'(\rho_I)}{k}\right)^{i-2}+O_n(\hat\eps)=\frac12 \left(\frac{K'(\rho)}{k}\right)^2 D''(1-K'(\rho)/k)+O_n(\hat\eps).
	\end{align*}
}
By the assumption that $\fixxx{\phi}'(\rho)<0$ we have 
\[
-1+\frac{D''(1-K'(\rho)/k)K''(\rho)}{kd}<0.
\]
Hence, $ \ex(X_{t+1}-X_t\mid G_t)<-\delta$ for some $\delta>0$, by making $\hat\eps$ sufficiently small (i.e.\ by choosing sufficiently large $I$). Then the standard Azuma inequality~\cite[Lemma 29]{GaoMolloy} (with Lipschitz constant $(n\log n)^{1/(2+\sigma)}$ by Claim~(\ref{c:neighbourhood}) will be sufficient to show that $X_t$ decreases to 0 after $O_n(\hat\eps n)=o_{\hat\eps,n}(n)$ steps (See details in~\cite[Lemma 4]{molloy2005cores}). The case $\rho>0$ of the theorem follows by 
\[
\lim_{n\to\infty} \frac{\bfm^*}{n} =  \lim_{n\to\infty} \frac{m}{n} \cdot \sum_{j\ge \fixxx{1}} \pr(\bfk=j) \rho^j= \frac{d}{k}K(\rho),
\]  
as desired. \fixxx{This proves~\eqn{eqthm:2core} when $\bfk\ge 1$. 
	
	Suppose now that $p_0=\pr(\bfk=0)>0$.  Let $\bar G$ be the hypergraph obtained from $G$ by deleting all hyperedges with size 0. Let $\bar m$ denote the number of hyperedges in $\bar G$. Then, with probability $1-o_n(1)$, $\bar m\sim (1-p_0)m$. The size of a uniformly random hyperedge in $\bar G$ has the same distribution as $\bar\bfk$, defined by $\bfk$ conditioned on $\bfk\ge 1$. Let $\bar k=\ex \bar\bfk$. Then, $\bar k=k/(1-p_0)$. Let $\bar K(\alpha)$ be the probability generating function of $\bar\bfk$. Then, immediately
	\[
	\bar K(\alpha)=\frac{K(\alpha)-p_0}{1-p_0},\quad \bar K'(\alpha)=\frac{K'(\alpha)}{1-p_0},\quad \bar K''(\alpha)=\frac{K''(\alpha)}{1-p_0}.
	\] 
	Let $\bar\Phi$ be the function obtained from $\Phi$ by replacing $K(\alpha)$, $K'(\alpha)$ and $k$ by $\bar K(\alpha)$, $\bar K'(\alpha)$ and $\bar k$ respectively. It is straightforward to see that the set of stable points of $\Phi$ corresponds to the set of stable points of $\bar\Phi$. Thus, by letting $\bar\rho=\max\{x\in[0,1]: \bar\Phi'(x)=1\}$ it follows then that $\bar\rho=\rho$.
	Applying~\eqn{eqthm:2core} with $\bar \bfk\ge 1$ to $\bar G$,
	\begin{eqnarray*}
		\lim_{n\to\infty} \frac{\bar\bfn^*}{n} &=& 1-D\left(1-\frac{\bar K'(\bar\rho)}{\bar k}\right)-\frac{\bar K'(\bar\rho)}{\bar k}D'\left(1-\frac{\bar K'(\bar\rho)}{\bar k}\right) = 1-D\left(1-\frac{ K'(\rho)}{ k}\right)-\frac{ K'(\rho)}{ k}D'\left(1-\frac{ K'(\rho)}{ k}\right)\\
		\lim_{n\to\infty} \frac{\bar\bfm^*}{n} &=& \frac{d}{\bar k} \bar K(\bar\rho)=\frac{d}{k}(K(\rho)-p_0),
	\end{eqnarray*}
	where $\bar\bfn^*$ and $\bar\bfm^*$ denote the numbers of vertices and hyperedges in $\bar G$. Since $\bfn^*=\bar\bfn^*$ and $\bfm^*=\bar\bfm^*+(1+o_n(1))p_0 m=(1+o_n(1))(\bar\bfm^*+p_0 dn/k)$, as the set of hyperedges of size 0 in $G$ remain in the 2-core of $G$, the equations~\eqn{eqthm:2core} holds as well for the case that $p_0>0$.\qed \medskip
}

\noindent {\em Proof of Claim~\ref{c:neighbourhood}. } Since both $\ex\bfd^{2+\sigma}=O_n(1)$ and $\ex\bfk^{2+\sigma}=O_n(1)$, the probability that $\bfd>(n\log n)^{1/(2+\sigma)}$ or $\bfk>(n\log n)^{1/(2+\sigma)}$ is $O_n(1/n\log n)$. The bound on the maximum degree and maximum size of the hyperedges in $\bfG$ follows by taking the union bound. 

For any $u\in[n]$, let $N_i(u)=|\partial^i(u)|$. We will prove that with high probability for every $u$ and for every fixed $i$, $N_i(u)=O_n(n^{1/(2+\sigma)}\log^2 n)$, which then completes the proof for Claim~\ref{c:neighbourhood}. We prove by induction. Let $d_1,\ldots, d_{N_i(u)}$ denote the degrees of the vertices in $\partial^i(u)$.  Then the number of hyperedges incident with these vertices is bounded by $M:=\sum_{j=1}^{N_i(u)} d_j$. By the construction of $\bfG$, each $M$ is stochastically dominated by $\sum_{j=1}^{N_i(u)} (1+o_n(1)) \hat\bfd_j$ where $\hat\bfd_j$ are i.i.d.\ copies of $\hat\bfd$ whose distribution is given in~\eqn{hatd}. The $o_n(1)$ error is caused by the exposure of $\cup_{j\le i}\partial^j(u)$ which contains $o_n(n)$ vertices by induction. Since $\ex \bfd^{2+\sigma}=O_n(1)$, we have $\hat d :=\ex \hat\bfd =O_n(1)$. Note that $\ex M=\hat d N_i(u)$. Applying the Chernoff bound to the sum of independent $[0,1]$-valued random variables we have
\[
\pr\left(M\ge 2\hat d N_i(u)+ n^{1/(2+\sigma)}\log^2 n \right) = \pr\left(\sum_{j=1}^{N_i(u)} \frac{\hat \bfd_i}{(n\log n)^{1/(2+\sigma)} } \ge \frac{2\hat d N_i(u)}{(n\log n)^{1/(2+\sigma)}} + (\log n)^{(3+\sigma)/(2+\sigma)}\right) <n^{-2}.
\]
Similarly, $N_{i+1}(u)$ is bounded by $\sum_{j=1}^M k_i$, where $k_i$ are the sizes of the hyperedges incident with the vertices in $\partial^i(u)$. Similarly, $\sum_{j=1}^M k_i$ is stochastically dominated by $(1+o_n(1))\sum_{j=1}^M \hat\bfk_j $ where $\hat\bfk_j$ are i.i.d.\ copies of $\hat\bfk$ whose distribution is defined in~\eqn{hatk}. Let $\hat k=\ex\hat\bfk$. Applying the Chernoff bound again we obtain that with probability at least $1-n^{-2}$, $N_{i+1}(u)< 2\hat k M +n^{1/(2+\sigma)}\log^2 n < 4\hat d\hat k N_i(u) +(1+2\hat k)n^{1/(2+\sigma)}\log^2 n$. Apply this recursion inductively and the union bound on the failure probability, we obtain $N_i(u)=O_n(n^{1/(2+\sigma)}\log^2 n)$, as desired. \qed\smallskip

\noindent {\em Proof of Claim~\ref{c:cycle}. } Fix $\eps>0$. Choose $L=L(\eps,r)$  sufficiently large so that the probability that $d_v>L$ is smaller than $\eps$ (note that $v$ is a uniformly random vertex). Given $d_v\le L$. Let $k_1,\ldots, k_{d_v}$ be the sizes of the hyperedges incident to $v$. Similarly to the proof of Claim~\ref{c:neighbourhood},  $k_j$s are approximated by i.i.d.\ copies of $\hat\bfk$ defined in~\eqn{hatk}, up to an $1+o(1)$ multiplicative error. We can assume $L$ is sufficiently large so that with probability at least $1-\eps$, $\sum_{i=1}^{d_v} k_i\le L$. Inductively, we can make $L$ sufficiently large so that $|\partial^i(v)|\le L$ for all $i\le R$.    Let $\E_i$ denote the set of hyperedges incident with vertices in $\partial^i(v)$, but not incident with any in $\partial^{i-1}(v)$. Cycles in $\partial^i(v)$ can appear in two ways: (a) two vertices in $\partial^i(v)$ are incident with the same hyperedge in $\E_i$; (b) two hyperedges in $\E_{i-1}$ are incident with the same vertex in $\partial^i(v)$. We will prove that with high probability, none of the two cases occurs for any fixed $i$. For (a), let $(d_j)_{j\in \partial^i(v)}$ denote the degrees of the vertices in $\partial^i(v)$. The expected number of occurrences of pairs of vertices in (a) is 
\begin{equation}\label{pairs}
	\ex\left(\sum_{j, k\in \partial^i(v)} \binom{d_j}{2} \binom{d_k}{2} \sum_{h\in [\bfm]} \binom{k_h}{2} O_n(n^{-2}) \right)= O_n(n^{-1}) \ex\left(\sum_{j,k\in \partial^i(v)} d_j^2 d_k^2\right).
\end{equation}
Note that $|\partial^j(v)|\le L$ for each $j\le R$. This immediately implies that $d_j\le L$ for all $j\in \partial^i(v)$. Hence, the above probability is $O_n(n^{-1})$.  The probability that $|\partial^i(v)|\le L$ fails is at most $R\eps$ by our choice of $L$. Hence, the probability that (a) fails is at most $R\eps+o_n(1)$.
The treatment of (b) is analogous. Our claim now follows by letting $\eps\to 0$. \qed

\section{Proof of \Thm~\ref{Thm_tight}}\label{Sec_tight} 

\noindent
Recall that
\begin{equation}\label{def:f}
	\phi(\alpha)=1-\alpha-\frac{1}{d}D'\left(1-\frac{K'(\alpha)}{k}\right).
\end{equation}

\fixx{For \Thm~\ref{Thm_tight} and Remark~\ref{remark:condition},} it is sufficient to prove that if condition (i) \fixx{or} (ii) is satisfied then 
(a)
$\max_{\alpha\in[0,1]}\Phi(\alpha)=\max\{\Phi(0),\Phi(\rho)\}$;
and (b) $\phi'(\rho)<0$ unless
\begin{equation}
	\pr(\bfd=1)=0\quad \mbox{and}\quad 2(\ex\bfk-1)\pr(\bfd=2)>\ex \bfd. \label{exception}
\end{equation} 

Since $\Phi(\alpha)$ is continuous on $[0,1]$, the maximum occurs at either 0 or 1 or at a stable point.

\smallskip

\noindent {\em Case A: $\Var(\bfk)=0$.} In this case, $\bfk=k$ always and thus $K(\alpha)=\alpha^k$. 
We must have $k\ge 1$ since otherwise $k=d=0$.  If $k=1$ then $\phi'(\alpha)=-1$ which implies (b) immediately. \fixx{Moreover, $K''(\alpha)=0$ for all $\alpha\in [0,1]$ and thus $\Phi'(\alpha)=(d/k)K''(\alpha)\phi(\alpha)=0$ for all $\alpha\in [0,1]$. This implies (a).}

Next consider the case that $k=2$. Then, $\phi''(\alpha)=-\frac{1}{d}D'''(1-\alpha)< 0$ on $(0,1)$ unless $\bfd\le 2$. Consider the case that $\supp{\bfd}\cap \bbN_{\ge 3}\neq \emptyset$. Then $\phi$ is concave and can have at most 2 roots. Obviously $\alpha=0$ is a root. Let $\rho$ denote the other root if exists. We must have $\phi'(\rho)<0$ by the concavity of $\phi$. Hence, the maximum of $\Phi$ cannot be achieved at 1. Thus, the maximas of $\Phi$ can only be from $\{0,\rho\}$. This verifies (a) and (b). Now assume $\bfd\le 2$. Then $\phi''(\alpha)=0$ on $[0,1]$. Hence $\phi'(\alpha)=\phi'(1)=-1+\pr(\bfd=2)/d<0$ for all $\alpha\in [0,1]$. Thus, $\phi(\alpha)$ is a line with a negative slope and  has exactly one root at $0$ on $[0,1]$. Hence $\rho=0$ and $\phi'(\rho)<0$. This verifies (a) and (b). 

Next we consider the case that $k\ge 3$. We have
\begin{align*}
	\phi(\alpha)&=1-\alpha-\frac{1}{d}D'(1-\alpha^{k-1})\\
	\phi'(\alpha)&=-1+\frac{(k-1)\alpha^{k-2}}{d}D''(1-\alpha^{k-1})\\
	\phi''(\alpha)&=\frac{k-1}{d}\alpha^{k-3}\Big((k-2)D''(1-\alpha^{k-1})-(k-1)D'''(1-\alpha^{k-1})\alpha^{k-1}\Big)\\
	&=\frac{k-1}{d}\alpha^{k-3}\Big((k-2)D''(t)-(k-1)D'''(t)(1-t)\Big) \quad \mbox{where $t=1-\alpha^{k-1}$}.
\end{align*}
Hence,
\begin{align}
	&& \phi(0)=0 && \phi(1)=-\frac{1}{d}D'(0)\le 0 && \label{f}\\
	&& \phi'(0)=-1 && \phi'(1)=-1+\frac{k-1}{d}D''(0).&& \label{f'}
\end{align}
Recall that $\Phi'(\alpha)=\frac{d}{k}K''(\alpha) \phi(\alpha)$. We have $K''(\alpha)> 0$ for all $\alpha\in(0,1]$.  By~\eqn{f} we have $\Phi'(1)\le 0$ and thus the supremum of $\Phi(\alpha)$ can only occur at 0 or a stable point. In all of the following sub-cases, we will prove that $\phi''(\alpha)$ has at most 1 root in $[0,1]$ (except for some trivial cases that we discuss separately). It follows immediately that $\phi$ can have at most three roots on $[0,1]$ including the trivial one at $\alpha=0$. Now we prove that this implies claims (a) and (b).

If $\phi$ has only a trivial root, then so is $\Phi'(\alpha)$. Thus, $\alpha=0$ is the unique maxima of $\Phi(\alpha)$ and $\rho=0$. This verifies (a). As $\phi'(0)=-1$ we immediately have $\phi'(\rho)<0$.

If $\phi$ has two roots, then the larger root is $\rho$. Since $\phi'(0)<0$, in this case, $\phi$ is negative in $(0,\rho)$ and positive in $(\rho,1)$. This is only possible when $\phi(1)=0$ \fixx{by~\eqn{f}}, which requires $\pr(\bfd=1)=0$. In this case, $\rho=1$. 
Next we consider two further cases: (i) $2(k-1)\pr(\bfd=2)>d$ corresponding to $\phi'(1)>0$; (ii) $2(k-1)\pr(\bfd=2)<d$ corresponding to $\phi'(1)<0$. As $\phi$ has only two roots, case (ii) obviously cannot happen. Thus, it means that the only situation that $\phi$ has two roots would be $\pr(\bfd=1)=0$ and $2(k-1)\pr(\bfd=2)>d$, as in~\eqn{exception}. In this situation we are only required to verify (a). Note that $\phi$ is negative in $(0,1)$ as $\rho=1$. It follows then that $\Phi(\alpha)$ is a decreasing function in $(0,1)$. Hence, $\alpha=0$ is the unique maxima, as desired.

If $\phi$ has three roots, then there is a root $\rho^*$ between $0$ and $\rho$. Then $\phi$ is negative in $(0,\rho^*)$ and positive in $(\rho^*,\rho)$. As $K''(\alpha)>0$ for all $\alpha\in (0,1]$, the sign of $\phi$  implies that $\rho^*$ is a local minima and $\rho$ is a local maxima. This verifies (a).  Moreover, as $\phi$ is positive in $(\rho^*,\rho)$ and $\phi(\rho)=0$, $\phi'(\rho)<0$ follows immediately. 

\medskip

\noindent {\em Case A1: $\Var(\bfk)=0$ and $\Var(\bfd)=0$.} In this case $\bfd=d$. Then $D(\alpha)=\alpha^d$. If $d\ge 3$ then
\begin{align*}
	\phi''(\alpha)&=\frac{k-1}{d}\alpha^{k-3}\Big((k-2)d(d-1)t^{d-2}-(k-1)d(d-1)(d-2)t^{d-3}(1-t)\Big)\\
	&=(k-1)(d-1)t^{d-3} \alpha^{k-3}\Big((k-2)t-(k-1)(d-2)(1-t)\Big) \quad \mbox{where $t=1-\alpha^{k-1}$}.
\end{align*}
Obviously, $\phi''(\alpha)$ has a unique root in $[0,1]$. 

If $d=1$ then $\phi'(\alpha)=-1$ and so $\phi$ has only a trivial root at $\alpha=0$; 
If $d=2$ then $\phi''(\alpha)>0$ in $(0,1)$ and so $\phi$ is convex and thus has only a trivial root at $\alpha=0$ by~\eqn{f}. Hence for $d\le 2$, $\rho=0$ and is the unique maxima. Claims (a) and (b) hold trivially.
\smallskip

\noindent {\em Case A2: $\Var(\bfk)=0$ and $\bfd\sim \po_{\ge r}(\lambda)$.} In this case $D(\alpha)=h_r(\lambda\alpha)/h_r(\lambda)$, where
\begin{align}
	h_r(x)&=\sum_{j\ge r}\frac{x^j}{j!} \quad \mbox{for all nonnegative integers $r$}; \label{h1}\\
	h_r(x)&=e^x \quad \mbox{for all negative integers $r$}.\label{h2}
\end{align}
Then, for all integers $t$,
\[
D'(\alpha)=\frac{\lambda h_{r-1}(\lambda \alpha)}{h_r(\lambda )},\  D''(\alpha)=\frac{\lambda ^2h_{r-2}(\lambda \alpha)}{h_r(\lambda )}, \ D'''(\alpha)=\frac{\lambda ^3h_{r-3}(\lambda \alpha)}{h_r(\lambda )}.
\]
Since $\ex \bfd=d$, it requires that $\lambda$ satisfies
\begin{equation}
	D'(1)=\frac{\lambda h_{r-1}(\lambda)}{h_r(\lambda)}=d.\label{lambda}
\end{equation}
Thus,
\[
\phi''(\alpha)=\frac{(k-1)d\alpha^{k-3}}{h_r(\lambda )}\Big((k-2)h_{r-2}(\lambda  t)-(k-1)(1-t)h_{r-3}(\lambda  t)\Big).
\]
Solving $\phi''(\alpha)=0$ yields
\begin{equation}
	\frac{k-1}{k-2}(1-t)= \frac{h_{r-2}(\lambda  t)}{h_{r-3}(\lambda  t)}=1-\frac{h_{r-3}(\lambda  t)-h_{r-2}(\lambda  t)}{h_{r-3}(\lambda t)}. \label{eq:truncate}
\end{equation}
The right hand side above is obviously a constant function if $r\le 2$. If $r\ge 3$, then $h_{r-3}(\lambda t)-h_{r-2}(\lambda t)=(\lambda  t)^{r-3}/(r-3)!$, and  $h_{r-3}(\lambda t)$ is a power series of $\lambda t$ with minimum degree $r-3$. Hence, by dividing $(\lambda t)^{r-3}/(r-3)!$ from both the numerator and the denominator, we immediately get that the right hand side of~\eqn{eq:truncate} is an increasing function. However the left hand side of~\eqn{eq:truncate} is a decreasing function. Hence~\eqn{eq:truncate} has at most one solution, implying that $\phi''(\alpha)$ has at most one root. \smallskip

\noindent {\em Case B: $\bfk\sim \po_{\ge s}(\gamma)$.} We must have $\gamma$ satisfy
\[
\frac{\gamma h_{s-1}(\gamma)}{h_s(\gamma)}=k,
\]
so that $\ex\bfk=k$. Here $k>s$ is required (to guarantee the existence of $\gamma$ if $s\ge 1$, and to avoid triviality if $s=0$). Now we have $K(\alpha)=h_s(\gamma\alpha)/h_s(\gamma)$, where $h_s$ is defined as in~\eqn{h1} and~\eqn{h2}.
Thus, 
\begin{align*}
	\phi(\alpha)&=1-\alpha-\frac{1}{d}D'\left(1-\frac{h_{s-1}(\gamma \alpha)}{h_{s-1}(\gamma)}\right)\\
	\phi'(\alpha)&=-1+\frac{\gamma h_{s-2}(\gamma\alpha)}{d h_{s-1}(\gamma)}D''\left(1-\frac{h_{s-1}(\gamma \alpha)}{h_{s-1}(\gamma)}\right)\\
	\phi''(\alpha)&=\frac{\gamma^2}{d h_{s-1}(\gamma)}\left(h_{s-3}(\gamma \alpha)D''\left(1-\frac{h_{s-1}(\gamma \alpha)}{h_{s-1}(\gamma)}\right) - \frac{h_{s-2}(\gamma \alpha)^2}{h_{s-1}(\gamma)} D'''\left(1-\frac{h_{s-1}(\gamma \alpha)}{h_{s-1}(\gamma)}\right) \right).
\end{align*}
Hence,
\begin{align}
	&& \phi(0)=0 && \phi(1)=-\frac{1}{d}D'(0)\le 0 && \label{f2}\\
	&& \phi'(0)=-1 && \phi'(1)=-1+\frac{\gamma h_{s-2}(\gamma)}{d h_{s-1}(\gamma)}D''(0).&& \label{f'2}
\end{align}

As before, we will prove that $\phi''(\alpha)$ has at most 1 root in $[0,1]$ (except for some trivial cases that will be discussed separately), which is sufficient to ensure (a) and (b).

\medskip

\noindent {\em Case B1: $\bfk\sim \po_{\ge s}(\gamma)$ and $\Var(\bfd)=0$.} In this case $\bfd=d$. Then $D(\alpha)=\alpha^d$. If $d\ge 3$ then
solving $\phi''(\alpha)=0$ yields
\begin{equation}
	\frac{d-2}{h_{s-1}(\gamma)} \cdot h_{s-2}(\gamma\alpha) = \left(1-\frac{h_{s-1}(\gamma \alpha)}{h_{s-1}(\gamma)}\right) \frac{h_{s-3}(\gamma\alpha)}{h_{s-2}(\gamma\alpha)}. \label{B1}
\end{equation}
On the right hand side above,
$1-h_{s-1}(\gamma \alpha)/h_{s-1}(\gamma)\ge 0$ and is a decreasing function of $\alpha$. 
We also have
\[
\frac{h_{s-3}(\gamma\alpha)}{h_{s-2}(\gamma\alpha)}=\frac{h_{s-3}(\gamma\alpha)}{h_{s-3}(\gamma\alpha)-(\gamma\alpha)^{s-3}/(s-3)!}=\left(1-\frac{(\gamma\alpha)^{s-3}/(s-3)!}{h_{s-3}(\gamma\alpha)}\right)^{-1},
\]
which is positive and a decreasing function of $\alpha$ if $s\ge 3$, and is equal to 1 if $s\le 2$. Hence, the left hand side of~\eqn{B1} is an increasing function whereas the right hand side is a decreasing function. Hence $\phi''(\alpha)$ has at most one root.

If $d\le 2$ the same argument as in Case A1 shows that claims (a) and (b) hold.
\smallskip

\noindent {\em Case B2: $\bfk\sim \po_{\ge s}(\gamma)$ and $\bfd\sim \po_{\ge r}(\lambda)$.} In this case $D(\alpha)=h_r(\lambda\alpha)/h_r(\lambda)$, and $\lambda$ necessarily satisfies~\eqn{lambda}.
Then solving $\phi''(\alpha)=0$ yields
\[
\frac{\lambda}{h_{s-1}(\gamma)} h_{s-2}(\gamma\alpha) = \frac{h_{s-3}(\gamma\alpha)}{h_{s-2}(\gamma\alpha)} \cdot  \frac{h_{r-2}(\lambda(1-h_{s-1}(\gamma \alpha)/h_{s-1}(\gamma)))}{h_{r-3}(\lambda(1-h_{s-1}(\gamma \alpha)/h_{s-1}(\gamma)))}
\]
The left hand side is an increasing function whereas the right hand side is the product of two functions, both of which are either equal to 1 or a positive decreasing function. Thus, $\phi''(\alpha)$ has at most one root.

\subsection*{Acknowledgment}
We thank David Saad for a helpful conversation and Guilhem Semerjian for bringing~\cite{Lelarge} to our attention. We thank Haodong Zhu for pointing out a small omission in Lemma \ref{Lemma_conc}.

\begin{appendix}

\section{Proof of \Lem~\ref{Lemma_sums}}\label{Sec_sums}

\noindent
Since $\Erw[\vec\lambda^r]<\infty$, the event $\cM=\cbc{\max_{i\in[s]}\vec\lambda_i\leq n/\ln^9n}$ has probability
\begin{align}\label{eqLemma_sums0}
\pr\brk{\cM}=1-o_n(1/n).
\end{align}
Moreover, fixing a small enough $\eta=\eta(\delta)>0$ and a large enough $L=L(\eta)>0$ and setting $Q_j=\sum_{i\in[s]}\vecone\{\lambda_i=j\}$, we obtain from the Chernoff bound that 
$\pr\brk{\forall j\leq L:|Q_j-s\pr\brk{\vec\lambda=j}|>\sqrt n\ln n}=o_n(1/n)$.
Hence, by Bayes' rule,
\begin{align}\label{eqLemma_sums1}
\pr\brk{\exists j\leq L:|Q_j-s\pr\brk{\vec\lambda=j}|>\sqrt n\ln n\mid\cM}&=o_n(1/n).
\end{align}
In addition, let $\cH=\cbc{h\in\NN:(1+\eta)^{h-1}L\leq n/\ln^9n}$ and for $h\in\cH$ let
\begin{align*}
R_h&=\sum_{j\geq1}Q_j\vecone\{L(1+\eta)^{h-1}<j\leq L(1+\eta)^{h}\wedge n/\ln^9n\},
\end{align*}
\begin{align*}
\bar R_h&=s\sum_{j\geq1}\pr\brk{\vec\lambda=j}\vecone\{L(1+\eta)^{h-1}<j\leq L(1+\eta)^{h}\wedge n/\ln^9n\}.
\end{align*}
Then the Chernoff bound and Bayes' rule yield
\begin{align}\label{eqLemma_sums2}
\pr\brk{\exists h\in\cH:\abs{R_h-\bar R_h}>\eta\bar R_h+\ln^2n\mid\cM}&=o_n(1/n).
\end{align}
Finally, given $\cM$ and $|Q_j-s\pr\brk{\vec\lambda=j}|\le\sqrt n\ln n$ for all $j\leq L$ and 
$\abs{R_h-\bar R_h}\leq\eta\bar R_h+\ln^2n$ for all $h\in\cH$, we obtain
\begin{align*}
\frac1s\sum_{i=1}^s\vec\lambda_i&\leq \sum_{j=1}jQ_j/s+\sum_{h\in\cH}(1+\eta)^hLR_h/s\\
	&=o_n(1)+\Erw\brk{\vec\lambda\vecone\{\vec\lambda\leq L\}}
	+\sum_{h\in\cH}(1+\eta)^{h+1}(\bar R_h+(\ln^2n))/s\leq \Erw\brk{\vec\lambda\vecone}+\delta/2+o_n(1).
\end{align*}
Similarly,  $\frac1s\sum_{i=1}^s\vec\lambda_i\geq \Erw\brk{\vec\lambda\vecone}-\delta/2+o_n(1)$.
Thus, the assertion follows from \eqref{eqLemma_sums0}--\eqref{eqLemma_sums2}


\section{Stochastic vs.\ linear independence}\label{Apx_0pinning}

\noindent
A precursor of \Prop~\ref{Prop_Alp} for finite field was obtained in~\cite[\Lem~3.1]{Ayre}.
Instead of dealing with linear independence, that statement dealt with stochastic dependencies.
Formally, given an $m\times n$-matrix $A$ over a finite field $\FF$, let $\mu_A$ be the probability distribution on $\FF^n$ defined by
\begin{align*}
\mu_A(\sigma)&=\vecone\cbc{\sigma\in\ker A}/|\ker A|.
\end{align*}
(This definition is nonsensical over infinite fields for the obvious reason that $|\ker A|\in\{1,\infty\}$.)
Let $\SIGMA=\SIGMA_A\in\FF^n$ denote a sample from $\mu_A$.
The stochastic independence statement reads as follows.

\begin{lemma}[{\cite[\Lem~3.1]{Ayre}}]\label{Lemma_stoch_pinning}
For any $\delta>0$, $\ell>0$ and for any finite field $\FF$ there exists $\cT=\cT(\delta,\ell,\FF)>0$ such that for any matrix $A$ over $\FF$ the following is true.
Choose $\THETA\in[\cT]$ uniformly at random.
Then with probability at least $1-\delta$ the matrix $A[\THETA]$ satisfies
\begin{align}\label{eqLemma_stoch_pinning}
\sum_{\substack{I\subset[n]:|I|=\ell}}\max_{\tau\in\FF^I}\abs{
		\mu_{A[\THETA]}\bc{\cbc{\forall i\in I:\SIGMA_{i}=\tau_i}}-\prod_{i\in I}\mu_{A[\THETA]}\bc{\cbc{\SIGMA_{i}=\tau_i}}}&<\delta n^\ell.
\end{align}
\end{lemma}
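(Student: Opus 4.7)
The plan is to derive Lemma~\ref{Lemma_stoch_pinning} directly from Proposition~\ref{Prop_Alp}, thereby showing that the purely linear-algebraic notion of $(\delta,\ell)$-freeness is a strictly stronger property than the stochastic independence used in \cite{Ayre}. A pleasant byproduct of this route is that it yields a threshold $\cT$ depending only on $\delta$ and $\ell$ and not on the finite field $\FF$, improving the quantitative bound of \cite[\Lem~3.1]{Ayre}.

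First I would invoke Proposition~\ref{Prop_Alp} with parameters $\delta$ and $\ell$ to secure $\cT = \cT(\delta,\ell)$ such that $A[\THETA]$ is $(\delta,\ell)$-free with probability at least $1-\delta$; it then suffices to check that $(\delta,\ell)$-freeness implies~\eqref{eqLemma_stoch_pinning} deterministically. Set $B = A[\THETA]$, and for $I \subset [n]$ of size $\ell$ write $I_f = I \cap \fF(B)$ and $I_n = I \setminus I_f$. The marginal $\mu_B(\SIGMA_i = \tau_i)$ equals $\vecone\{\tau_i=0\}$ for $i \in I_f$ and $|\FF|^{-1}$ for $i \in I_n$, since the projection of $\ker B$ onto an unfrozen coordinate is a non-trivial, hence full, subspace of $\FF$, and $\mu_B$ is uniform on that subspace by linearity. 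The crux is a duality argument: if $I$ is \emph{not} a proper relation of $B$, then $I_n$ is not a relation, which via the standard row-space/kernel duality means the projection $\ker B \to \FF^{I_n}$ is surjective, so the joint distribution of $(\SIGMA_i)_{i \in I_n}$ is uniform on $\FF^{I_n}$. Combining with the frozen coordinates, the joint distribution on $I$ factors \emph{exactly} as the product of marginals, so such $I$ contribute zero to the sum~\eqref{eqLemma_stoch_pinning}. Every remaining term is bounded by $1$, and the number of proper $\ell$-sets is at most $\delta n^\ell$ by freeness, yielding the claim.

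The only mildly delicate step is the duality claim, namely that a subspace $V \subset \FF^{I_n}$ whose annihilator in $\FF^{I_n}$ is trivial must equal $\FF^{I_n}$. I would argue by contrapositive: any $0 \neq z \in V^\perp$ extends by zero to a vector of $\FF^n$ annihilating $\ker B$, hence lying in the row space of $B$; this produces a row combination $y$ with $\supp(yB) = \supp(z)$ a non-empty subset of $I_n$, contradicting that $I_n$ is not a relation of $B$. Everything else is bookkeeping, and I anticipate no further obstacles.
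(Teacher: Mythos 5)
Your proposal is correct and follows essentially the same route as the paper's Appendix~\ref{Apx_0pinning}: apply \Prop~\ref{Prop_Alp} and then show, exactly as in \Lem~\ref{Lemma_linstoch}, that any $I$ which is not a proper relation contributes zero because the projection of $\ker A[\THETA]$ onto the unfrozen coordinates of $I$ is surjective (your annihilator/row-space duality is the same argument the paper runs via a kernel basis matrix), leaving at most $\delta n^\ell$ offending sets each contributing less than one.
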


\noindent
In words, for most sets $I$ of $\ell$ coordinates the joint distribution of the coordinates $(\SIGMA_i)_{i\in I}$ is close to a product distribution in total variation distance.
Furthermore, the number $\THETA$ of rows that we add to $A$ is bounded in terms of $\eps,\ell$ only; i.e., $\THETA$ does not depend on the size $m\times n$ of $A$ or on the matrix $A$ itself.
\Lem~\ref{Lemma_stoch_pinning} and its proof are inspired by the `pinning lemma' from~\cite{CKPZ}.

The following lemma shows that how \Prop~\ref{Prop_Alp} implies \Lem~\ref{Lemma_stoch_pinning}; in a nutshell, the lemma states that linear independence is stronger than stochastic independence.

\begin{lemma}\label{Lemma_linstoch}
Let $A$ be an $m\times n$-matrix over a finite field $\FF$.
Unless $I\subset[n]$ is a proper relation of $A$ we have
\begin{align}\label{eqLemma_linstoch1}
\mu_A\bc{\cbc{\forall i\in I:\SIGMA_i=\tau_i}}&=\prod_{i\in I}\mu_A\bc{\cbc{\SIGMA_i=\tau_i}}&&\mbox{for all }\tau\in\FF^I.
\end{align}
\end{lemma}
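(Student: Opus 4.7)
The plan is to decompose $I$ into its frozen and non-frozen parts, handle each separately, and then verify that the joint distribution of the non-frozen coordinates is the uniform distribution on $\FF^{I\setminus\fF(A)}$. The latter will follow from a standard duality fact: $(\ker A)^\perp$ equals the row span of $A$ over any field (in particular over finite $\FF$).

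First, I would split $I=F\sqcup J$ with $F=I\cap\fF(A)$ and $J=I\setminus\fF(A)$. For $i\in F$, by definition of $\fF(A)$ every $\sigma\in\ker A$ has $\sigma_i=0$, so $\mu_A(\SIGMA_i=\tau_i)=\vecone\{\tau_i=0\}$ and likewise $\mu_A(\forall i\in F:\SIGMA_i=\tau_i)=\prod_{i\in F}\vecone\{\tau_i=0\}$. Hence both sides of \eqref{eqLemma_linstoch1} vanish unless $\tau|_F\equiv 0$, in which case the identity reduces to showing that $(\SIGMA_j)_{j\in J}$ is i.i.d.\ uniform on $\FF$.

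The crux is therefore to prove that the restriction map $\pi_J:\ker A\to\FF^J$, $\sigma\mapsto (\sigma_j)_{j\in J}$, is surjective whenever $J$ is not a relation of $A$. Suppose for contradiction that $\pi_J(\ker A)\subsetneq\FF^J$. Then there is a nonzero $z\in\FF^J$ annihilating every element of $\pi_J(\ker A)$ under the standard inner product. Extending $z$ by zeros on $[n]\setminus J$ yields a nonzero vector $\tilde z\in\FF^n$ with $\tilde z\cdot\sigma=0$ for all $\sigma\in\ker A$, i.e.\ $\tilde z\in(\ker A)^\perp$. Since over a finite (or any) field $(\ker A)^\perp$ coincides with the row span of $A$, we obtain $\tilde z = y A$ for some row vector $y$. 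But then $\emptyset\neq\supp(yA)=\supp(\tilde z)\subset J$, contradicting the hypothesis that $J$ is not a relation of $A$.

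Once surjectivity of $\pi_J$ is established, $\pi_J|_{\ker A}$ is a surjective $\FF$-linear map, hence all its fibres have the same cardinality $|\ker A|/|\FF|^{|J|}$. Since $\mu_A$ is uniform on $\ker A$, the pushforward $(\pi_J)_*\mu_A$ is uniform on $\FF^J$, and so $\mu_A(\forall j\in J:\SIGMA_j=\tau_j)=|\FF|^{-|J|}=\prod_{j\in J}\mu_A(\SIGMA_j=\tau_j)$. Combining this with the frozen-coordinate contribution from $F$ yields \eqref{eqLemma_linstoch1}. The only step that requires genuine content is the duality identity $(\ker A)^\perp=\mathrm{rowspan}(A)$, which holds over any field via the rank-nullity theorem; the rest is bookkeeping.
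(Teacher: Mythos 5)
Your proposal is correct and follows essentially the same route as the paper: reduce to the non-frozen coordinates $J=I\setminus\fF(A)$, show the coordinate projection of $\ker A$ onto $\FF^J$ is surjective by observing that a nonzero annihilating functional would lie in the row span of $A$ (the paper phrases this as full rank of the $I$-rows of a kernel-basis matrix and invokes the same duality in one line), and conclude the product/uniformity statement from surjectivity. The only cosmetic difference is that you make the duality $(\ker A)^\perp=\mathrm{rowspan}(A)$ and the equal-fibre argument explicit, which the paper leaves implicit.
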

\begin{proof}
Since for every $\tau\in\FF^I$ we have
\begin{align*}
\mu_A\bc{\cbc{\forall i\in I:\SIGMA_i=\tau_i}}&=\vecone\cbc{\forall i\in I\cap\fF(A):\tau_i=0}
	\mu_A\bc{\cbc{\forall i\in I\setminus\fF(A):\SIGMA_i=\tau_i}},\\
\prod_{i\in I}\mu_A\bc{\cbc{\SIGMA_i=\tau_i}}&=
	\vecone\cbc{\forall i\in I\cap\fF(A):\tau_i=0}\prod_{i\in I\setminus\fF(A)}\mu_A\bc{\cbc{\SIGMA_i=\tau_i}},
\end{align*}
we may assume that $I\cap\fF(A)=\emptyset$ by simply passing on to $I\setminus\fF(A)$ if necessary.
Hence, the task reduces to proving \eqref{eqLemma_linstoch1} under the assumption  that $I\subset[n]\setminus\fF(A)$ is no relation of $A$.

To prove this statement let $N=\nul(A)$ and suppose that $\xi_1,\ldots,\xi_N\in\FF^n$ form a basis of $\ker A$.
Let $\Xi\in\FF^{n\times N}$ be the matrix with columns $\xi_1,\ldots,\xi_N$ and let $\Xi_1,\ldots,\Xi_N$ signify the rows of $\Xi$.
The homomorphism $z\in\FF^N\to\ker A$, $z\mapsto\Xi z$ maps the uniform distribution on $\FF^N$ to the uniform distribution $\mu_A$ on $\ker A$.
Therefore, to prove \eqref{eqLemma_linstoch1} it suffices to prove that the projection of this homomorphism to the $I$-rows, i.e., the map $z\in\FF^N\mapsto(\Xi_i z)_{i\in I}$ is surjective.
Equivalently, we need to show that
\begin{align}\label{eqLemma_linstoch2}
\rk(\Xi_i)_{i\in I}&=|I|.
\end{align}

Assume for contradiction that \eqref{eqLemma_linstoch2} is violated.
Then there exists a vector $z\in\FF^I\setminus\cbc 0$ such that $\sum_{i\in I}z_i\Xi_i=0$.
This implies that for all $x\in \FF^n$,
\begin{align*}
Ax=0&\quad\Rightarrow\quad\sum_{i\in I}z_ix_i=0.
\end{align*}
As a consequence, there exists a row vector $y$ of length $m$ such that $(yA)_j=\vecone\cbc{i\in I}z_i$ for all $j\in[n]$.
Hence, $\emptyset\neq\supp(yA)\subset I$.
Thus $I$ is a relation of $A$, in contradiction to our assumption that it is not.
\end{proof}

\noindent
Thus, \Lem~\ref{Lemma_stoch_pinning} is an immediate consequence of \Prop~\ref{Prop_Alp} and \Lem~\ref{Lemma_linstoch}.
Indeed, the proof of \Prop~\ref{Prop_Alp} renders the explicit bound $\cT=\lceil 4\ell^3/\delta^4\rceil+1$ on the number of coordinates that need to get pegged.
By comparison, the stochastic approach via the arguments from~\cite{Ayre,Victor} leads to a value of $\cT$ that is exponential in $\ell$ (although it may be possible to improve this estimate via probabilistic arguments).

\section{A self-contained proof of the upper bound on the rank}\label{Sec_interp}

\noindent
The `$\leq$'-inequality in \eqref{eqthm:rank} was previously proved by Lelarge~\cite{Lelarge}, who derived the bound from the Leibniz determinant formula and the formula for the matching number of random bipartite graphs from~\cite{BLS}.
The proof of that formula, however, is far from straightforward.
Therefore, as a point of interest in this section we show that another idea from mathematical physics, the interpolation method from spin glass theory~\cite{FranzLeone,Guerra}, can be harnessed to obtain a self-contained proof of the upper bound on the rank.
The proof uses similar ideas as the proof of the lower bound outlined in \Sec~\ref{Sec_overview}.
Thus, phrased in terms of the nullity, the aim in this section is to show that \whp
	\begin{align}\label{eqthm:rank_upper}
	\nul(\A)/n&\geq \max_{\alpha\in[0,1]}\Phi(\alpha)+o_n(1).
	\end{align}

\subsection{The interpolation method}
The basic idea behind the interpolation method is to construct a family of random matrices $\vA_{\eps}(t)$ parametrised by `time' $t$.
At $t=\vm_{\eps,n}$ we obtain precisely the matrix $\vA_{\eps,n}$.
At the other extreme, $\vA_\eps(0)$ is a block diagonal matrix whose nullity can be read off easily.
To establish the lower bound we will control the change of the nullity with respect to $t$.
By comparison to applications of the interpolation method to other combinatorial problems (e.g., \cite{bayati,CKPZ,FranzLeone,Panchenko}), the construction here is relatively elegant.
In particular, throughout the interpolation we will be dealing with an actual random matrix, rather than some other, more contrived object.

\begin{figure}
\includegraphics[height=4.5cm]{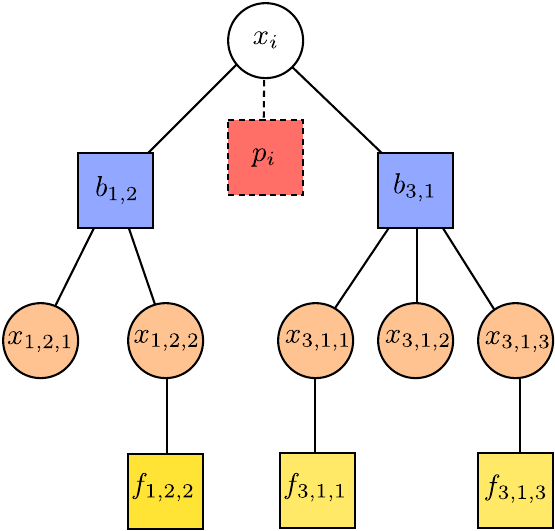}
\hfill\includegraphics[height=4.5cm]{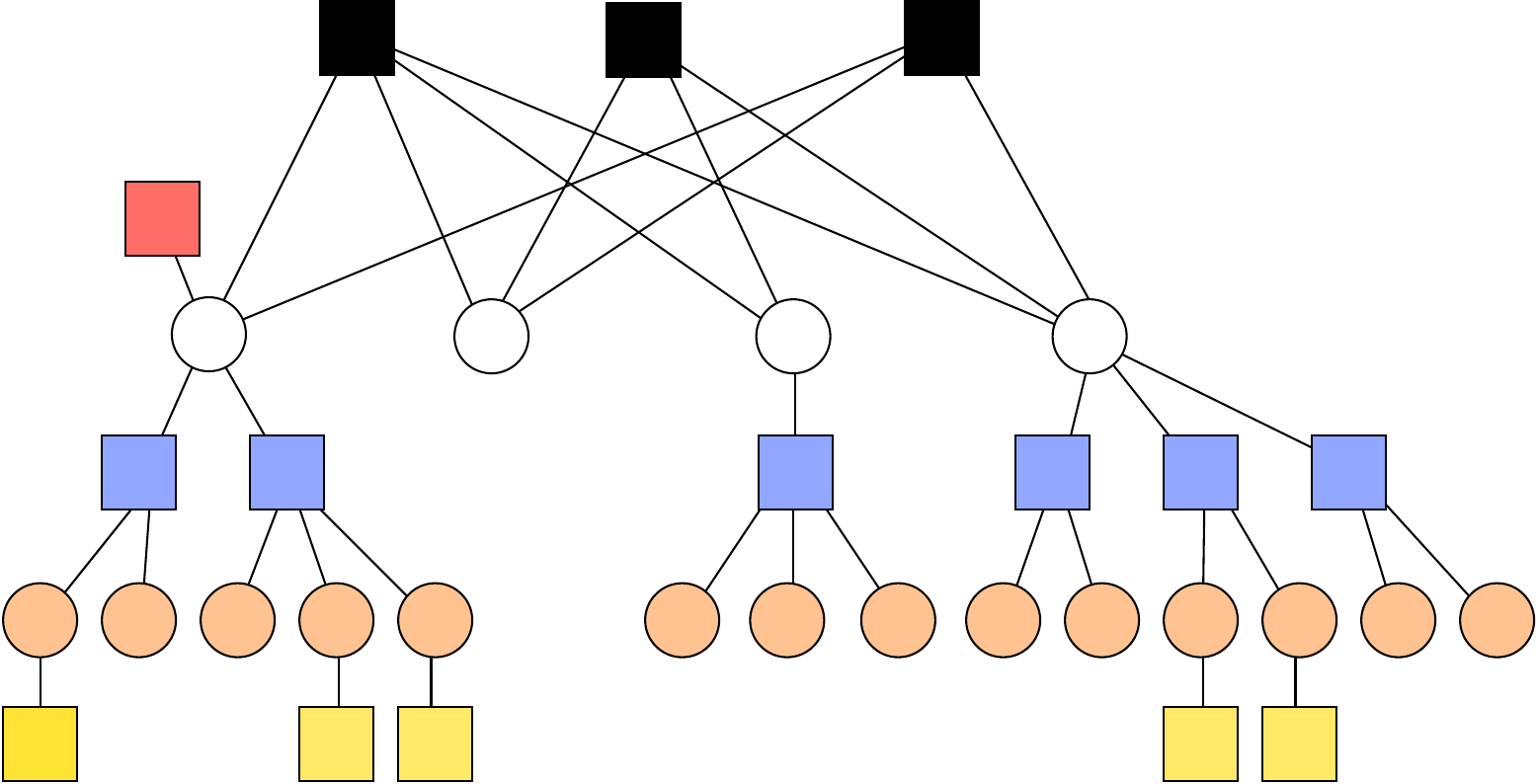}
\caption{Left: sketch of the component of $x_i$ at $t=0$; the check $p_i$ is present iff $i\leq\THETA$.
Right: sketch of the factor graph $\G_\eps(t)$ for $0<t<\vm_{\eps,n}$, with the $a_{i,j}$ coloured black and the other colours as in the left figure.}\label{Fig_interpolation}
\end{figure}

Getting down to the details, apart from $t$ and $\eps$ we need two further parameters: an integer $\cT=\cT(\eps)\geq0$ and a real $\beta\in[0,1]$, which, in order to obtain the optimal bound, we choose such that
\begin{equation}\label{eqbeta}
\Phi(\beta)=\max_{\alpha\in[0,1]}\Phi(\alpha).
\end{equation}	
Further, let $\vm_{\eps,n}\disteq \Po((1-\eps)dn/k)$.
Also let $(\vk_i,\vk_i',\vk_i'')_{i\geq1}$ and $(\vd_i)_{i\geq1}$ be copies of $\vk$ and $\vd$, respectively, mutually independent and independent of $\vm_{\eps,n}$.
Additionally, choose $\THETA\in[\cT]$ uniformly and independently of everything else.
Finally, recall that $(\row_i,\col_i)_{i\geq1}$ are uniformly distributed on the unit interval and independent of all other randomness.

The Tanner graph $\G_\eps(t)$ has variable nodes 
\begin{align*}
x_1,\ldots,x_n&&\mbox{and}&&(x_{i,j,h})_{i\in[\vm_{\eps,n}-t],\,j\in[\vk_i'],\,h\in[\vk_i'-1]}.
\end{align*}
Moreover, let $\cF_t$ be a random set that contains each of the variable nodes $x_{i,j,h}$ with probability $\beta$ independently.
Then the check nodes are 
\begin{align*}
a_1,\ldots,a_{t},&&(b_{i,j})_{i\in [\vm_{\eps,n}-t],\, j\in[\vk_i']},&&p_1,\ldots,p_{\THETA},&&
f_{i,j,h}\quad\mbox{ for each }x_{i,j,h}\in\cF_t.
\end{align*}
To define the edges of the Tanner graph let $\vec\Gamma_\eps(t)$ be a random maximal matching of the complete bipartite graph with vertex sets
\begin{align*}
\bigcup_{i=1}^n\cbc{x_i}\times[\vd_i],\qquad\bc{\bigcup_{i=1}^{t}\cbc{a_i}\times[\vk_i]}
		\cup\cbc{b_{i,j}:i\in[\vm_{\eps,n}-t],\ j\in[\vk_i']}.
\end{align*}
For each matching edge $\{(x_i,s),(a_j,t)\}\in\vec\Gamma_\eps(t)$ insert an edge between $x_i$ and $a_j$ into the Tanner graph and for each $\{(x_i,s),b_{j,h}\}\in\vec\Gamma_\eps(t)$ insert an edge between $x_i$ and $b_{j,h}$.
Thus, $\G_\eps(t)$ may contain multi-edges.
Further, add an edge between $x_i$ and $p_i$ for $i=1,\ldots,\THETA$ and
add an edge between $x_{i,j,h}$ and $b_{i,j}$ for each $h\in[\vk_i'-1]$ as well as an edge between every $x_{i,j,h}\in\cF_t$ and the check $f_{i,j,h}$.
Finally, let $\vA_{\eps}(t)$ be the random matrix induced by $\G_\eps(t)$.
Formally, with the rows indexed by the check nodes and the columns indexed by the variable nodes, we let
\begin{align*}
(\vA_{\eps}(t))_{p_i,x_j}&=\vecone\cbc{i=j}&&(i\in[\THETA],j\in[n]),\\
(\vA_{\eps}(t))_{a_i,x_j}&=\chi_{\row_i,\col_j}\sum_{u=1}^{\vk_i}\sum_{v=1}^{\vd_j}
		\vecone\cbc{\{(x_j,v),(a_i,u)\in\vec\Gamma_\eps(t)\}}
		&&(i\in[t],j\in[n]),\\
(\vA_{\eps}(t))_{b_{h,i},x_j}&=\vecone\cbc{x_j\in\partial_{\G_\eps(t)}b_{h,i}}
	&&(h\in[\vm_{\eps,n}-t],j\in[n]),\\
(\vA_{\eps}(t))_{b_{h,i},x_{u,v,w}}&=\vecone\cbc{h=u,\,i=v}&&
	(h,u\in[\vm_{\eps,n}-t],i\in[\vk_h'],v\in[\vk_u'],\\
	&&&w\in[\vk_u'-1]), \\
(\vA_{\eps}(t))_{f_{h,i,j},x_{u,v,w}}&=\vecone\cbc{(h,i,j)=(u,v,w)}&&
(h,u\in[\vm_{\eps,n}-t],i\in[\vk_h'],j\in[vk_h'-1],\\&&&\qquad v\in[\vk_u'],w\in[\vk_u'-1]).
\end{align*}
All other entries of $\vA_\eps(t)$ are equal to zero.

The semantics is as follows.
The checks $a_i$ will play exactly the same role as before, i.e., each is adjacent to $\vk_i$ of the variable nodes $x_1,\ldots,x_n$ \whp\
By contrast, each $b_{i,j}$ is adjacent to precisely one of the variables $x_1,\ldots,x_n$.
In addition, $b_{i,j}$ is adjacent to the $\vk_i'-1$ variable nodes $x_{i,j,h}$, $h\in[\vk_i'-1]$.
These variable nodes, in turn, are adjacent only to $b_{i,j}$ and to $f_{i,j,h}$ if $x_{i,j,h}\in\cF$.
The checks $f_{i,j,h}$ are unary, i.e., $f_{i,j,h}$ simply forces $x_{i,j,h}$ to take the value zero.
Finally, each of the checks $p_i$ is adjacent to $x_i$ only, i.e., $p_1,\ldots,p_{\THETA}$ just freeze $x_1,\ldots,x_{\THETA}$.

For $t=1$ the Tanner graph contains $\vm_{\eps,n}\disteq\Po((1-\eps)dn/k)$ `real' checks $a_i$ and none of the checks $b_{i,j}$ or $f_{i,j,h}$.
In effect, $\vA_\eps(1)$ is distributed precisely as $\vA_\eps$ from \Sec~\ref{Sec_outline1}.
By contrast, at $t=0$ there are no checks $a_i$ involving several of the variables $x_1,\ldots,x_n$.
As a consequence, the Tanner graph decomposes into $n$ connected components, one for each of the $x_i$.
In fact, each component is a tree comprising $x_i$, some of the checks $b_{j,h}$ and their proprietary variables $x_{j,h,s}$ along with possibly a check $f_{j,h,s}$ that freezes $x_{j,h,s}$ to zero.
{For $i\in[\THETA]$ there is a check $p_i$ freezing $x_i$ to zero as well.}
Thus, $\vA_\eps(0)$ is a block diagonal matrix consisting of $n$ blocks, one for each component.
In effect, the rank of $\vA_\eps(0)$ will be easy to compute.
Finally, for $0<t<1$ we have a blend of the two extremal cases.
There will be some checks $a_i$ and some $b_{i,j}$ with their retainer variables and checks;  see Figure~\ref{Fig_interpolation}.

We are going to trace the nullity of $\vA_\eps(t)$ as $t$ increases.
But since the newly introduced variables $x_{i,j,h}$ inflate the nullity, we subtract the `obvious' correction term to retain the same scale throughout the process.
In addition, we need a correction term to make up for the greater total number of check nodes in $\vA_\eps(0)$ by comparison to $\vA_\eps(\vm_{\eps,n})$.
Thus, let
\begin{align*}
\cN_t&=\nul\vA_{\eps}(t)+|\cF_t|-\sum_{i=1}^{\vm_{\eps,n}-t}\vk_i'(\vk_i'-1),&
\cY_t&=\sum_{i=1}^{\vm_{\eps,n}}(\vk_i-1)(\beta^{\vk_i}-1).
\end{align*}
The following two statements summarise the interpolation argument.
First, we compute $\Erw[\cN_0]$.

\begin{proposition}\label{Lemma_interpol2}
For any fixed $\theta\geq0$ we have
	$n^{-1}\Erw[\cN_0]=D(1-K'(\beta)/k)+dK'(\beta)/k-d+o_{\eps,n}(1).$
\end{proposition}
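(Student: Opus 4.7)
The plan is to exploit the fact that at $t=0$ the Tanner graph $\G_\eps(0)$ decomposes into $n$ disjoint components, one rooted at each variable $x_i$, so that $\vA_\eps(0)$ is block-diagonal and the nullity splits as a sum of per-block contributions. The block of $x_i$ consists of $x_i$ itself, the $\bfd_i^\star$ checks $b_{j,\ell}$ matched to clones of $x_i$ by $\vec\Gamma_\eps(0)$, their $\vk'_{i,s}-1$ ancillary variables, the freezing rows for the ancillaries that fell into $\cF_0$, and the pin $p_i$ if $i\leq\THETA$. Because each check row has a $1$ in the $x_i$-column and $1$'s in its own private ancillaries, a straightforward reduction shows that, writing $F_s$ for the number of ancillaries of the $s$-th check in the block that lie in $\cF_0$, the rank of the block equals
$$
\bfd_i^\star+\sum_s F_s
$$
unless $x_i$ is forced to zero (which happens iff $i\leq\THETA$ or some $s$ has $F_s=\vk'_{i,s}-1$), in which case the rank equals $1+|\{s:F_s<\vk'_{i,s}-1\}|+\sum_s F_s$.

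Plugging these into the definition of $\cN_0$ and using $C_i-\sum_s(\vk'_{i,s}-1)=1$, the per-block contribution telescopes neatly: with $N_i=|\{s:F_s=\vk'_{i,s}-1\}|$ one obtains the clean expression
$$
\cN_0=\sum_{i=1}^n\bigl(N_i-\bfd_i^\star+\vecone\{N_i=0,\ i>\THETA\}\bigr).
$$
The pin contribution is uniformly $O(\cT/n)=o_{\eps,n}(1)$ and can thus be dropped. It remains to compute the expectation of $N_i-\bfd_i^\star+\vecone\{N_i=0\}$.

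Next I will argue that, conditional on $\vd_i=d$, the random variable $\bfd_i^\star$ is within $o_n(1)$ in total variation of $\Bin(d,1-\eps)$, and that the sizes $\vk'_{i,1},\ldots,\vk'_{i,\bfd_i^\star}$ of the checks matched to $x_i$ are, to the same accuracy, independent copies of $\hat\vk$. This is the key technical step; it follows from the principle of deferred decisions applied to the configuration-model matching $\vec\Gamma_\eps(0)$ together with standard concentration (Azuma's inequality or \Lem~\ref{Lemma_sums}) for the empirical distribution of the check sizes, exactly along the lines used in \Sec~\ref{Sec_conc} and the proof of \Lem~\ref{Cor_gamma}. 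The main obstacle is controlling these dependencies when the degrees have only a $(2{+}\sigma)$-th moment, but since $\bfd_i^\star\le\vd_i$ and we only need the first moment $\Erw[\bfd_i^\star]$ and the probability-generating-function identities that follow, the bounded-second-moment input suffices, with a truncation at degrees $\leq\log n$ handling the heavy tail.

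Given this approximation, $\Erw[\beta^{\hat\vk-1}]=K'(\beta)/k$, so setting $q=K'(\beta)/k$ we get $\Erw[N_i\mid\bfd_i^\star]=\bfd_i^\star q$ and $\Pr[N_i=0\mid\bfd_i^\star]=(1-q)^{\bfd_i^\star}+o_n(1)$ by independence of the check sizes. Averaging over $\bfd_i^\star\sim\Bin(\vd_i,1-\eps)$ via the binomial generating function yields
$$
\Erw\brk{q\bfd_i^\star-\bfd_i^\star+(1-q)^{\bfd_i^\star}\,\big|\,\vd_i=d}=(1-\eps)(q-1)d+(1-(1-\eps)q)^d,
$$
and averaging over $\vd_i\sim\vd$ gives $(1-\eps)(q-1)d+D(1-(1-\eps)q)$. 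Summing over the $n$ blocks, dividing by $n$, and sending $\eps\to0$ (using continuity of $D$) produces exactly $D(1-K'(\beta)/k)+dK'(\beta)/k-d$, up to an $o_{\eps,n}(1)$ error absorbing both the pin correction and the total-variation slack from the matching approximation, which is the desired identity.
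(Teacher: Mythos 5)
Your proposal is correct and follows essentially the same route as the paper: exploit the block-diagonal structure of $\vA_\eps(0)$, compute the rank of each block exactly in terms of which checks have all their private ancillaries frozen, approximate the sizes of the checks attached to $x_i$ by i.i.d.\ copies of $\hat\vk$ (so that the frozen-check probability is $K'(\beta)/k$), and sum the per-block expectations. The only deviation is cosmetic: you keep the matching thinning explicit as $\bfd_i^\star\approx\Bin(\vd_i,1-\eps)$ and let $\eps\to0$ at the end, whereas the paper works on a truncation event on which the actual degree equals $\vd_i$ up to an error that is $o_{\eps,n}(1)$; both yield the same generating-function identity.
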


\noindent
The next proposition provides monotonicity.

\begin{proposition}\label{Lemma_interpolation}
For any $\eps>0$ there exists $\cT=\cT(\eps)>0$ such that with probability $1-o_n(1/n)$ uniformly for all $0\leq t<\vm_{\eps,n}$ we have
$\Erw[\cN_{t+1}+\cY_{t+1}\mid\vm_{\eps,n}]\geq\Erw[\cN_{t}+\cY_{t}\mid\vm_{\eps,n}]+o_{\eps,n}(1).$
\end{proposition}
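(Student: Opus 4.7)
The plan is to mirror the Aizenman-Sims-Starr scheme of Section~\ref{Sec_lower}, but now inside the interpolating family. For each fixed $t$, I would couple $\vA_\eps(t)$ and $\vA_\eps(t+1)$ to a common base matrix $\vA_\eps'(t)$, defined as $\vA_\eps(t)$ with the $b$-block at index $i = \vm_{\eps,n}-t$ excised: the $\vk'_i$ checks $b_{i,j}$, the $\vk'_i(\vk'_i-1)$ auxiliary variables $x_{i,j,h}$, and their incident $f$-checks are all deleted. Under this coupling, $\vA_\eps(t+1)$ arises from $\vA_\eps'(t)$ by adjoining a single fresh $a$-check $a_{t+1}$ of target degree $\vk_{t+1}$ attached to $\vk_{t+1}$ random cavities of the base---structurally identical to the perturbation analysed in the proof of \Lem~\ref{Lemma_A''}---while $\vA_\eps(t)$ arises from $\vA_\eps'(t)$ by restoring the $b$-block, a perturbation with bounded expected total degree because $\Erw[\vk^r]<\infty$ for some $r>2$.

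For the freeness step I would choose $\cT = \cT(\eps)$ sufficiently large so that \Prop~\ref{Prop_Alp} (via Remark~\ref{Prop_Alp++}, applied to the $n$ cavity variables) forces $\vA_\eps'(t)$ to be $(\delta,\ell)$-free with probability $1 - o_n(1/n)$, for $\delta = \delta(\eps)$ small and $\ell = \ell(\eps)$ large. Concentration of the degree sequences, in the spirit of \Lem~\ref{Lemma_cavityCount}, guarantees $\Omega_{\eps,n}(n)$ cavities. Let $\vec\alpha_t$ denote the fraction of cavities of $\vA_\eps'(t)$ whose underlying variable is frozen. Iterated application of \Lem~\ref{Cor_free}---first for the $a$-check, exactly as in the proof of \Lem~\ref{Lemma_A''}, and then, more delicately, for the $b$-block by eliminating the independently frozen auxiliaries one check at a time---yields explicit expressions, up to $o_{\eps,n}(1)$, for both expected nullity contributions in terms of $\vec\alpha_t$ and $\beta$. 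Combining these with the deterministic corrections inside $\cN_t$ (which cancel the gain from the newly introduced auxiliaries and adjust for $|\cF_t|$) and with the $\beta$-dependent counterterm $\cY_t$, the expected increment telescopes into a quantity of the form
\[
\Erw[\cN_{t+1}+\cY_{t+1}\mid\vm_{\eps,n}]-\Erw[\cN_t+\cY_t\mid\vm_{\eps,n}] = \Erw[\Phi(\beta)-\Phi(\vec\alpha_t)] + o_{\eps,n}(1),
\]
which is non-negative up to $o_{\eps,n}(1)$ by the maximality of $\beta$ in \eqref{eqbeta}. Uniformity over $0\leq t < \vm_{\eps,n}$ follows from a union bound since $\vm_{\eps,n} = O_{\eps,n}(n)$.

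The principal obstacle will be the detailed reduction of the $b$-block contribution. Each of the $\vk'$ $b$-checks in the block encodes the linear relation $x_{\mathrm{cav}} + \sum_h x_{\mathrm{aux},h} = 0$ between one random cavity of the base and $\vk'-1$ newly added auxiliaries, each pinned to zero independently with probability $\beta$ via its $f$-check. One must case-analyse by the number of frozen auxiliaries per check---deciding whether the reduced $b$-constraint is trivial, freezes the cavity, or merely constrains the remaining free auxiliaries---and invoke the $(\delta,\ell)$-freeness of $\vA_\eps'(t)$ to rule out short proper relations among the cavities to which the block attaches. The interpolation is designed so that, once the dust settles, the collective $b$-block contribution matches precisely the Bethe-ansatz expression responsible for the $\Phi(\beta)$ term, which is what closes the argument at the maximizer.
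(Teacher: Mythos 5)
Your overall scaffolding is the same as the paper's: you pass to the common base matrix $\vA'_\eps(t)$ (one $b$-block excised, equivalently $\vA_\eps(t+1)$ minus $a_{t+1}$), you invoke \Prop~\ref{Prop_Alp}/Remark~\ref{Prop_Alp++} through the cavity distribution (this is exactly \Lem~\ref{Lemma_pinint}) to rule out short proper relations among the attachment points, and you use \Lem~\ref{Cor_free} to express both expected nullity increments through the frozen-cavity fraction $\ALPHA$. Up to that point the proposal is sound, modulo a small imprecision: \Prop~\ref{Prop_Alp} only gives $(\delta,\ell)$-freeness with probability $1-\delta(\eps)$ over the pinning, not $1-o_n(1/n)$; the $1-o_n(1/n)$ in the statement refers to the conditioning on $\vm_{\eps,n}$ and the degree data, and probability $1-\delta$ suffices because the increments are dominated by degrees with bounded second moment.

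The genuine gap is in your closing step. Carrying out the two computations you outline gives $\Erw[\cN_{t+1}-\cN_t'\mid\vA'_\eps(t)]=K(\ALPHA)-1+o_{\eps,n}(1)$ for the added $a$-check and $\Erw[\cN_t-\cN_t'\mid\vA'_\eps(t)]=\Erw[\vk(\ALPHA\beta^{\vk-1}-1)]+o_{\eps,n}(1)$ for the restored $b$-block, while $\Erw[\cY_{t+1}-\cY_t]=\Erw[(\vk-1)(\beta^{\vk}-1)]$. Hence the telescoped increment is $\Erw\brk{\ALPHA^{\vk}-\vk\ALPHA\beta^{\vk-1}+(\vk-1)\beta^{\vk}}+o_{\eps,n}(1)$, an expression involving only $K$ and the random fraction $\ALPHA$. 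It is \emph{not} $\Erw[\Phi(\beta)-\Phi(\ALPHA)]+o_{\eps,n}(1)$: that quantity involves $D$, and the empirical frozen fraction $\ALPHA$ obeys no fixed-point identity that would convert the increment into a difference of Bethe values. So the appeal to the maximality of $\beta$ in \eqref{eqbeta} does not justify monotonicity, and as written the key inequality is unproved. The correct conclusion is pointwise and elementary: $x^k-kxy^{k-1}+(k-1)y^k\geq0$ for all $x,y\in[0,1]$ and $k\geq1$ (convexity of $K$), so the increment is $\geq o_{\eps,n}(1)$ for \emph{every} fixed $\beta\in[0,1]$ — which is also a sanity check against your formula, since the interpolation bound cannot be confined to the maximiser. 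The optimisation over $\beta$ enters only afterwards, in \Cor~\ref{Prop_upper}, when $\Erw[\cN_0]-\Erw[\cY_{\vm_{\eps,n}}]$ is evaluated and identified with $\Phi(\beta)$.
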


\noindent
As an immediate consequence of \Prop s~\ref{Lemma_interpol2} and~\ref{Lemma_interpolation} we obtain the desired lower bound on the nullity.

\begin{corollary}\label{Prop_upper}
We have
	$\frac1n\Erw[\nul(\vA_\eps)]\geq\max_{\alpha\in[0,1]}\Phi(\alpha)+o_{\eps,n}(1).$
\end{corollary}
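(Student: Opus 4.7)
The corollary is an immediate telescoping consequence of the two preceding propositions, so the plan is to verify the boundary condition at $t=\vm_{\eps,n}$, iterate the monotonicity step, and read off the resulting inequality. First I would check that at $t=\vm_{\eps,n}$ the auxiliary construction collapses: the index range $i\in[\vm_{\eps,n}-t]$ which parametrises the variables $x_{i,j,h}$, the checks $b_{i,j}$, and the freezing checks $f_{i,j,h}$ is empty, so $\cF_{\vm_{\eps,n}}=\emptyset$ and the subtracted sum $\sum_{i=1}^{\vm_{\eps,n}-t}\vk_i'(\vk_i'-1)$ vanishes. Hence $\cN_{\vm_{\eps,n}} = \nul\vA_\eps(\vm_{\eps,n})$, and by construction $\vA_\eps(\vm_{\eps,n})$ has the same distribution as the matrix $\vA_{\eps,n}$ from \Sec~\ref{Sec_outline1}, giving $\Erw[\cN_{\vm_{\eps,n}}] = \Erw[\nul\vA_{\eps,n}]$.

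Next, I would apply \Prop~\ref{Lemma_interpolation} for $t=0,1,\ldots,\vm_{\eps,n}-1$ and take expectations:
\begin{align*}
\Erw[\cN_{\vm_{\eps,n}} + \cY_{\vm_{\eps,n}}] \geq \Erw[\cN_0 + \cY_0] + \Erw[\vm_{\eps,n}] \cdot o_{\eps,n}(1).
\end{align*}
Since $\Erw[\vm_{\eps,n}] = (1-\eps)dn/k = O_{\eps,n}(n)$, the cumulative slack is $o_{\eps,n}(n)$, provided the implied constant in \Prop~\ref{Lemma_interpolation} is uniform in $t$ (as it should be, since its proof only involves the local surgery at step $t+1$). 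Substituting the evaluation of $\Erw[\cN_0]/n$ supplied by \Prop~\ref{Lemma_interpol2} and the explicit evaluation of $\Erw[\cY_{\vm_{\eps,n}} - \cY_0]/n$ via Wald's identity, the right-hand side collapses, after a short calculation, to exactly $n\Phi(\beta)+o_{\eps,n}(n)$, with $\Phi$ as defined in \eqref{eqBFE}. Dividing by $n$ and invoking the choice of $\beta$ in \eqref{eqbeta} gives the desired bound.

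Of course, the genuine difficulty sits in the two propositions rather than in the corollary. \Prop~\ref{Lemma_interpol2} should reduce to a local Belief-Propagation-like computation on the disconnected tree components of $\G_\eps(0)$: each variable $x_i$ sits at the root of a depth-two tree of $b$-checks and satellite variables, each satellite frozen independently with probability $\beta$, so the nullity decomposes into a product of bounded-degree moments. The harder input will be \Prop~\ref{Lemma_interpolation}, which asserts that replacing one bundle of $\vk_{t+1}'$ unit-degree $b$-checks (with their tree of satellites and unary freezing checks) by a single degree-$\vk_{t+1}$ check $a_{t+1}$ on the same variables can only raise $\Erw[\cN_t + \cY_t]$. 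The argument must mimic the coupling proof of \Prop~\ref{Prop_coupling}: \Lem~\ref{Cor_free} converts the change in nullity into a rank computation on the surgery, and \Prop~\ref{Prop_Alp} (applied via the pinning of the first $\THETA$ variables) eliminates short proper linear relations among the affected columns. Once these are in place, the per-step difference factorises through the frozen-cavity fraction $\vec\alpha$, and the $\beta$-random freezing in $\cF_t$ is calibrated precisely so that the resulting expression is non-negative — this monotonicity being the single place where the interpolation method really earns its keep.
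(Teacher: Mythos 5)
Your proposal is correct and follows essentially the same route as the paper: identify $\cN_{\vm_{\eps,n}}=\nul\vA_\eps(\vm_{\eps,n})\disteq\nul\vA_{\eps,n}$ at the endpoint, telescope \Prop~\ref{Lemma_interpolation} over the $O_{\eps,n}(n)$ steps to compare with $\Erw[\cN_0+\cY_0]$, substitute \Prop~\ref{Lemma_interpol2} together with the Wald-type evaluation of $\Erw[\cY_{\vm_{\eps,n}}]$, and observe that the resulting expression equals $\Phi(\beta)=\max_{\alpha\in[0,1]}\Phi(\alpha)$ by \eqref{eqbeta}. The only remarks beyond this are cosmetic: your discussion of how the two propositions themselves are proved is not needed for the corollary, and the accumulated error should be phrased as $o_{\eps,n}(n)$ on the high-probability event for $\vm_{\eps,n}$ (as in the paper) rather than literally as $\Erw[\vm_{\eps,n}]\cdot o_{\eps,n}(1)$, but this does not affect correctness.
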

\begin{proof}
\Prop~\ref{Lemma_interpolation} implies that
\begin{align}\nonumber
\Erw[\nul \vA_{\eps,n}]&=\Erw[\nul \vA_\eps(\vm_{\eps,n})]=\Erw[\cN_{\vm_{\eps,n}}]=\Erw[\cN_{\vm_{\eps,n}}+\cY_{\vm_{\eps,n}}]-\Erw[\cY_{\vm_{\eps,n}}]\\
		&\geq \Erw[\cN_0+\cY_0]-\Erw[\cY_{\vm_{\eps,n}}]+o_\eps(n)=\Erw[\cN_0]-\Erw[\cY_{\vm_{\eps,n}}]+o_{\eps,n}(n).\label{eqProp_upper_1}
\end{align}
Further, by \Prop~\ref{Lemma_interpol2},
\begin{align*}
\frac1n\Erw[\cN_0]&=-d+dK'(\beta)/k+D(1-K'(\beta)/k)+o_{\eps,n}(1),\\
\frac1n\Erw[\cY_{\vm_{\eps,n}}]&=\frac dk\bc{\beta K'(\beta)-k+1-K(\beta)}+o_{\eps,n}(1).
\end{align*}
Hence, \eqref{eqbeta} yields
$$n^{-1}(\Erw[\cN_0]-\Erw[\cY_{\vm_{\eps,n}}])=\Phi(\beta)+o_\eps(1)=\max_{\alpha\in[0,1]}\Phi(\alpha)+o_{\eps,n}(1)$$
and the assertion follows from \eqref{eqProp_upper_1}.
\end{proof}

\noindent
Combining \Prop~\ref{Cor_lower}, \Prop~\ref{Prop_lower} and \Cor~\ref{Prop_upper} and the standard concentration for $\nul{\vA_{\eps}}$ from Lemma~\ref{Lemma_conc} completes the proof of \eqref{eqthm:rank_upper}.
We proceed to prove \Prop s~\ref{Lemma_interpol2} and \ref{Lemma_interpolation}.

\subsection{Proof of \Prop~\ref{Lemma_interpol2}}
Each component of $\G_\eps(0)$ contains precisely one of the variable nodes $x_1,\ldots,x_n$.
In effect, $\vA_{\eps}(0)$ has a block diagonal structure, and the overall nullity is nothing but the sum of the nullities of the blocks.
It therefore suffices to calculate the nullity of the block $\vec B_s$ representing the connected component of $x_s$.
%
Indeed, because $\sum_{s=1}^n\abs{\partial^2x_s}=\sum_{i\leq\vm'_\eps(0)}\vk_i'(\vk_i'-1)$ and 
$\sum_{s=1}^n\abs{\partial^2x_s\cap\cF_0}=|\cF_0|$ we have
\begin{align*}
\cN_0&=\sum_{s=1}^n\vN_s,&\mbox{where }\quad\vN_s=\nul(\vec B_s)-\abs{\partial^2x_s}+\abs{\partial^2x_s\cap\cF_0}.
\end{align*}
Consequently, since $\THETA=O_{n}(1)$ it suffices to prove that
\begin{align}\label{eqLemma_interpol2_1}
\Erw\brk{\vN_s}=\begin{cases}dK'(\beta)/k+D(1-K'(\beta)/k)-d+o_\eps(1)&\mbox{ if }s>\THETA,\\	
O_n(1)&\mbox{ otherwise}.		
\end{cases}
\end{align}
In fact, the second case in \eqref{eqLemma_interpol2_1} simply follows from  $\vN_s\leq\vd_s$ and $\Erw[\vd_s]=O_n(1)$ for all $s$.

Hence, suppose that $s>\THETA$.
As $|\vN_s|\leq\vd_s$ and $\Erw[\vd_s^r]=O_{\eps,n}(1)$ for an $r>2$ we find $\xi>0$ such that
\begin{align}\label{eqLemma_interpol2_2}
\Erw[|\vN_s|\vecone\{\vd_s>\eps^{\xi-1/2}\}]&=o_{\eps,n}(1).
\end{align}
Moreover, let
$\Xi=\sum_{i=1}^{\vm_\eps'(0)}\vk_i'\vecone\cbc{\vk_i'>\eps^{-8}}$, $\vec M_j'=\sum_{i=1}^{\vm_\eps'(0)}\vecone\{\vk_i'=j\}.$
Because $\Erw[\vk^2]=O_{\eps,n}(1)$ we have
\begin{align}\label{eqLemma_interpol2_3}
\Erw\brk{\Xi}&\leq\frac{dn}k\Erw\brk{\vk\vecone\{\vk\geq\eps^{-8}\}}=nO_{\eps,n}(\eps^8),
\end{align}	
while $\vM_j'\sim(1-\eps)dn\pr\brk{\vk=j}/k$ for all $j\leq\eps^{-8}$ \whp\ by Chebyshev's inequality.
Hence, introducing the event
\begin{align*}
\cE_s&=\cbc{\vd_s\leq\eps^{\xi-1/2},\ \Xi\leq n\eps^6,\ 
\forall j\leq\eps^{-8}:\vM_j'\sim(1-\eps)dn\pr\brk{\vk=j}/k,\,
\sum_{i=1}^n\vd_i\sim dn,\,
\sum_{i\geq3}i\vM_i'\sim(1-\eps)dn},
\end{align*}
we obtain from \eqref{eqLemma_interpol2_2} and \eqref{eqLemma_interpol2_3} that
\begin{align}\label{eqLemma_interpol2_4}
\Erw[\vN_s]&=\Erw\brk{\vN_s\vecone\cE_s}+o_{\eps,n}(1).
\end{align}	

With $\vec\gamma\leq\vec d_s$ the actual degree of $x_s$ in $\G_\eps(s)$,
let $\vec\kappa_1,\ldots,\vec\kappa_{\vec\gamma}$ be the degrees of the checks adjacent to $x_s$.
We claim that given $\cE_s$ and $\vd_s$, 
\begin{align}\label{eqLemma_interpol2_5}
\dTV((\vec\kappa_1,\ldots,\vec\kappa_{\vec\gamma}),(\hat\vk_1,\ldots,\hat\vk_{\vd_s}))&=o_{\eps,n}(\eps^{1/2}).
\end{align}
Indeed, on $\cE_s$ the probability that $x_s$ is adjacent to a check of degree greater than $\eps^{-8}$ is 
\newline
$O_{\eps,n}(\vd_s\Xi/\sum_{j\geq3}j\vM_j')=o_{\eps,n}(\eps)$.
Further, given $\cE_s$ we have $$\sum_{j\geq3}j\vM_j'\geq(1-2\eps)dn,$$ and thus $\pr[\vec\gamma<\vd_s\mid\cE_s]=o_{\eps,n}(\eps^{1/2})$.
Moreover, given $\vec\gamma=\vd_s$, for each $i\in[\vd_s]$ the probability that the $i$-th clone of $x_s$ gets matched to a check of degree $j\leq\eps^{-8}$ is $$j\vM_j'/\sum_{h\geq3}h\vM_h'=j\pr\brk{\vk=j}/k+o_n(1)=\pr\brk{\hat\vk=j}+o_n(1).$$
These events are asymptotically independent for the different clones.
Thus, we obtain \eqref{eqLemma_interpol2_5}.

Finally, we can easily compute $\vN_s$ given the vector $(\vec\kappa_1,\ldots,\vec\kappa_\GAMMA)$.
The matrix $\vB_s$ has fairly simple structure.
The first $\GAMMA$ rows have a non-zero entry in the first column representing $x_s$.
Additionally, for $i=1,\ldots,\GAMMA$ the $i$th row contains $\vec\kappa_i-1$ further non-zero entries, and the columns where theses non-zero entries occur are disjoint for all $i$.
Finally, at the bottom of the matrix there is a block freezing the variables in $\cF_0\cap\partial^2x_s$ to zero.
We therefore claim that the rank of the matrix works out to be
\begin{align}\label{eqNeu1}
\Erw[\rk(\vB_s)\mid \vec\kappa_1,\ldots,\vec\kappa_\GAMMA]
	&=\sum_{i=1}^{\GAMMA}(1-\beta^{\vec\kappa_i-1})+|\cF_0\cap\partial^2x_s|+1-\prod_{i=1}^{\GAMMA}(1-\beta^{\vec\kappa_i-1}).
\end{align}
To see this, let us first compute the rank of the matrix $\vB_s'$ without the first column.
Then row $i\in[\GAMMA]$ contributes to the rank unless all the variables in the corresponding equation other than $x_s$ belong to $\cF_0$, an event that occurs with probability $\beta^{\vec\kappa_i-1}$; hence the first summand.
In addition, the $|\cF_0\cap\partial^2x_s|$ rows pegging variables to zero contribute to the rank (second summand).
Furthermore, going back to $\vB_s$, the first column adds to the rank unless non of the first $\GAMMA$ rows of $\vB_s'$ gets zeroed out completely, an event that has probability $\prod_{i=1}^{\GAMMA}(1-\beta^{\vec\kappa_i-1})$.
Since
\begin{align*}
\Erw[\vN_s\mid \vec\kappa_1,\ldots,\vec\kappa_\GAMMA]&=
	1+\sum_{i=1}^{\GAMMA}(\vec\kappa_i-1)-\Erw[\rk(\vB_s)\mid \vec\kappa_1,\ldots,\vec\kappa_\GAMMA]
		-\Erw\brk{\abs{\partial^2x_s}-\abs{\partial^2x_s\cap\cF_0}\mid \vec\kappa_1,\ldots,\vec\kappa_\GAMMA}\\
	&=1-\Erw[\rk(\vB_s)\mid \vec\kappa_1,\ldots,\vec\kappa_\GAMMA]+
	\Erw\brk{\abs{\partial^2x_s\cap\cF_0}\mid \vec\kappa_1,\ldots,\vec\kappa_\GAMMA},
\end{align*}
substituting \eqref{eqNeu1} in yields
\begin{align}\label{eqLemma_interpol2_6}
\Erw[\vN_s\mid \vec\kappa_1,\ldots,\vec\kappa_\GAMMA]
	&=\prod_{i=1}^{\GAMMA}(1-\beta^{\vec\kappa_i-1})-\sum_{i=1}^{\GAMMA}(1-\beta^{\vec\kappa_i-1}).
\end{align}
Combining \eqref{eqLemma_interpol2_4}, \eqref{eqLemma_interpol2_5} and \eqref{eqLemma_interpol2_6} completes the proof.

\subsection{Proof of \Prop~\ref{Lemma_interpolation}}
To couple the random variables $\cN_{t+1}$ and $\cN_t$ we need to investigate short linear relations among the cavities, i.e., the clones from $\bigcup_{i=1}^n\cbc{x_i}\times[\vd_i]$ that are not incident to an edge of $\vec\Gamma_\eps(t)$.
Denote this set by $\cC(t)$.
Further, let $P_t$ be the distribution on the set of variables induced by drawing a random cavity, i.e.,
\begin{align*}
P_t(x_i)=|\cC(t)\cap(\{x_i\}\times[\vd_i])|/|\cC(t)|,
\end{align*}
 and let $\vy_{1},\vy_{2}\ldots$ be independent samples from $P_t$.

\begin{lemma}\label{Lemma_pinint}
For any $\delta>0$ and $\ell>0$ there is $\cT=\cT(\delta,\ell)>0$ such that 
\begin{align*}
\pr\brk{\vy_{1},\ldots,\vy_{\ell}\mbox{ form a proper relation}}&<\delta.
\end{align*}
\end{lemma}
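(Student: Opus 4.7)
The plan is to mimic the argument used for Lemma~\ref{Lemma_theta} in the lower-bound proof, with \Prop~\ref{Prop_Alp} playing the role of the main technical input. The statement to be established is essentially an avatar of \Prop~\ref{Prop_Alp} for the matrix $\vA_\eps(t)$, adapted to the cavity distribution $P_t$ rather than the uniform distribution on variables.

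First, I will verify that \Prop~\ref{Prop_Alp} is directly applicable to $\vA_\eps(t)$. The matrix contains the unary checks $p_1,\ldots,p_\THETA$, which peg $x_1,\ldots,x_\THETA$ to zero. Since the joint distribution of the remaining part of $\vA_\eps(t)$ is invariant under relabelling of $x_1,\ldots,x_n$, pegging the first $\THETA$ variables is distributionally the same as pegging a uniformly random set of $\THETA$ variables. Hence \Prop~\ref{Prop_Alp}, applied with an auxiliary parameter $\delta_*=\delta_*(\delta,\ell)$ to be fixed below and with the same $\ell$, furnishes a value $\cT=\cT(\delta_*,\ell)$ such that, with probability at least $1-\delta_*$ over the choice of $\THETA\in[\cT]$, the matrix $\vA_\eps(t)$ has fewer than $\delta_*n^\ell$ proper relations of size $\ell$.

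Second, I will translate this linear-algebraic statement into a bound for the cavity distribution $P_t$. The argument proceeds exactly as in \Lem~\ref{Lemma_theta}. The number of cavities satisfies $|\cC(t)|\geq\eps dn/2$ with probability $1-o_n(1)$: indeed, the right-hand vertex class of $\vec\Gamma_\eps(t)$ has expected size $(1-\eps)dn+o_n(n)$ regardless of $t$, while $\sum_i\vd_i\sim dn$, so \Lem~\ref{Lemma_sums} and a Chernoff bound yield the claim. Next, fix $L=L(\delta,\ell)$ large enough so that $\Erw[\vd\vecone\{\vd>L\}]$ is small enough (this is possible because $\Erw[\vd^r]<\infty$ for some $r>2$). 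The contribution of variables of degree exceeding $L$ to the total cavity count is then $O(n\Erw[\vd\vecone\{\vd>L\}])$, so the probability that any one of $\vy_1,\ldots,\vy_\ell$ lands on such a variable is at most $\delta/8$, say. On the complementary event every $\vy_j$ has degree at most $L$, and on that event $P_t$ puts mass at most $L/|\cC(t)|\leq 2L/(\eps d n)$ on each variable, whereas the uniform distribution puts mass $1/n$. Hence, conditional on $|\cC(t)|\geq\eps dn/2$, the push-forward measure of $\vy_1,\ldots,\vy_\ell$ is dominated by $(2L/(\eps d))^\ell$ times the uniform distribution, plus the error $\delta/8$ from high-degree variables.

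Combining these two steps: choose $\delta_*=\delta/(4(2L/(\eps d))^\ell)$ and let $\cT=\cT(\delta_*,\ell)$ be the value furnished by \Prop~\ref{Prop_Alp}. Then with probability at least $1-\delta_*\geq 1-\delta/4$, the matrix $\vA_\eps(t)$ possesses fewer than $\delta_*n^\ell$ proper relations of size $\ell$, and hence uniform samples $\hat\vx_1,\ldots,\hat\vx_\ell$ form a proper relation with probability at most $\delta_*$. The comparison above upgrades this to the bound $\delta/4+\delta/4+o_n(1)<\delta$ for cavity samples $\vy_1,\ldots,\vy_\ell$. The one subtle point, and the only nontrivial obstacle, is that the set of variables that actually support positive mass under $P_t$ (i.e.\ those with at least one cavity) may be a proper subset of $[n]$; this is precisely the situation covered by \Rem~\ref{Prop_Alp++}, which allows one to apply the pinning argument to a designated subset rather than all of $[n]$, yielding the required bound on proper relations among variables that support cavities.
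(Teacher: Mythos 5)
Your proposal is correct and follows essentially the same route as the paper's proof (which mirrors \Lem~\ref{Lemma_theta}): condition on $|\cC(t)|=\Omega_n(\eps n)$ and on the variables of degree $>L$ carrying little cavity mass, dominate the law of $(\vy_1,\ldots,\vy_\ell)$ by $(2L/\eps)^{\ell}$ times the uniform law plus a small additive error, and then use the permutation invariance of $\G_\eps(t)-\{p_1,\ldots,p_\THETA\}$ together with \Rem~\ref{Prop_Alp++} to bound the probability that uniform samples form a proper relation. One small correction: the reason \Rem~\ref{Prop_Alp++} (rather than \Prop~\ref{Prop_Alp} verbatim) is needed is that $\vA_\eps(t)$ has the auxiliary columns $x_{i,j,h}$ while the pegged variables lie among $x_1,\ldots,x_n$ only, so one pins within the designated subset $[n]$ and counts relations $I\subset[n]$; the fact that $P_t$ may be supported on a proper subset of $[n]$ is immaterial, being already absorbed by your domination step.
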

\begin{proof}
The choice of $\vm_{\eps,n}$ guarantees that $|\cC(t)|\geq\eps n/2$ \whp\
Moreover, since $\Erw[\vd]=O_{\eps,n}(1)$ we find $L=L(\eps,\delta)>0$ such that
the event  $\cL=\cbc{\sum_{i=1}^n\vd_i\vecone\{\vd_i>L\}<\eps\delta^2 n/16}$ has probability $\pr[\cL]\geq1-\delta/8$.
Therefore, we may condition on $\cE=\cL\cap\{|\cC(t)|\geq\eps n/2\}$.

Let $\vx_1,\ldots,\vx_\ell$ be variables drawn uniformly with replacement from $V_n=\{x_1,\ldots,x_n\}$.
Then on the event $\cE$ we have, for any $\ell$-tuple $y_1,\ldots,y_\ell$ of variables,
\begin{align*}
\pr\brk{\vy_1=y_1,\ldots,\vy_\ell=y_\ell\mid\vA_\eps(t)}&\leq\pr\brk{\vx_1=y_1,\ldots,\vx_\ell=y_\ell\mid\vA_\eps(t)}(2L/\eps)^\ell
	+\delta^2.
\end{align*}
Consequently, because the distribution of $\G_\eps(t)-\{p_1,\ldots,p_{\vec\theta}\}$ is invariant under permutations of $x_1,\ldots,x_n$, Remark~\ref{Prop_Alp++} shows that  $\pr\brk{\vx_1=y_1,\ldots,\vx_\ell=y_\ell\mid\vA_\eps(t)}<\delta(\eps/(2L))^\ell/2$, provided that $\cT=\cT(\delta,\ell)$ is large enough.
\end{proof}

We proceed to derive \Prop~\ref{Lemma_interpolation} from \Lem~\ref{Lemma_pinint} and a coupling argument.
Let $\G_{\eps}'(t)$ be the Tanner graph obtained from $\G_{\eps}(t+1)$ by removing the check $a_{t+1}$, let $\A_{\eps}'(t)$ be the corresponding matrix and let
\begin{align*}
\cN_t'&=\nul\vA_{\eps}'(t)+|\cF_{t+1}|-\sum_{i=1}^{\vm_{\eps,n}-t-1}\vk_i'(\vk_i'-1).
\end{align*}
Then clearly
\begin{align*}
\Erw\brk{\cN_{t+1}-\cN_t\mid\vm_{\eps,n}}=
	\Erw\brk{\cN_{t+1}-\cN_t'\mid\vm_{\eps,n}}-
	\Erw\brk{\cN_{t}-\cN_t'\mid\vm_{\eps,n}}.
\end{align*}

Let $\ALPHA\in[0,1]$ be the fraction of frozen cavities in $\G_\eps'(t)$, with the convention that $\ALPHA=0$ if the set $\cC'(t)$ of these cavities is empty.

\begin{lemma}\label{Claim_Lemma_interpolation_2}
We have
 $\Erw\abs{\Erw[\cN_{t+1}-\cN_t'\mid\vA_\eps(t)',\vm_{\eps,n}]-(K(\ALPHA)-1)}=o_{\eps,n}(1)$.
\end{lemma}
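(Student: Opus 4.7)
The plan is to view $\vA_\eps(t+1)$ as being obtained from $\vA_\eps'(t)$ by restoring the single row corresponding to check $a_{t+1}$, which carries $\vk_{t+1}$ non-zero entries sitting at columns corresponding to $\vk_{t+1}$ cavities of $\G_\eps'(t)$. This is a single-check, no-new-column variant of the coupling analysis carried out in \Sec~\ref{Sec_A''} for $\vA''$, but now supported on the interpolating ensemble.

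\textbf{Step 1: Algebraic bookkeeping.} The correction terms $|\cF_{t+1}|$ and $\sum_{i=1}^{\vm_{\eps,n}-t-1}\vk_i'(\vk_i'-1)$ appearing in both $\cN_{t+1}$ and $\cN_t'$ are identical, so $\cN_{t+1}-\cN_t'=\nul(\vA_\eps(t+1))-\nul(\vA_\eps'(t))$. Writing $\vA_\eps(t+1)=\left(\begin{smallmatrix}\vA_\eps'(t)\\\vB\end{smallmatrix}\right)$ with $\vB$ the single row indexed by $a_{t+1}$, and denoting by $I\subset[n]$ the set of its $\vk_{t+1}$ non-zero column indices, \Lem~\ref{Cor_free} applied with an empty $C$-block yields $\nul(\vA_\eps(t+1))-\nul(\vA_\eps'(t))=-\rk(\vB_*)$ whenever $I$ is not a proper relation of $\vA_\eps'(t)$, where $\vB_*$ zeroes out the columns indexed by $\fF(\vA_\eps'(t))$. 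Since $\vB$ has a single row, $\rk(\vB_*)\in\{0,1\}$, and equals $0$ iff every non-zero entry of $\vB$ sits in a frozen column.

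\textbf{Step 2: Ruling out bad events.} Introduce the tail-truncation $\cE''=\{\vk_{t+1}\leq 1/\eps\}$, which satisfies $\pr[\lnot\cE'']=O_{\eps,n}(\eps^{r-1})$ because $\Erw[\vk^r]<\infty$ for some $r>2$. On $\cE''$ the no-collision event $\cE'''$ that the $\vk_{t+1}$ neighbours of $a_{t+1}$ in $\G_\eps(t+1)$ are pairwise distinct occurs with probability $1-o_n(1)$, since $|\cC'(t)|=\Omega_n(n)$ with high probability while each variable contributes at most $O_n(\sqrt n)$ cavities. Next, choose $\cT=\cT(\eps)$ large enough that \Lem~\ref{Lemma_pinint}, applied with $\ell=\lceil1/\eps\rceil$ and error $\delta=\exp(-1/\eps^4)$, ensures that on $\cE''$ the set $I$ fails to be a proper relation of $\vA_\eps'(t)$ with probability $1-o_{\eps,n}(1)$. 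Because $|\nul(\vA_\eps(t+1))-\nul(\vA_\eps'(t))|\le1$ deterministically, the contribution of the complement of $\cE''\cap\cE'''\cap\{I\text{ no proper relation}\}$ to the conditional expectation is $o_{\eps,n}(1)$.

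\textbf{Step 3: Counting frozen hits and integrating.} On the good event, by the principle of deferred decisions conditional on $\vA_\eps'(t)$ and $\vm_{\eps,n}$, the $\vk_{t+1}$ variables $\vy_1,\dots,\vy_{\vk_{t+1}}$ neighbouring $a_{t+1}$ are distributed as a uniform sample of $\vk_{t+1}$ cavities from $\cC'(t)$ drawn without replacement. Since $|\cC'(t)|=\Omega_n(n)$ while $\vk_{t+1}\leq1/\eps$, the total variation distance between this law and $\vk_{t+1}$ i.i.d.\ samples from $P_t$ is $o_n(1)$ (in the spirit of \Lem~\ref{Cor_gamma}), hence
\[
\pr[\rk(\vB_*)=0\mid\vA_\eps'(t),\vk_{t+1}]=\ALPHA^{\vk_{t+1}}+o_n(1).
\]
Averaging over $\vk_{t+1}\sim\vk$, which is independent of $\vA_\eps'(t)$, and truncating the tail via $\cE''$ yields
\[
\Erw[\rk(\vB_*)\mid\vA_\eps'(t),\vm_{\eps,n}]=\Erw[1-\ALPHA^{\vk_{t+1}}\mid\ALPHA]+o_{\eps,n}(1)=1-K(\ALPHA)+o_{\eps,n}(1).
\]
Combining with Step~1 gives $\Erw[\cN_{t+1}-\cN_t'\mid\vA_\eps'(t),\vm_{\eps,n}]=K(\ALPHA)-1+o_{\eps,n}(1)$, and taking the outer $L^1$-norm delivers the claim.

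\textbf{Main obstacle.} The delicate point is Step~2: controlling, uniformly in $t$, the conditional probability that the neighbourhood of $a_{t+1}$ forms a proper relation of $\vA_\eps'(t)$. This requires choosing $\cT(\eps)$ large enough that the pinning bound of \Lem~\ref{Lemma_pinint} beats the tail of $\vk$, forcing $\cT(\eps)$ to grow polynomially with $1/\eps$ (but independently of $n$ and $t$). Second, making precise the i.i.d.\ approximation of the neighbourhood of $a_{t+1}$ by samples from $P_t$ requires the configuration-model coupling between $\G_\eps(t+1)$ and $\G_\eps'(t)$ alluded to above, together with a local limit-type bound to absorb the stochastic dependence introduced by conditioning on $\vm_{\eps,n}$ and the degree sequence.
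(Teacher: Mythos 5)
Your proposal is correct and follows essentially the same route as the paper: cancel the identical correction terms, truncate $\vk_{t+1}$ using the moment bound, invoke \Lem~\ref{Lemma_pinint} (with $\cT=\cT(\eps)$ large) to exclude the event that the attachment set is a proper relation, and then apply \Lem~\ref{Cor_free} so that the single new row contributes $-1$ unless all its non-zero entries land on frozen cavities, which by deferred decisions happens with probability $\ALPHA^{\vk_{t+1}}+o_n(1)$, giving $K(\ALPHA)-1$ after averaging over $\vk_{t+1}$. You merely spell out explicitly (no-collision event, without-replacement versus i.i.d.\ sampling, bounding bad events via $|\nul(\vA_\eps(t+1))-\nul(\vA_\eps'(t))|\le1$) steps the paper leaves implicit.
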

\begin{proof}
The random matrix $\vA_\eps(t+1)$ is obtained from $\vA_\eps'(t)$ by inserting a new random check $a_{t+1}$.
Pick $\zeta=\zeta(\eps)>0$ small enough and $\delta=\delta(\zeta)>0$ smaller still.
Since $|\nul(\vA_\eps'(t))-\nul(\vA_\eps(t+1))|\leq1$ and $\Erw[\vk^2]=O_{\eps,n}(1)$ we may condition on the event that $\vk_{t+1}\leq\eps^{-1}$.
Similarly, \Lem~\ref{Lemma_pinint} shows that we may assume that the set $\cX$ of variables of $\G_\eps'(t)$ where the new check node $a_{t+1}$ attaches does not form a proper relation, provided that $\cT=\cT(\eps)$ is chosen sufficiently large.
Therefore, \Lem~\ref{Cor_free} yields
\begin{align*}
\Erw[\cN_{t+1}-\cN_t\mid\vA_\eps(t)',\vm_{\eps,n}]&=
\Erw[\nul(\vA_\eps(t+1))-\nul(\vA_\eps'(t))\mid\vA_\eps'(t),\vm_{\eps,n}]\\
&=
\Erw\brk{\ALPHA^{\vk_{t+1}}-1\mid\vA_\eps'(t),\vm_{\eps,n}}+o_\eps(1)=K(\ALPHA)-1+o_{\eps,n}(1),
\end{align*}
as claimed.
\end{proof}

\begin{lemma}\label{Claim_Lemma_interpolation_4}
Let $Q(\alpha,\beta)=\Erw\brk{\vk(\alpha \beta^{\vk-1}-1)}$ for $\alpha\in[0,1]$.
Then $\Erw\abs{\Erw\brk{\cN_t-\cN_t'\mid\vA'_\eps(t),\vm_{\eps,n}}-Q(\ALPHA,\beta)}=o_{\eps,n}(1).$
\end{lemma}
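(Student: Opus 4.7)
The plan is to compute $\cN_t-\cN_t'$ by exploiting that $\vA_\eps(t)$ differs from $\vA'_\eps(t)$ only by adjoining a single ``$b$-block'' of new rows and columns, and then to invoke \Lem~\ref{Cor_free}. Writing $\vk^*:=\vk_{\vm_{\eps,n}-t}'$, I will couple $\G_\eps(t)$, $\G'_\eps(t)$ and $\G_\eps(t+1)$ so that the three Tanner graphs coincide on $\G'_\eps(t)$, that $\cF_t$ and $\cF_{t+1}$ agree on the variables they share, and that the $\vk_{t+1}$ clones of $a_{t+1}$ matched in $\G_\eps(t+1)$ are identified with the $\vk^*$ cavities $c_1,\ldots,c_{\vk^*}$ of $\vA'_\eps(t)$ to which the new checks $b_{\vm_{\eps,n}-t,j}$ attach in $\G_\eps(t)$. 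Under this coupling $|\cF_t|-|\cF_{t+1}|=\sum_{j=1}^{\vk^*}F_j$, where $F_j\disteq\Bin(\vk^*-1,\beta)$ counts the proprietary variables $x_{\vm_{\eps,n}-t,j,\cdot}$ landing in $\cF_t$, and $c_1,\ldots,c_{\vk^*}$ is a uniform without-replacement sample from the cavity set $\cC'(t)$ of $\vA'_\eps(t)$.

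I would first truncate to $\vk^*\leq L(\eps)$: since $|\cN_t-\cN_t'|=O(\vk^{*2})$ and $\Erw[\vk^2]<\infty$, the truncated tail contributes $o_{\eps,n}(1)$. Choosing $\cT=\cT(\eps)$ large enough in \Lem~\ref{Lemma_pinint}, with probability $1-o_{\eps,n}(1)$ the variables underlying $c_1,\ldots,c_{\vk^*}$ are distinct (by an argument analogous to \Lem~\ref{Claim_A'''2a}) and do not form a proper relation of $\vA'_\eps(t)$. On this good event, \Lem~\ref{Cor_free} yields
$$\nul\vA_\eps(t)-\nul\vA'_\eps(t)=\vk^*(\vk^*-1)-\rk(\vB_*\ \vC),$$
where $\vC$ collects the columns for the new proprietary variables and $\vB_*$ collects the cavity columns of the new rows with the columns indexed by $\fF(\vA'_\eps(t))$ zeroed out.

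Next I would evaluate $\rk(\vB_*\ \vC)$ block by block: the $\vk^*$ blocks share only the cavity columns and are column-disjoint otherwise, so their ranks add. In block $j$, the $F_j$ unary freezing rows contribute $F_j$ to the rank and neutralize $F_j$ of the $\vk^*-1$ proprietary columns. Modulo those rows, the main row of $b_{\vm_{\eps,n}-t,j}$ reduces to a combination of the remaining $\vk^*-1-F_j$ unfrozen proprietary entries and a scalar multiple of $e_{c_j}$. If $F_j<\vk^*-1$ the residual carries an unfrozen proprietary entry, so the main row is independent of the freezing rows and contributes $1$; if $F_j=\vk^*-1$ only the cavity entry survives, contributing $1$ precisely when $c_j$ is not frozen in $\vA'_\eps(t)$. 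Summing and substituting into the definitions of $\cN_t,\cN_t'$, the $\vk^*(\vk^*-1)$ and $\sum_j F_j$ terms cancel, leaving
$$\cN_t-\cN_t'=-\vk^*+\sum_{j=1}^{\vk^*}\vecone\cbc{F_j=\vk^*-1\mbox{ and $c_j$ frozen in $\vA'_\eps(t)$}}.$$

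Finally, I would take conditional expectations. Since $F_j$ is independent of $c_j$ with $\pr[F_j=\vk^*-1]=\beta^{\vk^*-1}$ and marginally each $c_j$ is a uniform cavity with $\pr[c_j\mbox{ frozen}\mid\vA'_\eps(t)]=\ALPHA$, one obtains $\Erw[\cN_t-\cN_t'\mid\vA'_\eps(t),\vm_{\eps,n},\vk^*]=\vk^*(\ALPHA\beta^{\vk^*-1}-1)+o_{\eps,n}(1)$; since $\vk^*$ is an independent copy of $\vk$, averaging over $\vk^*$ yields $Q(\ALPHA,\beta)+o_{\eps,n}(1)$, and taking an $L^1$-norm over $\vA'_\eps(t)$ delivers the claim. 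The hard part will be the case $F_j=\vk^*-1$: there the main row of block $j$ becomes a fresh linear relation pinning $x_{c_j}$ to zero, and its rank contribution depends on whether that cavity is already frozen. It is precisely this subtlety that injects the factor $\ALPHA$ into $Q(\ALPHA,\beta)$, and keeping $(F_j,c_j)$ effectively independent across $j$ is what forces the use of the pinning guarantee of \Lem~\ref{Lemma_pinint}.
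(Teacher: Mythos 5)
Your proposal is correct and follows essentially the same route as the paper's proof: truncate on the new check-group size being bounded, invoke \Lem~\ref{Lemma_pinint} (for large enough $\cT(\eps)$) to rule out a proper relation among the cavities that the new checks $b_{\cdot,j}$ attach to, apply \Lem~\ref{Cor_free}, and observe that the frozen cavities behave like $\Bin(\cdot,\ALPHA)$ independently of the $\beta$-freezing of the proprietary variables, so the block-by-block rank count gives $\vk^*(\ALPHA\beta^{\vk^*-1}-1)$ and hence $Q(\ALPHA,\beta)$ after averaging over $\vk^*\disteq\vk$; your explicit rank computation merely spells out what the paper leaves implicit. The only blemish is the aside about identifying the matched clones of $a_{t+1}$ with the cavities used by the new $b$-checks, which is ill-defined when $\vk_{t+1}\neq\vk^*$ and in any case irrelevant here, since only the coupling of $\G_\eps(t)$ with $\G'_\eps(t)$ enters this lemma.
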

\begin{proof}
The factor graph $\G_t(\eps)$ is obtained from $\G_t'(\eps)$ by adding the
checks $b_{\vm_{\eps,n}-t-1,h}$ for $h\in[\vk'_{\vm_\eps-t-1}]$, the corresponding variables
$x_{\vm_{\eps,n}-t-1,h,j}$ and possibly their respective checks $f_{\vm_{\eps,n}-t-1,h,j}$.
Since by construction
 $$|\cN_t-\cN_t'|\leq\vk'_{\vm_\eps-t-1}$$
and $\Erw[\vk^2]=O_{\eps,n}(1)$ we may condition on the event that $\vk'_{\vm_\eps-t-1}\leq\eps^{-1}$.
In effect, \Lem~\ref{Lemma_pinint} shows that we may assume the set $\cX$ of cavities adjacent to the new checks $b_{\vm_{\eps,n}-t-1,h}$ does not form a proper relation, provided that $\cT=\cT(\eps)$ is chosen large enough.
Moreover, the number of frozen cavities in $\cX$ is within $o_n(1)$ of a binomial distribution $\Bin(\vk_{\vm_{\eps,n}-t-1}',\ALPHA)$ in total variation.
Therefore, \Lem~\ref{Cor_free} shows that 
\begin{align*}
\Erw\brk{\cN_{t}-\cN_t'\mid\vA_{\eps,n}'(t),\vm_{\eps,n}}&=
\Erw[\nul(\vA_{\eps}(t))\negmedspace-\negmedspace\nul(\vA_\eps'(t))\negmedspace-\negmedspace\vk'_{\vm_{\eps,n}-t-1}(\vk'_{\vm_{\eps,n}-t-1}\negmedspace-\negmedspace1)\negmedspace+\negmedspace|\cF'|\mid\vA_\eps'(t),\vm_{\eps,n}]\\&=
	Q(\ALPHA,\beta)+o_{\eps,n}(1),
\end{align*}
as claimed.
\end{proof}

\begin{lemma}\label{Claim_Lemma_interpolation_5}
We have
$\Erw[\cY_{t+1}-\cY_t]=\Erw[(\vk-1)(\beta^{\vk}-1)]$.
\end{lemma}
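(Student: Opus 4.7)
The statement is essentially a one-line computation once one unpacks the definition of $\cY_t$. Reading the quantity $\cY_t$ as the sum over those indices $i \le \vm_{\eps,n}$ for which the check node $b_{i,j}$/variable node $x_{i,j,h}$ machinery is present at time $t$ (equivalently, $\cY_t$ involves only the $\vk_i'$ with $i$ in the range $[\vm_{\eps,n}-t]$, and is designed precisely so that $\cY_{t+1}-\cY_t$ picks up exactly the contribution of the single index that migrates from `$b$-check' to `$a$-check' when $t$ is incremented by one), the difference $\cY_{t+1}-\cY_t$ collapses to a single summand of the form $(\vk_{i_0}'-1)(\beta^{\vk_{i_0}'}-1)$ for some fixed index $i_0 = i_0(t) \le \vm_{\eps,n}$.

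The plan is therefore as follows. First I would write out $\cY_{t+1}$ and $\cY_t$ side by side and verify that, after the appropriate telescoping, all but one term cancel. Second, I would observe that the surviving summand involves $\vk_{i_0}'$, which by construction (see the definition of $\G_\eps(t)$ in Section~\ref{Sec_outline1}) is a copy of $\vk$ that is independent of $\vm_{\eps,n}$ and of everything else that enters $\cY_t$ or $\cY_{t+1}$. Third, I would take expectations and use $\vk_{i_0}' \disteq \vk$ to conclude
\begin{align*}
\Erw[\cY_{t+1}-\cY_t]=\Erw\bigl[(\vk_{i_0}'-1)(\beta^{\vk_{i_0}'}-1)\bigr]=\Erw\bigl[(\vk-1)(\beta^{\vk}-1)\bigr],
\end{align*}
which is the claim.

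There is no real obstacle here: the $\cY_t$ are defined as auxiliary bookkeeping terms precisely so that their increment matches the deterministic error that appears when one combines \Lem~\ref{Claim_Lemma_interpolation_2} and \Lem~\ref{Claim_Lemma_interpolation_4}. The only thing to be careful about is the convention on the range of summation: $\cY_t$ should depend on $t$ through a truncation of the sum, and one must check that the correct index is the one that enters/exits as $t \mapsto t+1$. Once that bookkeeping is settled, the i.i.d.\ property of $(\vk_i')_{i \ge 1}$ and independence from $\vm_{\eps,n}$ give the equality in expectation directly, with no asymptotic error at all.
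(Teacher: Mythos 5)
Your mechanism --- the difference telescopes to a single summand whose law is that of $\vk$ and which is independent of $\vm_{\eps,n}$, so the expectation is immediate --- is exactly the ``straightforward calculation'' the paper has in mind. But the bookkeeping you yourself flag as the only delicate point is where your proposal goes wrong: you read $\cY_t$ as the sum over the indices $i\in[\vm_{\eps,n}-t]$ of the surviving $b$-check gadgets, i.e.\ as a sum that \emph{shrinks} as $t$ grows. Under that reading $\cY_{t+1}-\cY_t$ is \emph{minus} one summand, so you would obtain $\Erw[\cY_{t+1}-\cY_t]=-\Erw[(\vk-1)(\beta^{\vk}-1)]$, which differs from the claim by a sign. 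Since $(\vk-1)(\beta^{\vk}-1)\leq0$ pointwise and is not a.s.\ zero unless $\beta=1$ or $\vk\equiv1$, the two signs genuinely disagree; moreover the monotonicity computation in the proof of \Prop~\ref{Lemma_interpolation} relies on precisely this sign in order to reduce to the inequality $\alpha^{k}-k\alpha\beta^{k-1}+(k-1)\beta^{k}\geq0$, so with your truncation the interpolation argument would not close.

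The displayed definition of $\cY_t$ in the paper is admittedly typo'd (as printed it does not depend on $t$ at all), but the intended truncation is forced by the surrounding argument and it is the other one: $\cY_t=\sum_{i=1}^{t}(\vk_i-1)(\beta^{\vk_i}-1)$, one term for each ``real'' check among $a_1,\ldots,a_t$ present at time $t$, so the sum \emph{grows} with $t$. This is the only reading consistent with \Cor~\ref{Prop_upper}, which uses both $\Erw[\cY_0]=0$ and $\frac1n\Erw[\cY_{\vm_{\eps,n}}]=\frac dk\bc{\beta K'(\beta)-k+1-K(\beta)}+o_{\eps,n}(1)$; under your reading $\cY_{\vm_{\eps,n}}$ would be an empty sum while $\cY_0$ would be of order $n$. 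With the correct truncation your argument is fine and coincides with the paper's: $\cY_{t+1}-\cY_t=(\vk_{t+1}-1)(\beta^{\vk_{t+1}}-1)$, and since $\vk_{t+1}$ is a copy of $\vk$ independent of $\vm_{\eps,n}$ (whether one works with the primed or unprimed copies is immaterial for the expectation, as both are copies of $\vk$), taking expectations gives the lemma exactly, with no error term.
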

\begin{proof}
This is the result of a straightforward calculation.
\end{proof}

\begin{proof}[Proof of \Prop~\ref{Lemma_interpolation}]
Combining \Lem s~\ref{Claim_Lemma_interpolation_2}--\ref{Claim_Lemma_interpolation_5}, we obtain
\begin{align}\label{eqLemma_interpolation_1}
\Erw[\cN_{t+1}+\cY_{t+1}]-\Erw[\cN_{t}+\cY_{t}]&=
	\Erw\brk{\ALPHA^{\vk}-1-\vk(\ALPHA\beta^{\vk-1}-1)+(\vk-1)(\beta^{\vk}-1)}+o_{\eps,n}(1).
\end{align}
Since $x^k-kxy^{k-1}+(k-1)y^k\geq0$ for all $k\geq1$, $x,y\in[0,1]$, the assertion follows from~\eqref{eqLemma_interpolation_1}.
\end{proof}

\section{Verification of $(\vm-\vm')/n$ and $\rank(\A')/n$}\label{apx_maurice}

Let $\vm_j$ denote the number of rows with exactly $j$ nonzero entries. With standard concentration arguments, we know that \whp\ $\vm_0\sim \vm \pr(\vk=0) \sim (dn/k) K(0)$, and $\vm_1\sim \vm \pr(\vk=1) \sim (dn/k) K'(0) $. Consequently, \whp\ $(\vm-\vm')/n \sim 
	d(1-K(0)-K'(0))/k$.

For $\rank(\A')$, let $X_i$ be the indicator variable that there exists a row with exactly one nonzero entry, and that nonzero entry occurs at the $i$-th column. Then $\rank(\A')=\sum_{i=1}^n X_i$.  Conditioning on $\vm_1$ and $\vD=\sum_{j} j\vm_j$, we know that $\ex X_i = \sum_j \pr(\vd=j)(1- (\vm_1/\vD)^j)$ for every $i$. Since \whp\ $\vm_1/\vD\sim K'(0)/k$, the standard concentration results immediately yield that \whp\ $\rank(\A')/n\sim 1- \sum_j \pr(\vd=j) (K'(0)/k)^j = 1-D(1-K'(0)/k)$.

\end{appendix}


\end{document}